\newif\ifarxiv
\setlist{itemsep=3pt,parsep=0pt,topsep=2pt,partopsep=0pt}  
\setlist{leftmargin=2.5\parindent} 
\def\itm#1{\rm ({#1})} \def\itmit#1{\itm{\it #1\,}} \def\rom{\itmit{\roman{*}}}
\def\abc{\itmit{\alph{*}}} 
 \def\endofClaim{\hfill\scalebox{.6}{$\Box$}}
\let\subset\subseteq \let\eps\varepsilon \let\rho\varrho
\def\dcup{\cup}
\def\cB{\mathcal{B}}  
\def\cF{\mathcal{F}} \def\cG{\mathcal{G}} 
\def\cS{\mathcal{S}} \def\cT{\mathcal{T}} 
\def\X{\mathcal{X}}
\def\le{\leqslant} \def\ge{\geqslant}
\newtheorem{theorem}{Theorem}[section]
\newtheorem*{SzRLdeg}{Szemer\'edi's Regularity Lemma (sparse minimum degree form)}
\newtheorem*{Chernoff}{Chernoff bound}
\newtheorem*{Hoeff}{Hoeffding's inequality}
\newtheorem{lemma}[theorem] {Lemma}    
\newtheorem{corollary}[theorem]{Corollary}    
\newtheorem{prop}[theorem] {Proposition}   
\newtheorem{fact}[theorem]{Fact} 
\newtheorem{obs}[theorem] {Observation}  
\newtheorem{conj}[theorem]{Conjecture}
\newtheorem{claim}{Claim}    
\newtheorem*{claim*}{Claim}  
\theoremstyle{definition}
\newtheorem{definition}[theorem]{Definition}
\newtheorem{defn}[theorem]{Definition}
\theoremstyle{remark} 
\newcommand{\oldqed}{}
\newenvironment{claimproof}[1][Proof]{
  \renewcommand{\oldqed}{\qedsymbol}
  \renewcommand{\qedsymbol}{\endofClaim}
  \begin{proof}[#1]
}{
  \end{proof}
  \renewcommand{\qedsymbol}{\oldqed}
}
\newcommand{\By}[2]{\overset{\mbox{\tiny{#1}}}{#2}} 
\newcommand{\ByRef}[2]{   \By{\eqref{#1}}{#2} }
\newcommand{\eqByRef}[1]{ \ByRef{#1}{=} }
\newcommand{\geByRef}[1]{ \ByRef{#1}{\ge} }
\newcommand{\NATS}{\mathbb{N}} 
\newcommand{\Prob}{\mathbb{P}}
\newcommand{\Exp}{\mathbb{E}}
 \def\Ex{\mathbb{E}}  \def\Pr{\mathbb{P}}
   \def\N{\mathbb{N}}
\newcommand{\ds}{\displaystyle}
\newcommand{\ol}{\overline}
\newcommand{\Bin}{\textup{Bin}}
\newcommand{\Hyp}{\textup{Hyp}}
\newcommand{\Aut}{\textup{Aut}}
\newcommand{\tpl}[2]{\mathbf{#1}^{#2}}
\title{Chromatic thresholds in dense random graphs}
  \author[P. Allen, J. B\"ottcher, S. Griffiths, Y. Kohayakawa \and R. Morris]{Peter Allen, Julia B\"ottcher, Simon Griffiths \\
 Yoshiharu Kohayakawa \and Robert Morris}
 \address{
    Peter Allen, Julia B\"ottcher \hfill\break Department of Mathematics, London School of Economics, Houghton Street, London WC2A 2AE, U.K.
 }
 \email{p.d.allen|j.boettcher@lse.ac.uk}
\address{
    Simon Griffiths \hfill\break
    Department of Statistics, University of Oxford, 1 South Parks Road,
    Oxford, OX1~3TG, UK.
} 
\email{griffith@stats.ox.ac.uk}
\address{
    Yoshiharu Kohayakawa \hfill\break Instituto de Matem\'atica e
    Estat\'{\i}stica, Universidade de S\~ao Paulo, Rua do Mat\~ao 1010,
    05508--090~S\~ao Paulo, Brazil.
  }
  \email{yoshi@ime.usp.br}
  \address{
    Robert Morris\hfill\break
   IMPA, Estrada Dona Castorina 110, Jardim Bot\^anico, Rio de Janeiro, RJ,
   Brazil
 }
 \email{rob@impa.br}
\thanks{
    PA was partially supported by FAPESP (Proc.~2010/09555-7); JB by FAPESP
    (Proc.~2009/17831-7); SG by CNPq (Proc.~500016/2010-2); YK by CNPq
    (Proc.~308509/2007-2); RM by CNPq (Proc.~479032/2012-2 and Proc.~303275/2013-8). This
    research was supported by CNPq (Proc.~484154/2010-9). The authors are
    grateful to NUMEC/USP, N\'ucleo de Modelagem Estoc\'astica e Complexidade of
    the University of S\~{a}o Paulo, and Project MaCLinC/USP, for supporting
    this research. }
\date{\today}
\begin{document}

\begin{abstract}
The chromatic threshold $\delta_\chi(H,p)$ of a graph $H$ with respect to the random graph $G(n,p)$ is the infimum over $d > 0$ such that the following holds with high probability: the family of $H$-free graphs $G \subset G(n,p)$ with minimum degree $\delta(G) \ge dpn$ has bounded chromatic number. The study of the parameter $\delta_\chi(H) := \delta_\chi(H,1)$ was initiated in 1973 by Erd\H{o}s and Simonovits, and was recently determined for all graphs $H$. In this paper we show that $\delta_\chi(H,p) = \delta_\chi(H)$ for all fixed $p \in (0,1)$, but that typically $\delta_\chi(H,p) \ne \delta_\chi(H)$ if $p = o(1)$. We also make significant progress towards determining $\delta_\chi(H,p)$ for all graphs $H$ in the range $p = n^{-o(1)}$. In sparser random graphs the problem is somewhat more complicated, and is studied in a separate paper. 
\end{abstract}

\maketitle

\section{Introduction}

One of the most famous early applications of the probabilistic method is
Erd\H{o}s' proof~\cite{Erd59} that there exist graphs with arbitrarily high
girth and chromatic number. In 1973, Erd\H{o}s and Simonovits~\cite{ES73}
asked whether such constructions are still possible under the additional
condition that the graph have high minimum degree. The \emph{chromatic
  threshold} $\delta_\chi(H)$ of a graph $H$ is defined to be the infimum
over $d > 0$ such that there exists $C = C(H,d)$ with the following
property: if $G$ is an $H$-free graph on $n$ vertices with minimum degree
$\delta(G) \ge d n$, then $\chi(G) \le C$. For example, it is easy to see
that $\delta_\chi(H) = 0$ for all bipartite $H$, and it was proved by
Thomassen~\cite{Thomassen02,Thomassen07} that $\delta_\chi(K_3) = 1/3$ and
that $\delta_\chi(C_{2k+1}) = 0$ for every $k \ge 2$. 

Important breakthroughs in the study of chromatic thresholds of more general families of graphs were
obtained by Lyle~\cite{Lyle10} and {\L}uczak and Thomass\'e~\cite{LT}. Following~\cite{LT}, we say that a 
graph $H$ is \emph{near-acyclic} if $\chi(H)=3$ and~$H$ admits a partition
into a forest~$F$ and an independent set~$I$ such that every odd cycle
of~$H$ meets~$I$ in at least two vertices. The family of near-acyclic
graphs was introduced by \L uczak and Thomass\'e~\cite{LT}, who conjectured
that they were exactly the $3$-chromatic graphs with chromatic threshold
zero. This was proved in~\cite{ChromThresh}, where moreover the chromatic
threshold of every graph $H$ was determined:
If $\chi(H) = r \ge 3$ then
\begin{equation*}
\delta_\chi(H) \, \in \, \bigg\{ \frac{r-3}{r-2}, \, \frac{2r-5}{2r-3}, \, \frac{r-2}{r-1} \bigg\}\,,
\end{equation*}
where the first possibility occurs exactly when it is possible to obtain a
near-acyclic graph by removing $r-3$ independent sets from $H$, the third
when $H$ has no forest in its decomposition family\footnote{Recall that the
decomposition family of a graph $H$ is the collection of bipartite
graphs obtained from $H$ by removing $\chi(H) - 2$ independent sets.}, and the
second otherwise.

In recent years, beginning with~\cite{FR,KLR}, there has been a great deal of interest in \emph{sparse random analogues} of results in extremal graph theory. For example, it can be proved\footnote{See~\cite{Sam} for a proof of this theorem using the method of~\cite{SchTuran}.} using the general methods introduced in~\cite{BMS,ConGow,ST,SchTuran} that if $p \gg n^{-2/(r+1)}$ then, with high probability, every $K_{r+1}$-free subgraph of $G(n,p)$ with $\big( 1 - 1/r  + o(1) \big) p {\binom{n}{2}}$ edges can be made $r$-partite by removing $o(pn^2)$ edges. DeMarco and Kahn~\cite{DeMKah1,DeMKah2} moreover proved that if $p \gg n^{-2/(r+1)} (\log n)^{2/(r+1)(r - 2)}$ then with high probability the largest $K_{r+1}$-free subgraph of $G(n,p)$ is $r$-partite. For an excellent introduction to the area, see the recent survey~\cite{RS}. 

In this paper we will study a sparse random analogue of the chromatic threshold. The following definition was first made in~\cite{ChromThresh}.

\begin{defn}\label{def:deltachip}
Given a graph $H$ and a function $p = p(n) \in [0,1]$, define
\begin{align*}
& \delta_\chi\big( H, p \big) \, := \, \inf \Big\{ d > 0 \,:\, \text{there exists $C > 0$ such that the following holds} \\ 
& \hspace{5cm} \text{with high probability: every $H$-free spanning subgraph }  \\
& \hspace{6.5cm} G \subset G(n,p) \textup{ with } \delta(G) \ge d pn \textup{ satisfies } \chi(G) \le C \Big\}.
\end{align*}
We call $\delta_\chi\big(H,p\big)$ the \emph{chromatic threshold of $H$ with respect to $p$}.
\end{defn}

Note that $\delta_\chi(H) = \delta_\chi(H,1)$, so this definition generalises that of the chromatic threshold. We emphasise that the constant $C$ is allowed to depend on the graph $H$, the function $p$ and the number $d$, but not on the integer $n$. We also note that if, for some $d$, with high probability there is no spanning $H$-free subgraph of $G(n,p)$ whose average degree exceeds $dpn$, then vacuously we have $\delta_\chi(H,p)\le d$.

\subsection{Our results}

Our first theorem shows that if $p \in (0,1]$ is constant, then the chromatic threshold does not depend on its value.

\begin{theorem}\label{thm:pconst}
  For each constant $p > 0$ and graph $H$, we have
  $\delta_\chi(H,p) = \delta_\chi(H)$.
\end{theorem}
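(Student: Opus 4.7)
The plan is to prove the two inequalities $\delta_\chi(H,p) \le \delta_\chi(H)$ and $\delta_\chi(H,p) \ge \delta_\chi(H)$ separately, using Szemer\'edi's Regularity Lemma together with the dense chromatic threshold theorem of~\cite{ChromThresh}. Since $p$ is a positive constant, dense regularity applies directly to any spanning subgraph $G$ of $G(n,p)$, and the quasi-randomness of $G(n,p)$ (which holds w.h.p.) is what bridges the sparse hypothesis $\delta(G) \ge dpn$ to a dense conclusion on the reduced graph.

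For the upper bound, fix $d > \delta_\chi(H)$ and $\eta > 0$ with $d - \eta > \delta_\chi(H)$. Given an $H$-free $G \subseteq G(n,p)$ with $\delta(G) \ge dpn$, I would apply Szemer\'edi's Regularity Lemma to $G$ with parameter $\eps \ll \eta p$, obtaining clusters $V_0 \cup V_1 \cup \dots \cup V_k$, and build the reduced graph $R$ on $[k]$ whose edges are the $\eps$-regular pairs of $G$-density at least $\eta p/10$. Since this density threshold is a positive constant, the Counting Lemma in its dense form forces $R$ to be $H$-free. A short averaging argument, using $\delta(G) \ge dpn$ and the w.h.p.\ property that every pair of linear-sized disjoint vertex sets in $G(n,p)$ has density $(1\pm o(1))p$, shows that after discarding $o(k)$ exceptional clusters the restricted reduced graph $R'$ satisfies $\delta(R') \ge (d - \eta)|V(R')|$. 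The dense chromatic threshold then gives a proper colouring $c : V(R') \to [C_0]$ with $C_0 = C(H, d-\eta)$.

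The main technical obstacle is to lift $c$ to a colouring of $G$ itself with $O(1)$ colours. Setting $\phi(v) = c(V_i)$ for $v \in V_i$ is proper on edges lying inside $R'$-pairs, but fails on the \emph{bad} edges, namely those within a single cluster, inside non-regular or low-density pairs, or incident to $V_0$ and to the exceptional clusters; the total count of bad edges is $O(\eta p n^2)$. One refines the colouring by also recording the cluster index (so that distinct clusters get distinct colour pairs, giving a proper colouring of all inter-cluster edges using $C_0 k$ colours, which is still $O(1)$ since $k$ is bounded in terms of $\eps$), and then handles the within-cluster edges of each non-exceptional cluster $V_i$ by a separate argument: since $G[V_i]$ is $H$-free and sits inside the quasi-random $G(n,p)[V_i]$, one uses either an iterated regularity step within each $V_i$ or identifies a large subset of $V_i$ on which an inherited minimum-degree condition allows a recursive application of the theorem. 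This iteration is the delicate part: it must terminate in boundedly many steps, with the total number of colours used at each level multiplying up to a constant depending only on $H$, $p$ and $d$.

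For the lower bound, fix $d < \delta_\chi(H)$ and any $C$, and pick $d' \in (d, \delta_\chi(H))$. By the extremal constructions underlying~\cite{ChromThresh}, there is an $H$-hom-free graph $F$ on $t$ vertices with $\delta(F) \ge d' t$ and $\chi(F) > C$. Equitably partition $V(G(n,p)) = V_1 \cup \dots \cup V_t$, and let $G_F \subseteq G(n,p)$ consist of every edge $uv \in E(G(n,p))$ with $u \in V_i$, $v \in V_j$, $ij \in E(F)$. By quasi-randomness of $G(n,p)$, $\delta(G_F) \ge (d' - o(1))pn \ge dpn$ w.h.p., and $G_F \subseteq F[n/t]$ is $H$-free because $F$ is $H$-hom-free. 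Finally, since $p$ is constant and $t$ fixed, $G_F$ w.h.p.\ contains a copy of $F$ itself as a subgraph (choose one $x_i \in V_i$ per cluster so that all $|E(F)|$ required cross-edges occur in $G(n,p)$; the expected number of such transversals is $\Theta(n^t)$), forcing $\chi(G_F) \ge \chi(F) > C$ as required.
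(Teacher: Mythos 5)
Your proposal has genuine gaps in both directions, and in neither direction does it follow the paper's actual argument.

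\textbf{Upper bound.} The plan of colouring the reduced graph $R'$ and then lifting does not work, and you correctly flag the sticking point yourself: the within-cluster edges. But the difficulty is worse than delicate --- it is fatal to the approach. The minimum degree hypothesis $\delta(G)\ge dpn$ gives \emph{no} control over $\delta(G[V_i])$; a vertex $v\in V_i$ may have every one of its $G$-neighbours outside $V_i$, so there is no ``inherited minimum-degree condition'' to recurse on. Worse, one cannot hope to bound $\chi(G[V_i])$ in isolation at all: with $p$ constant, $G(n,p)[V_i]$ w.h.p.\ contains vertex-disjoint copies of every fixed $H$-free graph, so $G[V_i]$ alone can be $H$-free with arbitrarily large chromatic number. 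The bound on $\chi(G)$ must come from the interaction of the global degree condition with the structure of $H$. Accordingly, the paper does not colour the reduced graph. After applying the sparse minimum degree form of the Regularity Lemma, it partitions $V(G)$ into classes $X_{I_1,I_2}$ recording which clusters each vertex has many (and very many) neighbours in, splits $E(G[X_{I_1,I_2}])$ into edges with large common neighbourhood in the relevant clusters and the rest, bounds the latter by a new pairwise-common-neighbourhood lemma (Lemma~\ref{lem:mostgood}), and handles the former via the degeneracy/embedding machinery of~\cite{ChromThresh} (Lemmas~\ref{pigeon},~\ref{lem:extend} and~\ref{lem:magic}). None of this is a lifting of a reduced-graph colouring.

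\textbf{Lower bound.} Blowing up a fixed graph $F$ and intersecting with $G(n,p)$ produces a graph that is $H$-free if and only if there is no homomorphism from $H$ to $F$. Hence what your argument proves (assuming the required $F$ exists) is $\delta_\chi(H,p)\ge\delta_\chi^*(H)=\min\{\delta_\chi(H'):H\to H'\}$, which by Theorem~\ref{thm:deltachistar} can be strictly smaller than $\delta_\chi(H)$. Concretely, if $H$ is the $2$-blow-up of $C_5$ then $\delta_\chi(H)=\tfrac12$ but $\delta_\chi^*(H)=\delta_\chi(C_5)=0$; since $H\to C_5$, an $H$-hom-free graph has odd girth at least $7$, and $\delta_\chi(C_5)=0$ means no such graph can simultaneously have minimum degree $\ge d'v(F)$ for $d'$ near $\tfrac12$ and unbounded chromatic number. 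So the claimed extremal $F$ simply does not exist. The paper's construction (Lemmas~\ref{lem:constconst} and~\ref{lem:constcsn}) is not a blow-up of a fixed hom-free graph: it takes an $H$-free extremal graph of~\cite{ChromThresh} in which the high chromatic number is concentrated on a \emph{bounded} set $X$ with $e(G[X,Y])=e(G[Y])=0$ for a suitable large independent set $Y$, embeds $G[X]$ into a clique of $G(n,p)$, and then uses the Chernoff bound to recover the degree condition for the remaining (dense, but low-chromatic) part of the graph.
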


For functions $p = o(1)$, the situation is quite different. In this paper we will focus on the `dense' regime, by which we mean the case in which $p = n^{-o(1)}$. In this regime, it is still trivially true that $\delta_\chi(H,p) = \delta_\chi(H) = 0$ for all bipartite $H$. We are also able to determine $\delta_\chi(H,p)$ in the case $\chi(H) \ge 4$, even for somewhat smaller values of $p$. Recall that the $2$-density $m_2(H)$ is the maximum of $\frac{e(F) - 1}{v(F) - 2}$ over subgraphs $F \subset H$ with at least $3$ vertices.

\begin{theorem}\label{thm:classhigh} 
Let $H$ be a graph with $\chi(H) \ge 4$, and let $p = p(n)$ be any function satisfying $\max\big\{ n^{-1/m_2(H)}, n^{-1/2} \big\} \ll p = o(1)$. Then
\[\delta_{\chi}(H,p) \,=\, \frac{\chi(H)-2}{\chi(H)-1}\,.\]
\end{theorem}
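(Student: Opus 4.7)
Let $r = \chi(H) \ge 4$. The plan is to prove the upper and lower bounds on $\delta_\chi(H,p)$ separately: the upper bound $\delta_\chi(H,p) \le \frac{r-2}{r-1}$ as an essentially vacuous consequence of the sparse random Tur\'an theorem, and the matching lower bound by a random $(r-1)$-partition construction augmented by a deletion step.

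For the upper bound, I would invoke the random analogue of Erd\H{o}s--Stone--Simonovits (proved in the regime $p \gg n^{-1/m_2(H)}$ by the container and balanced-supersaturation methods cited in the introduction): whp every $H$-free subgraph of $G(n,p)$ has at most $\bigl(1 - \tfrac{1}{r-1} + o(1)\bigr)p\binom{n}{2}$ edges. If $d > \frac{r-2}{r-1}$ is fixed then any $H$-free spanning $G \subset G(n,p)$ with $\delta(G) \ge dpn$ would contain at least $\tfrac{d}{2}pn^2$ edges, contradicting the above bound for $n$ large. Thus whp no such $G$ exists and the chromatic-number condition holds vacuously for any $C$.

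For the lower bound I fix $d < \frac{r-2}{r-1}$ and set $\varepsilon := \frac{r-2}{r-1} - d > 0$. Pick a uniformly random equitable partition $V = V_1 \cup \cdots \cup V_{r-1}$ and set $G^\ast \subset G(n,p)$ to be the set of inter-part edges; a vertex-wise Chernoff bound (made uniform via $p \gg n^{-1/2}$) gives $\delta(G^\ast) \ge \bigl(\tfrac{r-2}{r-1} - \tfrac{\varepsilon}{2}\bigr)pn$ whp, and $G^\ast$ is $(r-1)$-partite, hence $H$-free; but $\chi(G^\ast) \le r-1$ so $G^\ast$ alone does not witness the lower bound. To boost the chromatic number I would take $G' \subset G(n,p)[V_1]$ to be a random sub-sample of edge-probability $q/p$ with $1/m \ll q \ll p$, where $m := n/(r-1)$, so that $G'$ is marginally distributed as $G(m,q)$ and whp has chromatic number $\Theta(qm/\log(qm)) \to \infty$ while having many fewer edges than the whole of $G(n,p)[V_1]$. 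Every copy of $H$ in $G^\ast \cup G'$ uses at least one edge of $G'$ and, by the decomposition-family characterisation recalled in the introduction, its restriction to $V_1$ is a bipartite member $F$ of the decomposition family $\cF$ embedded in $G'$, with independent-set extensions to the other parts via edges of $G(n,p)$. I would then destroy every such copy by putting one of its inter-part edges into a deletion set $D$; the candidate spanning graph $G := (G^\ast \setminus D) \cup G'$ is $H$-free by construction and satisfies $\chi(G) \ge \chi(G') \to \infty$.

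The main obstacle is showing $\delta(G) \ge dpn$, which reduces to uniformly bounding $|D \cap \partial v|$ for every vertex $v$. This count is at most the number of copies of $H$ in $G^\ast \cup G'$ incident to $v$, which factors as the number of $F$-copies in $G'$ that meet (a neighbour of) $v$, times the number of extensions to $H$ through $G(n,p)$. Sparsity of $G'$ (tuned through $q$) controls the first factor, while Janson-type or Kim--Vu concentration for extension counts in $G(n,p)$, valid because $p \gg n^{-1/m_2(H)}$, controls the second. The delicate calibration is to choose $q$ so that simultaneously $qm \to \infty$ (for $\chi(G') \to \infty$) and $q$ is small enough that the expected per-vertex deletion count is $o(pn)$, and then to upgrade expectation bounds to uniform high-probability bounds over all $n$ vertices using $p \gg n^{-1/2}$. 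Once this calibration goes through, the construction yields the required $H$-free spanning subgraph of $G(n,p)$ of minimum degree at least $dpn$ and unbounded chromatic number, so $\delta_\chi(H,p) \ge \frac{r-2}{r-1}$, completing the proof.
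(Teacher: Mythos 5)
Your upper bound is exactly the paper's: the minimum degree condition forces more edges than the sparse random Erd\H{o}s--Stone theorem (Theorem~\ref{thm:CGS}/Corollary~\ref{cor:turan}) allows, so the statement holds vacuously. The gap is in your lower bound, specifically in the deletion step. You bound $|D\cap\partial v|$ by the number of copies of $H$ through $v$ in $G^*\cup G'$ and hope to calibrate $q$ so that this is $o(pn)$; no such calibration exists on the dense end of the range, which the theorem must cover (it includes all $p=n^{-o(1)}$ with $p=o(1)$). Take $H=K_4$: to get $\chi(G')\to\infty$ you need $q\ge \omega/n$ with $\omega\to\infty$, and then the expected number of copies of $K_4$ through a fixed vertex $v\notin V_1$ using one $G'$-edge is of order $q(pm)^2\cdot p^3 n=\Theta(qp^5n^3)\ge \omega p^5 n^2$, which is $\gg pn$ whenever $p\gg n^{-1/4}$ -- in particular for all $p=n^{-o(1)}$. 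For general $H$ with $v(H)\ge 5$ the situation is worse. Nor is this just a lossy bound that a cleverer choice of deleted edges repairs automatically: a typical $G'$-edge $xy$ has $\Theta(p^2n)$ common neighbours outside $V_1$ spanning $\Theta(p^5n^2)$ edges of $G(n,p)$, each completing a $K_4$ with $xy$, so killing all copies through $xy$ while keeping $xy$ and all of $G'$ requires a substantial hitting set per $G'$-edge, and you give no mechanism for charging these deletions to vertices without destroying the minimum degree. There are also two smaller slips: a copy of $H$ lying entirely inside $V_1$ (i.e.\ inside $G'$) has no inter-part edge to delete, so you must additionally take $q$ below the appearance threshold of $H$ in $G(m,q)$ (possible since $m(H)>1$, but it needs saying); and the restriction of a copy of $H$ to $V_1$ need not be a bipartite member of the decomposition family -- all one can say is that it is $H$ minus $r-2$ independent sets and hence contains an edge.

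The paper avoids all of this by construction rather than deletion (Theorem~\ref{thm:classhigh:lower}, proved in the companion paper and sketched in Section~\ref{sec:constructconst}): the high-chromatic gadget is not a linear-sized random graph inside $V_1$ but a subgraph $F\subset G(n,p)$ with only $o(1/p)$ vertices, girth larger than $v(H)$ and unbounded chromatic number; the remaining vertices are partitioned into $V_1,\dots,V_{r-1}$ so that each vertex of $F$ has about $\big(1-\tfrac{1}{r-1}\big)pn$ neighbours in $V_1$ (possible precisely because $v(F)\cdot pn=o(n)$), and $G$ consists of $F$, the $F$--$V_1$ edges, and all inter-part edges. Then any $v(H)$-vertex subgraph meets $F$ in a forest, so it is properly $(r-1)$-colourable and $G$ is $H$-free outright -- no copies ever need to be destroyed, which is exactly the point your construction is missing. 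If you want to salvage your route, the two ideas to import are the smallness $v(F)=o(1/p)$ (so the gadget's attachments and any cleaning cost $o(pn)$ per vertex) and the girth condition (so $H$-freeness is structural); with a $\Theta(n)$-vertex gadget $G'$ the cleaning cost is unavoidably too large for dense $p$.
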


Recall that $\delta_{\chi}(H) = \frac{\chi(H)-2}{\chi(H)-1}$ if and only if there is no forest in the decomposition family of $H$, so for all other graphs we have $\delta_\chi(H,p) > \delta_\chi(H)$ whenever $p \to 0$ sufficiently slowly. We remark that the upper bound in Theorem~\ref{thm:classhigh} is an immediate consequence of a theorem of Conlon and Gowers~\cite{ConGow} and Schacht~\cite{SchTuran}, while the lower bound is proved in~\cite{sparse}. 

Perhaps surprisingly, for graphs $H$ with chromatic number $\chi(H) = 3$ the situation is significantly more complicated. In order to state our main theorem, which determines $\delta_{\chi}(H,p)$ for many (but not all) 3-chromatic graphs in the `dense' range $p = n^{-o(1)}$, we will need the following definitions (see Figure~\ref{fig:thunder}). 

\begin{defn}\label{def:cftcf}
A graph $H$ is a \emph{cloud-forest graph} if there is an independent set $I \subset V(H)$ (the \emph{cloud}) such that $V(H)\setminus I$ induces a forest $F$, the only edges from $I$ to $F$ go to leaves or isolated vertices of $F$, and no two adjacent leaves in $F$ send edges to $I$. 

Moreover, $H$ is a \emph{thundercloud-forest graph} if there is a cloud $I \subset V(H)$, which witnesses that $H$ is a cloud-forest graph, such that every odd cycle in $H$ uses at least two vertices of~$I$. 
\end{defn}

\begin{figure}[ht]
\psfrag{I}{$I$}
\psfrag{J}{$J$}
\includegraphics[width=12cm]{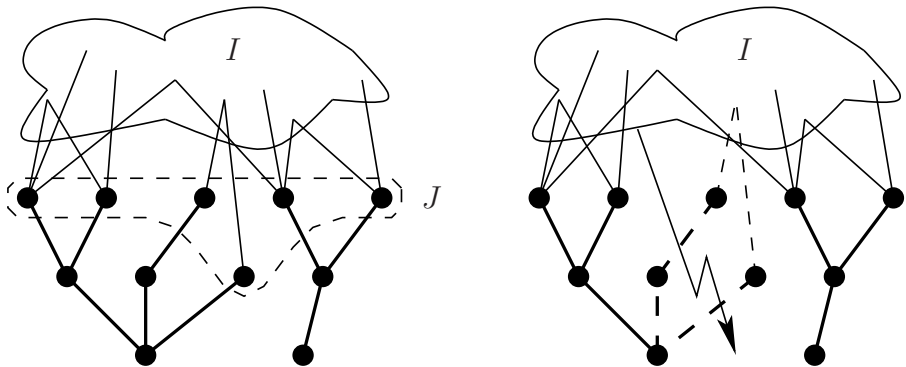}
\label{fig:thunder}
\caption{A cloud-forest graph (left), and a forbidden odd cycle for a thundercloud-forest graph (dashed, right).}
\end{figure}

An alternative definition of `cloud-forest', which is used in our proofs, is the following: The vertex set
can be partitioned into independent sets $I$ and $J$, and a forest $F'$,
such that there are no edges from $V(F')$ to $I$ and each vertex of $J$ has
at most one neighbour in $V(F')$. To obtain this partition from that of Definition~\ref{def:cftcf}, let $J$ be the neighbours of vertices in $I$, as shown in Figure~\ref{fig:thunder}, and let $F'$ be obtained by removing $J$ from $F$.

For example, $K_3$ is not a cloud-forest graph, $C_5$ is a cloud-forest but not
a thundercloud-forest graph, and $C_{2k+1}$ is a thundercloud-forest graph
for every $k \ge 3$. Note that every cloud-forest graph has a forest in its decomposition family, and similarly every thundercloud-forest graphs is near-acyclic, but in both cases the reverse inclusion does not hold (as $K_3$ and $C_5$ respectively demonstrate). 

Our main theorem is the following partial characterisation of $\delta_\chi(H,p)$ for 3-chromatic graphs in the dense regime, i.e., for functions $p = p(n)$ that satisfy $p = o(1)$ and $p = n^{-o(1)}$. Together with Theorem~\ref{thm:classhigh}, it determines $\delta_{\chi}(H,p)$ in this range for every $H$ that is not a thundercloud forest graph.

\begin{theorem}\label{thm:sparseclass} 
Let $H$ be a graph with $\chi(H) = 3$, and let $p = p(n)$ be a function satisfying $p = o(1)$ and $p = n^{-o(1)}$. Then
\begin{equation*}
\delta_{\chi}(H,p)\begin{cases} \, = \, \tfrac12&\text{ if $H$ is not a cloud-forest graph.}\\
\, = \, \tfrac13 & \text{ if $H$ is a cloud-forest graph but not a thundercloud-forest graph.}\\
\, \le \, \tfrac13 & \text{ if $H$ is a thundercloud-forest graph.}
\end{cases}
\end{equation*}
\end{theorem}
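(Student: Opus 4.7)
The theorem combines three upper bounds and two lower bounds. The upper bound $\delta_\chi(H,p) \le \tfrac12$ holding in all three cases is essentially vacuous under $p = n^{-o(1)}$: by the sparse Erd\H{o}s--Stone--Simonovits theorem of Conlon--Gowers and Schacht, with high probability every $H$-free spanning subgraph $G\subset G(n,p)$ satisfies $e(G) \le (\tfrac12+o(1))p\binom{n}{2}$, which is incompatible with $\delta(G)\ge(\tfrac12+\eps)pn$. Hence in case~(i) the only substantive task is the matching lower bound.

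The main upper bound is $\delta_\chi(H,p)\le\tfrac13$ whenever $H$ is a cloud-forest graph, covering cases~(ii) and~(iii). My plan is to apply the sparse minimum-degree form of Szemer\'edi's regularity lemma to a hypothetical $H$-free $G\subset G(n,p)$ with $\delta(G)\ge(\tfrac13+\eps)pn$, obtaining a reduced graph $R$ on $k$ clusters with $\delta(R)\ge(\tfrac13+\tfrac{\eps}{2})k$ whose edges correspond to regular pairs of density $\Theta(p)$. Using the alternative description of cloud-forest graphs recorded after Definition~\ref{def:cftcf} (a partition $V(H) = I \cup J \cup V(F')$ with $I$, $J$ independent, $F'$ a forest, no $I$--$V(F')$ edges, and each vertex of $J$ with at most one neighbour in $V(F')$), the sparse counting lemma forbids certain embeddings $H\to R$ respecting this partition, since any such embedding would yield a copy of $H$ in $G$. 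Combined with the minimum-degree condition on $R$, this structural restriction is sufficient to force $\chi(R)\le C(H,\eps)$; this is the sparse analogue of the {\L}uczak--Thomass\'e structural step, and is exactly where the cloud-forest hypothesis (as opposed to the weaker ``forest in decomposition family'') is essential. Lifting the colouring of $R$ back to $G$, with standard adjustments for low-density vertices and irregular pairs, yields $\chi(G)\le C'(H,\eps)$.

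For the two lower bounds I would construct balanced blow-ups intersected with $G(n,p)$. In case~(i), take a high-chromatic triangle-free graph $Q$ (a shift graph or a generalized Kneser graph) whose structural properties certify the failure of the cloud-forest partition, and form $Q[t]\cap G(n,p)$ for suitable $t=\Theta(n)$: a standard deletion argument gives a spanning subgraph of $G(n,p)$ with $\delta(G)\ge(\tfrac12-\eps)pn$ and $\chi(G)\to\infty$, which remains $H$-free because the non-cloud-forest property obstructs any embedding of $H$ into $Q[t]$ along the cluster partition. Case~(ii) requires a different base: a three-partite ``shifted'' graph $Q'$ that reduces the achievable minimum degree to $(\tfrac13-\eps)pn$, whose three-partite structure tolerates cloud-forest embeddings but fails to tolerate thundercloud-forest ones -- an odd cycle of $H$ that meets $I$ in only one vertex creates a forbidden configuration in $Q'$, so only when $H$ is non-thundercloud does the construction survive.

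The main obstacle, I expect, is the sparse upper bound for cloud-forest graphs. In the dense case the {\L}uczak--Thomass\'e argument for the chromatic threshold $\tfrac13$ relies on exact iterated neighbourhood inclusions; translating these to the reduced graph via the sparse counting lemma introduces error terms that must be absorbed, and the interplay between the ``leaves-only'' edge constraints of cloud-forest and the partial embeddings that extend through regular pairs is delicate. One must identify precisely which sub-embeddings of the cloud-forest partition are completable in a typical random graph and organise the reduced-graph argument so that only completable sub-embeddings are forbidden, without over-restricting the structure of $R$. The lower bound constructions should be comparatively routine once the correct base graphs are identified.
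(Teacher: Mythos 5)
Your reduction of the $\tfrac12$ upper bound to the sparse Erd\H{o}s--Stone theorem is fine and matches Corollary~\ref{cor:turan}. The problem is the core of the argument, the bound $\delta_\chi(H,p)\le\tfrac13$ for cloud-forest graphs (Proposition~\ref{prop:slightly} in the paper). Your plan is to show that the cloud-forest structure plus $\delta(R)\ge(\tfrac13+\tfrac\eps2)k$ forces $\chi(R)\le C(H,\eps)$ and then to ``lift'' a colouring of $R$ to $G$. This cannot work: $\chi(R)\le k\le k_1$ is automatic and carries no information, and, more importantly, a proper colouring of the reduced graph says nothing about the edges of $G$ inside a single cluster, inside $V_0$, or across sparse and irregular pairs, so it cannot be lifted to a bound on $\chi(G)$ -- indeed $R$ may well be complete, and bounded chromatic number of $H$-free graphs never comes from colouring a reduced graph in this area. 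Moreover the ``sparse counting lemma'' you invoke to rule out embeddings $H\to R$ is exactly what fails for subgraphs of $G(n,p)$ with $p=o(1)$ (large subsets of an $(\eps,d,p)$-regular pair need not be regular), which is why the paper has to use inheritance of regularity (Theorem~\ref{thm:2reg}) instead. The paper's actual argument works at the vertex level in $G$: it defines $X_i$ as the vertices whose $(d,p)$-robust second neighbourhood covers a $(\tfrac12+d)$-fraction of $V_i$, proves via a supersaturation/random-injection argument (Lemma~\ref{lem:makebip}) that every pair $u,v\in X_i$ has $\Omega(|V_i|^s)$ $(u,v)$-completable $s$-sets in $V_i$, uses degeneracy and pigeonhole to find one set $Z$ completable for a subgraph of large average degree, and embeds the cloud-forest partition $I\cup J\cup V(F')$ of $H$ directly into $G$ (forest into the dense edge set, $I$ into $Z$, $J$ greedily); separately it shows the leftover set $X_0$ is empty using edge-counting, regularity inheritance and $K_{s,s}$-supersaturation to manufacture a $K_{s,s,s}\supset H$. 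None of these steps, which are the substance of the proof, appear in your proposal, and the step you do propose would not bound $\chi(G)$.

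The lower bounds are quoted in the paper from the companion paper (Propositions~\ref{prop:cloud:lower} and~\ref{prop:thundercloud:lower}), so you are not required to reprove them, but your sketch for case~(i) is also not salvageable as stated: a triangle-free (or any $H$-hom-free, $H$ $3$-chromatic) graph $Q$ with relative minimum degree $\tfrac12-\eps$ and unbounded chromatic number does not exist (by Andr\'asfai--Erd\H{o}s--S\'os type results, such a $Q$ with minimum degree above $\tfrac25 v(Q)$ is bipartite), so a dense blow-up $Q[t]\cap G(n,p)$ can never beat the dense threshold $\delta_\chi(H)$. The genuine constructions exploit sparseness in an essential way: a high-girth, high-chromatic graph $F$ on only $o(1/p)$ vertices is attached to one part of an (almost) bipartite or tripartite majority structure, with edges pruned so that no two vertices of $F$ have a common neighbour in $V_1$ (which is what makes every $v(H)$-vertex subgraph a cloud-forest graph), and the $\tfrac13$ case additionally uses a \L uczak--Thomass\'e-type gadget. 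This is precisely why $\delta_\chi(H,p)$ exceeds $\delta_\chi(H)$ once $p=o(1)$, a phenomenon your dense-style construction cannot capture.
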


We consider this theorem (in particular, the upper bound $\delta_{\chi}(H,p) \le 1/3$ for cloud-forest graphs) to be the main contribution of this paper. It is possible that $\delta_{\chi}(H,p) = 1/3$ for every function $\max\big\{ n^{-1/m_2(H)}, n^{-1/2} \big\} \ll p \ll 1$ whenever $H$ is a cloud-forest graph but not a thundercloud-forest graph, see~\cite[Question~6.3]{sparse}.

For thundercloud-forest graphs Theorem~\ref{thm:sparseclass} only provides an upper bound on $\delta_\chi(H,p)$, which we do not believe to be sharp. We make the following conjecture, which would complete the characterisation of $\delta_{\chi}(H,p)$ in dense random graphs.

\begin{conj}\label{conj:thunder} 
If $H$ is a thundercloud-forest graph, and $p = p(n)$ is a function satisfying $p = o(1)$ and $p = n^{-o(1)}$, then $\delta_\chi(H,p)=0$.
\end{conj}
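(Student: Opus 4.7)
\textbf{Proof plan for Conjecture~\ref{conj:thunder}.} Since every thundercloud-forest graph is near-acyclic, the conjecture would align with the deterministic theorem of \L uczak-Thomass\'e~\cite{LT} (as extended in~\cite{ChromThresh}): every $H$-free graph $G'$ with $\delta(G')\ge\varepsilon|V(G')|$ admits a homomorphism into a fixed finite graph $F=F(H,\varepsilon)$, and so has chromatic number $O_H(1)$. My approach would be to lift this structural statement to $G(n,p)$ by combining sparse regularity with a counting lemma.

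The first step is to apply the sparse minimum-degree regularity lemma to an arbitrary $H$-free $G\subset G(n,p)$ with $\delta(G)\ge dpn$, obtaining a bounded-cluster reduced graph $R$ whose edges are $(\varepsilon,p)$-regular pairs of density at least $\eta p$ and which satisfies $\delta(R)\ge(d-o(1))|V(R)|$ with high probability. Second, one invokes a sparse counting lemma for the fixed graph $H$ in $(\varepsilon,p)$-regular pairs: because $p=n^{-o(1)}$ lies well above $n^{-1/m_2(H)}$, the K{\L}R-type machinery of~\cite{BMS,ConGow,ST,SchTuran} applies and forces $R$ to be $H$-free. Third, one applies the deterministic \L uczak-Thomass\'e theorem to $R$ to obtain a homomorphism $R\to F$ for a finite graph $F=F(H,d)$, and hence $\chi(R)=O_H(1)$. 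Finally, one must lift a bounded colouring of $R$ to a bounded colouring of $G$ itself.

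The principal obstacle is this final step. In the dense regime underlying Theorem~\ref{thm:pconst} the lift is essentially trivial, because intra-cluster graphs and the sparse inter-cluster leftovers are negligible. In the sparse regime, by contrast, each cluster $V_i$ can carry up to $\varepsilon p|V_i|^2$ internal edges and each pair of clusters assigned the same $R$-colour contributes an additional sparse random graph of chromatic number $\Theta(pn/\log(pn))$; a naive greedy lift therefore yields only $\chi(G)=O(pn)$, not $O_H(1)$. I would attempt to bypass this by upgrading the third step so that it produces a homomorphism $\phi\colon G\to F$ \emph{directly}, rather than merely $\chi(R)\le\chi(F)$, by adapting the shattering / VC-dimension argument on neighbourhoods from~\cite{LT,ChromThresh}: two vertices with the same intersection pattern against a fixed finite collection of reference neighbourhoods should receive the same colour. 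The pseudorandom concentration available in $G(n,p)$---every intersection of $O_H(1)$ neighbourhoods has the expected size up to a $1+o(1)$ factor---is precisely what one would want to make such a finite-lattice argument survive rescaling.

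The deepest difficulty is that after rescaling, all the relevant sets have size $\Theta(pn)$ rather than $\Theta(n)$, so the error terms inherited from regularity and pseudorandomness are no longer negligible on the scale of the sets themselves, and the delicate set-theoretic identities that drive~\cite{LT,ChromThresh} may simply fail. It is conceivable that the conjecture requires a genuinely new ingredient: for example, a variant of the \L uczak-Thomass\'e structure theorem in which $H$-freeness is weakened to the sparse condition of containing only $o(p^{e(H)}n^{v(H)})$ copies of $H$ (a condition natural for $G(n,p)$), or a containers-style argument that extracts directly from $G$ a bounded-size ``skeleton'' reflecting the near-acyclic structure of $H$ and handles the exceptional vertices using the $\delta(G)\ge dpn$ hypothesis.
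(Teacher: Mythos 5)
The statement you are trying to prove is stated in the paper as a \emph{conjecture} (Conjecture~\ref{conj:thunder}); the paper does not prove it, and notes only that the special case $H=C_{2k+1}$, $k\ge 3$, is settled in the companion paper~\cite{sparse}. So there is no proof in the paper to compare against, and your proposal would need to stand on its own as a complete argument, which it does not: as you yourself observe, the final ``lift'' step is missing, and that step is not a technical afterthought but the entire content of the problem. Indeed, steps one through three of your plan are essentially vacuous: the reduced graph $R$ has at most $k_1=O(1)$ vertices, so $\chi(R)=O(1)$ holds trivially, with or without $H$-freeness of $R$ and with or without the \L uczak--Thomass\'e theorem. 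Everything therefore hinges on converting a bounded colouring (or homomorphism) of $R$ into a bounded colouring of $G$, and any lift that proceeds by discarding the irregular pairs, the low-density pairs and the intra-cluster edges only shows that $G$ can be made $C$-colourable after deleting $\eps p n^2$ edges. That is precisely the approximate parameter $\delta_\chi^*(H,p)$ of Definition~\ref{def:deltastar}, which by Theorem~\ref{thm:CGSS} is already understood and which the paper stresses behaves very differently from $\delta_\chi(H,p)$ (compare the $C_5$ example in Section~1.2). So your plan, as it stands, reproves a known statement about $\delta_\chi^*$ and does not touch the conjecture.

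Two further inaccuracies are worth flagging. First, the ``structural statement'' you attribute to \L uczak--Thomass\'e --- that every $H$-free graph with minimum degree $\eps n$ admits a homomorphism into a fixed finite graph $F(H,\eps)$ --- is stronger than what is proved in~\cite{LT,ChromThresh}; those papers bound the chromatic number via VC-dimension/paired-VC arguments on neighbourhood set systems, and homomorphism thresholds are in general strictly larger than chromatic thresholds, so you cannot invoke such a homomorphism version as a black box. Second, even granting a homomorphism $\phi\colon G\to F$ produced by a shattering argument in the sparse setting, your own diagnosis is correct that the neighbourhood-intersection identities driving~\cite{LT,ChromThresh} live at scale $\Theta(p^j n)$ for $j$ iterated neighbourhoods, while the error terms from sparse regularity and from inheritance (Theorem~\ref{thm:2reg}) do not obviously stay below that scale uniformly in the number of iterations; nothing in your sketch controls this. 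The honest conclusion is that your proposal is a reasonable description of why the conjecture is hard, not a proof of it; a genuinely new ingredient of the kind used in Section~\ref{sec:slightly} (robust second neighbourhoods, completable sets, and inheritance of regularity, which give the \emph{exact} bound $\delta_\chi(H,p)\le 1/3$ for cloud-forest graphs) would be needed, adapted to exploit the extra ``thundercloud'' condition to push the bound to~$0$.
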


In~\cite[Theorem~1.5]{sparse}, we prove Conjecture~\ref{conj:thunder} for odd cycles, that is, we prove that $\delta_\chi(C_{2k+1},p)=0$ for every $k \ge 3$.

\subsection{Sparser random graphs} 

In a companion paper~\cite{sparse} we study $\delta_\chi(H,p)$ for sparser random graphs, i.e., when $p=p(n)$  tends to zero faster than $n^{-\eps}$ for some $\eps > 0$. For example, in that paper we determine $\delta_\chi(H,p)$ for almost all values of $p$ whenever $\chi(H) \ge 5$: 
\begin{equation}\label{eq:thm:classhigh}
\delta_{\chi}(H,p) = \left\{
\begin{array}{cll}
\delta_\chi(H) & \text{if } & p > 0 \text{ is constant,}\smallskip\\ 
\tfrac{\chi(H)-2}{\chi(H)-1} & \text{if } & n^{-1/m_2(H)} \ll p \ll 1,\smallskip\\
1 & \text{if } & \frac{\log n}{n} \ll p \ll n^{-1/m_2(H)}.
\end{array} \right.
\end{equation}
Note that if $p \ll \frac{\log n}{n}$ then $G(n,p)$ is likely to have an isolated vertex, so trivially $\delta_\chi(H,p) = 0$. In the range $p = \Theta\big( n^{-1/m_2(H)} \big)$ we are not sure exactly what to expect, see~\cite[Problem~6.4]{sparse}.

As noted above, the situation is more complicated (and more interesting) for 3-chromatic graphs. In~\cite{sparse} we are able to determine $\delta_\chi(K_3,p)$ and $\delta_\chi(C_5,p)$ for most functions $p$, and $\delta_\chi(C_{2k+1},p)$ if either $p \gg n^{-1/2}$ or $\frac{\log n}{n} \ll p \ll n^{-(2k-3)/(2k-2)}$, for all $k \ge 3$. Perhaps most interestingly, we show that $\delta_\chi(C_5,p)$ has (at least) four different non-trivial regimes: 
\begin{equation*}
\delta_{\chi}(C_5,p) = \left\{
\begin{array}{cll}
0 & \text{if } & p > 0 \text{ is constant}\smallskip\\ 
\tfrac{1}{3} & \text{if } & n^{-1/2} \ll p \ll 1\smallskip\\
\tfrac{1}{2} & \text{if } & n^{-3/4} \ll p \ll n^{-1/2}\smallskip\\
1 & \text{if } & \frac{\log n}{n} \ll p \ll n^{-3/4}.
\end{array} \right.
\end{equation*}
We also show that~\eqref{eq:thm:classhigh} holds for a large class of 4-chromatic graphs (those with $m_2(H) > 2$), but we suspect (see~\cite[Conjecture~6.1]{sparse}) that this is not the case for all 4-chromatic graphs.

All of the lower bound constructions in the range $p = o(1)$ are given in~\cite{sparse}, but in Section~\ref{sec:constructconst} we will briefly describe those that are used in the proofs of Theorems~\ref{thm:classhigh} and~\ref{thm:sparseclass}. As noted above, the main contribution of this paper is the proof (see Section~\ref{sec:slightly}) that $\delta_{\chi}(H,p) \le 1/3$ if $H$ is a cloud-forest graph and $p = p(n)$ satisfies $p = o(1)$ and $p = n^{-o(1)}$.

\subsection{An approximate version of \texorpdfstring{$\delta_\chi(H,p)$}{delta(H)}} \label{sec:intro:CGSS}

In the definition of $\delta_{\chi}(H,p)$, the $H$-free graph $G \subset G(n,p)$ is required to be $C$-partite (rather than `close-to-$C$-partite'), and in this sense the theorems stated above have more in common with the theorem of DeMarco and Kahn~\cite{DeMKah1,DeMKah2}, stated earlier, than those of Conlon and Gowers~\cite{ConGow} and Schacht~\cite{SchTuran} (see the discussion before Definition~\ref{def:deltachip}). The following `approximate' random graph version of $\delta_\chi(H)$ was recently proposed by Conlon, Gowers, Samotij and Schacht~\cite{CGSS}.

\begin{defn}\label{def:deltastar}
For each graph $H$ and function $p = p(n) \in (0,1]$, define
\begin{align*}
& \delta_\chi^*\big( H, p \big) := \inf \Big\{ d > 0 \,:\, \text{there exists $C > 0$ such that, for all $\eps > 0$, the following holds} \\ 
& \hspace{3.5cm} \text{with high probability: every $H$-free spanning subgraph $G \subset G(n,p)$}  \\
& \hspace{3.5cm}  \textup{with } \delta(G) \ge d pn \textup{ can be made $C$-colourable by removing $\eps p n^2$ edges} \Big\}.
\end{align*}
\end{defn}

Conlon, Gowers, Samotij and Schacht~\cite{CGSS} used the so-called Kohayakawa-{\L}uczak-R\"odl conjecture~\cite{KLR}, which was recently proved in~\cite{BMS,CGSS,ST}, to deduce the following theorem, which determines $\delta_\chi^*(H,p)$ in terms of $\delta_\chi^*(H,1)$ for all $H$ and essentially all $p$. 

\begin{theorem}[Conlon, Gowers, Samotij and Schacht~\cite{CGSS}]\label{thm:CGSS} 
For every graph $H$, 
\begin{equation*}
\delta_{\chi}^*(H,p) = \left\{
\begin{array}{cll}
\delta_{\chi}^*(H) & \text{if } & p \gg n^{-1/m_2(H)}\smallskip\\
1 & \text{if } & \frac{\log n}{n} \ll p \ll n^{-1/m_2(H)},
\end{array} \right.
\end{equation*}
where $\delta_{\chi}^*(H) := \delta_{\chi}^*(H,1)$. 
\end{theorem}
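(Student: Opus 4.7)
The plan is to handle the two regimes of $p$ with very different arguments: a direct random construction below the KLR threshold, and a sparse-regularity transference from the dense extremal result above it.

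For the subcritical range $\frac{\log n}{n} \ll p \ll n^{-1/m_2(H)}$, to show $\delta_\chi^*(H,p) \ge 1$ I would exhibit a suitable $G \subset G(n,p)$ as follows. Let $F \subset H$ achieve the maximum $(e(F)-1)/(v(F)-2) = m_2(H)$, and form $G$ from $G(n,p)$ by deleting one edge from each copy of $F$. The condition $p \ll n^{-1/m_2(H)}$ gives that the expected number of copies of $F$ through a given edge is $n^{v(F)-2}p^{e(F)-1} = o(1)$, so with high probability at most $o(pn^2)$ edges are deleted in total and at most $o(pn)$ at any single vertex; hence $\delta(G) \ge (1-o(1))pn$. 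Destroying every copy of $F$ makes $G$ $F$-free and \emph{a fortiori} $H$-free. On the other hand $G$ has $(1-o(1))p\binom{n}{2}$ edges, and since $pn \gg \log n$ a Chernoff plus union bound over the $C^n$ possible $C$-partitions of $[n]$ shows that with high probability every such partition leaves at least $\frac{pn^2}{2C}(1-o(1))$ monochromatic edges in $G$. For any fixed $C$, choosing $\eps < \frac{1}{4C}$ then shows no $d<1$ is admissible in Definition~\ref{def:deltastar}, so $\delta_\chi^*(H,p) = 1$.

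Now suppose $p \gg n^{-1/m_2(H)}$. The lower bound $\delta_\chi^*(H,p) \ge \delta_\chi^*(H)$ follows from a random sparsification of a deterministic extremal example: pick $d < \delta_\chi^*(H)$, take a witness $G^*$ on $n$ vertices with $\delta(G^*) \ge dn$ and some constant $c=c(d)>0$ such that every $C$-partition of $V(G^*)$ leaves at least $cn^2$ monochromatic edges, and set $G := G^* \cap G(n,p)$. Then $G \subset G(n,p)$ is $H$-free with $\delta(G) \ge (d-o(1))pn$ by Chernoff, and since $pn \gg \log n$ a union bound over the $C^n$ partitions gives that with high probability every $C$-partition of $V(G)$ leaves $\ge \tfrac{c}{2}pn^2 \gg \eps pn^2$ monochromatic edges, for any $\eps<c/2$.

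The matching upper bound $\delta_\chi^*(H,p) \le \delta_\chi^*(H)$ is the heart of the theorem. Let $d > \delta_\chi^*(H)$ and $\eps > 0$, and suppose $G \subset G(n,p)$ is $H$-free with $\delta(G) \ge dpn$. Apply the sparse regularity lemma to $G$ to obtain an equitable partition $V_1, \ldots, V_k$ in which all but $\alpha\binom{k}{2}$ pairs are $(\alpha,p)$-regular, and form the reduced graph $R$ on $[k]$ by joining $i \sim j$ when the pair is $(\alpha,p)$-regular with normalised density at least $\eta$. A routine calculation gives $\delta(R) \ge (d - O(\alpha+\eta))k$. The crucial step, and the main obstacle, is the sparse counting lemma: one needs to know that if $R$ contained a copy of $H$, then $G$ would too. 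This is precisely the content of the KLR conjecture, proved in~\cite{BMS,CGSS,ST} by container-type or decomposition arguments, and it applies uniformly over all $H$-free $G \subset G(n,p)$ exactly when $p \gg n^{-1/m_2(H)}$---this is what separates the two halves of the theorem. Granted this, $R$ is $H$-free with $\delta(R)/k > \delta_\chi^*(H)$, so for some $C = C(H,d)$ we may delete $\sigma k^2$ edges of $R$ (for any $\sigma>0$ we prescribe) and obtain a $C$-colouring. Pulling this colouring back by assigning each $V_i$ the colour of $i$, the remaining monochromatic edges of $G$ lie in clusters, irregular pairs, low-density regular pairs, or the $\sigma k^2$ pairs corresponding to deleted edges of $R$; choosing $1/k \ll \alpha \ll \eta \ll \sigma \ll \eps$ these four classes together contribute $o(\eps pn^2)$ edges, establishing the required bound.
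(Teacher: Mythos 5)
The paper does not actually prove Theorem~\ref{thm:CGSS}: it is quoted from Conlon, Gowers, Samotij and Schacht~\cite{CGSS}, who deduce it from the K{\L}R conjecture~\cite{KLR} (proved in~\cite{BMS,CGSS,ST}) combined with the sparse regularity method. Your supercritical argument is essentially that intended derivation: sparse regularity, the one-sided embedding statement supplied by the K{\L}R theorem to conclude that the reduced graph $R$ is $H$-free, the definition of $\delta_\chi^*(H)=\delta_\chi^*(H,1)$ applied to $R$, and the standard cleaning-up of cluster-internal, irregular, sparse and deleted-pair edges; likewise your dense-range lower bound (randomly sparsify a dense witness, Chernoff plus a union bound over the $C^n$ partitions, valid since $pn^2\gg n$) is the standard transference of the $p=1$ lower bound. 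Two points there need repair but are routine: the claim $\delta(R)\ge (d-O(\alpha+\eta))k$ is \emph{not} a routine consequence of the plain sparse regularity lemma, since a single cluster may be incident to up to $k-1$ irregular pairs; you should instead invoke the minimum degree form stated in Section~\ref{prelim:sec}, which directly yields $\delta(R)\ge(\delta-d-\eps)k$. Also, to meet the threshold $dpn$ after sparsification you should start from witnesses of minimum degree $d''n$ with $d<d''<\delta_\chi^*(H)$, and note that such witnesses exist only along a subsequence of $n$, which suffices to contradict the ``with high probability'' statement in Definition~\ref{def:deltastar}.

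The genuine gap is in the subcritical range. From $n^{v(F)-2}p^{e(F)-1}=o(1)$ and Markov you may conclude that with high probability the total number of copies of $F$ is $o(pn^2)$, but your assertion that with high probability at most $o(pn)$ edges are deleted \emph{at every single vertex} does not follow: the expected number of copies of $F$ through a fixed vertex is indeed $o(pn)$, but making this hold simultaneously for all $n$ vertices requires upper-tail control of the number of copies of $F$ at a vertex (or a more careful deletion rule treating the exceptional vertices that lie in many copies), and near the bottom of the range, where $pn$ is only polylogarithmic, Markov plus a union bound is far from enough --- upper tails of subgraph counts are precisely the delicate issue here. Preserving minimum degree $(1-o(1))pn$ while destroying all copies of $F$ is the technical heart of the analogous statement $\delta_\chi(H,p)=1$ in this range, which is proved in the companion paper~\cite{sparse} with a substantially more careful construction; as written, your construction may fail to have minimum degree $dpn$ at the vertices where copies of $F$ cluster. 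The remainder of that paragraph (the convexity bound of roughly $n^2/(2C)$ monochromatic pairs per partition and the union bound over $C^n$ partitions) is fine once the deletion step is repaired.
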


It is somewhat surprising that $\delta_{\chi}(H,p)$ and $\delta_{\chi}^*(H,p)$ have such different behaviour. Indeed, the threshold for an exact statement (such as that of DeMarco and Kahn) `usually' differs from that for the asymptotic statement (such as that proved by Conlon-Gowers and Schacht) by only a poly-logarithmic factor. By contrast, for most graphs $H$ there are at least two thresholds at which the value of $\delta_\chi(H,p)$ changes, and in the case $H=C_5$ there are (at least) three such thresholds. It seems reasonable to believe that multiple thresholds also exist for many other cloud-forest graphs.

Perhaps surprisingly, we do not have $\delta_\chi^*(H) = \delta_\chi(H)$ in general. However, we are able to determine $\delta_\chi^*(H)$ in terms of $\delta_\chi(H)$ for every graph $H$.

\begin{theorem}\label{thm:deltachistar}
For every graph $H$, 
\begin{equation*}
 \delta_\chi^*(H) = \min\Big\{\delta_\chi(H')\,:\, \text{there exists a
   homomorphism from }H\text{ to }H' \Big\}\,.
\end{equation*}
\end{theorem}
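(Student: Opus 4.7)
The plan is to establish both $\delta_\chi^*(H) \le d^*$ and $\delta_\chi^*(H) \ge d^*$, where $d^* := \min\{\delta_\chi(H') : H \to H'\}$. This minimum is attained since, by the classification of $\delta_\chi$ recalled in the introduction, $\delta_\chi(H')$ takes only finitely many values in $[0,1]$.

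For the upper bound, fix a minimiser $H^*$ and a homomorphism $\phi : H \to H^*$. Given $\eta > 0$ and an $H$-free graph $G$ on $n$ vertices with $\delta(G) \ge (d^* + \eta) n$, apply Szemer\'edi's regularity lemma with density threshold $d_0$ and regularity parameter $\eps_0$, both small relative to $\eta$. Let $R$ be the reduced graph on $t$ clusters; then $\delta(R) \ge (d^* + \eta/2) t$. The key step is that $R$ is $H^*$-free as a subgraph: any embedded copy of $H^*$ in $R$, composed with $\phi$, gives a homomorphism $H \to R$ supported on $\eps_0$-regular pairs of density at least $d_0$, and the counting lemma then produces a copy of $H$ in $G$, contradicting $H$-freeness. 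By the definition of $\delta_\chi(H^*) = d^*$ applied to $R$, we obtain $\chi(R) \le C$ for some $C = C(H^*,\eta)$. Pulling back an optimal $C$-colouring of $R$ to $G$ yields a $C$-partition whose monochromatic edges lie within clusters, in irregular pairs, or in low-density regular pairs, totalling at most $\eps n^2$ for suitably small parameters. Hence $\delta_\chi^*(H) \le d^* + \eta$; let $\eta \to 0$.

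For the lower bound, fix $d < d^*$ and a constant $C$; we must produce an $H$-free graph $G$ with $\delta(G) \ge dn$ that is $\eps$-far from $C$-partite for some $\eps = \eps(C) > 0$. Since $\delta_\chi(H^*) > d$ at the minimiser $H^*$, the definition supplies a sequence of $H^*$-free graphs $J_m$ with $\delta(J_m) \ge d |V(J_m)|$ and $\chi(J_m) \to \infty$. We argue that (i) $H^*$ may be taken to be a retract of $H$, hence a subgraph, so that $H^*$-freeness of $J_m$ automatically implies $H$-freeness; and (ii) the standard extremal constructions realising $\delta_\chi(H^*) > d$, such as blow-ups of vertex-transitive, pseudorandom, or Kneser-type graphs, have fractional chromatic number tending to infinity, so that a direct averaging argument shows every $C$-partition of $J_m$ leaves $\Omega(d n^2 / C)$ monochromatic edges.

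The main obstacle is the lower bound. Justifying claim (i) requires a short case analysis using the classification formula, to show that the minimum over $\mathcal{H}(H) := \{H' : H \to H'\}$ is always achieved at a retract of $H$: in particular, any $H' \in \mathcal{H}(H)$ with $\chi(H') > \chi(H)$ has strictly larger $\delta_\chi(H')$ by the formula, so the minimum is attained at some $\chi(H') = \chi(H)$, which can be identified with an appropriate retract. Justifying claim (ii) requires that the extremal constructions for $\delta_\chi(H^*) > d$ be robustly chromatic under sparse edge deletion; this is standard for symmetric or pseudorandom constructions, but must be verified in any borderline subcase arising from the structural conditions in the classification.
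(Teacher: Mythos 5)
Your upper bound argument is sound and runs along the same lines as the paper's, though the paper packages it differently (Graph Removal Lemma plus Erd\H{o}s's hypergraph Tur\'an density result, Lemma~\ref{lem:tblowup}, rather than a direct regularity/counting argument); both routes are essentially regularity-lemma machinery and either works.

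The lower bound, however, has a genuine gap precisely where you flag uncertainty, and I do not think the proposed repair goes through. Your claim (i) asserts that the minimiser $H^*$ of $\delta_\chi$ over $\mathcal{H}(H)$ can always be taken to be a retract of $H$. The reduction to $\chi(H^*)=\chi(H)$ is fine, but the step ``which can be identified with an appropriate retract'' is unsupported: if $\phi\colon H\to H'$ is a surjective homomorphism with $\chi(H')=\chi(H)$, the image $H'$ is a quotient, not a subgraph, and there is no reason for some subgraph of $H$ to be both a homomorphic image of $H$ and to achieve the same (or smaller) value of $\delta_\chi$. The classification of $\delta_\chi$ is not homomorphism-invariant (the paper's example $C_5(2)$ versus $C_5$ already shows this), so one cannot transport the near-acyclic / decomposition-family conditions along a non-injective $\phi$. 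Claim (ii) also requires case-by-case verification of robustness under sparse deletion for each construction in the classification, which you have not done.

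The paper's proof sidesteps both of these issues with a single device: Lemma~\ref{lem:construction:allsametime} produces an $H'$-free graph $G'$ with high minimum degree and high $\chi(G'[X])$ for a bounded set $X$, and, crucially, with the extra property that \emph{every} subgraph of $G'$ on at most $v(H)$ vertices has $\delta_\chi$ strictly less than $\delta_\chi(H')$. One then blows up $X$ to linear size to get $G$. If a copy of $H$ sat inside $G$, contracting the blow-up classes would yield a small subgraph $H^*\subset G'$ with $H\to H^*$ and $\delta_\chi(H^*)<\delta_\chi(H')$, contradicting minimality---so $G$ is $H$-free without ever needing $H'$ to be a subgraph of $H$. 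And because the blown-up $X$ contributes $\Theta(\mu^2 n^2)$ edges between any two adjacent classes, deleting $\eps n^2 < \mu^2 n^2$ edges cannot destroy its high chromatic number, which replaces your fractional-chromatic-number argument. If you want to salvage your outline, you would need to either prove claim (i) as a lemma in its own right (which I suspect is false in general) or adopt the paper's construction-with-small-subgraph-control, which makes the retract question moot.
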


For example if $H$ is obtained from $C_5$ by blowing up each vertex to a $2$-vertex independent set then we have $\delta_\chi(H)=\tfrac12$ but $\delta_\chi^*(H)=0$. On the other hand, for some graphs $H$, such as $K_3$, we have $\delta_\chi^*(H)=\delta_\chi(H)$. 

Finally, let us note that is interesting to ask how many edges one really needs to delete in order
to obtain a graph with bounded chromatic number: perhaps one can replace
$\eps pn^2$ in the definition of~$\delta_\chi^*(H,p)$ by some
asymptotically smaller function of $n$ and $p$? For the specific case of
$K_3$, Allen, B\"ottcher, Kohayakawa and Roberts~\cite{ABKR} have shown that
$\eps p n^2$ can be replaced by any function $f(n,p) \gg n/p$, but there
exists $c > 0$ such that the function $cn/p$ does not suffice.

\subsection{The structure of the paper}

In Section~\ref{prelim:sec} we state the extremal and probabilistic tools we will use in the proofs of Theorems~\ref{thm:pconst} and~\ref{thm:sparseclass}, together with the sparse random Erd\H{o}s-Stone Theorem, which implies some of the upper bounds for Theorems~\ref{thm:pconst} and~\ref{thm:sparseclass}, and the upper bound of Theorem~\ref{thm:classhigh}. In Section~\ref{sec:constructconst} we describe the constructions that prove the lower bounds in Theorems~\ref{thm:pconst},~\ref{thm:classhigh} and~\ref{thm:sparseclass}, and in Section~\ref{sec:slightly} we prove the upper bound on $\delta_\chi(H,p)$ for cloud-forest graphs, which is our main new result and which completes the proof of Theorem~\ref{thm:sparseclass}. In Section~\ref{sec:pconst} we show how to adapt the method of~\cite{ChromThresh} in order to prove that $\delta_\chi(H,p) \le \delta_\chi(H)$ for all fixed $p > 0$, and hence complete the proof of Theorem~\ref{thm:pconst}. Finally, in Section~\ref{sec:deltachistar}, we give a brief sketch of the proof of Theorem~\ref{thm:deltachistar}.

\section{Preliminaries}\label{prelim:sec}

In this section we state the sparse random Erd\H{o}s-Stone theorem, which implies some of our claimed upper bounds, some basic probabilistic and graph-theoretic tools, and a sparse version of Szemer\'edi's Regularity Lemma.

\subsection{The sparse random \texorpdfstring{Erd\H{o}s}{Erdos}-Stone theorem}

The following theorem was originally conjectured by Kohayakawa, \L uczak and R\"odl~\cite{KLR}, and was recently proved by Conlon and Gowers~\cite{ConGow} (for strictly balanced graphs $H$) and Schacht~\cite{SchTuran} (in general), see also~\cite{BMS,ST}. 

\begin{theorem}\label{thm:CGS}
For every graph $H$, every $\gamma > 0$, and every $p \gg n^{- 1 / m_2(H)}$, the following holds with high probability. For every $H$-free subgraph $G \subset G(n,p)$, we have
$$e(G) \, \le \, \bigg( 1 - \frac{1}{\chi(H)-1} + \gamma \bigg) p {\binom{n}{2}}.$$
\end{theorem}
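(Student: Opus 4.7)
The plan is to apply the hypergraph container method of Balogh--Morris--Samotij~\cite{BMS} and Saxton--Thomason~\cite{ST}, of which Theorem~\ref{thm:CGS} is a now-standard consequence. Consider the auxiliary hypergraph $\mathcal{H}$ whose vertex set is $E(K_n)$ and whose hyperedges are the edge sets of copies of $H$ in $K_n$; then a subset $S \subset E(K_n)$ is independent in $\mathcal{H}$ precisely when the graph it determines on $[n]$ is $H$-free, and the edge set of every $H$-free $G \subset G(n,p)$ is such an independent set.

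First I would verify the codegree hypothesis required to feed $\mathcal{H}$ into the container theorem. The assumption $p \gg n^{-1/m_2(H)}$ is equivalent to the statement that, for every subgraph $F \subset H$ with at least $3$ vertices, the number of copies of $H$ in $K_n$ extending a fixed $j$-edge partial copy is bounded by the quantity needed for containers. Applying the container theorem then yields a family $\mathcal{C} \subset 2^{E(K_n)}$ with the following properties: (a) every $H$-free graph on $[n]$ is a subset of some $C \in \mathcal{C}$; (b) $\log |\mathcal{C}| = o(pn^2)$; and (c) each $C \in \mathcal{C}$ contains only $o(n^{v(H)})$ copies of $H$.

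Next, I would combine property (c) with the Erd\H os--Simonovits supersaturation theorem to conclude that every container satisfies
\[
|C| \,\le\, \Bigl( 1 - \tfrac{1}{\chi(H)-1} + \tfrac{\gamma}{2} \Bigr) \binom{n}{2},
\]
since any denser graph on $[n]$ would contain $\Omega\bigl(n^{v(H)}\bigr)$ copies of $H$, contradicting (c). For each fixed $C$, the intersection $|E(G(n,p)) \cap C|$ is a sum of independent Bernoulli variables of mean at most the right-hand side above times $p$, so Chernoff's inequality yields
\[
\Pr\biggl[ |E(G(n,p)) \cap C| \ge \Bigl( 1 - \frac{1}{\chi(H)-1} + \gamma \Bigr) p \binom{n}{2} \biggr] \,\le\, \exp\bigl( -c\gamma^2 pn^2 \bigr)
\]
for some absolute $c > 0$. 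A union bound over the at most $\exp(o(pn^2))$ containers then completes the proof, since the edge set of every $H$-free $G \subset G(n,p)$ must lie inside one of them.

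The main technical obstacle is the codegree/balanced supersaturation estimate matching $m_2(H)$ that feeds into the container theorem: this is the substantive content of~\cite{BMS,ConGow,ST}, and is where the distinction between strictly balanced $H$ (handled directly by Conlon--Gowers~\cite{ConGow}) and general $H$ (requiring the more delicate balanced supersaturation of~\cite{BMS,ST}) first appears. Once this ingredient is in place, the remainder of the argument is a clean combination of containers, classical supersaturation, Chernoff's inequality, and a union bound, as sketched above.
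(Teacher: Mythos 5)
The paper does not prove Theorem~\ref{thm:CGS}: it is quoted as a black box and credited to Conlon--Gowers~\cite{ConGow} (strictly balanced $H$), Schacht~\cite{SchTuran} (general $H$), and to the container papers~\cite{BMS,ST}. There is therefore no in-paper proof to compare against. Your container-method sketch is the now-standard derivation and is correct in outline: build the $v(H)$-uniform hypergraph on $E(K_n)$ whose edges are copies of $H$, invoke the container theorem to obtain a family $\mathcal{C}$ with $\log|\mathcal{C}| = o(pn^2)$ each of whose members contains few copies of $H$, use Erd\H{o}s--Simonovits supersaturation to cap the density of each container at $1 - \tfrac{1}{\chi(H)-1} + \gamma/2$, and finish with Chernoff and a union bound. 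Two small points of precision worth flagging: (i) the container theorem gives $\varepsilon n^{v(H)}$ copies of $H$ per container for a fixed small $\varepsilon=\varepsilon(\gamma,H)$, not $o(n^{v(H)})$; $o(\cdot)$ is stronger than what you get and also stronger than you need, since supersaturation only requires the copy count to be below the supersaturation constant. (ii) The bound $\log|\mathcal{C}|=o(pn^2)$ at the sharp threshold $p\gg n^{-1/m_2(H)}$ (with no extra polylogarithmic factor) is not automatic from the most naive container count $n^{O(n^{2-1/m_2(H)})}$ and requires the more careful bookkeeping in~\cite{BMS,ST}; you acknowledge this as the substantive content of those papers, which is fair, but it is worth knowing that this is exactly where the work lies. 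Note also that the original proofs of Conlon--Gowers and Schacht predate containers and instead go via multi-round exposure / transference arguments, so strictly speaking your route follows~\cite{BMS,ST} rather than the first two references, though all are cited in the paper and all yield the stated theorem.
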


Theorem~\ref{thm:CGS} has the following immediate corollary. 

\begin{corollary}\label{cor:turan}
For every graph $H$, and every $p \gg n^{- 1 / m_2(H)}$, 
$$\delta_\chi(H,p) \,\le\, 1 - \frac{1}{\chi(H)-1}.$$
\end{corollary}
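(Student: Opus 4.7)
The plan is to use Theorem~\ref{thm:CGS} essentially in a black-box manner, combined with the observation (made by the authors after Definition~\ref{def:deltachip}) that $\delta_\chi(H,p) \le d$ holds vacuously if, with high probability, no $H$-free spanning subgraph $G \subset G(n,p)$ has average degree exceeding $dpn$.

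Concretely, fix any $d > 1 - \frac{1}{\chi(H)-1}$ and set $\gamma := \frac{1}{2}\bigl(d - (1 - \frac{1}{\chi(H)-1})\bigr) > 0$. Apply Theorem~\ref{thm:CGS} with this $\gamma$: with high probability, every $H$-free spanning subgraph $G \subset G(n,p)$ satisfies
\[
e(G) \,\le\, \Bigl( 1 - \tfrac{1}{\chi(H)-1} + \gamma \Bigr) p \binom{n}{2}.
\]
On the other hand, the minimum degree hypothesis $\delta(G) \ge dpn$ would force
\[
e(G) \,\ge\, \tfrac{1}{2} \cdot dpn \cdot n \,=\, \tfrac{d}{2} p n^2 \,>\, \Bigl( 1 - \tfrac{1}{\chi(H)-1} + \gamma \Bigr) p \binom{n}{2}
\]
for all sufficiently large $n$, by the choice of $\gamma$. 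Hence with high probability no such $G$ exists, so the condition in Definition~\ref{def:deltachip} is satisfied vacuously with, say, $C = 1$. This yields $\delta_\chi(H,p) \le d$, and letting $d \downarrow 1 - \frac{1}{\chi(H)-1}$ gives the claimed bound.

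There is no real obstacle here: the entire content is packed into Theorem~\ref{thm:CGS}, and the deduction is just the standard conversion between an edge-count upper bound and a minimum-degree lower bound via handshaking. The only small point to take care of is the factor $\binom{n}{2}$ versus $n^2/2$, which is harmless once $n$ is large enough for the strict inequality above to hold.
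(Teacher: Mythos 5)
Your proof is correct and takes essentially the same approach as the paper: both convert the minimum-degree hypothesis into an edge-count lower bound via handshaking and then invoke Theorem~\ref{thm:CGS} to conclude that no $H$-free spanning subgraph with that minimum degree can exist. The only cosmetic difference is that you phrase the conclusion as a vacuous satisfaction of Definition~\ref{def:deltachip}, while the paper phrases it as the graph being forced to contain $H$ (a contradiction with $H$-freeness), which is the same argument.
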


\begin{proof}
Let $\gamma > 0$, and suppose that $G \subset G(n,p)$ is a spanning subgraph with $\delta(G) \ge \big( 1 - \frac{1}{\chi(H)-1} + \gamma \big) p n$. Since $p \gg n^{- 1 / m_2(H)}$, it follows from Theorem~\ref{thm:CGS} that, with high probability, $H \subset G$. Hence $\delta_\chi(H,p) \le 1 - \frac{1}{\chi(H)-1}$, as claimed.
\end{proof}

\subsection{Probabilistic and graph-theoretic tools}

We will frequently use the following concentration bounds. Let $\Bin(n,p)$ denote the Binomial distribution, and let $\Hyp(n,m,s)$ denote the hypergeometric distribution, corresponding to respectively the number of elements of $[n]$ chosen if each is selected independently with probability $p$, and the number of elements of $[m]$ chosen if a set of $s$ elements of $[n]$ is selected uniformly at random. The following relatively weak bounds on the large deviations of $\Bin(n,p)$ and $\Hyp(n,m,s)$, see for example~\cite[Theorems~2.1 and~2.10]{JLRbook}, will suffice for our purposes.

\begin{Chernoff}
Let $n \in \N$ and $p \in [0,1]$, and let $X \sim \Bin(n,p)$. Then
$$\Pr\big( | X - \Ex[X]| \ge t \big) \, \le \, e^{-\Omega(t)}$$
for every $t = \Omega\big( \Ex[X] \big)$.
\end{Chernoff}

\begin{Hoeff}
Let $n \in \N$ and $m,s \in [n]$, and let $Y \sim \Hyp(n,m,s)$. Then
$$\Pr\big( | Y - \Ex[Y]| \ge t \big) \, \le \, e^{-\Omega(t)}$$
for every $t = \Omega\big( \Ex[Y] \big)$.
\end{Hoeff}

We will also need the following supersaturation theorem of Erd\H{o}s and Simonovits~\cite{ErdSimSup}.

\begin{theorem}[Erd\H{o}s and Simonovits]\label{lem:densbip} 
For every $s \in \N$ there exists $\beta > 0$ such that the following holds. Let $G$ be a graph on $n$ vertices, with $e(G) = \rho n^2 \ge \beta^{-1}n^{2-1/s}$ edges, where $\rho=\rho(n)$. Then $G$ contains at least $\beta \rho^{s^2}n^{2s}$ copies of $K_{s,s}$.
\end{theorem}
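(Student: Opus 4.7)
The plan is a double-counting argument powered by two applications of convexity, a quantitative enhancement of the classical Kővári--Sós--Turán proof. The quantity I would count in two ways is the number of pairs $(v, S)$ with $v \in V(G)$ and $S \in \binom{N(v)}{s}$, i.e.\ the number of rooted copies of the star $K_{1,s}$ in $G$. On the vertex side, this count equals $\sum_{v \in V(G)} \binom{d(v)}{s}$; on the $S$-side, it equals $\sum_{S \in \binom{V(G)}{s}} c(S)$, where $c(S) := |\bigcap_{u \in S} N(u)|$ is the common-neighbourhood size. The two convexity steps convert degree information into common-neighbourhood information, and then common-neighbourhood information into a count of $K_{s,s}$.

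First, since $\sum_v d(v) = 2\rho n^2$ and $\binom{\cdot}{s}$ is convex on the nonnegative integers, Jensen's inequality gives
\[
\sum_{v \in V(G)} \binom{d(v)}{s} \;\ge\; n \binom{2\rho n}{s}.
\]
Under the hypothesis $\rho n \ge \beta^{-1} n^{1-1/s}$, the quantity $2\rho n$ exceeds $s$ by a factor $\ge 2\beta^{-1}$, so for $\beta$ small in terms of $s$ we can replace $\binom{2\rho n}{s}$ by $c_s (\rho n)^s$ for a constant $c_s > 0$ depending only on $s$. Equating the two expressions for the count and dividing by $\binom{n}{s}$, the average codegree $\bar c := \binom{n}{s}^{-1}\sum_{S} c(S)$ satisfies $\bar c \ge c_s' \rho^s n$.

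Second, the hypothesis $\rho \ge \beta^{-1}n^{-1/s}$ implies $\rho^s n \ge \beta^{-s}$, so $\bar c \ge c_s'\beta^{-s}$, which is again much larger than $s$ for $\beta$ small. Applying Jensen once more to the convex function $\binom{\cdot}{s}$ yields
\[
\sum_{S \in \binom{V(G)}{s}} \binom{c(S)}{s} \;\ge\; \binom{n}{s}\binom{\bar c}{s} \;\ge\; c_s'' \rho^{s^2} n^{2s},
\]
using $\binom{\bar c}{s} \ge c_s''' \bar c^s$ when $\bar c \gg s$. The left-hand side counts pairs $(S, T)$ with $S, T \in \binom{V(G)}{s}$ and $T$ contained in the common neighbourhood of $S$; each unordered copy of $K_{s,s}$ with parts $A, B$ is counted exactly twice (once as $(A,B)$ and once as $(B,A)$). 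Dividing by $2$ and absorbing all constants into a suitable $\beta = \beta(s) > 0$ gives the claimed bound of $\beta \rho^{s^2} n^{2s}$ copies of $K_{s,s}$.

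The only real obstacle is making the two Jensen steps quantitatively tight simultaneously: we need $2\rho n \gg s$ for the first and $\bar c \gg s$ for the second, in order to approximate $\binom{x}{s}$ by $x^s/s!$ up to a constant factor in each. Both requirements reduce to $\rho^s n \ge \beta^{-s}$, which is precisely the hypothesis after raising to the $s$-th power, so choosing $\beta$ small enough in terms of $s$ closes the loop.
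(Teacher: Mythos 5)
Your argument is correct. The paper itself does not prove this lemma---it cites Erd\H{o}s and Simonovits---so there is no in-paper proof to compare against; what you have written is the standard double-counting argument via two applications of Jensen's inequality, which is the natural way to obtain this supersaturated K\H{o}v\'ari--S\'os--Tur\'an bound, and it does yield at least $\beta\rho^{s^2}n^{2s}$ copies of $K_{s,s}$ with $\beta$ depending only on~$s$.

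Two bookkeeping points worth making explicit in a full write-up. First, Jensen should be applied to the convex extension of $\binom{x}{s}$ that is set to $0$ for $x<s-1$; this agrees with the usual binomial coefficient at nonnegative integers, so both sums are unchanged. Second, the hypothesis $\rho\ge\beta^{-1}n^{-1/s}$ forces both $n\ge\beta^{-s}$ and $\bar{c}\ge\rho^s n\ge\beta^{-s}$, so once $\beta$ is small enough in terms of $s$ the quantities $2\rho n$, $n$ and $\bar{c}$ all exceed (say) $2s$, and the three approximations $\binom{2\rho n}{s}\ge(\rho n)^s/s!$, $\binom{n}{s}\ge n^s/(2^s s!)$ and $\binom{\bar{c}}{s}\ge(\bar{c}/2)^s/s!$ go through; the $s!$ in the average-codegree step cancels exactly, giving $\bar{c}\ge\rho^s n$ cleanly. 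Finally, the sets $S$ and $T$ in your last count are automatically disjoint because no vertex lies in its own neighbourhood, so the factor $1/2$ from double-counting is indeed all that is lost, and one may take $\beta$ to be the minimum of $2^{-(2s+1)}(s!)^{-2}$ and whatever smallness is needed above.
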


Finally, we will use the following straightforward and well-known fact.

\begin{fact}\label{prop:forest}
Let~$F$ be a forest and~$G$ be a graph on $n$ vertices. If $e(G) \ge v(F) n$, then $F\subset G$.
\end{fact}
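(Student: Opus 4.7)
The plan is to combine the standard ``minimum-degree subgraph'' lemma with a greedy embedding of the forest.

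First, I would pass to a subgraph of large minimum degree. Writing $d := 2e(G)/n \ge 2v(F)$ for the average degree of $G$, I invoke the classical observation that any graph of average degree $d$ contains a subgraph $G' \subset G$ with $\delta(G') \ge d/2$. (One proves this by iteratively deleting vertices of degree $< d/2$: each deletion does not decrease the average degree, so the process terminates at a nonempty subgraph.) Thus $G'$ has minimum degree at least $v(F)$, and in particular $|V(G')| \ge v(F)+1$.

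Next, I would order the vertices of $F$. Since $F$ is a forest, one can order $V(F) = \{v_1, \dots, v_{v(F)}\}$ in such a way that each $v_i$ with $i\ge 2$ has at most one neighbour among $\{v_1,\dots,v_{i-1}\}$ (process each tree component by successively appending leaves, starting with any chosen root, or reverse a BFS order).

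Finally, I would embed $F$ into $G'$ greedily along this order. Set $\phi(v_1)$ to be any vertex of $G'$. Having defined $\phi(v_1),\dots,\phi(v_{i-1})$, consider $v_i$. If $v_i$ has no earlier neighbour (i.e.\ it starts a new component), pick $\phi(v_i)$ to be any vertex of $G'$ not yet used; at most $i-1 < v(F) \le |V(G')|$ vertices are forbidden, so such a choice exists. If $v_i$ has a (unique) earlier neighbour $v_j$, pick $\phi(v_i)$ to be any vertex in $N_{G'}(\phi(v_j))$ not yet used; since $\delta(G') \ge v(F)$ and at most $i-1 \le v(F)-1$ vertices are already used, such a choice again exists. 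This gives an injective embedding of $F$ into $G' \subset G$, as required.

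The only real step is the minimum-degree reduction; the greedy embedding is routine, and there is no significant obstacle, making this a short argument suitable for a ``fact''.
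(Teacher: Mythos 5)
Your proof is correct. The paper states this as a well-known fact and gives no proof, and your argument --- passing to a subgraph $G'$ with $\delta(G') \ge e(G)/n \ge v(F)$ by the standard average-degree/minimum-degree reduction, and then greedily embedding the forest along a $1$-degenerate ordering --- is exactly the standard argument the authors are alluding to; all the details you give (termination of the deletion process at a nonempty subgraph because the average degree never decreases, and the counting in the greedy step) check out.
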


\subsection{Sparse regularity in \texorpdfstring{$G(n,p)$}{G(n,p)}}

One of our key tools in this paper will be the so-called `sparse minimum degree form' of Szemer\'edi's Regularity Lemma. In order to state this result we need a little notation.

\begin{defn} [$(\eps,p)$-regular pairs and partitions, the reduced graph]
Given a graph $G$ and $\eps, d, p > 0$, a pair of disjoint vertex sets $(A,B)$ is said to be \emph{$(\eps,p)$-regular} if 
\[\bigg| \frac{e\big( G[A,B] \big)}{p|A| |B|} - \frac{e\big( G[X,Y] \big)}{p|X| |Y|} \bigg| \, < \, \eps \]
for every $X \subset A$ and $Y \subset B$ with $|X| \ge \eps |A|$ and $|Y| \ge \eps |B|$.
We say that the pair $(A,B)$ is \emph{$(\eps,d,p)$-lower-regular} if $e\big( G[X,Y] \big) \ge (d-\eps)p|X||Y|$ for every $X \subset A$ and $Y \subset B$ with $|X| \ge \eps |A|$ and $|Y| \ge \eps |B|$. Finally, we say that $(A,B)$ is \emph{$(\eps,d,p)$-regular} if it is both $(\eps,p)$-regular and $(\eps,d,p)$-lower-regular.

A partition $V(G) = V_0 \cup \ldots \cup V_k$ is said to be \emph{$(\eps,p)$-regular} if $|V_0| \le \eps n$, $|V_1| = \ldots = |V_k|$, and at most $\eps k^2$ of the pairs $(V_i,V_j)$ with $1 \le i < j \le k$ are not $(\eps,p)$-regular. 

The \emph{$(\eps,d,p)$-reduced graph} of an $(\eps,p)$-regular partition is the graph $R$ with vertex set  
$V(R) = \{1,\ldots,k\}$ and edge set 
$$E(R) \, = \, \big\{ ij : (V_i,V_j) \text{ is an $(\eps,d,p)$-regular pair} \big\}.$$ 
\end{defn}

We will use the following form of the Regularity Lemma, see~\cite[Lemma~4.4]{BipartBU} for a proof. Note that although the statement there only guarantees lower-regularity of pairs in the partition, the proof explicitly gives an $(\eps,p)$-regular partition.

\begin{SzRLdeg}
Let $\delta,d,\eps > 0$, $k_0 \in \N$ and $p = p(n) \gg (\log n)^4 / n$. There exists $k_1 = k_1(\delta,d,\eps,k_0) \in \N$ such that the following holds with high probability. If $G \subset G(n,p)$ has minimum degree $\delta(G) \ge \delta pn$, then there is an $(\eps,p)$-regular partition of $G$ into $k$ parts, where $k_0\le k\le k_1$, whose $(\eps,d,p)$-reduced graph $R$ has minimum degree at least $(\delta-d-\eps)k$.
\end{SzRLdeg}

When $p$ is constant, we will use an associated `Counting Lemma'  (see~\cite[Theorem~3.1]{KS93}, for example) which says that if $H \subset R$ then $G$ contains a positive fraction of all copies of $H$. 

\begin{lemma}[Counting Lemma]
Given $p > 0$ and $d > 0$, if $\eps > 0$ is sufficiently small the following holds for each $r_1\in\N$ when $n \in \N$ is sufficiently large. If $G$ is a graph on~$n$ vertices with $(\eps,d,p)$-reduced graph~$R$ on $r\le r_1$ vertices, such that $H \subset R$, then~$G$ contains at least $\tfrac{1}{2v(H)!}(dp)^{e(H)}(n/r)^{v(H)}$ copies of~$H$. 
\end{lemma}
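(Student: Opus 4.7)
My approach would be the standard inductive embedding (``Key Lemma'') argument. I would fix an enumeration $v_1,\dots,v_h$ of $V(H)$ with $h := v(H)$, and an injective homomorphism $\phi \colon V(H) \to V(R)$ witnessing $H \subset R$. For a partial embedding $\psi$ of $\{v_1,\dots,v_i\}$ (meaning $\psi(v_k) \in V_{\phi(v_k)}$ for each $k \le i$ and $\psi(v_a)\psi(v_b) \in E(G)$ for each edge $v_av_b \in E(H)$ with $a,b \le i$) and each $j > i$, define the candidate set
\[
C_j^\psi := \bigl\{ u \in V_{\phi(v_j)} : u\psi(v_k) \in E(G) \text{ for every } k \le i \text{ with } v_kv_j \in E(H) \bigr\},
\]
and write $d_j(i) := |\{k \le i : v_kv_j \in E(H)\}|$. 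I will call $\psi$ \emph{balanced} if $|C_j^\psi| \ge (d-2\eps)^{d_j(i)} p^{d_j(i)}|V_{\phi(v_j)}|$ for every $j > i$.

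The main step would be to prove by induction on $i$ that every balanced partial embedding $\psi$ of $\{v_1,\dots,v_i\}$ extends to a balanced embedding of $\{v_1,\dots,v_{i+1}\}$ in at least $(1-\eps^{1/2})|C_{i+1}^\psi|$ ways. For the inductive step, given a balanced $\psi$ and an unembedded neighbour $v_j$ of $v_{i+1}$ (of which there are at most $h$), I would call $u \in C_{i+1}^\psi$ \emph{bad for $j$} if $|C_j^\psi \cap N(u)| < (d-2\eps) p |C_j^\psi|$. If the set $B$ of such vertices satisfied $|B| \ge \eps |V_{\phi(v_{i+1})}|$, summing the bad-degree inequality over $B$ would give $e(B, C_j^\psi) < (d-2\eps)p|B||C_j^\psi|$, whereas $(\eps,d,p)$-lower-regularity of the pair $(V_{\phi(v_{i+1})}, V_{\phi(v_j)})$ would give $e(B, C_j^\psi) \ge (d-\eps)p|B||C_j^\psi|$; lower-regularity applies because balance implies $|C_j^\psi| \ge (dp)^h |V_{\phi(v_j)}| \ge \eps |V_{\phi(v_j)}|$ once $\eps$ is small in terms of $d, p, h$. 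Hence $|B| < \eps|V_{\phi(v_{i+1})}|$, and summing over the at most $h$ such neighbours $v_j$ shows the total number of bad vertices in $C_{i+1}^\psi$ is at most $h\eps|V_{\phi(v_{i+1})}| \le \eps^{1/2}|C_{i+1}^\psi|$ once $\eps \le ((dp)^h/h)^2$. Any choice of $\psi(v_{i+1})$ outside this bad set yields an extension, and such an extension is automatically balanced by construction.

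Multiplying across the $h$ inductive steps, using $|V_{\phi(v_k)}| \ge (1-\eps)n/r$ from the definition of the regular partition, and the identity $\sum_{i=0}^{h-1} d_{i+1}(i) = e(H)$ (each edge of $H$ is counted once, at its later endpoint), the total number of labelled embeddings of $H$ into $G$ respecting $\phi$ is at least
\[
(1-\eps^{1/2})^h (1-\eps)^h (d-2\eps)^{e(H)} p^{e(H)} \Bigl(\frac{n}{r}\Bigr)^h.
\]
Taking $\eps$ sufficiently small in terms of $d$ and $h$, this exceeds $\tfrac{1}{2 v(H)!}(dp)^{e(H)}(n/r)^{v(H)}$, since as $\eps \to 0$ the displayed expression converges to $d^{e(H)}p^{e(H)}(n/r)^h$ and $\tfrac{1}{2h!} \le 1$ for every $h \ge 1$.

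The argument is otherwise routine; the only point requiring any care is bookkeeping the three sources of per-step loss --- the $(d-2\eps)$ in place of $d$ from the density drop, the $(1-\eps^{1/2})$ from avoiding bad vertices, and the $(1-\eps)$ from cluster sizes slightly below $n/r$ --- so that they collectively fit inside the $\tfrac{1}{v(H)!}$ slack built into the statement. Incidentally, the upper half of $(\eps,p)$-regularity is not used anywhere in this proof; the $(\eps,d,p)$-lower-regularity half suffices throughout.
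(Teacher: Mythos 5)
Your argument is correct, and it is worth noting that the paper does not prove this lemma at all: it simply cites it (as Theorem~3.1 of Koml\'os--Simonovits), and the proof you give is essentially the standard greedy ``Key Lemma'' embedding argument that underlies that citation. Two small remarks. First, your final display bounds the number of \emph{labelled embeddings} of $H$ respecting $\phi$, whereas the lemma asks for copies of $H$; you should divide by $|\Aut(H)|\le v(H)!$, which is precisely what the factor $\tfrac{1}{2v(H)!}$ in the statement is there to absorb. Since your embedding bound is $(1-o(1))(dp)^{e(H)}(n/r)^{v(H)}$, i.e.\ at least $v(H)!$ times the target once $\eps$ is small, the fix is one line, but as written you compare the embedding count directly with the copy count. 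Second, your conditions $\eps\le \big((dp)^{v(H)}/v(H)\big)^2$ and $\eps\le (dp)^{v(H)}$ (more carefully, with $d-2\eps$ in place of $d$) make $\eps$ depend on $v(H)$, while the lemma as stated quantifies $\eps$ only over $p$ and $d$ before $r_1$ (and hence before $H$) is chosen; this dependence is genuinely unavoidable for any counting lemma, and in all of the paper's applications $H$ is fixed before $\eps$ is chosen, so your proof establishes the statement in its intended reading --- the looseness is in the paper's quantification, not in your argument. Your closing observation that only the lower-regularity half of $(\eps,d,p)$-regularity is needed for this one-sided count is also correct.
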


When $p = o(1)$, the proof of the Counting Lemma breaks down, since large subsets of an $(\eps,d,p)$-regular pair no longer necessarily have sufficiently strong regularity properties. However, the following theorem shows that this desired `inheritance of regularity' holds, with high probability, for almost all neighbourhoods in $G(n,p)$. This result follows with some work from Gerke, Kohayakawa, Steger and R\"odl~\cite{GKRS}, for the details see~\cite{SparseBU}.

\begin{theorem}\label{thm:2reg}
 For any $\eps',d > 0$ there exist $\eps_0 = \eps_0(\eps',d) > 0$ and $C = C(\eps',d)$ such that, for any $0 < \eps \le \eps_0$ and any function $p = p(n)$, the following holds with high probability for the graph $\Gamma = G(n,p)$. For any disjoint sets of vertices $X$ and $Y$ with 
 $$\min\big\{ |X|,|Y| \big\} \, \ge \, C \max\bigg\{ \frac{1}{p^2}, \frac{\log n}{p} \bigg\},$$ 
 and any $(\eps,d,p)$-lower-regular bipartite subgraph $G \subset \Gamma[X,Y]$, there are at most 
 $$C \max\bigg\{ \frac{1}{p^2}, \frac{\log n}{p} \bigg\},$$ 
 vertices $w \in V(\Gamma)$ such that $\big( N_\Gamma(w) \cap X, N_\Gamma(w) \cap Y \big)$ is not $(\eps',d,p)$-lower-regular in $G$.
\end{theorem}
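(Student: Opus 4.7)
The plan is to deduce Theorem~\ref{thm:2reg} from a ``one-pair'' inheritance bound, essentially due to Gerke, Kohayakawa, R\"odl and Steger~\cite{GKRS}: if $H$ is a fixed $(\eps,d,p)$-lower-regular bipartite graph with parts $A, B$ of size at least $C\max\{p^{-2}, p^{-1}\log n\}$ and $A^* \subset A$, $B^* \subset B$ are independent $p$-random subsets, then $H[A^*, B^*]$ is $(\eps', d, p)$-lower-regular with probability at least $1 - \exp\big(-\Omega(p^2|A||B|)\big)$, provided $\eps$ is sufficiently small in terms of $\eps'$ and~$d$. Granted this bound, the rest of the proof is a conditional-independence argument followed by a union bound over admissible triples $(X, Y, G)$.

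For the main step, I would fix disjoint sets $X, Y \subset V(\Gamma)$ of the required size together with a candidate bipartite graph $H$ on $(X, Y)$ that is $(\eps, d, p)$-lower-regular. Exposing the edges of $\Gamma$ inside $X \cup Y$ first determines whether $H \subset \Gamma[X,Y]$; crucially, the edges from each $w \in V(\Gamma) \setminus (X \cup Y)$ to $X \cup Y$ remain unexposed independent Bernoulli random variables. Consequently, conditional on $\Gamma[X \cup Y]$, the events ``$w$ is bad for $H$'' are mutually independent across~$w$, and each occurs with probability at most $q := \exp\big(-\Omega(p^2|X||Y|)\big)$ by the one-pair bound applied with $(A^*, B^*) = (N_\Gamma(w) \cap X, N_\Gamma(w) \cap Y)$. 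A Chernoff estimate then bounds the number of bad vertices for this specific $H$ by $C\max\{p^{-2}, p^{-1}\log n\}$ with probability extremely close to~$1$, in fact with plenty of room to spare.

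To close the proof one takes a union bound over admissible triples $(X, Y, H)$. There are at most $3^n$ choices of disjoint $(X, Y)$, and for each fixed pair a Chernoff bound gives $e\big(\Gamma[X,Y]\big) \le 2p|X||Y|$ with very high probability, so at most $2^{2p|X||Y|}$ candidate subgraphs $H \subset \Gamma[X, Y]$. One must then check that the per-triple failure probability, bounded by $\binom{n}{k} q^k$ with $k = C\max\{p^{-2}, p^{-1}\log n\}$, beats this entropy; this is done by splitting according to which of the two terms in $\max\{p^{-2}, p^{-1}\log n\}$ dominates and according to the scale of $|X|\cdot|Y|$ relative to $n^2$, but in each regime the exponent $\Omega(kp^2|X||Y|)$ in $q^k$ comfortably overwhelms the combined entropy $O(n + p|X||Y|)$ once $C$ is chosen sufficiently large in terms of $\eps'$ and~$d$.

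The hard part is the one-pair inheritance bound itself. Its proof, carried out in~\cite{GKRS}, requires enumerating pairs $(X^*, Y^*) \subset A \times B$ that could witness failure of lower-regularity in $H[X^*, Y^*]$ and using the $(\eps, d, p)$-lower-regularity of $H$ to show, via a delicate counting-with-concentration argument, that the total probability mass of such witnesses is exponentially small in $p^2|A||B|$. The version derived in~\cite{SparseBU} provides precisely the exponent needed to make the union-bound arithmetic above go through, and once it is taken as a black box the rest of the argument is bookkeeping.
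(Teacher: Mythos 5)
First, note that the paper does not prove Theorem~\ref{thm:2reg} at all: it quotes it, remarking that it ``follows with some work'' from~\cite{GKRS}, with the details carried out in~\cite{SparseBU}. So your proposal is not being measured against an argument in this paper but against the cited derivation, and there it has a genuine quantitative gap at its core. The ``one-pair'' inheritance bound you attribute to~\cite{GKRS} is misquoted: for a fixed $(\eps,d,p)$-lower-regular pair $(A,B)$ and random subsets $A^*\subset A$, $B^*\subset B$, what GKRS gives is a failure probability of the form $\beta^{\min\{|A^*|,|B^*|\}}$ (with $\beta$ an arbitrarily small constant), i.e.\ exponentially small in $p\min\{|A|,|B|\}$ in your application, \emph{not} $\exp\bigl(-\Omega(p^2|A||B|)\bigr)$. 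The stronger bound you assume cannot hold in general: the randomness of $(A^*,B^*)$ consists of $|A|+|B|$ independent Bernoulli$(p)$ choices, so every event of positive probability has probability at least roughly $p^{|A|+|B|}$, which for sets of size $\Theta(n)$ and, say, constant $p$ is $e^{-O(n)}$, far larger than $e^{-\Omega(p^2|A||B|)}=e^{-\Omega(n^2)}$; and failure outcomes of positive probability do exist (e.g.\ $A^*$ a single vertex whose $G$-neighbourhood is missed by $B^*$).

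With the correct exponent, the rest of your argument does not close. The union bound over candidate subgraphs $H\subset\Gamma[X,Y]$ costs $2^{\Theta(p|X||Y|)}$, while the failure probability for a fixed $H$ is only about $\binom{n}{k}\beta^{k\,p\min\{|X|,|Y|\}}$ with $k=C\max\{p^{-2},p^{-1}\log n\}$; to absorb the entropy you would need $k\gtrsim\max\{|X|,|Y|\}$, which fails badly in the paper's main regime ($p=n^{-o(1)}$, clusters of size $\Theta(n/k_1)$, constant $k$ up to a constant times $p^{-1}\log n$). In other words, the step you describe as ``bookkeeping'' is exactly where the difficulty lies: because the adversary chooses $G$ after seeing all of $\Gamma$, one cannot simply fix $H$, use conditional independence of the edges at $w$, and union bound over $H$ (nor, in fact, naively over the pairs $(X,Y)$ when $|X|\ll|Y|$). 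This is precisely why the theorem is quoted from~\cite{SparseBU}, where the derivation from the GKRS counting statement is organised so as to avoid this union bound over subgraphs. (A further small point: your conditional-independence step only covers $w\notin X\cup Y$, whereas the theorem counts all $w\in V(\Gamma)$; this is minor, but the main gap above is fatal to the proposed route as written.)
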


Finally, let us state a useful (and straightforward) fact about regular pairs, known as the `Slicing Lemma'.

\begin{lemma}[Slicing Lemma]\label{lem:slicing}
Let $(U,W)$ be an $(\eps,d,p)$-regular (respectively, lower-regular) pair and $U'\subset U$, $W'\subset W$ satisfy $|U'|\ge \alpha|U|$ and $|W'|\ge\alpha|W|$. Then $(U',W')$ is $(\eps/\alpha,d,p)$-regular (respectively, lower-regular).\qed 
\end{lemma}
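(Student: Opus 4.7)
The plan is to reduce the lemma to a single scaling observation and then invoke the regularity hypothesis on $(U,W)$ directly. The key remark is that if $X\subset U'$ satisfies $|X|\ge (\eps/\alpha)|U'|$, then $|X|\ge (\eps/\alpha)\cdot \alpha|U| = \eps|U|$, and similarly any $Y\subset W'$ with $|Y|\ge (\eps/\alpha)|W'|$ satisfies $|Y|\ge \eps|W|$. Thus every pair of test subsets that could witness failure of $(\eps/\alpha,d,p)$-(lower-)regularity for $(U',W')$ automatically qualifies as an admissible pair of test subsets for the $(\eps,d,p)$-(lower-)regularity hypothesis on $(U,W)$.

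For the lower-regular case this observation yields the conclusion in one step. Given any such $X,Y$, the $(\eps,d,p)$-lower-regularity of $(U,W)$ gives
\[e(G[X,Y]) \,\ge\, (d-\eps)p|X||Y| \,\ge\, (d-\eps/\alpha)p|X||Y|,\]
since $\alpha\le 1$, which is exactly the required $(\eps/\alpha,d,p)$-lower-regularity of $(U',W')$.

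For the full $(\eps,p)$-regular case I would apply the regularity hypothesis of $(U,W)$ twice: once to the pair $(U',W')$ itself and once to the pair $(X,Y)$. Both pairs qualify under the $\eps$-threshold by the size observation above (noting that $|U'|\ge \alpha|U| \ge \eps|U|$ whenever $\alpha\ge\eps$, and similarly for $W'$). The two normalised densities $e(G[U',W'])/(p|U'||W'|)$ and $e(G[X,Y])/(p|X||Y|)$ therefore each lie within $\eps$ of $e(G[U,W])/(p|U||W|)$, and the triangle inequality gives
\[\bigl|\, e(G[U',W'])/(p|U'||W'|) - e(G[X,Y])/(p|X||Y|) \,\bigr| \, < \, 2\eps.\]
The only point requiring comment is that this yields $2\eps$ rather than $\eps/\alpha$: this is absorbed by the fact that the Slicing Lemma is only ever invoked with $\alpha$ bounded away from $1$ (in practice $\alpha\le 1/2$), in which regime $2\eps\le \eps/\alpha$ and the conclusion follows. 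This constant-factor technicality is really the only subtle point in the argument; otherwise the proof is a direct inheritance check via the scaling observation above.
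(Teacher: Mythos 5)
The paper gives no proof of this lemma (the statement ends with a bare \qed), so there is nothing in the text to compare against; your scaling-plus-triangle-inequality argument is the standard one and is what the authors have in mind. Your reduction is correct: any pair of test sets $X\subset U'$, $Y\subset W'$ with $|X|\ge(\eps/\alpha)|U'|$ and $|Y|\ge(\eps/\alpha)|W'|$ satisfies $|X|\ge\eps|U|$, $|Y|\ge\eps|W|$, and the lower-regular inheritance follows in one line for every $\alpha\in(0,1]$ (for $\alpha<\eps$ the conclusion is vacuous, as $\eps/\alpha>1$ leaves no admissible test pairs). For the full $(\eps,p)$-regular clause, applying the hypothesis to $(U',W')$ and to $(X,Y)$ and using the triangle inequality gives $2\eps$, as you say.

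The one point that deserves a slightly firmer treatment than you give it is the $2\eps$ versus $\eps/\alpha$ discrepancy. You present it as a practical matter (``only ever invoked with $\alpha\le 1/2$''), but it is in fact a genuine imprecision in the lemma as literally stated: with the paper's symmetric density-deviation definition of $(\eps,p)$-regular, one can construct pairs where the subpair density and a further-sliced density sit near opposite ends of the $\pm\eps$ band, so for $\alpha\in(1/2,1)$ the true guaranteed parameter is $\max(\eps/\alpha,\,2\eps)$, not $\eps/\alpha$. This is how the Slicing Lemma is usually stated in the literature. You are entirely right that it is harmless here: the paper applies it with $\alpha=d/6$ (with $d=\gamma/6$) in Claim 6 of the proof of Proposition~\ref{prop:slightly} and with $\alpha=\gamma$ in the appendix, both far below $1/2$, and the lower-regular version (which is what Claim 6 actually needs) holds for all $\alpha$ anyway. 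So your proof is correct on the range where the lemma is used, and the remaining discrepancy is a minor imprecision in the lemma's statement rather than a gap in your argument.
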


\section{The lower bounds: constructions}\label{sec:constructconst}

The aim of this short section is to state or adapt constructions from~\cite{sparse,ChromThresh,LT} which imply the lower bounds in Theorem~\ref{thm:pconst},~\ref{thm:classhigh} and~\ref{thm:sparseclass}. We first state the lower bound construction from~\cite{sparse} which, together with Corollary~\ref{cor:turan}, proves Theorem~\ref{thm:classhigh}.

\begin{theorem}[{\cite[Theorem 3.5]{sparse}}]\label{thm:classhigh:lower}
Let $H$ be a graph with $\chi(H) \ge 4$, and suppose $p=p(n)$ satisfies $n^{-1/2} \ll p\ll 1$. Then
\[\delta_\chi(H,p) \, \ge \, 1 - \frac{1}{\chi(H)-1}\,.\]
\end{theorem}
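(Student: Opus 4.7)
The plan is to construct, with high probability, an $H$-free spanning subgraph of $G(n,p)$ with minimum degree close to $\tfrac{r-2}{r-1} pn$ but arbitrarily large chromatic number, where $r = \chi(H) \ge 4$. Fix $\eta > 0$ and $C \in \NATS$, and assume $n$ is sufficiently large. It will suffice to produce, with positive probability, such a $G$ with $\chi(G) > C$ and $\delta(G) \ge \bigl(\tfrac{r-2}{r-1} - \eta\bigr) pn$.

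The construction is a random blow-up. First select an auxiliary graph $H^*$ on $t = t(H,C,\eta)$ vertices with three properties: \itm{i} there is no graph homomorphism $H \to H^*$; \itm{ii} $\chi(H^*) > C$; and \itm{iii} every vertex of $H^*$ has $H^*$-degree at least $\bigl(\tfrac{r-2}{r-1} - \eta\bigr) t$. Such $H^*$ exists by the classical dense lower bound $\delta_\chi(H) \ge \tfrac{r-2}{r-1}$: one may take the join of $K_{r-3}$ with a triangle-free graph of chromatic number exceeding $C$ furnished by Erd\H{o}s's probabilistic construction, appropriately regularised to ensure uniformly high degree. Then uniformly at random partition $V(G(n,p)) = \bigsqcup_{x \in V(H^*)} W_x$ into $t$ classes of size $n/t$, and let $G$ be the spanning subgraph of $G(n,p)$ containing exactly those edges $uv$ with $\phi(u)\phi(v) \in E(H^*)$, where $\phi$ is the partition map.

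By the Chernoff bound, each vertex $v \in W_x$ has $(1+o(1)) \cdot \tfrac{\deg_{H^*}(x)}{t} \cdot pn$ neighbours in $G$, which by \itm{iii} exceeds $\bigl(\tfrac{r-2}{r-1} - \eta\bigr) pn$ with high probability. Moreover, $\phi$ is by construction a graph homomorphism from $G$ to $H^*$; since by \itm{i} no homomorphism $H \to H^*$ exists, $G$ is $H$-free. It remains to verify $\chi(G) > C$. While $\chi(G) \le \chi(H^*[W]) = \chi(H^*)$ is trivial, the matching lower bound is the heart of the argument. I expect this to rely on a ``colour-inheritance'' argument using the sparse regularity theorem (specifically Theorem~\ref{thm:2reg}, which is valid precisely because $p \gg n^{-1/2}$): given a purported $C$-colouring $\psi$ of $G$, for each class $W_x$ extract a ``majority colour'' $c_x$ used on a positive fraction of vertices of $W_x$; sparse-regularity inheritance then forces adjacent classes $W_x, W_y$ (with $xy \in E(H^*)$) to have $c_x \ne c_y$, yielding a proper $C$-colouring of $H^*$, contradicting \itm{ii}.

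The main obstacle is this colour-inheritance step. The dense analogue (which uses the Counting Lemma directly) does not transfer verbatim, and the hypothesis $p \gg n^{-1/2}$ is essential: this is precisely the regime in which sparse regularity inheritance is strong enough to guarantee that a colouring of the random blow-up $G$ transfers, up to a negligible error on each $W_x$, to a colouring of $H^*$ itself. Handling the exceptional vertices in each class, and ensuring that the majority-colour map $x \mapsto c_x$ is genuinely a proper colouring of $H^*$ rather than merely an almost-proper one, is the technical heart of the proof.
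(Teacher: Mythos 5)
Your construction cannot work, because the auxiliary graph $H^*$ you posit does not exist for general $H$ with $\chi(H)=r\ge 4$ --- indeed it already fails for $H=K_r$. For $H=K_r$, condition \itm{i} (no homomorphism $K_r\to H^*$) is simply $K_r$-freeness, and since $\delta_\chi(K_r)=\tfrac{2r-5}{2r-3}<\tfrac{r-2}{r-1}$, every $K_r$-free graph on $t$ vertices with minimum degree at least $\bigl(\tfrac{r-2}{r-1}-\eta\bigr)t$ (for $\eta$ small) has chromatic number bounded by a constant, contradicting \itm{ii} once $C$ is large. Your concrete suggestion exhibits the problem: in $K_{r-3}+T$ with $T$ triangle-free on $m$ vertices, the vertices of $T$ have degree at most $(r-3)+m/2$, i.e.\ relative degree about $\tfrac12$, far below $\tfrac{r-2}{r-1}\ge\tfrac23$; no amount of ``regularising'' a triangle-free graph can push its minimum degree above $m/2$. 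The obstruction is structural, not technical: any spanning subgraph of $G(n,p)$ that admits a homomorphism to a bounded-size $H$-free template $H^*$ forces $H^*$ itself to be an $H$-free graph with relative minimum degree $\approx\tfrac{r-2}{r-1}$ and chromatic number $>C$, which is exactly what $\delta_\chi(H)<\tfrac{r-2}{r-1}$ rules out whenever $H$ has a forest in its decomposition family (e.g.\ every clique). The whole point of the theorem is that for $p=o(1)$ the random threshold \emph{exceeds} the dense threshold $\delta_\chi(H)$ for such $H$, so no blow-up of a dense extremal example can certify it. (Incidentally, the colour-inheritance step you single out as the technical heart is in fact routine: a union bound shows that for $p\gg \log n/n$ any two linear-sized subsets of parts joined in $H^*$ span an edge, so majority colours give a proper colouring of $H^*$. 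The gap is upstream.)

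The proof in the companion paper, sketched in Section~\ref{sec:constructconst}, is genuinely different and exploits sparseness. One first finds, inside $G(n,p)$ with high probability, a subgraph $F$ on only $o(1/p)$ vertices with girth greater than $v(H)$ and chromatic number at least $C$ (this is where $p\gg n^{-1/2}$ enters). One then partitions the remaining vertices into $r-1$ roughly equal parts $V_1,\dots,V_{r-1}$ so that every vertex of $F$ has about $\bigl(1-\tfrac1{r-1}\bigr)pn$ neighbours in $V_1$, and takes $G$ to consist of $F$, the edges from $F$ to $V_1$, and all edges of $G(n,p)$ between distinct parts $V_i,V_j$. The chromatic number is large simply because $F\subset G$ --- no colour-transfer argument is needed --- while $H$-freeness follows because any $v(H)$-vertex subgraph meets $F$ in a forest (by the girth condition) and is therefore properly $(r-1)$-colourable: colour $V_i$ with colour $i$ and the forest with colours $2$ and $3$. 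The planted $F$ occupies $o(1/p)$ vertices, so it is invisible at the scale $pn$ of the minimum degree condition; this is precisely the mechanism unavailable when $p$ is constant, and it is why $\delta_\chi(H,p)$ jumps up to $\tfrac{r-2}{r-1}$ in the sparse regime.
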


The next two constructions together give the lower bounds for Theorem~\ref{thm:sparseclass}.

\begin{prop}[{\cite[Proposition 4.2]{sparse}}]\label{prop:cloud:lower}
Let $H$ be a graph with $\chi(H) = 3$, and suppose that $n^{-1/2} \ll p(n) = o(1)$. If $H$ is not a cloud-forest graph, then
$$\delta_{\chi}(H,p) \ge \frac{1}{2}.$$
\end{prop}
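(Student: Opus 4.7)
The plan is to exhibit, for every $\eps > 0$ and every $t \in \NATS$, an $H$-free subgraph $G \subset G(n,p)$ with $\delta(G) \ge (\tfrac12 - \eps) pn$ and $\chi(G) \ge t$, with probability tending to $1$. Letting $\eps \to 0$ and $t \to \infty$ then gives $\delta_\chi(H,p) \ge \tfrac12$.

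For the construction, fix a small $\eta > 0$ (depending on $\eps$ and $v(H)$) and partition $V(G(n,p)) = I \cup J \cup K$ with $|I| = |J| \approx (\tfrac12 - \tfrac\eta2) n$ and $|K| = \eta n$. On $K$, install a gadget graph $\Gamma \subset G(n,p)[K]$ with chromatic number at least $t$ and girth greater than $v(H)$; such $\Gamma$ exists with high probability by Erd\H{o}s' classical probabilistic construction combined with a two-round exposure of $G(n,p)$ restricted to $K$. Let $G$ consist of the $G(n,p)$-edges between $I$ and $J$, the $G(n,p)$-edges between $J$ and $K$, a suitably thinned set of $G(n,p)$-edges between $I$ and $K$, and the gadget $\Gamma$ on $K$. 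Note that $I$ and $J$ are independent sets in $G$. The Chernoff and Hoeffding bounds applied vertex-by-vertex then give $\delta(G) \ge (\tfrac12 - \eps) pn$ with high probability, and $\chi(G) \ge \chi(\Gamma) \ge t$ is immediate.

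The main step is to verify $H$-freeness. Suppose $\phi\colon V(H) \hookrightarrow V(G)$ is an embedding and write $V_I = \phi^{-1}(I)$, $V_J = \phi^{-1}(J)$ and $V_K = \phi^{-1}(K)$. Since $I$ and $J$ are independent in $G$, the sets $V_I$ and $V_J$ are independent in $H$. Since $\girth(\Gamma) > v(H) \ge |V_K|$, the subgraph $H[V_K]$ embeds into a subgraph of $\Gamma$ on strictly fewer than $\girth(\Gamma)$ vertices, hence is a forest. A further probabilistic argument, controlling how many neighbours a $J$-vertex has in small subsets of $K$ and exploiting the thinning of the $I$-$K$ edges, shows that in any such embedding every $v \in V_J$ has at most one neighbour in $V_K$ and there are no $V_I$-$V_K$ edges in $H$. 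Invoking the alternative characterisation of cloud-forest stated just after Definition~\ref{def:cftcf}, this produces a cloud-forest decomposition of $H$ with cloud $V_I$, middle $V_J$, and forest $V_K$, contradicting the hypothesis that $H$ is not a cloud-forest graph.

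The principal obstacle is the delicate balancing act in this $H$-freeness step: the random edges between $K$ and $I \cup J$ must be dense enough to preserve the minimum degree at $K$-vertices, yet sparse enough to force the cloud-forest conditions for every possible embedding of $H$ into $G$. This balance is achievable only because $p = o(1)$, and the careful implementation (which is the content of the full construction in~\cite{sparse}) requires further refinement of the layered partition described above.
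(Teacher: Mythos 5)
Your overall plan --- two independent linear parts, a high-girth high-chromatic gadget, restrictions on edges between the gadget and the other parts, and the alternative characterisation of cloud-forest graphs from after Definition~\ref{def:cftcf} --- is the same as the construction from~\cite{sparse} that is sketched in Section~\ref{sec:constructconst}. However, the two places where you wave your hands are exactly where the real content lies, and as written both steps fail. First, the gadget. The range $n^{-1/2}\ll p=o(1)$ includes polynomially sparse $p$, and there ``Erd\H{o}s' classical construction plus a two-round exposure restricted to $K$'' does not produce what you need: sparsifying $G(n,p)[K]$ and running the deletion argument yields a high-girth, high-chromatic subgraph only on $\Theta(|K|)=\Theta(n)$ vertices, while any vertex subset of size $o(1/p)$ spans a graph of expected average degree $o(1)$, hence of bounded chromatic number. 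What the construction actually requires is a gadget with $o(1/p)$ vertices, arbitrarily high girth and chromatic number --- this is precisely \cite[Proposition~2.1]{sparse}, which the paper explicitly says is \emph{not} easy for polynomially sparse $p$. The size bound is forced, not cosmetic: if every vertex of $I\cup J$ is to have at most one neighbour among the gadget vertices, the gadget receives at most $n$ edges in total from $I\cup J$, yet each gadget vertex must retain degree about $pn/2$, so the gadget can have at most roughly $2/p$ vertices. With $|K|=\eta n$ and a linear-sized gadget, minimum degree at the gadget vertices and the embedding restrictions are mutually incompatible.

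Second, $H$-freeness fails in your construction because you keep \emph{all} $G(n,p)$-edges between $J$ and $K$. Since $p\gg n^{-1/2}$, a fixed pair of gadget vertices has about $p^2|J|\to\infty$ common neighbours in $J$, so with high probability some gadget edge has a common neighbour in $J$; already for $H=K_3$ (not a cloud-forest graph) this gives a copy of $H$, and in general your claim that every $v\in V_J$ has at most one neighbour in $V_K$ in every embedding is simply false without an explicit deletion of $J$--$K$ edges. No ``further probabilistic argument'' can rescue this, because the bad configurations genuinely occur. The construction in~\cite{sparse} performs exactly this deletion (no two gadget vertices keep a common neighbour in the middle class), and each gadget vertex thereby loses about $|F|p^2n$ edges, which is $o(pn)$ only because the gadget $F$ has $o(1/p)$ vertices --- which brings you back to the first gap. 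So the skeleton is right, but the two quantitative ingredients (the $o(1/p)$-vertex gadget and the affordable deletion making each middle vertex see at most one gadget vertex) are the heart of the proof and are missing from your argument.
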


\begin{prop}[{\cite[Proposition 4.3]{sparse}}]\label{prop:thundercloud:lower}
Let $H$ be a graph with $\chi(H) = 3$, and suppose that $n^{-1/2} \ll p(n) = o(1)$. If $H$ is not a thundercloud-forest graph, then
$$\delta_{\chi}(H,p) \ge \frac{1}{3}.$$
\end{prop}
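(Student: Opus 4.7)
The plan is to exhibit, for every $\eps>0$ and every integer $L$, a spanning subgraph $G\subset G(n,p)$ satisfying (with high probability) $\delta(G)\ge(\tfrac13-\eps)pn$, $\chi(G)>L$ and $H\not\subset G$; this establishes $\delta_\chi(H,p)\ge\tfrac13$. First I would dispose of the easier sub-case: if $H$ is not a cloud-forest graph, then Proposition~\ref{prop:cloud:lower} already yields the stronger bound $\delta_\chi(H,p)\ge\tfrac12>\tfrac13$. Hence we may assume that $H$ is a cloud-forest graph but not a thundercloud-forest graph, so that for every cloud $I\subset V(H)$ witnessing the cloud-forest property, at least one odd cycle of $H$ meets $I$ in at most one vertex.

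The natural template for $G$ is a randomised blow-up. I would fix a graph $\Gamma$ on a bounded number $m=m(L,\eps)$ of vertices with \textbf{(i)}~$\chi(\Gamma)>L$, \textbf{(ii)}~$\delta(\Gamma)\ge(\tfrac13-\tfrac{\eps}{2})m$, and \textbf{(iii)}~no graph homomorphism from $H$ into $\Gamma$. Partitioning $[n]$ into equal parts $V_1,\dots,V_m$ indexed by $V(\Gamma)$, let $G$ consist of every edge $uv\in G(n,p)$ with $u\in V_i$, $v\in V_j$ and $ij\in E(\Gamma)$. The Chernoff bound delivers $(1-o(1))p\deg_\Gamma(i)(n/m)\ge(\tfrac13-\eps)pn$ neighbours for every vertex of $V_i$; since $p\gg n^{-1/2}$, a first-moment argument embeds a copy of $\Gamma$ in $G$, so $\chi(G)\ge\chi(\Gamma)>L$; and any copy of $H$ in $G$ projects, via the map sending each vertex to the index of its part, to a homomorphism $H\to\Gamma$, contradicting~(iii).

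The principal obstacle is the construction of $\Gamma$. For non-thundercloud cloud-forest graphs such as $H=C_5$, no deterministic $\Gamma$ satisfying all of (i)--(iii) can exist: classical extremal results of Andr\'asfai--Erd\H{o}s--S\'os type force any graph of odd girth $\ge 7$ with $\delta(\Gamma)\ge(\tfrac13-o(1))m$ to be bipartite, which precludes arbitrarily large $\chi(\Gamma)$. To circumvent this I would enrich the blow-up: subdivide each part as $V_i=V_i^1\cup V_i^2$ and additionally include $G(n,p)$-edges between $V_i^1$ and $V_i^2$. This intra-part bipartite layer supplies the large chromatic number through a mechanism independent of $\Gamma$ and contributes no intra-part odd cycles. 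The non-thundercloud hypothesis on $H$ is exactly what then prevents $H$ from being assembled across the combined inter-part ($\Gamma$-dictated) and intra-part (bipartite) edges: any attempted embedding of $H$ induces a candidate cloud inside $V(H)$ determined by the intra-part edges used, and the existence of an odd cycle of $H$ missing this cloud forces a homomorphic image to appear in the odd-girth-$\ge 7$ skeleton~$\Gamma$, a contradiction. Verifying this embedding obstruction uniformly for every non-thundercloud cloud-forest $H$ is the technical heart of the argument, and is the content of the full proof in~\cite{sparse}.
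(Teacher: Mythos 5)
There is a genuine gap, and it is fatal to the construction you propose. Your ``enriched blow-up'' --- parts $V_1,\dots,V_m$ joined according to a bounded skeleton $\Gamma$, plus a random bipartite layer between $V_i^1$ and $V_i^2$ inside each part --- has \emph{bounded} chromatic number: colouring each vertex by the pair (part index, side) is a proper colouring with $2m$ colours, since intra-part edges cross the two sides and inter-part edges cross parts. So the claim that the intra-part bipartite layer ``supplies the large chromatic number'' cannot be right; a bipartite layer can never push the chromatic number beyond what the bounded template already allows, and you correctly observed (via Andr\'asfai--Erd\H{o}s--S\'os-type results) that no bounded template $\Gamma$ with $\delta(\Gamma)\ge(\tfrac13-o(1))m$, large chromatic number and no homomorphism from $H$ exists when $H$ is, say, $C_5$. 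Hence your construction produces no graph with $\chi(G)>L$ at all, and the subsequent discussion of why $H$ cannot embed is moot. Deferring ``the technical heart'' to~\cite{sparse} also leaves the essential content of the statement unproven rather than proved.

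The actual argument (carried out in~\cite{sparse} and sketched in Section~\ref{sec:constructconst}) obtains the unbounded chromatic number from a completely different source, and this is exactly where the hypothesis $p\gg n^{-1/2}$ is used: with high probability $G(n,p)$ contains a subgraph $F$ on only $o(1/p)$ vertices with arbitrarily large girth \emph{and} chromatic number. Because $F$ is so small, one can attach it to a partition of the remaining vertices --- each vertex of $F$ receiving about the right number of neighbours in designated parts, with a few edges pruned so that neighbourhoods of $F$-vertices do not combine into copies of $H$ --- without affecting the minimum degree condition $\delta(G)\ge(\tfrac13-\eps)pn$. For the $\tfrac13$ bound the attachment follows an adaptation of the {\L}uczak--Thomass\'e construction~\cite{LT} (originally used to show $\delta_\chi(H)\ge\tfrac13$ for graphs that are not near-acyclic), and the hypothesis that $H$ is not a thundercloud-forest graph is what guarantees that any would-be copy of $H$ would have to intersect the high-girth graph $F$ in a forbidden pattern, so $G$ is $H$-free while $\chi(G)\ge\chi(F)$ is arbitrarily large. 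Your proposal contains no analogue of $F$, i.e.\ no mechanism at all for unbounded chromatic number compatible with $H$-freeness, which is the whole difficulty of the proposition.
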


All three of these constructions use the fact, proved in~\cite[Proposition~2.1]{sparse}, that if $n^{-1/2} \ll p = o(1)$, then $G(n,p)$ contains (with high probability) a subgraph $F$ with $o(1/p)$ vertices, and arbitrarily high chromatic number and girth. This is not easy to prove for polynomially sparse random graphs: but it is worth noting that in the dense regime, when $p=n^{-o(1)}$, we can use the fact that for each constant $t$, with high probability $G(n,p)$ contains $K_t$, and appeal to Erd\H{o}s' result~\cite{Erd59} that there exist graphs with arbitrarily large girth and chromatic number to obtain $F$ much more easily.

The first two constructions take the subgraph $F$, and (with high probability) find a partition of the remaining vertices of $G(n,p)$ into $\chi(H)-1$ roughly equal parts $V_1,\ldots,V_{\chi(H)-1}$, such that each vertex of $F$ has about $\big(1-\tfrac{1}{\chi(H)-1}\big)pn$ neighbours in $V_1$. For Theorem~\ref{thm:classhigh:lower} we can then let $G$ consist of $F$, together with the edges from $F$ to $V_1$, and all edges between $V_i$ and $V_j$ for $i\neq j$. It is easy to check that this $G$ has minimum degree close to $\big(1-\tfrac{1}{\chi(H)-1}\big)pn$ and large chromatic number, while any $v(H)$-vertex subgraph of $G$ intersects $F$ in a forest, so that we can colour the vertices in each part $V_i$ with colour $i$ and the forest in $F$ with colours $2$ and $3$. This gives a proper $\big(\chi(H)-1\big)$-colouring, so in particular $H\not\subset G$.

For Proposition~\ref{prop:cloud:lower} we modify this construction slightly, removing edges so that no two vertices of $F$ have a common neighbour in $V_1$ in $G$. One can easily verify that this extra deletion does not significantly decrease the minimum degree of $G$, and again it is easy to check that any $v(H)$-vertex subgraph of $G$ is a cloud-forest graph (the cloud consists of the vertices in $V_2$).

Finally, the proof of Proposition~\ref{prop:thundercloud:lower} is similar, but more complicated. In addition to the ideas above, it makes use of a construction of {\L}uczak and Thomass\'e~\cite{LT}, that was originally used to show $\delta_\chi(H)\ge\tfrac13$ for graphs $H$ that are not near-acyclic. For the details, and for proofs of all three statements, we refer the reader to~\cite{sparse}.

\medskip

It remains to prove the lower bound in Theorem~\ref{thm:pconst}, which is an immediate consequence of the following proposition.

\begin{prop}\label{prop:pconstant:construction}
Fix a graph $H$ and a constant $p > 0$. Then $\delta_\chi(H,p) \ge \delta_\chi(H)$. That is, for each $C > 0$ and $\gamma > 0$, with high probability $G(n,p)$ contains a spanning $H$-free subgraph with minimum degree at least $\big(\delta_\chi(H) - \gamma\big)pn$ and chromatic number at least $C$.
\end{prop}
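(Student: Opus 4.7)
The plan is to sparsify a suitable dense extremal construction using $G(n,p)$.

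\textbf{Dense witness.} Given $C, \gamma > 0$, the definition of $\delta_\chi(H)$ guarantees an $H$-free graph $G_0$ on some $N = N(C,\gamma)$ vertices with $\delta(G_0) \ge (\delta_\chi(H) - \gamma/2) N$ and $\chi(G_0) \ge C$. A case-by-case inspection of the constructions of~\cite{ChromThresh} witnessing the three possible values of $\delta_\chi(H)$ shows that one may in addition take $G_0$ to be \emph{$H$-hom-free}: the complete blow-up $B(G_0)$, obtained by replacing each vertex of $G_0$ by an independent set and each edge by a complete bipartite graph, is itself $H$-free. This is the key property that will let me embed a blow-up into $G(n,p)$ without creating any copy of $H$.

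\textbf{Embedding.} Fix a partition $V(G(n,p)) = V_1 \cup \cdots \cup V_N$ into parts of size $\lfloor n/N \rfloor$ and define the spanning subgraph
\[
G := \bigl\{ uv \in E(G(n,p)) : u \in V_i,\ v \in V_j,\ ij \in E(G_0) \bigr\}.
\]
Since $G \subseteq B(G_0)$, the graph $G$ is $H$-free. For each $v \in V_i$, $\deg_G(v)$ is binomial with mean approximately $p \cdot \deg_{G_0}(i) \cdot (n/N) \ge (\delta_\chi(H) - \gamma/2)pn$, so Chernoff together with a union bound over $v$ yields $\delta(G) \ge (\delta_\chi(H) - \gamma)pn$ with high probability.

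\textbf{Chromatic number.} It suffices to produce one copy of $G_0$ as a subgraph of $G$, as this implies $\chi(G) \ge \chi(G_0) \ge C$. Since $p$ is constant and $G_0$ is fixed, a standard second-moment computation shows that the number of $N$-tuples $(w_1, \ldots, w_N) \in V_1 \times \cdots \times V_N$ that span a copy of $G_0$ in $G(n,p)$ is concentrated around its mean $\lfloor n/N \rfloor^N p^{e(G_0)} = \Theta(n^N)$, and in particular is positive with high probability. Any such copy lies in $G$ by construction.

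\textbf{Main obstacle.} The only genuinely non-trivial point is the hom-freeness assertion for $G_0$: the definition of $\delta_\chi(H)$ alone only yields an $H$-free witness, whereas the partial-blow-up argument above requires the stronger property that no homomorphism $H \to G_0$ exists. This must be verified in each of the regimes $\delta_\chi(H) \in \{\tfrac{r-3}{r-2},\, \tfrac{2r-5}{2r-3},\, \tfrac{r-2}{r-1}\}$ with $r = \chi(H)$. In each case the witnesses of~\cite{ChromThresh} are obtained by superposing a high-chromatic, large-girth gadget on an appropriate complete multipartite skeleton; the girth bound combined with the multipartite structure rules out all potential homomorphisms from $H$, so the blow-up remains $H$-free. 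Once hom-freeness is in hand, the remaining probabilistic estimates at constant $p$ are entirely routine.
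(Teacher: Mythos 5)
Your approach hinges on the claim that the dense extremal witness $G_0$ can be taken to be $H$-hom-free, so that every blow-up of $G_0$ remains $H$-free. That claim is false for some graphs $H$, and the failure is not a technicality: it is precisely why the paper's proof has a different shape.

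Take $H$ to be the $2$-blow-up of $C_5$, for which (as noted after Theorem~\ref{thm:deltachistar}) $\delta_\chi(H)=\tfrac12$. Since $H$ is homomorphically equivalent to $C_5$, a graph $G_0$ is $H$-hom-free if and only if it admits no homomorphism from $C_5$, which is equivalent to having odd girth greater than $5$; in particular $G_0$ must be triangle-free. By the theorem of Andr\'asfai, Erd\H{o}s and S\'os, any triangle-free graph on $N$ vertices with minimum degree exceeding $2N/5$ is bipartite. Hence for any $\gamma<\tfrac1{10}$, every $H$-hom-free $G_0$ with $\delta(G_0)\ge(\tfrac12-\gamma)N$ satisfies $\chi(G_0)\le 2$, and the dense witness you need does not exist once $C\ge 3$. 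Put differently: if an $H$-hom-free witness attaining $\delta_\chi(H)$ always existed, one would have $\delta_\chi^*(H)=\delta_\chi(H)$ for every $H$, contradicting Theorem~\ref{thm:deltachistar} and this very example. So the \emph{case-by-case inspection} you invoke cannot come out the way you claim; the high-girth gadgets in the constructions of~\cite{ChromThresh} sit on top of dense multipartite parts whose blow-ups do admit homomorphic images of $H$.

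The paper sidesteps this by using an $H$-free (not hom-free) witness on $n$ vertices while isolating its high chromatic number in a bounded set. Lemma~\ref{lem:constconst} gives an $H$-free $G$ with $\delta(G)\ge(\delta_\chi(H)-\gamma)n$, a set $X$ of constant size $K$ with $\chi(G[X])\ge C$, and a set $Y$ of size $n/K$ with $e(G[X,Y])=e(G[Y])=0$. Lemma~\ref{lem:constcsn} then embeds into $G(n,p)$ by first locating a $K$-clique among the first $K+n/K$ vertices (possible whp for constant $p$), placing $X$ on that clique so that \emph{all} of $G[X]$ survives, placing $Y$ on the remaining initial vertices (free, since $Y$ carries no edges), and only then sparsifying the rest and verifying the degree bound by Chernoff. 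Because the gadget is carried over intact rather than rediscovered in a randomly sparsified blow-up, no hom-freeness is required, and the $Y$-slack ensures the forced placement of $X$ does not compromise the minimum degree.
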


Proposition~\ref{prop:pconstant:construction} follows by adapting the constructions from~\cite{ChromThresh}, which themselves are minor adaptations of constructions originally due to {\L}uczak and Thomass\'e~\cite{LT}. We state only the features of these constructions we require in order to prove Proposition~\ref{prop:pconstant:construction}, and refer the reader to~\cite{ChromThresh,LT} for further details. The following lemma was proved in~\cite{ChromThresh} as Proposition~5, Theorem~16 or Proposition~35, depending on the value of $\delta_\chi(H)$.

\begin{lemma}\label{lem:constconst}
 For every graph $H$ and constants $C,\gamma > 0$, there exists $K = K(H,\gamma,C) > 0$ such that the following holds. For all sufficiently large $n$, there exists an $H$-free graph $G$ on $n$ vertices with the following properties:
 \begin{enumerate}[label=\abc]
\item $\delta(G) \ge \big(\delta_\chi(H)-\gamma\big)n$.
\item There exist disjoint sets $X,Y \subset V(G)$, with $|X|=K$ and $|Y|=n/K$, such that
$$\chi\big(G[X]\big) \ge C \qquad \text{and} \qquad e\big( G[X,Y] \big) = e\big( G[Y] \big) = 0.$$
\end{enumerate}
\end{lemma}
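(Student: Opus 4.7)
The plan is to use Lemma~\ref{lem:constconst} as a deterministic ``blueprint'' and realise it inside $G(n,p)$ via a random bijection. Apply Lemma~\ref{lem:constconst} with parameters $(H,\gamma/2,C)$ to obtain a graph $G_0$ on $n$ vertices with $\delta(G_0) \ge (\delta_\chi(H) - \gamma/2)n$ and a constant-size ``core'' $X$ of size $K = K(H,\gamma,C)$ satisfying $\chi(G_0[X]) \ge C$. For a bijection $\sigma \colon V(G_0) \to [n]$, set
\begin{equation*}
G \;:=\; \bigl\{\sigma(u)\sigma(v) : uv \in E(G_0)\bigr\} \,\cap\, E(G(n,p)).
\end{equation*}
Then $G$ is a spanning subgraph of $G(n,p)$ which is also contained in the isomorphic copy $\sigma(G_0)\cong G_0$, so $G$ is $H$-free. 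To force $\chi(G) \ge C$ it suffices that every edge of $\sigma(G_0[X])$ survive the intersection with $E(G(n,p))$, since then $G[\sigma(X)]$ is isomorphic to $G_0[X]$.

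Condition on the high-probability events that \itm{i} $\deg_{G(n,p)}(w) = (1 \pm \eta)pn$ for every $w \in [n]$, which holds by Chernoff and a union bound, and \itm{ii} the number of ordered copies of $G_0[X]$ in $G(n,p)$ is $(1 + o(1))p^{e(G_0[X])} n!/(n-K)!$, which holds by a routine second-moment calculation since $K$ is constant; here $\eta > 0$ is chosen small compared to $\gamma$. Now let $\sigma$ be a uniformly random bijection. By \itm{ii},
\begin{equation*}
\Pr_\sigma\bigl[\,\text{every edge of } \sigma(G_0[X]) \text{ lies in } E(G(n,p))\,\bigr] \;=\; (1+o(1))\,p^{e(G_0[X])},
\end{equation*}
a positive constant depending only on $G_0[X]$ and $p$. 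For the minimum-degree requirement, fix $v \in V(G_0)$ and condition on $\sigma(v) = w$: then $\{\sigma(u) : u \in N_{G_0}(v)\}$ is a uniformly random $\deg_{G_0}(v)$-subset of $[n]\setminus\{w\}$, so
\begin{equation*}
\deg_G(\sigma(v)) \;\sim\; \Hyp\!\bigl(n-1,\, |N_{G(n,p)}(w)|,\, \deg_{G_0}(v)\bigr),
\end{equation*}
whose expectation is at least $(1-\eta)(\delta_\chi(H)-\gamma/2)pn \ge (\delta_\chi(H) - \gamma)pn + \Omega(pn)$ when $\eta$ is small enough. Hoeffding's inequality therefore gives a deviation probability of $\exp(-\Omega(pn)) = \exp(-\Omega(n))$, and a union bound over the $n$ choices of $v$ shows that $\Pr_\sigma[\delta(G) < (\delta_\chi(H) - \gamma)pn] = o(1)$.

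Combining the two estimates, conditional on the typical behaviour of $G(n,p)$, a uniformly random $\sigma$ simultaneously realises an intact copy of $G_0[X]$ in $G(n,p)$ and satisfies the minimum-degree condition with positive (indeed constant) probability; in particular some $\sigma$ yields a graph $G$ with all of the required properties. The main potential obstacle is mere bookkeeping: verifying that the several ``typical'' events for $G(n,p)$ hold simultaneously with high probability, and checking that the Hoeffding estimate applies uniformly over vertices despite the mild dependence introduced by the random permutation; both are routine.
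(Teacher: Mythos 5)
Your proposal is circular: its very first step is ``Apply Lemma~\ref{lem:constconst} with parameters $(H,\gamma/2,C)$ to obtain a graph $G_0$\dots'', i.e.\ it assumes exactly the statement you were asked to prove. Lemma~\ref{lem:constconst} is a purely deterministic extremal statement --- the existence of an $H$-free graph $G$ on $n$ vertices with $\delta(G)\ge(\delta_\chi(H)-\gamma)n$ (note: not $pn$; there is no random graph in the statement), a bounded set $X$ with $\chi(G[X])\ge C$, and, crucially, a linear-sized set $Y$ with $e(G[X,Y])=e(G[Y])=0$. Everything after your first step is about embedding such a blueprint into $G(n,p)$ via a random bijection, which is the content of the \emph{next} results in the paper (Lemma~\ref{lem:constcsn} and Proposition~\ref{prop:pconstant:construction}), not of the lemma at hand. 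Your argument never constructs $G_0$, never produces the set $Y$, and its conclusion (a subgraph of $G(n,p)$ with minimum degree $(\delta_\chi(H)-\gamma)pn$) is not the conclusion of the lemma.

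What is actually required is to revisit the chromatic-threshold constructions of~\cite{ChromThresh} (Propositions~5 and~35 and Theorem~16 there, depending on the value of $\delta_\chi(H)$; these are adaptations of the \L uczak--Thomass\'e constructions) and check that they can be arranged to have the two extra features: the high chromatic number can be concentrated on a bounded vertex set $X$ --- for the constructions using an `Erd\H{o}s graph' one chooses a graph of chromatic number $C$ and girth $v(H)+1$ in which all but a fixed number $K$ of vertices form an independent set, and takes $X$ to be those $K$ vertices; for the Borsuk-graph construction one takes $X$ to be the Borsuk subgraph --- and there is a set $Y$ of size $n/K$ sending no edges to $X$ and spanning no edges. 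None of this appears in your proposal. Separately, even read as a proof of the transfer step, the random-bijection argument only gives the clique/core event with \emph{constant} probability $p^{e(G_0[X])}$, which does not combine with the degree estimate to give the ``with high probability'' statement needed later; the paper avoids this by a two-round exposure, first finding a $K$-clique in $G(n,p)$ and mapping $X$ onto it so that all edges of $G[X]$ are present deterministically.
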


Note that for the application in~\cite{ChromThresh} the important points are the high minimum degree and the subgraph $G[X]$ whose chromatic number is large. However for this paper we require in addition the existence of the set $Y$. It is easy to check that each of the constructions in~\cite{ChromThresh} permits us to find such a set. For the constructions given there as Propositions~5 and~35, we need to choose in the construction not just any `Erd\H{o}s graph', that is, a graph with chromatic number at least $C$ and girth at least $v(H)+1$, but specifically one in which all but a fixed number $K$ of vertices are independent; this is possible since graphs with chromatic number $C$ and girth $v(H)+1$ exist. We then let $X$ be the $K$ vertices which are not independent. For the construction given as Theorem~16, we always obtain the desired sets, taking $X$ to be the vertex set of the Borsuk subgraph in that construction.

The next lemma allows us to find in $G(n,p)$ a graph corresponding to that described in Lemma~\ref{lem:constconst}.

\begin{lemma}\label{lem:constcsn} 
Let $d,p,\gamma,K > 0$, and let $G$ be a graph on $n$ vertices satisfying:
 \begin{enumerate}[label=\abc]
\item $\delta(G) \ge dn$.
\item There exist disjoint sets $X,Y \subset V(G)$, with $|X|=K$ and $|Y|=n/K$, such that
$$e\big( G[X,Y] \big) = e\big( G[Y] \big) = 0.$$
\end{enumerate}
Then, with high probability, $G(n,p)$ contains as a spanning subgraph a subgraph of $G$ with minimum degree at least $(d-\gamma)pn$ which includes all edges of $G[X]$.
\end{lemma}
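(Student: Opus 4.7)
The plan is to apply the probabilistic method with a uniformly random bijection $\pi\colon V(G)\to[n]$, chosen independently of $G(n,p)$. Viewing $V(G)$ as $[n]$ via $\pi$, the spanning subgraph of $G$ obtained by keeping an edge $\{u,w\}$ of $G$ precisely when $\pi(u)\pi(w)\in E(G(n,p))$ is a subgraph of $G(n,p)$, and we must guarantee that this subgraph contains every edge of $G[X]$ and has minimum degree at least $(d-\gamma)pn$. Equivalently, for a suitable high-probability event on $G(n,p)$ it suffices to show that with positive $\pi$-probability both events
\begin{gather*}
\mathrm{(i)}\colon \pi(u)\pi(w)\in E(G(n,p))\text{ for every }uw\in E(G[X]),\\
\mathrm{(ii)}\colon |\pi(N_G(v))\cap N_{G(n,p)}(\pi(v))|\ge (d-\gamma)pn\text{ for every }v\in V(G)
\end{gather*}
hold simultaneously.

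First, set $\eta:=\gamma/(2d)$. By the Chernoff bound and a union bound, with high probability every vertex of $G(n,p)$ has degree in $\big[(1-\eta)pn,(1+\eta)pn\big]$. Simultaneously, partitioning $[n]$ into $\lfloor n/K\rfloor$ disjoint $K$-blocks, each block independently induces a copy of $G[X]$ in $G(n,p)$ with probability at least $p^{\binom{K}{2}}>0$, so with high probability at least one block does. Condition on both high-probability events henceforth. Under this conditioning, the $\pi$-probability of (i) is at least $1/\big(n(n-1)\cdots(n-K+1)\big)$, since there exists at least one ordered $K$-tuple of vertices of $[n]$ on which $\pi$ can realise a copy of $G[X]$.

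Next I would bound $\Pr_\pi[\neg(\mathrm{ii})]$. For a fixed vertex $v$ of $G$ and a fixed image $\pi(v)=w$, the set $\pi(N_G(v))$ is a uniformly random subset of $[n]\setminus\{w\}$ of size $|N_G(v)|\ge dn$, while $N_{G(n,p)}(w)$ is a fixed set of size in $\big[(1-\eta)pn,(1+\eta)pn\big]$. Their intersection is therefore hypergeometric with mean at least $(1-\eta)dpn-o(n)\ge(d-\gamma/2)pn$, and Hoeffding's inequality gives a failure probability at most $e^{-\Omega(pn)}=e^{-\Omega(n)}$, since $p$ is a positive constant. Averaging over $w$ and union bounding over the $n$ choices of $v$ yields $\Pr_\pi[\neg(\mathrm{ii})]\le ne^{-\Omega(n)}$.

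Finally, this bound is super-exponentially smaller than the $\Omega(n^{-K})$ lower bound on $\Pr_\pi[(\mathrm{i})]$, so for all sufficiently large $n$
\[\Pr_\pi[(\mathrm{i})\wedge(\mathrm{ii})]\ge\Pr_\pi[(\mathrm{i})]-\Pr_\pi[\neg(\mathrm{ii})]>0,\]
which furnishes the desired $\pi$. The main subtlety is precisely that (i) holds with only polynomially small probability, so one must drive $\Pr_\pi[\neg(\mathrm{ii})]$ well below that; this is possible because $p$ being constant turns Hoeffding's tail bound into $e^{-\Omega(n)}$, which easily dominates any polynomial factor in $1/n$.
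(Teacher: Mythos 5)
Your proof is correct, but it takes a genuinely different route from the paper's. The paper uses a two-round exposure: it first reveals $G(n,p)$ on the first $K+n/K$ vertices, finds a $K$-clique there with high probability, and then defines a \emph{single} injection $\phi$ mapping $X$ onto that clique and $Y$ onto the remaining already-exposed vertices; the second round of exposure then gives the degree bound by Chernoff for each vertex separately. Condition~(b) is used crucially: it guarantees that for each $v\in Y$ (and also for $v\in X$) all $G$-neighbours of $v$ lie in $V(G)\setminus(X\cup Y)$, so the corresponding potential edges of $G(n,p)$ are exposed only in round two and are genuinely fresh randomness. Your proof instead averages over a uniformly random bijection $\pi$ independent of $G(n,p)$, and plays off the polynomial $\Omega(n^{-K})$ lower bound for landing $X$ on a good $K$-tuple against the $e^{-\Omega(n)}$ hypergeometric tail for the degree event, via $\Pr[A\cap B]\ge\Pr[A]-\Pr[\neg B]$. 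This works exactly because $p$ is a fixed constant, so the exponential tail crushes the polynomial loss. One notable feature: your argument never invokes condition~(b) at all, so you in fact prove a slightly stronger statement (the disjoint independent set $Y$ is not needed). The trade-off is that the paper's proof is explicit — it produces a concrete embedding and each step is a bare Chernoff bound for independent bits — whereas yours is an existence argument that needs the Hoeffding inequality for the hypergeometric distribution and a slightly more delicate balancing of error probabilities. Both are valid; yours is more general, the paper's is more transparent about why the hypothesis~(b) is in the lemma.
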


\begin{proof}
We expose the edges of $G(n,p)$ in two rounds. First, we expose all the edges among the first $K+n/K$ vertices. It is easy to see that, with high probability, we will find a $K$-vertex clique in this set. Fix any injective map $\phi \colon V(G) \to [n]$ which takes $X$ to the $K$-vertex clique and $Y$ to the remaining $n/K$ initial (i.e., already exposed) vertices. 

We next expose the remaining edges, and claim that, with high probability,
the intersection of $\phi(G)$ and $G(n,p)$ has minimum degree at least
$(d-\gamma)pn$. Indeed, since $\delta(G) \ge dn$ and $d$, $p$ and $\gamma$
are fixed, this follows easily by Chernoff's inequality and the union bound. Thus, by construction, we have found the desired subgraph of $G$.
\end{proof}

Proposition~\ref{prop:pconstant:construction} now follows immediately.

\begin{proof}[Proof of Proposition~\ref{prop:pconstant:construction}]
Given $H$, $C$, $p$ and $\gamma$, let $n$ be sufficiently large, and let $K = K(H,\gamma,C) > 0$ and $G$ be given by Lemma~\ref{lem:constconst}, so in particular $G$ is $H$-free. Now, applying Lemma~\ref{lem:constcsn} with $d = \delta_\chi(H) - \gamma$, it follows that, with high probability, $G(n,p)$ contains a spanning subgraph $G' \subset G$ with minimum degree at least $\big( \delta_\chi(H) - 2\gamma \big)pn$ which includes all edges of $G[X]$, so $\chi(G') \ge C$. Since $\gamma > 0$ was arbitrary, this proves the proposition. 
\end{proof}

\section{The upper bound for cloud-forest graphs}\label{sec:slightly}

In this section we will prove the following proposition, which is our main new result.

\begin{prop}\label{prop:slightly} 
If $H$ is a cloud-forest graph, and $p = p(n)$ satisfies $p = o(1)$ and $p = n^{-o(1)}$, then 
\[\delta_\chi(H,p) \le \, \frac{1}{3}\,.\]
\end{prop}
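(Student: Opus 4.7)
The plan is to invoke the sparse minimum degree form of the Regularity Lemma on any putative $H$-free spanning subgraph $G \subset G(n,p)$ with $\delta(G) \ge (\tfrac{1}{3} + \gamma)pn$, define a cover of $V(G)$ from the cluster structure, and bound the chromatic number on each piece. Fix constants $\eps \ll d \ll \gamma$, apply the Regularity Lemma to produce an $(\eps,p)$-regular partition $V_1 \cup \cdots \cup V_k$ whose $(\eps,d,p)$-reduced graph $R$ has minimum degree at least $(\tfrac{1}{3} + \gamma/2)k$, and call $w$ a \emph{robust second neighbour} of $v$ if they share at least $dp^2n$ common neighbours in $G$; write $N^*_2(v)$ for the set of such $w$. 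For each cluster set
\[
X_i := \Big\{ v \in V(G) : \bigl|N^*_2(v) \cap V_i\bigr| \ge (\tfrac{1}{2} + d)|V_i| \Big\}.
\]
The aim is to show that the $X_i$ cover $V(G)$ and that $\chi(G[X_i]) = O(1)$ for each $i$, which would give $\chi(G) \le k\cdot O(1) = O(1)$.

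First I would bound $\chi(G[X_i])$. Suppose for contradiction that it can be made arbitrarily large. Any $Z \subseteq X_i$ of large chromatic number must contain many vertices, and hence many pairs; because each vertex of $X_i$ has at least $(\tfrac{1}{2}+d)|V_i|$ robust partners inside $V_i$, a pigeonhole argument produces many edges $uv$ of $G[Z]$ whose robust common-neighbourhoods overlap in a large fraction of $V_i$. Two applications of the Erd\H{o}s--Simonovits supersaturation theorem (Theorem~\ref{lem:densbip})---first to extract a dense bipartite skeleton between $Z$ and $V_i$, and then inside the common neighbourhood of a chosen $K_{s,s}$---yield a rich bipartite structure. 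Using the alternative description of cloud-forest graphs ($V(H)=V(F')\cup J\cup I$ with $F'$ a forest, $I$ and $J$ independent, no edges from $I$ to $V(F')$, and each vertex of $J$ with at most one neighbour in $V(F')$), one embeds $F'$ greedily into $Z$ via Fact~\ref{prop:forest} on an auxiliary "many-common-neighbour" graph, pins down $J$ using real $G$-neighbours in $V_i$ of the images of $F'$, and finally places the cloud $I$ inside the portion of $V_i$ where the images of $J$ still enjoy enough common neighbours. The resulting copy of $H$ contradicts $H$-freeness.

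Next I would show that $X_1 \cup \cdots \cup X_k = V(G)$. If some vertex $v$ lies in no $X_i$, then $v$ has few robust partners in every cluster. Combining this deficit with the minimum degree hypothesis $\delta(G) \ge (\tfrac{1}{3}+\gamma)pn$ and the minimum degree of $R$, a direct edge-count produces many triples of clusters $V_a, V_b, V_c$ such that $v$ has many $G$-neighbours in $V_a$ and all three pairs $(V_a,V_b)$, $(V_a,V_c)$, $(V_b,V_c)$ are $(\eps,d,p)$-regular. Invoking the regularity-inheritance theorem of Gerke--Kohayakawa--R\"odl--Steger (Theorem~\ref{thm:2reg}), almost every vertex of $V_a$ has neighbourhoods in $V_b$ and $V_c$ on which regularity is preserved, and supersaturation inside one such triple of neighbourhoods then produces arbitrarily large complete tripartite graphs $K_{t,t,t} \subseteq G$. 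Since $\chi(H)=3$, any sufficiently large $K_{t,t,t}$ contains $H$, again contradicting $H$-freeness.

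The main obstacle will be the first step: converting the robust common-neighbour structure of $X_i$ into an actual embedding of the three-layered cloud-forest graph $H$. Coordinating the embedding of $F'$, $J$, and $I$ requires iterated pigeonhole and supersaturation, together with careful bookkeeping so that enough common-neighbour room remains at each stage to place the next layer; this is where the intricate "several pages" of calculation alluded to in the introduction will occur. By contrast, Step 2 is a reasonably standard sparse-regularity-plus-inheritance argument for producing $K_{t,t,t}$.
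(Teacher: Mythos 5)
Your overall architecture matches the paper's (regularity lemma, the sets $X_i$ defined via robust second neighbourhoods, bounding $\chi(G[X_i])$, and showing the $X_i$ cover $V(G)$ by producing $K_{s,s,s}$), but both key steps as you sketch them contain genuine gaps. In Step 1 your embedding layout is not supported by the structure available. A vertex $v\in X_i$ is only guaranteed many \emph{robust second} neighbours in $V_i$, i.e.\ vertices sharing at least $dp^2n$ common neighbours with $v$; it may have no actual $G$-neighbours in $V_i$ at all, so ``pinning down $J$ using real $G$-neighbours in $V_i$ of the images of $F'$'' can fail outright. Likewise, placing the cloud $I$ inside $V_i$ adjacent to $J$-images that you also put in $V_i$ would require edges of $G$ inside a single cluster, over which the partition gives no control whatsoever. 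The deeper missing ingredient is a mechanism converting pairwise second-neighbourhood information into a single $s$-set of cluster vertices having at least $s$ common $G$-neighbours inside $N(u)$ for \emph{every} relevant vertex $u$ in the image of the forest: supersaturation applied to the auxiliary ``robust partnership'' bipartite graph between $X_i$ and $V_i$ only yields copies of $K_{s,s}$ whose edges are not edges of $G$. The paper achieves this via $(u,v)$-completable sets and Lemma~\ref{lem:makebip}, whose proof needs a random injection $W_2\to W_1$ and a random auxiliary bipartite graph to coordinate the two neighbourhoods $N(u)$ and $N(v)$ simultaneously, followed by a pigeonhole (Claim~\ref{clm:third}) producing one $s$-set $Z\subset V_i$ completable for all edges of a subgraph of average degree $2s$; the cloud $I$ then goes into $Z\subset V_i$ (it has no edges to $F'$) and $J$ goes into the common neighbourhoods --- the reverse of your assignment. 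Your ``two applications of supersaturation plus bookkeeping'' does not identify this coordination device, which is the main technical content of the proof.

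In Step 2 you require a triple of clusters $V_a,V_b,V_c$ with all three pairs $(\eps,d,p)$-regular. But the reduced graph only has minimum degree $\big(\tfrac13+\gamma\big)k$, which does not force a triangle in $R$ (it could be bipartite), so such a triple need not exist; indeed, if it always existed one would obtain $K_{s,s,s}\subset G$ without using $X_0\neq\emptyset$ at all, contradicting the matching lower bound $\delta_\chi(H,p)\ge\tfrac13$ for cloud-forest graphs that are not thundercloud-forest graphs. The paper instead uses the assumption $u\in X_0$ (its robust second neighbourhood misses nearly half of every cluster) to force the edges leaving a set $U\subset N(u)$ to concentrate, which yields a single edge $ij\in E(R)$ together with at least $2dpn/3$ vertices of $U$ having degree at least $dp|V_i|/3$ into both $V_i$ and $V_j$ (Claim~\ref{clm:fifth}); the third part of the eventual tripartite structure is this vertex set, not a cluster, and $K_{s,s,s}$ is then obtained via regularity inheritance (Theorem~\ref{thm:2reg}), supersaturation inside each inherited neighbourhood pair, and a pigeonhole over those vertices. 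Your covering step needs to be repaired along these lines.
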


This proposition, together with Corollary~\ref{cor:turan} and Propositions~\ref{prop:cloud:lower} and~\ref{prop:thundercloud:lower}, proves Theorem~\ref{thm:sparseclass}. We begin by giving  an outline of the proof of Proposition~\ref{prop:slightly}.

Let us fix a cloud-forest graph $H$ with $s$ vertices, and a function $p = p(n)$ such that $p = o(1)$ and $p = n^{-o(1)}$. Fix also a (small) constant $\gamma > 0$ and (with foresight) set $d = \gamma/6$. We will use the following definitions.

\begin{defn}
We define the \emph{$(d,p)$-robust second neighbourhood} $N^*_2(v)$ of a vertex $v$ in a graph $G$ to be the set of vertices $w$ of $G$ such that $v$ and $w$ have at least $dp^2n$ common neighbours in $G$. (Since $d$ and $p$ were fixed above, we suppress them from the notation.)

Given $u,v \in V(G)$, let us say that a set $Z$ of size $s$ is \emph{$(u,v)$-completable} if $Z$ has at least $s$ common neighbours in each of $N(u)$ and $N(v)$.
\end{defn}

Let $G$ be an $H$-free spanning subgraph of $G(n,p)$ with $\delta(G) \ge \big( \frac{1}{3} + 2\gamma \big) pn$. Applying the sparse minimum degree form of Szemer\'edi's Regularity Lemma to $G$, we obtain a partition $V(G) = V_0 \cup V_1 \cup \cdots \cup V_k$, such that the $(\eps,d,p)$-reduced graph $R$ satisfies $\delta(R)\ge\big(\tfrac13+\gamma\big)k$.
For each $i\in[k]$, we let $X_i$ be the set of vertices $v\in V(G)$ such that $N^*_2(v)$ covers at least a $\big(\tfrac12+\gamma\big)$-fraction of $V_i$, and set $X_0:=V(G)\setminus\big(X_1\cup\dots\cup X_k\big)$. We will show that $\chi\big( G[X_i] \big) = O(1)$ for each $1 \le i \le k$, and that $X_0=\emptyset$. 

We will bound the chromatic number of $G[X_i]$ in three steps, as follows. First, we show (see Claim~\ref{clm:second}, below) that for every pair $u,v \in X_i$, there are $\Omega(n^s)$ sets $Z \subset V_i$ of size $s$ that are $(u,v)$-completable. Second (see Claim~\ref{clm:third}), we show that if $\chi\big( G[X_i] \big) \gg 1$, then there exists a subgraph $E' \subset G[X_i]$ with arbitrarily large minimum degree. Using the pigeonhole principle, we can show that there exists a subgraph $E \subset E'$ with large average degree and an $s$-set $Z \subseteq V_i$ such that $Z$ is $(u,v)$-completable for each $uv \in E$. Third (see Claim~\ref{clm:fourth}), a graph with large enough average degree contains all small forests, by Fact~\ref{prop:forest}. Using the alternative definition of  a cloud-forest graph, it follows that there exists a forest $F'$ that is contained in $E$ and which we can extend to a copy of $H$. % with the cloud in $Z$.

In order to complete the proof, we will show that $X_0 = \emptyset$, as follows. Suppose for a contradiction that there exists a vertex $u \in X_0$. Since the vertices of $X_0$ spread out their second neighbourhoods relatively evenly over the clusters of the regular partition, we will be able to find (see Claim~\ref{clm:fifth}) a pair $(V_i,V_j)$ of clusters which form an $(\eps,d,p)$-regular pair in $G$, and furthermore are such that there are at least $2dpn / 3$ vertices in $N(u)$ with at least $dp|V_i| / 3$ neighbours in each of $V_i$ and $V_j$. We will then (see Claim~\ref{clm:sixth}) use Theorem~\ref{thm:2reg} to show that for at least $dpn / 3$ of those vertices $v \in N(u)$, the density of $\big( N(v) \cap V_i, N(v) \cap V_j\big)$ is at least $dp / 2$ (this is `inherited' from the $(\eps,d,p)$-regular pair $(V_i,V_j)$). Now, by Theorem~\ref{lem:densbip}, any such bipartite graph contains many copies of $K_{s,s}$, which together with $v$ gives us many copies of $K_{1,s,s}$. By an application of the pigeonhole principle we find a copy of $K_{s,s,s}$ in $G$. Since $H \subset K_{s,s,s}$, we thus obtain the desired contradiction.

In order to perform the first step (Claim~\ref{clm:second}) in the above sketch, we need the following lemma. It implies that if $(W_1,Y)$ and $(W_2,Y)$ are two `sufficiently dense' pairs in a subgraph $G \subset G(n,p)$, and $p$ is not too small, then we can find `many' copies of $K_{s,2s}$ with the smaller part in $Y$, and the other part split equally between $W_1$ and $W_2$.

\begin{lemma}\label{lem:makebip}
For every $\eps > 0$ and $s \in \N$, there exists $\alpha > 0$ such that the following holds for all sufficiently large $n \in \N$. Let $G$ be a graph on $n$ vertices, and let $p = n^{-o(1)}$. Let $W_1,W_2, Y \subset V(G)$ be disjoint sets with $|Y| \ge 2pn$, with $\eps pn \le |W_i| \le 2pn$, and with
\[|N(y) \cap W_i| \ge \eps p^2 n \qquad \text{and} \qquad \Big| \bigcap_{v \in S} N(v) \cap W_i \Big| \le 2p^{s+1}n\]
for each $i \in \{1,2\}$, $y \in Y$ and $S \subset Y$ of size $s$. Then there exist at least $\alpha |Y|^s$ sets $Z \subset Y$ of size $s$ such that 
$$\Big| \bigcap_{v \in Z} N(v) \cap W_i \Big| \ge s$$
for each $i \in \{1,2\}$. 
\end{lemma}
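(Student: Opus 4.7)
The plan is to double-count the quantity
\[T \,:=\, \sum_{Z \in \binom{Y}{s}} |N_1(Z)| \cdot |N_2(Z)|,\]
where $N_i(Z) := \bigcap_{v \in Z} N(v) \cap W_i$. Let $B$ denote the family of $s$-subsets $Z \subset Y$ satisfying $|N_i(Z)| \ge s$ for both $i \in \{1,2\}$; the goal is to show $|B| \ge \alpha |Y|^s$ for a suitable $\alpha = \alpha(\eps,s) > 0$.

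For the lower bound on $T$, I would rewrite
\[T \,=\, \sum_{(w_1,w_2) \in W_1 \times W_2} \binom{|N(w_1) \cap N(w_2) \cap Y|}{s}\]
and note that swapping the order of summation gives
\[\sum_{w_1,w_2} |N(w_1) \cap N(w_2) \cap Y| \,=\, \sum_{y \in Y} |N(y) \cap W_1| \cdot |N(y) \cap W_2| \,\ge\, \eps^2 p^4 n^2 |Y|,\]
by the degree hypothesis. Since $|W_i| \le 2pn$, the average of $|N(w_1) \cap N(w_2) \cap Y|$ over pairs in $W_1 \times W_2$ is at least $\eps^2 p^2 |Y|/4$, which tends to infinity because $p = n^{-o(1)}$ and $|Y| \ge 2pn$. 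Using convexity of $\binom{\cdot}{s}$ in this regime (discarding the negligible contribution of pairs whose overlap is smaller than, say, $2s$), together with $|W_1||W_2| \ge \eps^2 p^2 n^2$, I would obtain
\[T \,\ge\, (1 - o(1)) \cdot \frac{\eps^{2s+2}}{4^s \, s!} \cdot p^{2s+2} n^2 |Y|^s.\]

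For the upper bound on $T$, partition the sum according to membership in $B$. If $Z \notin B$, then $\min(|N_1(Z)|,|N_2(Z)|) < s$, and together with the hypothesis $|N_i(Z)| \le 2p^{s+1}n$ this yields $|N_1(Z)||N_2(Z)| < 2sp^{s+1}n$; if $Z \in B$, the hypothesis applied twice gives $|N_1(Z)||N_2(Z)| \le 4p^{2s+2}n^2$. Summing over the $\binom{|Y|}{s} \le |Y|^s/s!$ subsets produces
\[T \,\le\, \frac{2s p^{s+1} n \, |Y|^s}{s!} + 4 p^{2s+2} n^2 \, |B|.\]

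Comparing the two bounds and dividing by $4p^{2s+2}n^2$ gives
\[|B| \,\ge\, (1 - o(1)) \cdot \frac{\eps^{2s+2} |Y|^s}{4^{s+1} s!} \,-\, \frac{s |Y|^s}{2 \, s! \, p^{s+1} n}.\]
The main obstacle, and the only point where the assumption $p = n^{-o(1)}$ is actually needed, is ensuring that the error term $|Y|^s / (p^{s+1} n)$ is of smaller order than $|Y|^s$. This requires $p^{s+1} n \to \infty$, which holds because $p \ge n^{-\eta}$ for every fixed $\eta > 0$ gives $p^{s+1} n = n^{1-o(1)} \to \infty$. Setting $\alpha := \eps^{2s+2} / (2 \cdot 4^{s+1} s!)$ then yields $|B| \ge \alpha |Y|^s$ for all sufficiently large $n$, completing the proof.
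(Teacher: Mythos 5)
Your proof is correct in its essential structure and gives a genuinely different, substantially more elementary argument than the paper's. Your double-counting identity
\[
T = \sum_{Z \in \binom{Y}{s}} |N_1(Z)||N_2(Z)| = \sum_{(w_1,w_2)\in W_1\times W_2} \binom{|N(w_1)\cap N(w_2)\cap Y|}{s}
\]
is valid, as is the observation that $\sum_{w_1,w_2}|N(w_1)\cap N(w_2)\cap Y| = \sum_{y\in Y}|N(y)\cap W_1||N(y)\cap W_2|\ge \eps^2p^4n^2|Y|$; the upper bound on $T$ split by $Z\in B$ vs.\ $Z\notin B$ is clean; and the error term is killed exactly by $p^{s+1}n\to\infty$, i.e.\ by $p = n^{-o(1)}$, which is where the hypothesis is used. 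The paper instead constructs a \emph{random} auxiliary bipartite graph $\mathbf{H}$ (by choosing a random injection $\phi\colon W_2\to W_1$ and a random subset $Y'\subset Y$), applies the Erd\H{o}s--Simonovits supersaturation theorem (Theorem~\ref{lem:densbip}) to find many copies of $K_{s,s}$ in $\mathbf H$, translates these into balanced copies of $K_{s,2s}$, and finishes by pigeonhole using the two-sided bound $|\bigcap_{v\in S}N(v)\cap W_i|\le 2p^{s+1}n$. Your route avoids the randomization and the supersaturation theorem entirely, at the cost of a convexity/truncation step; both methods rely in the same way on $p=n^{-o(1)}$ to make the relevant quantities grow. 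One small imprecision to clean up: for the Jensen step you want an \emph{upper} bound on the number of pairs, $|W_1||W_2|\le 4p^2n^2$ (since $N\binom{S/N}{s}$ is decreasing in $N$ on the relevant range), not the lower bound $|W_1||W_2|\ge\eps^2p^2n^2$ you quoted; with the correct direction the leading constant comes out to $\tfrac{4\eps^{2s}}{8^s s!}$ rather than $\tfrac{\eps^{2s+2}}{4^s s!}$, but since any positive $\alpha=\alpha(\eps,s)$ suffices the conclusion is unaffected.
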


\begin{proof}
Fix $\eps > 0$ and $s \in \N$, let $\beta > 0$ be the constant returned by Theorem~\ref{lem:densbip}, and set
\[
\alpha \,:=\, \ds\frac{\beta}{2} \bigg( \frac{\eps}{6s} \bigg)^s \bigg( \ds\frac{\eps^2}{32} \bigg)^{s^2}.
\]
Let us assume without loss of generality that $|W_1| \ge |W_2|$, and say that a copy of $K_{s,2s}$ in the bipartite graph with parts $Y$ and $W_1 \cup W_2$ is \emph{balanced} if it has $s$ vertices in each of $W_1$ and $W_2$. Our aim is to show that at least $\alpha |Y|^s$ sets $Z\subseteq Y$ of size $s$ extend to a balanced copy of $K_{s,2s}$. We will do so in two steps: we will prove a lower bound on the total number of balanced copies of $K_{s,2s}$, and an upper bound on the number rooted at a given $s$-set in $Y$. By the pigeonhole principle, this will be enough to give the result.

To prove a lower bound on the number of balanced copies of $K_{s,2s}$, we will apply Theorem~\ref{lem:densbip} to the following random bipartite graph $\mathbf{H}$. Let $\phi \colon W_2 \to W_1$ be a uniformly chosen random injective map, and let $Y' \subset Y$ be a uniformly chosen random subset of size $2pn$. Let $\mathbf{H}$ be the bipartite graph with parts $Y'$ and $W_1$, and edge set 
$$E(\mathbf{H}) = \big\{ yw \in G[Y',W_1] \,:\, y\phi^{-1}(w)\in G[Y',W_2] \big\}.$$ 
(If $\phi^{-1}(w)$ is not defined, we say the pair $y\phi^{-1}(w)$ is not in $G$.) Note that each copy of $K_{s,s}$ in $\mathbf{H}$ corresponds to a balanced copy of $K_{s,2s}$. 

We claim first that, with high probability, 
\begin{equation}\label{eq:eHbound}
e(\mathbf{H}) \, \ge \, \frac{\eps^2 p^4 n^2}{2} \, \ge \, \frac{\eps^2p^2}{32} \cdot v(\mathbf{H})^2.
\end{equation}
To prove this, let $y \in Y'$, and recall that $y$ has at least $\eps p^2 n$ neighbours in each of $W_1$ and $W_2$, and that $|W_2| \le |W_1| \le 2pn$. Since $\phi$ is a random map, the degree $d_\mathbf{H}(y)$ of $y$ in $\mathbf{H}$ is hypergeometrically distributed with mean at least $(\eps p / 2 ) \cdot \eps p^2 n$, and it follows (by Hoeffding's inequality, and the fact that $p = n^{-o(1)}$) that
$$\Pr\bigg( d_\mathbf{H}(y) \le \frac{\eps^2 p^3 n}{4} \bigg) \, \ll \, \frac{1}{n}.$$ 
By the union bound, and recalling that $|Y'| = 2pn$, the claimed bound on $e(\mathbf{H})$ follows.

Let $\mathbf{K}$ denote the number of copies of $K_{s,s}$ in $\mathbf{H}$. We claim that 
\begin{equation}\label{eq:ExK}
\Exp[ \mathbf{K} ] \, \ge \, \ds\frac{\beta}{2} \bigg( \ds\frac{\eps^2p^2}{32} \bigg)^{s^2} (2pn)^{2s}.
\end{equation}
To prove this, observe that if~\eqref{eq:eHbound} holds then Theorem~\ref{lem:densbip}, applied with $\rho=\eps^2p^2/32$, implies that $\mathbf{K}\ge\beta\rho^{s^2}v(H)^{2s}$. Since~\eqref{eq:eHbound} holds with probability greater than $1/2$, and $v(H)\ge|Y'|= 2pn$, the bound~\eqref{eq:ExK} follows immediately.

We will next show that the number of balanced copies of $K_{s,2s}$ is at least
\begin{equation}\label{eq:makebip:numks2s}
\bigg( \frac{|Y|}{3pn} \bigg)^s \bigg( \frac{\eps pn}{2s} \bigg)^s \cdot \Exp [\mathbf{K}].
\end{equation}
To see this, simply note that for a given balanced copy $K$ of $K_{s,2s}$, the $s$-set is contained in the randomly chosen $Y'$  with probability
\[\binom{|Y| - s}{2pn - s} \binom{|Y|}{2pn}^{-1} \, \le \, \bigg( \frac{2pn}{|Y| - s} \bigg)^s  \, \le \, \bigg( \frac{3pn}{|Y|} \bigg)^s\,.\]
If this event occurs, then $K$ yields a copy of $K_{s,s}$ in $\mathbf{H}$ with probability
\[\frac{s!\binom{|W_1|-s}{|W_2|-s}\big(|W_2|-s\big)!}{\binom{|W_1|}{|W_2|}|W_2|!}=
\frac{s! (|W_1| - s)!}{|W_1|!} \, \le \, \frac{s!}{(\eps pn - s)^s} \, \le \, \bigg( \frac{2s}{\eps pn} \bigg)^s\,.\] 
where the first inequality uses the condition $|W_1|\ge\eps p n$ of the lemma, and the second uses the facts that $p=n^{-o(1)}$ and $n$ is sufficiently large. Thus $\Exp [\mathbf{K}]$ is at most the product of these two probabilities, times the number of choices for $K$, as claimed in~\eqref{eq:makebip:numks2s}.

Finally observe that, since no $s$-set of vertices of $Y$ has more than $2p^{s+1}n$ neighbours in either $W_1$ or $W_2,$ each $s$-set of vertices of $Y$ extends to at most $(2p^{s+1}n)^{2s}$ balanced copies of $K_{s,2s}$. It follows from~\eqref{eq:ExK} and~\eqref{eq:makebip:numks2s} that the number of $s$-sets in $Y$ which extend to at least one balanced copy of $K_{s,2s}$ is at least
\[\bigg( \frac{|Y|}{3pn} \bigg)^s \bigg( \frac{\eps pn}{2s} \bigg)^s \cdot \ds\frac{\beta}{2} \bigg( \ds\frac{\eps^2p^2}{32} \bigg)^{s^2} (2pn)^{2s} \cdot (2p^{s+1}n)^{-2s} \, = \, \alpha |Y|^s\,,\]
as required.
\end{proof}

Before proving Proposition~\ref{prop:slightly}, let us note several properties of $\Gamma=G(n,p)$ that hold with high probability if $p=n^{-o(1)}$. In the proof we will assume that all of these properties hold:
\begin{enumerate}[label=(A\arabic*)]
\item\label{DNF:nbh} For each $|S| = O(1)$ we have $\Big| \ds\bigcap_{u \in S} N_{\Gamma}(u) \Big| = \big( 1+o(1) \big) p^{|S|} n$.
\item\label{DNF:edgeint} For each $|U| = \Omega(pn)$ there are at most $p|U|^2$ edges in $U$, and there are at most $\frac{\log n}{p^2}$ vertices outside $U$ with more than $2p|U|$ neighbours in $U$.
\item\label{DNF:edgecross} For any disjoint sets $U$ and $V$ of size $\Omega( p n )$ there are $\big( 1+o(1) \big) p|U||V|$ edges between $U$ and $V$.
\end{enumerate}
In each case, the probability of failure can easily be shown to tend to zero using the Chernoff bound. 

\begin{proof}[Proof of Proposition~\ref{prop:slightly}]
Let $H$ be a cloud-forest graph with $s$ vertices, and let $\gamma > 0$. We claim that there exists $C=C(H,\gamma)$ such that the following holds with high probability: if $G$ is a $H$-free spanning subgraph of $G(n,p)$ with $\delta(G) \ge \big(\tfrac13+2\gamma\big)pn$, then $\chi(G)\le C$. Let $k_0 = 1 / \gamma$, let $d = \gamma / 6$, and let $\eps' > 0$ be sufficiently small. Let $\eps_0 < \eps'$ and $C'$ be the constants returned by Theorem~\ref{thm:2reg} with inputs $\eps'$ and $d$, and choose $\eps \le \eps_0/4$ sufficiently small.

Suppose that $\Gamma=G(n,p)$ satisfies assumptions~\ref{DNF:nbh},~\ref{DNF:edgeint} and~\ref{DNF:edgecross}, and the high probability events of Theorem~\ref{thm:2reg} and of the sparse minimum degree form of Szemer\'edi's Regularity Lemma. Let $G\subset\Gamma$ be an $H$-free graph with minimum degree $\big(\tfrac13+2\gamma\big)pn$. We will write $N(u)$ for the neighbourhood of $u$ in $G$, and $N_\Gamma(u)$ for the neighbourhood in $\Gamma$.

We begin by applying the sparse minimum degree form of Szemer\'edi's Regularity Lemma, with input $\delta=\tfrac13+2\gamma$, $d$, $\eps$ and $k_0$, to $G$. This gives us a partition of $V(G)$ into parts $V_0,V_1,\ldots,V_k$, where $k_0\le k\le k_1$ and $k_1=k_1(\delta,d,\eps,k_0)$ does not depend on $n$, and a reduced graph $R$ with
\begin{equation}\label{eq:deltaR}
\delta(R) \ge \bigg( \frac{1}{3} + \gamma \bigg) k\,.
\end{equation}
We now define sets $X_1,\ldots,X_k$ by
\begin{equation}\label{def:Xi:NR}
X_i := \bigg\{ v \in V(G) : \big| N^*_2(v) \cap V_i \big| \ge \bigg( \frac{1}{2} + d \bigg) |V_i| \bigg\}
\end{equation}
and let $X_0=V(G)\setminus\big(X_1\cup\dots\cup X_k\big)$. Let $\alpha' > 0$ be the constant returned by Lemma~\ref{lem:makebip} with input $\eps > 0$ and $s$, and set $\alpha = d^s \alpha'$.

Our first goal is to show that for each $i$ we have $\chi\big(G[X_i]\big)=O(1)$. We break the proof up into a series of claims, the first of which gives us the set $W_1$ that we will use in our application of Lemma~\ref{lem:makebip}.

\begin{claim}\label{clm:first} 
For every $u \in V(G)$, there exists $W_1 \subset N(u)$ of size $d^2pn / 6$ such that 
\begin{equation}\label{eq:C11}
|N(w)\cap W_1| \ge d^3p^2n / 24
\end{equation}
for every $w \in N^*_2(u)$, and 
\begin{equation}\label{eq:C12}
|N(w) \cap W_1|  \le 2d^2p^2 n
\end{equation}
 for every $w \in V(G)$.
\end{claim}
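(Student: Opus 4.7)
The plan is to choose $W_1 \subset N(u)$ uniformly at random among all subsets of size $\lceil d^2pn/6 \rceil$, and then verify that both bounds hold simultaneously with positive probability by two applications of Hoeffding's inequality combined with a union bound over $V(G)$.

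First I would record the easy estimates on $|N(u)|$. On the one hand, the minimum degree condition gives $|N(u)| \ge \bigl(\tfrac13+2\gamma\bigr)pn$, so the sampling ratio $|W_1|/|N(u)|$ is at most $d^2/2$ and at least $d^2/7$ for $n$ large. On the other hand, $N(u) \subseteq N_\Gamma(u)$, so property~\ref{DNF:nbh} gives $|N(u)| \le (1+o(1))pn$. For the upper bound~\eqref{eq:C12}, fix any $w \in V(G)$. Again by~\ref{DNF:nbh}, $|N(w)\cap N(u)| \le |N_\Gamma(w) \cap N_\Gamma(u)| \le (1+o(1))p^2n$, and so the hypergeometric random variable $|N(w)\cap W_1|$ has expectation at most $(1+o(1))p^2 n \cdot d^2/2 \le d^2 p^2 n$ for $n$ large. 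Hoeffding's inequality then gives
\[
\Pr\bigl(|N(w) \cap W_1| > 2 d^2 p^2 n\bigr) \,\le\, e^{-\Omega(d^2 p^2 n)},
\]
and since $p = n^{-o(1)}$ we have $p^2 n \gg \log n$, so a union bound over the $n$ choices of $w$ makes the total failure probability $o(1)$.

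For the lower bound~\eqref{eq:C11}, fix $w \in N^*_2(u)$. By definition of the $(d,p)$-robust second neighbourhood, $|N(w) \cap N(u)| \ge dp^2 n$. Hence $|N(w) \cap W_1|$ has expectation at least $dp^2 n \cdot (d^2 pn/6)/|N(u)| \ge dp^2 n \cdot d^2/12 = d^3 p^2 n / 12$, using the upper bound $|N(u)|\le 2pn$. Hoeffding's inequality applied in the lower tail direction yields
\[
\Pr\bigl(|N(w) \cap W_1| < d^3 p^2 n / 24\bigr) \,\le\, e^{-\Omega(d^3 p^2 n)},
\]
and again, since $p^2 n \gg \log n$, a union bound over at most $n$ second-neighbourhood vertices makes the total failure probability $o(1)$.

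Combining the two union bounds, with positive probability the random set $W_1$ simultaneously satisfies~\eqref{eq:C11} for every $w \in N^*_2(u)$ and~\eqref{eq:C12} for every $w \in V(G)$; in particular such a set exists, proving the claim. The calculation is routine: the only point to be slightly careful about is choosing the sampling size $d^2pn/6$ so that both the expected upper bound comes out below $d^2p^2n$ (using the typical $p^2 n$ size of a two-step neighbourhood intersection in $\Gamma$) and the expected lower bound comes out above $d^3p^2n/12$ (using the lower bound on $|N(u)|$); the stated target thresholds $d^3 p^2 n / 24$ and $2 d^2 p^2 n$ then leave enough room for a straightforward Hoeffding deviation.
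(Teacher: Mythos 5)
Your proof is correct and follows essentially the same route as the paper: sample $W_1$ uniformly at random from $N(u)$ with the prescribed size, bound $|N(u)|$ above and below via~\ref{DNF:nbh} and the minimum degree, bound $|N(u)\cap N(w)|$ below (via the definition of $N^*_2$) and above (via~\ref{DNF:nbh}), compute hypergeometric expectations, and close with Hoeffding plus a union bound over $V(G)$, using $p=n^{-o(1)}$ to make $p^2 n\gg\log n$. The constants and the two-sided concentration argument match the paper's proof; this is the intended argument.
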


\begin{claimproof}[Proof of Claim~\ref{clm:first}]
Choose a set $W_1 \subset N(u)$ of size $d^2pn / 6$ uniformly at random. By the definition of $N^*_2(u)$, we have $|N(u) \cap N(w)| \ge d p^2 n$ for every $w \in N^*_2(u)$. Moreover, by assumption~\ref{DNF:nbh}, and our bound on $\delta(G)$, we have $pn/3 \le |N(u)| \le 2 p n$ and $|N(u) \cap N(w)| \le 2p^2 n$ for every $w\in V(G)$. It follows that $|N(w)\cap W_1|$ is a hypergeometrically distributed random variable with expected value at least $d^3 p^2 n/12$ for every $w\in N^*_2(u)$, and at most $d^2p^2 n$ for every $w \in V(G)$. 

Since $p = n^{-o(1)}$, Hoeffding's inequality and the union bound tell us that with high probability $|N(w)\cap W_1|$ satisfies~\eqref{eq:C11} for every $w \in N^*_2(u)$, and~\eqref{eq:C12} for every $w\in V(G)$. Thus there must exist some such set $W_1 \subset N(u)$, as claimed.
\end{claimproof}

We now show that for each $i$ and pair $u,v\in X_i$, there are many sets in $V_i$ which are $(u,v)$-completable. 

\begin{claim}\label{clm:second} 
For every $i \in [k]$, and every pair $u,v \in X_i$, there exist at least $\alpha |V_i|^s$ subsets $Z \subset V_i$ of size $s$ that are $(u,v)$-completable. 
\end{claim}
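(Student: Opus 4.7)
The plan is to apply Lemma~\ref{lem:makebip} with $W_1\subseteq N(u)$, $W_2\subseteq N(v)$, and $Y$ a large subset of $N^*_2(u)\cap N^*_2(v)\cap V_i$. The key observation is that because $u,v\in X_i$, inclusion--exclusion applied to~\eqref{def:Xi:NR} gives $|N^*_2(u)\cap N^*_2(v)\cap V_i|\ge 2d|V_i|$; every vertex in this triple intersection has, by the very definition of $N^*_2$, at least $dp^2n$ common neighbours with each of $u$ and $v$, which is precisely the density hypothesis the lemma needs on its $Y$-side. The output will be many $s$-subsets of $Y$ with at least $s$ common neighbours in each of $W_1\subseteq N(u)$ and $W_2\subseteq N(v)$, which is exactly the $(u,v)$-completability condition.

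First I will produce $W_1\subseteq N(u)$ and $W_2\subseteq N(v)$ by two applications of Claim~\ref{clm:first}, so each has size $d^2pn/6$, satisfies~\eqref{eq:C11} at every vertex of the corresponding robust second neighbourhood, and satisfies~\eqref{eq:C12} globally. Next I will enforce pairwise disjointness by trimming: replace $W_2$ by $W_2':=W_2\setminus W_1$ and set $Y:=(N^*_2(u)\cap N^*_2(v)\cap V_i)\setminus(W_1\cup W_2')$. By~\ref{DNF:nbh} the overlap $|N(u)\cap N(v)|=(1+o(1))p^2n$ is $o(pn)$, so $|W_2'|$ still lies in the required window $[\eps pn,2pn]$ of Lemma~\ref{lem:makebip}; and since $|V_i|=\Theta(n)$ but $|W_1\cup W_2'|=O(pn)=o(n)$, we keep $|Y|\ge d|V_i|\ge 2pn$ for $n$ sufficiently large.

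The verification of the remaining hypotheses of Lemma~\ref{lem:makebip} is then routine. The lower bound $|N(y)\cap W_1|\ge\eps p^2n$ for $y\in Y\subseteq N^*_2(u)$ is immediate from~\eqref{eq:C11} provided $\eps\le d^3/24$, and the analogue for $W_2'$ only loses the negligible $|W_1\cap W_2|=O(p^2n)$. The upper bound $|\bigcap_{v\in S}N(v)\cap W_i|\le 2p^{s+1}n$ for $|S|=s$ follows from $W_1\subseteq N(u)$ (respectively $W_2'\subseteq N(v)$) together with the $(s+1)$-fold neighbourhood estimate~\ref{DNF:nbh}. Lemma~\ref{lem:makebip} then supplies $\alpha'|Y|^s$ subsets $Z\subseteq Y\subseteq V_i$ of size $s$ with at least $s$ common neighbours in each of $W_1$ and $W_2'$, and each such $Z$ is $(u,v)$-completable because $W_1\subseteq N(u)$ and $W_2'\subseteq N(v)$. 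Combined with $|Y|^s\ge (d|V_i|)^s$ and the definition $\alpha=d^s\alpha'$, this yields the promised $\alpha|V_i|^s$ completable sets.

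The main obstacle here is not really mathematical but bookkeeping: the triple $(W_1,W_2,Y)$ must be pairwise disjoint to feed Lemma~\ref{lem:makebip}, yet $N(u)$, $N(v)$ and $V_i$ can genuinely overlap. All the necessary overlap estimates are dominated by~\ref{DNF:nbh} and the assumption $p=o(1)$, but the parameters must be ordered carefully --- in particular $\eps$ must be chosen small relative to $d$ (e.g.\ $\eps\le d^3/48$) so that both the size window $[\eps pn,2pn]$ and the density hypothesis for $W_1$ and $W_2'$ survive the trimming with room to spare.
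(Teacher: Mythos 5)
Your overall strategy is the same as the paper's: apply Lemma~\ref{lem:makebip} with $W_1\subset N(u)$ from Claim~\ref{clm:first}, $W_2\subset N(v)$, and $Y$ inside $N^*_2(u)\cap N^*_2(v)\cap V_i$, with $|Y|\ge 2d|V_i|$ by inclusion--exclusion. The divergence is in the choice of $W_2$, and it is there that your bookkeeping breaks. You take $W_2$ to be a second Claim~\ref{clm:first} set of size $d^2pn/6$ and trim to $W_2'=W_2\setminus W_1$, then claim the density loss is ``the negligible $|W_1\cap W_2|=O(p^2n)$.'' But the lower bound you start from is $|N(y)\cap W_2|\ge d^3p^2n/24$, which is of the \emph{same order} as your estimate $(1+o(1))p^2n$ for $|W_1\cap W_2|$; since $d=\gamma/6$ is small, $d^3p^2n/24-(1+o(1))p^2n$ is negative, so the conclusion $|N(y)\cap W_2'|\ge\eps p^2n$ does not follow. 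The correct fix within your setup is to bound the actual loss $|N(y)\cap W_1\cap W_2|\le|N_\Gamma(y)\cap N_\Gamma(u)\cap N_\Gamma(v)|=(1+o(1))p^3n=o(p^2n)$ using~\ref{DNF:nbh} for the triple $\{y,u,v\}$ together with $W_1\subset N(u)$, $W_2\subset N(v)$; that really is negligible. The paper sidesteps the issue entirely by taking $W_2=N(v)\setminus W_1$, so the lower bound on $|N(y)\cap W_2|$ comes directly from the definition of $N^*_2(v)$ (giving $dp^2n$, not the much smaller $d^3p^2n/24$), while the subtraction $|N(y)\cap W_1|\le 2d^2p^2n$ from~\eqref{eq:C12} is genuinely smaller since $d<1/2$.

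There is also a smaller oversight: you do not remove $u$ and $v$ from $Y$. Since $u\in N^*_2(u)$ always, and $u\in N^*_2(v)$ by~\ref{DNF:nbh} (because $|N(u)\cap N(v)|=(1+o(1))p^2n>dp^2n$), $u$ or $v$ can lie in $Y$. If $u\in S\subset Y$ then $\bigcap_{w\in S}N(w)\cap W_1\subset\bigcap_{w\in S}N_\Gamma(w)$, which~\ref{DNF:nbh} bounds only by $(1+o(1))p^{s}n$, not $2p^{s+1}n$ --- a factor $1/p$ too large for Lemma~\ref{lem:makebip}. The paper excludes $\{u,v\}$ from $Y$ precisely so that $S\cup\{u\}$ and $S\cup\{v\}$ have $s+1$ distinct vertices.
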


\begin{claimproof}[Proof of Claim~\ref{clm:second}]
Let $W_1 \subset N(u)$ be given by Claim~\ref{clm:first}, and set
\[W_2 = N(v) \setminus W_1 \qquad \text{and} \qquad Y = N^*_2(u) \cap N^*_2(v) \cap V_i \setminus \big( W_1 \cup W_2 \cup \{u,v\} \big)\,.\]
We will use Lemma~\ref{lem:makebip} to show that there exist at least $\alpha' |Y|^s$ sets $Z \subset Y$ of size $s$ whose common neighbourhoods intersect each of $W_1$ and $W_2$ in at least $s$ vertices, and which are therefore $(u,v)$-completable.

In order to apply Lemma~\ref{lem:makebip}, we need to check that $W_1$, $W_2$ and $Y$ satisfy the various conditions of the lemma. To do so, note first that 
\[|W_1| = \frac{d^2pn}{6} \qquad \textup{and} \qquad \bigg( \frac{1}{3} - \frac{d^2}{6} \bigg) pn \le |W_2| \le |N(v)| \le 2pn\]
by Claim~\ref{clm:first}, our lower bound on $\delta(G)$, and assumption~\ref{DNF:nbh}. Thus we have $\eps pn \le |W_i| \le 2pn$ for $i \in \{1,2\}$, as required.

Next, to bound $|Y|$, note that since $u,v \in X_i$, by inclusion-exclusion and~\eqref{def:Xi:NR} we have 
\[\big| N^*_2(u) \cap N^*_2(v)  \cap V_i \big| \, \ge \, 2 \bigg( \frac12 + d \bigg) |V_i| - |V_i| \, =\, 2d|V_i| \,.\]
Since $|W_1| + |W_2|+2 \le 4pn+2\le d|V_i|$ and $p = o(1)$, we obtain $|Y| \ge d|V_i| \ge 2pn$.

To bound $|N(y) \cap W_i|$, we use Claim~\ref{clm:first} and the fact that $Y \subset N^*_2(u) \cap N^*_2(v)$. Indeed, by~\eqref{eq:C11} we have $|N(y) \cap W_1| \ge d^3 p^2 n / 24 \ge \eps p^2 n$ for every $y \in Y \subset N^*_2(u)$, and by~\eqref{eq:C12}, together with the fact $W_2=N(v)\setminus W_1$, we have
$$|N(y) \cap W_2| \ge |N(y) \cap N(v)| - |N(y) \cap W_1| \ge d p^2 n - 2d^2p^2 n \, \ge \,  \eps p^2 n$$ 
for every $y \in Y \subset N^*_2(v)$. Finally, since $W_1\subset N(u)$, $W_2 \subset N(v)$ and $u,v \notin Y$, it follows from assumption~\ref{DNF:nbh} that 
$$\bigg| \bigcap_{w \in S} N(w) \cap W_i \bigg| \le 2p^{s+1}n$$ 
for $i \in \{1,2\}$ and every set $S \subset Y$ of size $s$, as required. 

Therefore, by Lemma~\ref{lem:makebip}, there exist at least $\alpha' |Y|^s \ge \alpha' (d|V_i|)^s = \alpha |V_i|^s$ sets $Z \subset Y \subset V_i$ of size $s$ that are $(u,v)$-completable, as claimed.
\end{claimproof}

We now show that if $G[X_i]$ has large chromatic number then there is a single set $Z\subset V_i$ which is $(u,v)$-completable for many edges $uv\in G[X_i]$. 

\begin{claim}\label{clm:third} For each $i \in [k]$, if $\chi\big( G[X_i] \big) > 2s/\alpha$ then there exists a set of edges $E \subset G[X_i]$ of average degree at least $2s$, and an $s$-set $Z \subseteq V_i$ which is $(u,v)$-completable for every $uv \in E$.
\end{claim}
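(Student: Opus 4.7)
The plan for Claim~\ref{clm:third} is to combine a classical degeneracy observation with a pigeonhole-style double count, using Claim~\ref{clm:second} as the key input. First I would recall the elementary fact that any graph whose chromatic number strictly exceeds $k$ contains a subgraph of minimum degree at least $k$, which is proved by iteratively deleting vertices of degree below $k$ and then greedily recolouring what remains. Applied to $G[X_i]$, whose chromatic number exceeds $2s/\alpha$, this yields a non-empty subgraph $G' \subset G[X_i]$ with $\delta(G') \ge 2s/\alpha$ (any harmless rounding of $2s/\alpha$ can be absorbed into the choice of constants and I will suppress it). By handshaking, $e(G') \ge (s/\alpha)\,v(G')$.

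Next I would double-count pairs $(uv, Z)$ such that $uv \in E(G')$, $Z$ is an $s$-subset of $V_i$, and $Z$ is $(u,v)$-completable. Claim~\ref{clm:second} guarantees that each edge $uv \in E(G')$ contributes at least $\alpha |V_i|^s$ such pairs, so the total number of pairs is at least $\alpha |V_i|^s \cdot e(G')$. Since there are at most $|V_i|^s$ candidate $s$-subsets of $V_i$, the pigeonhole principle supplies a single $s$-set $Z^* \subset V_i$ that is $(u,v)$-completable for at least $\alpha \cdot e(G') \ge s \cdot v(G')$ edges of $G'$. Let $E \subset E(G')$ be the set of these edges.

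To conclude, observe that $E \subset E(G')$, so the vertex set $V(E)$ of the endpoints of edges in $E$ satisfies $|V(E)| \le v(G')$. Hence the graph with vertex set $V(E)$ and edge set $E$ has average degree at least
\[\frac{2|E|}{|V(E)|} \, \ge \, \frac{2s \cdot v(G')}{v(G')} \, = \, 2s,\]
while by construction $Z^*$ is $(u,v)$-completable for every $uv \in E$, giving exactly the statement of the claim.

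I do not expect any real obstacle in this argument: Claim~\ref{clm:second} already supplies a constant-density of completable $s$-sets for each edge, and this matches the degeneracy bound cleanly in a single pigeonhole step, yielding simultaneously the edge set $E$ of large average degree and the common completion set $Z^*$.
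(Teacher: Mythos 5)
Your proof is correct and follows essentially the same route as the paper: extract a subgraph of minimum degree at least $2s/\alpha$ from the chromatic-number hypothesis, invoke Claim~\ref{clm:second} for each edge, and use a pigeonhole (equivalently, double-counting) step to find a single $s$-set $Z$ serving a constant $\alpha$-fraction of the edges, whose average degree is then at least $2s$. The paper phrases the pigeonhole step slightly more compactly (noting $|E|\ge\alpha|E'|$ and that $|V(E)|\le|V(E')|$), but the argument and the bookkeeping are the same.
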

\begin{claimproof}[Proof of Claim~\ref{clm:third}]
Since $G[X_i]$ is not $2s / \alpha$-colourable, it is not $\big(2s/\alpha - 1\big)$-degenerate, and so contains a subgraph of minimum degree, and hence also average degree, at least $2s / \alpha$. Let $E'$ be the edges of such a subgraph, and note that, by Claim~\ref{clm:second}, for each edge $uv \in E'$ there exist at least $\alpha |V_i|^s$ sets $Z \subset V_i$ of size $s$ that are $(u,v)$-completable. Therefore, by the pigeonhole principle, there exists a subset $E \subset E'$ and a set $Z \subset V_i$ of size $s$, such that $Z$ is $(u,v)$-completable for every $uv \in E$, and 
\[|E| \, \ge \, \bigg( \frac{\alpha |V_i|^s}{\binom{|V_i|}{s}} \bigg) \cdot |E'| \, \ge \, \alpha |E'|\,.\]
Since $E'$ has average degree at least $2s / \alpha$ it follows that $E$ has average degree at least $2s$, as claimed.
\end{claimproof}

We can now bound $\chi\big(G[X_i]\big)$ for each $i\in[k]$. 

\begin{claim}\label{clm:fourth} $\chi\big( G[X_i] \big) \le 2s / \alpha$ for each $1 \le i \le k$.
\end{claim}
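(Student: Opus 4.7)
I argue by contradiction. Suppose $\chi\big(G[X_i]\big) > 2s/\alpha$ for some $i \in [k]$. Then Claim~\ref{clm:third} supplies a set $E \subset G[X_i]$ of average degree at least $2s$, together with an $s$-set $Z \subset V_i$ that is $(u,v)$-completable for every $uv \in E$. The goal is to build a copy of $H$ in $G$, contradicting the $H$-freeness of $G$.

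I use the alternative characterisation of cloud-forest graphs: $V(H) = I \cup J \cup V(F')$ with $I$ and $J$ independent, $F'$ a forest, no edges between $I$ and $V(F')$, and every $J$-vertex having at most one neighbour in $V(F')$. Set $W := \{w \in V(G) \setminus Z : Z \subset N_G(w)\}$; by property~\ref{DNF:nbh} we have $|W| = (1+o(1))p^s n$, which is much larger than $s$.

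The copy of $H$ is produced in three stages. First, since $e(E) \ge s \cdot |V(E)| \ge v(F') \cdot |V(E)|$, Fact~\ref{prop:forest} embeds $F'$ into $E$ via some $\psi \colon V(F') \to V(E)$ (any isolated vertices of $F'$ are placed at unused members of $V(E)$, which exist because the average degree condition forces $|V(E)| \ge 2s+1$); crucially every $\psi(v)$ is incident to an $E$-edge. Second, I process the vertices of $J$ one at a time: for $j \in J$ with (unique) $V(F')$-neighbour $v$, the $(u,v)$-completability applied to an $E$-edge incident to $\psi(v)$ yields at least $s$ candidates in $W \cap N_G(\psi(v))$, and fewer than $s$ vertices are already forbidden (namely $\psi(V(F'))$ together with the previously chosen $J$-images, totalling at most $v(F') + |J|-1 < s$), so an unused $j^*$ is always available; for $j \in J$ with no $V(F')$-neighbour, pick any unused vertex of $W$, which is possible since $|W| \gg s$. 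Third, embed $I$ as $|I|$ distinct vertices of $Z \setminus V(F')^*$, which is possible because $|Z \cap V(F')^*| \le v(F') = s - |I| - |J|$, so $|Z \setminus V(F')^*| \ge |I|+|J| \ge |I|$.

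Finally, I verify the embedding. The edges of $F'$ are preserved by $\psi$, each $J$--$V(F')$ edge was realised in the second stage, and each $I$--$J$ edge is realised because every vertex of $J^* \subset W$ is adjacent to every vertex of $Z \supset I^*$. The three image sets $I^*$, $J^*$, $V(F')^*$ are pairwise disjoint since $W \cap Z = \emptyset$ and reuse was avoided at every step. This copy of $H$ contradicts the $H$-freeness of $G$. The main obstacle is the tight bookkeeping for distinctness: $|V(H)| = s$ exactly matches the number of candidates supplied by $(u,v)$-completability, so the greedy extension barely works, and the cloud-forest partition must be used precisely to align the three stages—in particular, the fact that $I$-neighbours of any $J$-vertex lie entirely in the image $Z$ is what lets Stage~2 avoid needing further compatibility conditions at Stage~3.
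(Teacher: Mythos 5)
Your overall strategy is exactly the paper's: take $E$ and $Z$ from Claim~\ref{clm:third}, embed the forest $F'$ into $E$ via Fact~\ref{prop:forest}, place $I$ in $Z$ avoiding the forest image (the same count $|Z| - v(F') \ge |I| + |J|$), and embed $J$ greedily using the at least $s$ common neighbours of $Z$ in $N(\psi(v))$ provided by completability, with at most $v(F') + |J| - 1 < s$ forbidden vertices at each step. That part is correct and matches the paper, with the bookkeeping made explicit.

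The one step that does not hold as written is your claim that $|W| = (1+o(1))p^s n$ for $W = \{w \in V(G)\setminus Z : Z \subset N_G(w)\}$. Assumption~\ref{DNF:nbh} controls common neighbourhoods in $\Gamma = G(n,p)$, not in the adversarially chosen subgraph $G$: since $G$ may discard most edges of $\Gamma$ subject only to the minimum degree condition, the common $G$-neighbourhood of a fixed $s$-set $Z$ could be far smaller than $p^s n$ (possibly almost empty); \ref{DNF:nbh} yields only the upper bound. This flaw affects only your treatment of $J$-vertices with no neighbour in $V(F')$, where you choose ``any unused vertex of $W$'' on the strength of $|W| \gg s$. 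The repair is immediate, and is what the paper implicitly does: such a vertex only needs to be adjacent to the images of its $I$-neighbours, which lie in $Z$, so the at least $s$ common neighbours of $Z$ in $N(u)$, for $u$ an endpoint of any edge of $E$ (which exists since $E$ is nonempty), already supply at least $s$ candidates in $W$, of which at most $s-1$ are forbidden. With that substitution your argument is complete and coincides with the paper's proof.
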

\begin{claimproof}[Proof of Claim~\ref{clm:fourth}]
Suppose that $\chi\big( G[X_i] \big) > 2s / \alpha$ for some $i \in [k]$, and let $E \subset G[X_i]$ and $Z \subseteq V_i$ be given by Claim~\ref{clm:third}. Thus $E$ has average degree at least $2s$, and for each $uv\in E$ the set $Z$ is $(u,v)$-completable. Letting $W$ be the set of vertices in edges of $E$, it follows that $Z$ has at least $s$ common neighbours in $N(u)$ for every vertex $u \in W$. We will show that $H \subset G$, which will contradict our assumption that $G$ is $H$-free, and hence prove the claim. 

Recall first that since $H$ is a cloud-forest graph (using the alternative definition), its vertex set can be partitioned into independent sets $I$ and $J$, and a forest $F'$, such that there are no edges from $V(F')$ to $I$ and each vertex of $J$ has at most one neighbour in $V(F')$.

By Fact~\ref{prop:forest}, $E$ contains $F'$. We now construct an embedding of $H$ into $G$ as follows. We embed the copy of $F'$ in $H$ into that in $E$. We then embed $I$ into vertices of $Z$ outside the copy of $F'$, and finally embed the vertices of $J$ greedily, preserving the property of having a graph embedding. We can embed $I$ to $Z$ because $Z$ has $s=v(H)$ vertices and there are no edges of $H$ from $V(F')$ to $I$. Finally, each vertex of $J$ has at most one neighbour in $V(F')$ and the rest of its neighbours are in $I$, so, since $Z$ has at least $v(H)$ common neighbours in $N(u)$ for each $u$ in the copy of $F'$, the greedy embedding of $J$ succeeds.
\end{claimproof}

It remains to prove that $X_0 = \emptyset$. We start by showing that if this is false, then there is a dense regular pair $(V_i,V_j)$ in $G$ and a substantial number of vertices with many neighbours in each of $V_i$ and $V_j$.

\begin{claim}\label{clm:fifth} 
If $X_0\neq\emptyset$, then there exists an $(\eps,d,p)$-regular pair $(V_i,V_j)$ and at least $2dpn / 3$ vertices with at least $dp|V_i| / 3$ neighbours in each of $V_i$ and $V_j$.
\end{claim}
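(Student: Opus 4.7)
The plan is to pick $u \in X_0$, classify for each $v \in N(u)$ the clusters $V_i$ in which $v$ has many neighbours, and then double-count to locate an $(\eps,d,p)$-regular pair $(V_i,V_j)$ for which many vertices of $N(u)$ have substantial degree on both sides.

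First I will use property~\ref{DNF:edgeint} with $U = V_i$ for each $i \in [k]$ (valid because $|V_i| \ge (1-\eps)n/k_1 = \Omega(n) \gg pn$) to deduce that all but at most $k_1 \log n/p^2 = o(pn)$ vertices $v \in V(\Gamma)$ satisfy $|N_\Gamma(v) \cap V_i| \le 2p|V_i|$ for every $i \in [k]$; call these \emph{typical}. For each typical $v \in N(u)$, set $L(v) := \{i \in [k] : |N(v) \cap V_i| \ge dp|V_i|/3\}$. The degrees of $v$ into clusters outside $L(v)$ sum to at most $\sum_i dp|V_i|/3 \le dpn/3$, and $|V_0| \le \eps n$, so combining with the minimum degree bound yields
\[\sum_{i \in L(v)} |N(v) \cap V_i| \ge \Big(\tfrac13+2\gamma\Big)pn - \eps n - \tfrac{d}{3}pn \ge \Big(\tfrac13+\gamma\Big)pn,\]
using $d=\gamma/6$ and small~$\eps$. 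Combining with the typicality upper bound $|N(v) \cap V_i| \le 2p|V_i| \le 2pn/k$ then yields $|L(v)| \ge (\tfrac13+\gamma)k/2$.

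Next I count triples $(v,\{i,j\})$ with $v$ typical in $N(u)$ and $\{i,j\} \subseteq L(v)$. Writing $N^*$ for the number of typical vertices in $N(u)$, so $N^* \ge (1-o(1))(\tfrac13+2\gamma)pn$, the lower bound on $|L(v)|$ gives
\[\sum_v \binom{|L(v)|}{2} \ge N^* \binom{(\tfrac13+\gamma)k/2}{2} \ge (1-o(1))(\tfrac13+2\gamma)(\tfrac13+\gamma)^2 pn k^2/8.\]
At most $\eps k^2$ pairs $\{i,j\}$ are non-$(\eps,p)$-regular in the partition, contributing at most $\eps k^2 |N(u)| \le 2\eps pnk^2$ to the count; for $\eps$ sufficiently small (depending on $\gamma$) this is negligible compared to the main term. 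Averaging the remainder over the at most $\binom{k}{2}$ $(\eps,d,p)$-regular pairs in~$R$ therefore produces an $(\eps,d,p)$-regular pair $(V_i,V_j)$ for which at least $2dpn/3 = \gamma pn/9$ typical vertices $v\in N(u)$ have $\{i,j\} \subseteq L(v)$, i.e.\ have at least $dp|V_i|/3$ neighbours in each of $V_i$ and $V_j$.

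The main technical point to watch is the upper bound $|N_\Gamma(v)\cap V_i|\le 2p|V_i|$ for typical~$v$: although the partition $V_0,\dots,V_k$ depends on the subgraph $G$, property~\ref{DNF:edgeint} is stated uniformly over all sets of size $\Omega(pn)$ in $\Gamma$, so it applies to whichever partition the Regularity Lemma produces. Without such uniform control, the resulting lower bound on $|L(v)|$ would only be linear in $pk$, which would collapse the double-counting argument in the sparse regime $p=o(1)$.
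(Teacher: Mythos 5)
There is a genuine gap, and it is fatal rather than cosmetic: your argument never uses the defining property of $X_0$. You pick $u\in X_0$, but from then on you only use the minimum degree of $G$ (and properties of $\Gamma$), so if your proof were correct it would establish the conclusion of Claim~\ref{clm:fifth} for \emph{every} spanning subgraph $G\subset G(n,p)$ with $\delta(G)\ge(\tfrac13+2\gamma)pn$. That statement is false. Take $G$ to be all edges of $G(n,p)$ between two halves $A,B$ of the vertex set: it has minimum degree about $pn/2$, but every $(\eps,d,p)$-regular pair $(V_i,V_j)$ must (by lower-regularity) have one cluster almost entirely inside $A$ and the other almost entirely inside $B$, and then no vertex can have $dp|V_i|/3$ neighbours in both sides once $\eps\ll d$ (a vertex of $A$ has all its neighbours in $B$, hence at most roughly $2\eps p|V_i|$ of them in the mostly-$A$ cluster). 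Since the conclusion of Claim~\ref{clm:fifth} feeds into Claims~\ref{clm:sixth} and~\ref{clm:seventh} to produce $K_{s,s,s}\subset G$, it simply cannot follow from the degree condition alone; the hypothesis that $N_2^*(u)$ misses a $(\tfrac12-d)$-fraction of every cluster is what the paper uses to squeeze the edges leaving $N(u)$ into half-sized sets $V_j'$, forcing some pair $ij\in E(R)$ to carry combined density exceeding $(1+5d)p$, from which the $2dpn/3$ vertices with large degree to both sides are extracted.

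The place where this omission surfaces in your write-up is the averaging step. You discard only the at most $\eps k^2$ pairs that fail $(\eps,p)$-regularity, but the claim requires an $(\eps,d,p)$-regular pair, i.e.\ an edge of the reduced graph $R$, which additionally demands lower-regularity at density about $dp$. Most pairs of the partition may be $(\eps,p)$-regular yet far too sparse to be edges of $R$, and nothing in your double count prevents all of the mass $\sum_v\binom{|L(v)|}{2}$ from sitting on such sparse pairs (in the bipartite example above, for $v\in A$ the set $L(v)$ consists of clusters inside $B$, so every pair counted has density zero). To repair this one must bring in both $\delta(R)\ge(\tfrac13+\gamma)k$ and the $X_0$-property of $u$, which is exactly what the paper's proof does. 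A further, smaller slip: in bounding $\sum_{i\in L(v)}|N(v)\cap V_i|$ you subtract $\eps n$ for the neighbours in $V_0$, but since $p=o(1)$ and $\eps$ is a fixed constant we have $\eps n\gg\gamma pn$, so the displayed inequality fails for large $n$; the correct bound is via~\ref{DNF:edgeint}/\ref{DNF:edgecross}, giving at most about $2p|V_0|\le 2\eps pn$ such neighbours for typical vertices.
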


\begin{claimproof}[Proof of Claim~\ref{clm:fifth}]
Suppose that $u \in X_0$, and recall that therefore
$$\big| N^*_2(u) \cap V_j \big| < \bigg( \frac{1}{2} + d \bigg) |V_j|$$
for every $j \in [k]$. Since $\delta(G) \ge pn / 3$, we can fix a set $U \subset N(u)$ of size $pn/3$, and for each $j \in [k]$, choose a subset $V'_j \subset V_j \setminus U$ of size $\big(\tfrac12+d\big)|V_j|$ containing $N^*_2(u) \cap V_j \setminus U$. 

We will first show (via some simple counting) that there exists $ij\in E(R)$ such that 
\begin{equation}\label{eq:eGViVj}
e\big( G[U,V'_i] \big) + e\big( G[U,V'_j] \big) \, > \, (1+5d) p |U| |V_i'|.
\end{equation}
Indeed, let $i$ be such that $e\big( G[U,V'_i] \big)$ is maximised, and consider $j\in V(R)$, not necessarily adjacent to $i$. Note that if there exist $2k/3$ indices $j$ such that the inequality~\eqref{eq:eGViVj} holds, then by~\eqref{eq:deltaR}, which bounds the minimum degree of $R$, at least one among them satisfies $ij\in E(R)$ and we are done. So let us suppose (for a contradiction) that there exist $k/3$ indices $j$ such that~\eqref{eq:eGViVj} fails to hold. Then, by the maximality of $e\big( G[U,V'_i] \big)$, we have
\begin{equation}\label{eq:eGUV:lower}
\sum_{j=1}^k e\big( G[U,V'_j] \big) \, \le \, \frac{k}{3} \cdot \Big( \big( 1 + 5d \big) p |U| |V_i'| - e\big( G[U,V'_i] \big) \Big) + \frac{2k}{3} \cdot e\big( G[U,V'_i] \big),
\end{equation}

We now establish a lower bound on the same sum. We have $\delta(G) \ge \big(\tfrac13+2\gamma\big)pn$, and 
\[e\big( G[U,V(G) \setminus N^*_2(u)] \big) \, \le \, dp^2n^2 \, = \, \frac{\gamma pn |U|}{2}\]
by the definition of $N^*_2(u)$ and since $|U| = pn/3$ and $d = \gamma/6$. Moreover, 
\[e\big( G[U] \big) \le p|U|^2 \ll pn |U| \qquad \text{and} \qquad e\big( G[U,V_0] \big) \le 2p|U||V_0| \le 2\eps pn|U|\]
by our assumptions~\ref{DNF:edgeint} and~\ref{DNF:edgecross}, and since $p = o(1)$. Now, since $N^*_2(u) \cap V_j \setminus U \subset V'_j$ for every $j \in [k]$, it follows from the above inequalities that
\begin{equation}\label{eq:eGUV:upper}
\sum_{j=1}^k e\big( G[U,V'_j] \big) \, \ge \, e\big( G[U,N^*_2(u) \setminus U] \big) - e\big( G[U,V_0] \big) \, \ge \, \bigg( \frac13 + \gamma \bigg) p n |U|.
\end{equation}
Now, combining~\eqref{eq:eGUV:lower} and~\eqref{eq:eGUV:upper}, we have
$$\bigg( \frac13 + \gamma \bigg) p n |U| \, \le \, \frac{k}{3} \cdot \Big( \big( 1 + 5d \big) p |U| |V_i'|  + e\big( G[U,V'_i] \big) \Big) \, \le \, \bigg( \frac{1}{3} + 2d \bigg) pn |U| ,$$
where the second inequality follows as $e\big( G[U,V'_i] \big) \, \le \, \big( 1 + d \big) p |U| |V'_i|$, 
by assumption~\ref{DNF:edgecross}, and since $|V_i'| = \big(\tfrac12+d\big)|V_i| \le \big(\tfrac12+d\big)n/k$. But recalling that $d = \gamma/6$, we see that this is a contradiction, and thus we have proved that there is a pair $ij \in E(R)$ for which~\eqref{eq:eGViVj} holds. 

Let us fix such a pair, and set
\begin{equation*}
U' \, = \, \bigg\{ w \in U : \min\big\{ |N(w) \cap V_i'|, |N(w) \cap V_j'| \big\} \ge \frac{dp|V_i|}{3} \bigg\}.
\end{equation*}
In order to prove the claim, it will suffice to show that $|U'| \ge 2d|U| = 2dpn/3$. Set
$$c \, := \, \frac{e\big( G[U,V'_i] \big)}{p \cdot |U| |V_i'|},$$
and observe that $c \le 1 + d$ by assumption~\ref{DNF:edgecross}. We claim that 
\begin{equation}\label{eq:Vineighbours}
\big| \big\{ w \in U : \ |N(w) \cap V_i'| \ge dp|V_i'| \big\} \big| \, \ge \, (c - 2d)|U|
\end{equation}
and 
\begin{equation}\label{eq:Vjneighbours}
\big| \big\{ w \in U : \ |N(w) \cap V_j'| \ge dp|V_i'| \big\} \big| \, \ge \, (1 - c + 4d)|U|,
\end{equation}
from which it will follow (by inclusion-exclusion, and since $|V_i'| > |V_i| / 3$) that we have $|U'| \ge 2d|U|$, as required. Suppose first that~\eqref{eq:Vineighbours} fails to hold, and observe that therefore
$$e\big( G[U,V'_i] \big) \, \le \, (c - 2d)|U| \cdot (1+d)p|V'_i| + (1 - c + 2d)|U| \cdot dp|V'_i| \, < \, cp|U||V'_i|$$
by assumption~\ref{DNF:edgecross}, which contradicts the definition of $c$. Similarly, if~\eqref{eq:Vjneighbours} fails to hold, then
$$e\big( G[U,V'_j] \big) \, \le \, (1 - c + 4d)|U| \cdot (1+d)p|V'_i| + (c - 4d)|U| \cdot dp|V'_i| \, = \, (1 - c + 5d)p|U||V'_i|,$$
again using assumption~\ref{DNF:edgecross}, which contradicts~\eqref{eq:eGViVj}. Hence, both~\eqref{eq:Vineighbours} and~\eqref{eq:Vjneighbours} hold, and so the claim follows. 
\end{claimproof}

We now use Claim~\ref{clm:fifth} to show that if $X_0\neq\emptyset$ then there exist a substantial number of vertices each of whose neighbourhoods is dense.

\begin{claim}\label{clm:sixth} 
If $X_0\neq\emptyset$, then there exists an $(\eps,d,p)$-regular pair $(V_i,V_j)$ and a set $W$ of size $dpn / 3$ with the following property. For every $w \in W$, there exists a graph $G_w \subset G\big[ N(w) \cap V_i, N(w) \cap V_j \big]$ with $dp|V_i|/2$ vertices and at least $2^{-6}d^3p^3|V_i|^2$ edges.
\end{claim}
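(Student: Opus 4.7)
The plan is to start from the pair $(V_i,V_j)$ and set $W_0$ of size at least $2dpn/3$ provided by Claim~\ref{clm:fifth}, and then to apply the inheritance-of-regularity theorem (Theorem~\ref{thm:2reg}) to pass to a slightly smaller set $W$ on which the desired density statement for $G_w$ will hold.

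Since the pair $(V_i,V_j)$ is $(\eps,d,p)$-regular and hence $(\eps,d,p)$-lower-regular, and since $\eps\le\eps_0$, Theorem~\ref{thm:2reg} applied with parameters $\eps'$ and $d$ gives that there are at most $C'\max\{1/p^2,\log n/p\}$ vertices $w\in V(\Gamma)$ for which the pair $(N_\Gamma(w)\cap V_i,N_\Gamma(w)\cap V_j)$ fails to be $(\eps',d,p)$-lower-regular in $G$. Separately, assumption~\ref{DNF:edgeint} applied to $U=V_i$ and $U=V_j$ shows that all but at most $2\log n/p^2$ vertices $w$ satisfy $|N_\Gamma(w)\cap V_\ell|\le 2p|V_\ell|$ for both $\ell\in\{i,j\}$. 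Since $p=n^{-o(1)}$, both exceptional sets have size $o(pn)$, so removing them from $W_0$ still leaves a set of size at least $dpn/3$; I take $W$ to be any subset of the remaining vertices of the right size.

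For each $w\in W$, I then read off the density of the bipartite graph $G[A,B]$ with $A:=N(w)\cap V_i$ and $B:=N(w)\cap V_j$. The lower bounds $|A|,|B|\ge dp|V_i|/3$ combined with the upper bounds $|N_\Gamma(w)\cap V_\ell|\le 2p|V_\ell|$ give $|A|\ge(d/6)|N_\Gamma(w)\cap V_i|$, and similarly for $B$. Having chosen $\eps'\le d/6$ at the outset, the $(\eps',d,p)$-lower-regularity applied with $X'=A$, $Y'=B$ yields
\[e(G[A,B])\;\ge\;(d-\eps')p|A||B|\;\ge\;\tfrac{d}{2}\cdot p\cdot(dp|V_i|/3)^2\;\ge\;2^{-6}d^3p^3|V_i|^2,\]
and setting $G_w:=G[A,B]$ produces a graph with $|A|+|B|\ge 2dp|V_i|/3\ge dp|V_i|/2$ vertices and the required edge count (if exactly $dp|V_i|/2$ vertices are wanted, one instead takes any $(dp|V_i|/4)$-sized subsets of $A,B$; the same lower-regularity argument, using $\eps'\le d/8$, still yields the required number of edges).

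The only real subtlety is bridging the gap between the $G$-neighbourhoods appearing in the statement of the claim and the $\Gamma$-neighbourhoods appearing in Theorem~\ref{thm:2reg}; this is handled entirely by the parameter choice $\eps'\le d/6$, which forces $A$ and $B$ to occupy non-negligible fractions of their respective $\Gamma$-neighbourhoods, so that the inherited lower-regularity condition applies to them directly. Once this setup is in place, the rest of the argument reduces to a union bound over the two exceptional sets and a single application of the definition of lower-regularity.
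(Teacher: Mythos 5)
Your proof is correct and follows essentially the same route as the paper: pass from Claim~\ref{clm:fifth} to $W$ by discarding the $n^{o(1)}$ vertices that violate~\ref{DNF:edgeint} or fail the inheritance of lower-regularity from Theorem~\ref{thm:2reg}, then extract a dense $G_w$ from the inherited lower-regular pair. The only differences are cosmetic: the paper routes through the Slicing Lemma with $\alpha=d/6$ where you apply the definition of lower-regularity directly, and your parenthetical (taking $dp|V_i|/4$-sized subsets of $A$ and $B$, with $\eps'\le d/8$) is the version actually needed, since the exact vertex count $dp|V_i|/2$ is used in the density computation in Claim~\ref{clm:seventh}.
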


\begin{claimproof}[Proof of Claim~\ref{clm:sixth}]
By Claim~\ref{clm:fifth} there is an $(\eps,d,p)$-regular pair $(V_i,V_j)$ and a set $U'$ of $2dpn/3$ vertices, each of which has at least $dp|V_i|/3$ neighbours in each of $V_i$ and $V_j$. We first prove that there exists a subset $W \subset U'$ of size $dpn / 3$ such that the following hold for every $w \in W$:
\begin{enumerate}[label=\abc]
\item\label{sixth:b} $|N_\Gamma(w) \cap V_i| \le 2p|V_i|$ and $|N_\Gamma(w) \cap V_j| \le 2p|V_j|$,\smallskip
\item\label{sixth:c} the pair $\big(N_\Gamma(w) \cap V_i,N_\Gamma(w) \cap V_j \big)$ is $(\eps',d,p)$-lower-regular,
\end{enumerate}
where $\eps' = \eps'(d) > 0$ was chosen earlier to be sufficiently small. This will be sufficient to prove the claim, because it follows, by the Slicing Lemma applied with $\alpha = d/6$, that $\big(N(w) \cap V_i, N(w) \cap V_j\big)$ is $(6\eps'/d,d/2,p)$-lower-regular, and hence
$$e\big( G[X,Y] \big) \, \ge \, \frac{d^3 p^3 |V_i|^2}{2^6}$$
for every $X \subset N(w) \cap V_i$ and $Y \subset N(w) \cap V_j$ with $|X| = |Y| = dp|V_i|/4$. 

We will obtain the set $W$ by removing from $U'$ those vertices which fail either condition~\ref{sixth:b} or~\ref{sixth:c}, and then taking an arbitrary subset of the correct size. We claim that there are only $n^{o(1)}$ such `bad' vertices in $U'$. To see that there are only $n^{o(1)}$ vertices $w \in U'$ such that $|N_\Gamma(w) \cap V_i| > 2p|V_i|$ or $|N_\Gamma(w) \cap V_j| > 2p|V_j|$, simply observe that assumption~\ref{DNF:edgeint} implies that the number of such vertices is at most $\frac{2\log n}{p^2} = n^{o(1)}$. To prove the corresponding statement for condition~\ref{sixth:c}, we apply Theorem~\ref{thm:2reg} with $X = V_i$ and $Y = V_j$. Recall that the pair $(V_i,V_j)$ is $(\eps,d,p)$-lower-regular, and therefore, by Theorem~\ref{thm:2reg}, there are at most $C' \max\big\{ \tfrac{\log n}{p},p^{-2} \big\} = n^{o(1)}$ vertices $w \in V(G)$ such that $\big(N_\Gamma(w) \cap V_i, N_\Gamma(w) \cap V_j\big)$ is not $(\eps',d,p)$-lower-regular, as required. This completes the proof of the claim.
\end{claimproof}

We are finally ready to show that $X_0=\emptyset$.  

\begin{claim}\label{clm:seventh} 
$X_0 = \emptyset$.
\end{claim}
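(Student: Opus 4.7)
The plan is to derive a contradiction by producing a copy of $K_{s,s,s}$ in $G$, using Claim~\ref{clm:sixth} as the engine: for each $w \in W$, the graph $G_w$ is a bipartite graph on $dp|V_i|/2$ vertices with density of order $p$, so by the supersaturation theorem of Erd\H{o}s and Simonovits (Theorem~\ref{lem:densbip}) it contains $\Omega\big( p^{s^2}(p|V_i|)^{2s} \big)$ copies of $K_{s,s}$. Each such copy consists of an $s$-set $A\subset V_i$ and an $s$-set $B\subset V_j$ with $A\cup B\subset N(w)$ and all edges between $A$ and $B$ present in $G$, i.e., $A\cup B$ spans a genuine copy of $K_{s,s}$ in $G$.

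Next I would count pairs $(w, K_{s,s})$ in two ways. Summing over $w \in W$ gives a lower bound of $|W|\cdot\Omega\big(p^{s^2}(p|V_i|)^{2s}\big) = \Omega\big( p^{s^2+2s+1} n^{2s+1} \big)$, while the total number of candidate $K_{s,s}$'s (with one part in $V_i$ and the other in $V_j$) is at most $|V_i|^s|V_j|^s \le (n/k)^{2s} = O(n^{2s})$. By pigeonhole there exists a single copy of $K_{s,s}$ on vertex sets $A\subset V_i$ and $B\subset V_j$ such that $A\cup B\subset N(w)$ for at least $\Omega\big(p^{s^2+2s+1} n\big) = n^{1-o(1)}$ vertices $w\in W$, since $p = n^{-o(1)}$.

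Call this set of witnesses $W^*$. Since $|W^*| \ge n^{1-o(1)} \gg 2s$, we may select $s$ vertices $C = \{w_1,\dots,w_s\} \subset W^* \setminus (A\cup B)$. By construction every vertex of $C$ is adjacent to every vertex of $A\cup B$ in $G$, and $A$ is completely joined to $B$ in $G$ as well. Thus $A\cup B\cup C$ is a copy of $K_{s,s,s}$ (as a subgraph) in $G$. Since $H$ is $3$-chromatic with $s$ vertices, $H$ embeds into $K_{s,s,s}$, and hence $H\subset G$, contradicting the assumption that $G$ is $H$-free. This forces $X_0 = \emptyset$.

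The main obstacle is choosing the parameters so that the supersaturation hypothesis is met: Theorem~\ref{lem:densbip} requires that the edge count exceeds $\beta^{-1} n_w^{2-1/s}$, which translates (given $G_w$ has $\Theta(p|V_i|)$ vertices and density $\Theta(p)$) to a condition that holds comfortably because $p = n^{-o(1)}$. Once this is in place, the remainder is a routine double-counting argument; no further regularity-based inheritance is required beyond Claim~\ref{clm:sixth}, which has already done the heavy lifting of turning the single assumption $X_0 \neq \emptyset$ into a family of dense bipartite neighbourhoods.
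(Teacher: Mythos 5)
Your proof is correct and takes essentially the same approach as the paper: apply the Erd\H{o}s--Simonovits supersaturation theorem to each $G_w$ to count copies of $K_{s,s}$, double-count the pairs $(w,K)$, use the pigeonhole principle to exhibit a single $K_{s,s}$ lying in the common neighbourhood of at least $s$ vertices of $W$, and so obtain a $K_{s,s,s}\supset H$ in $G$, a contradiction. The only cosmetic difference is that you explicitly verify the supersaturation hypothesis and explicitly remove $A\cup B$ before choosing the third class $C$ (the latter is automatic since $A\cup B\subset N(w)$ forces $w\notin A\cup B$).
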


\begin{claimproof}[Proof of Claim~\ref{clm:seventh}]
Suppose for a contradiction that $X_0\neq\emptyset$, and let $(V_i,V_j)$ and $W$ be given by Claim~\ref{clm:sixth}. We will use Theorem~\ref{lem:densbip} to find many copies of $K_{s,s}$ in $G_w$ for each $w \in W$, which gives a lower bound on the number of copies of $K_{1,s,s}$ with parts in (respectively) $W$, $V_i$ and $V_j$. This bound will be sufficiently large that, via an application of the pigeonhole principle, we can find a copy of $K_{s,s,s}$ in $G$. But $H \subset K_{s,s,s}$, so this gives the desired contradiction.

To spell out the details, let $\beta > 0$ be the constant returned by Theorem~\ref{lem:densbip} with input $s$, and recall that, for every $w \in W$, the graph $G_w \subset G\big[ N(w) \cap V_i, N(w) \cap V_j\big]$ has $dp|V_i|/2$ vertices and $d^3 p^3 |V_i|^2 / 2^6$ edges. By Theorem~\ref{lem:densbip}, it follows that $G_w$ contains at least
$$\beta \bigg( \frac{dp}{16} \bigg)^{s^2} \bigg( \frac{dp|V_i|}{2} \bigg)^{2s}$$
copies of $K_{s,s}$, and hence, recalling that $|W| = dpn/3$, it follows that the number of pairs $(w,K)$, where $w \in W$ and $K \subset G\big[ N(w) \cap V_i, N(w) \cap V_j\big]$ is a copy of $K_{s,s}$, is at least
\begin{equation}\label{eq:countingpairs:wK}
\frac{dpn}{3} \cdot \beta\bigg( \frac{dp}{16} \bigg)^{s^2} \bigg( \frac{dp|V_i|}{2} \bigg)^{2s} \ge \, \beta \bigg( \frac{dp}{16} \bigg)^{(s+1)^2} |V_i|^{2s} n \, \gg \, s |V_i|^{2s},
\end{equation}
since $p = n^{-o(1)}$. 

Finally, note that there are at most $|V_i|^{2s}$ choices for the graph $K$, and so there must exist some $K$ which is in the neighbourhood of at least $s$ distinct $u \in W$. In particular, we have a copy of $K_{s,s,s}$ in $G$, and hence a copy of $H$. This contradiction proves the claim.
\end{claimproof}

By Claims~\ref{clm:fourth} and~\ref{clm:seventh}, it follows that $\chi(G) \le 2sk / \alpha$, which completes the proof of the proposition.
\end{proof}

\section{Upper bounds for \texorpdfstring{$p$}{p} constant}\label{sec:pconst}

In this section we will prove the upper bound $\delta_\chi(H,p) \le \delta_\chi(H)$ for all $p > 0$ constant. Together with the matching lower bound from Proposition~\ref{prop:pconstant:construction}, this completes the proof of Theorem~\ref{thm:pconst}. Recall that the bound $\delta_\chi(H,p) \le 1 - \frac{1}{\chi(H) - 1}$ for all $H$ and any constant $p$ was proved in Corollary~\ref{cor:turan}, so that it remains to prove the following two propositions.

Recall that the decomposition family of a graph $H$ is the collection of bipartite graphs obtained from $H$ by removing $\chi(H) - 2$ independent sets. The following proposition generalises~\cite[Theorem~7]{ChromThresh}, which proved it for $p=1$.

\begin{prop}\label{prop:pconstant:middle}
Let $H$ be a graph with $\chi(H) = r \ge 3$, and let $0 < p \le 1$ be a constant. If $H$ has a forest in its decomposition family, then
\[
\delta_\chi(H,p) \, \le \, \delta_\chi(H)\,=\,\frac{2r-5}{2r-3}.
\]
That is, for every $\gamma > 0$, there exists $C = C(H,p,\gamma) > 0$ such that, with high probability, every $H$-free spanning subgraph $G \subset G(n,p)$ with $\delta(G) \ge \big( \frac{2r-5}{2r-3} + \gamma \big) pn$ satisfies $\chi(G) \le C$. 
\end{prop}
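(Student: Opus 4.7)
The plan is to adapt the proof of \cite[Theorem~7]{ChromThresh}, which establishes this bound for $p = 1$, to arbitrary constant $p \in (0,1]$. The key observation is that when $p$ is a fixed positive constant, the sparse minimum degree form of Szemer\'edi's Regularity Lemma applies to $G \subset G(n,p)$, and the associated Counting Lemma translates subgraph-containment and density statements between $G$ and its reduced graph $R$ exactly as in the dense case.

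First, I would apply the Regularity Lemma with input $\delta = \tfrac{2r-5}{2r-3} + \gamma$ and sufficiently small $\eps, d > 0$ (so that $\delta - d - \eps \ge \tfrac{2r-5}{2r-3} + \tfrac{\gamma}{2}$), obtaining a partition $V_0, V_1, \dots, V_k$ whose $(\eps,d,p)$-reduced graph $R$ has minimum degree at least $\bigl(\tfrac{2r-5}{2r-3} + \tfrac{\gamma}{2}\bigr)k$. Since $R$ is a fixed graph of bounded size satisfying the same minimum-degree hypothesis as in the $p=1$ theorem, I would invoke the structural dichotomy of \cite[Theorem~7]{ChromThresh} applied to $R$: either $R$ contains a configuration $J$ whose $t$-blow-up (with $t = t(H)$) contains $H$, or $R$ admits an auxiliary decomposition which, when lifted back to $G$, witnesses $\chi(G) \le C$ for some $C = C(H,\gamma)$. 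The relevant property of $H$ here is that it has a forest $F$ in its decomposition family, so by Fact~\ref{prop:forest} the forest embeds into any sufficiently dense bipartite pair in the blow-up, while the $r-2$ removed independent sets embed into the remaining parts.

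In the first alternative, the Counting Lemma gives $\Omega\bigl((n/k)^{v(J)}\bigr)$ copies of $J$ in $G$ with vertices lying in the corresponding clusters, and a standard pigeonhole/supersaturation argument within each cluster extracts the required $t$-blow-up of $J$, yielding a copy of $H$ in $G$ and contradicting $H$-freeness. Thus only the second alternative holds, giving $\chi(G) \le C$. The main obstacle will be the careful importation of the structural dichotomy of \cite{ChromThresh} into the reduced-graph framework, together with the pigeonhole step that converts the copies of $J$ produced by the Counting Lemma into an actual $t$-blow-up matching the $r-2$ independent sets plus forest decomposition of $H$. Since $p$ is a positive constant, no random-graph estimates are needed beyond the Regularity and Counting Lemmas themselves, and the proof reduces essentially to a deterministic translation of the dense case handled in \cite{ChromThresh}.
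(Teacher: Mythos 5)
There is a genuine gap, and it is exactly at the point you wave away with ``no random-graph estimates are needed beyond the Regularity and Counting Lemmas themselves.'' The proof of \cite[Theorem~7]{ChromThresh} is not carried out purely in the reduced graph: after partitioning the vertices of $G$ according to their degree profile into the clusters (as in~\eqref{def:XI1I2}), it crucially uses that if two vertices $u,v$ each have more than $\big(\tfrac12+\gamma\big)|V_i|$ neighbours in a cluster $V_i$, then by inclusion--exclusion they have at least $2\gamma|V_i|$ \emph{common} neighbours there, which is what feeds into the degeneracy arguments (Lemmas~\ref{pigeon} and~\ref{lem:extend}). For a subgraph $G\subset G(n,p)$ with constant $p<1$ this implication fails: $u$ and $v$ can each have degree $\big(\tfrac12+\gamma\big)p|V_i|$ into $V_i$ while their $\Gamma$-neighbourhoods in $V_i$ overlap in only about $p^2|V_i|$ vertices, so the adversary choosing $G$ can arrange that particular pairs have essentially no common $G$-neighbours in $V_i$. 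Your proposal has no mechanism to deal with such exceptional pairs, and neither the Regularity Lemma nor the Counting Lemma addresses them, since the problem lives at the level of individual vertex pairs inside $G$, not at the level of cluster densities.

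The paper's proof handles precisely this issue with a new whp statement about $G(n,p)$ (Lemma~\ref{lem:mostgood}, proved via Lemma~\ref{boundedweirdness} and Chernoff-type estimates): for every pair of sets $(U,W)$ with $|W|=\Omega(n)$ in which every $u\in U$ has at least $\big(\tfrac12+\gamma\big)p|W|$ neighbours in $W$, only $O(|U|)$ pairs of $U$ have fewer than $\gamma p^2|W|$ common neighbours in $W$. This allows the edges of $G[X_{I_1,I_2}]$ to be split into a graph $L$ of ``bad'' edges, shown to be $O(1)$-degenerate by Lemma~\ref{lem:mostgood}, and a graph $J$ of ``good'' edges (common neighbourhood at least $\gamma p^2|V_i|$ in every $V_i$, $i\in I_2$), to which the dense machinery of \cite{ChromThresh} then applies essentially verbatim. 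Without an analogue of Lemma~\ref{lem:mostgood} (or some replacement argument controlling pairs with atypically small common neighbourhoods), your reduction to the $p=1$ case does not go through; note also that additional whp properties of $G(n,p)$ beyond regularity are used even in the ``good'' part of the argument (e.g.\ that only $O(1)$ vertices have more than $(1+\gamma)p|V_i|$ neighbours in a cluster). A secondary, smaller inaccuracy is your description of the dense proof as a dichotomy applied to $R$ followed by a blow-up extraction; the actual argument of \cite{ChromThresh}, which the paper imports, runs the colouring argument on the vertex classes $X_{I_1,I_2}$ of $G$ itself, using cliques found in $R[I_1]$ or $R[I_2]$ together with the Counting Lemma and Lemmas~\ref{pigeon} and~\ref{lem:extend}.
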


Recall that a graph $H$ is near-acyclic if $\chi(H)=3$ and~$H$ admits a partition into a forest~$F$ and an independent set~$I$ such that every odd cycle of~$H$ meets~$I$ in at least two vertices. It is $r$-near-acyclic if it is possible to obtain a near-acyclic graph by removing $\chi(H)-3$ independent sets from $H$. The following proposition generalises~\cite[Theorem~34]{ChromThresh}.

\begin{prop}\label{prop:pconstant:bottom}
 Let $H$ be an $r$-near-acyclic graph with $\chi(H) = r \ge 3$, and let $0 < p \le 1$ be a constant. Then we have
 \[\delta_\chi(H,p) \, \le \, \delta_\chi(H)\,=\,\frac{r-3}{r-2}.\]
 That is, for every $\gamma > 0$, there exists $C = C(H,p,\gamma) > 0$ such that, with high probability, every $H$-free spanning subgraph $G \subset G(n,p)$ with $\delta(G) \ge \big( \frac{r-3}{r-2} + \gamma \big) pn$ satisfies $\chi(G) \le C$. 
\end{prop}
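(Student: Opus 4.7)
The plan is to adapt the proof of \cite[Theorem~34]{ChromThresh}, which establishes the $p=1$ case, to the constant-$p$ random setting. Since $p$ is a positive constant, the sparse regularity lemma produces a reduced graph $R$ of constant density, and the classical Counting Lemma applies, so structural properties can be transferred cleanly between $G$ and $R$ without the additional technicalities needed in the sparse-$p$ regime of Section~\ref{sec:slightly}.

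First, apply the sparse min-degree Szemer\'edi Regularity Lemma to $G$ with $\delta = \tfrac{r-3}{r-2} + \gamma$, small $d$ and $\eps$, and a large $k_0$, producing an $(\eps,p)$-regular partition $V_0 \cup V_1 \cup \cdots \cup V_k$ with bounded $k$ and an $(\eps,d,p)$-reduced graph $R$ satisfying $\delta(R) \ge \bigl(\tfrac{r-3}{r-2} + \tfrac{\gamma}{2}\bigr) k$. Second, following the template of Section~\ref{sec:slightly} and of \cite[Theorem~34]{ChromThresh}, define cover sets $X_1, \ldots, X_k \subset V(G)$ based on a robust joint-neighborhood pattern of vertices with respect to the clusters $V_i$, where the precise pattern is tailored to the near-acyclic structure witnessing that $H$ is $r$-near-acyclic. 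Third, show that $\chi(G[X_i]) \le C_0 = C_0(H, p, \gamma)$ for each $i \in [k]$: if not, the \L uczak--Thomass\'e topological argument (via Borsuk's theorem) on an auxiliary graph derived from $G[X_i]$ produces a topologically rich configuration which, combined with the Counting Lemma applied to the clusters around $V_i$ in $R$, yields a copy of $H$ in $G$, contradicting $H$-freeness. Fourth, show that $V(G)\setminus V_0 \subset X_1 \cup \cdots \cup X_k$: any vertex $u$ outside all $X_i$ has a neighborhood that is ``spread'' over the partition in a generic way, and a supersaturation-plus-counting argument locates a copy of $H$ through $u$, again contradicting $H$-freeness. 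Finally, the exceptional set $V_0$ (of size at most $\eps n$) is handled using a constant number of extra colors, giving $\chi(G) \le k C_0 + O(1)$ as required.

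The principal technical obstacle is Step 3: the \L uczak--Thomass\'e topological argument in \cite{ChromThresh} operates directly on a dense deterministic graph, but here it must interface with the regularity-and-counting framework. The Borsuk-theorem-derived substructure must be supported on $(\eps,d,p)$-regular pairs of the partition and must project to a cluster-blow-up large enough for the Counting Lemma to deliver a copy of $H$. Ensuring compatibility between the topological invariant (high Borsuk index) and the combinatorial requirements of the Counting Lemma---in particular, avoiding irregular pairs while preserving enough of the odd-cycle and near-acyclic witnessing structure of $H$---is the main delicate point, and is where the bulk of the proof effort will concentrate.
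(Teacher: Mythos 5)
Your plan correctly identifies the overall strategy (regularity lemma, partition into bounded pieces, invoke the $p=1$ argument from~\cite{ChromThresh} on each piece), but it misses the one genuinely new obstacle and also misidentifies the key tool from~\cite{ChromThresh}, so as written it would not go through.

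The missing idea is the following. Even for constant $p<1$, it is \emph{not} true that two vertices $u,v$ each having $>\bigl(\tfrac12+\gamma\bigr)p|W|$ neighbours in a large set $W$ must have $\Omega(p^2|W|)$ common neighbours in $W$; the naive inclusion–exclusion that works at $p=1$ fails because $N(u)\cap N(v)\cap W$ is compared against $N_\Gamma(u)\cap N_\Gamma(v)\cap W$, which is only about $p^2|W|$, not $|W|$. Thus there can be exceptional pairs with anomalously small common neighbourhoods. This is precisely why the paper partitions the edges of each $G[X_{I_1,I_2}]$ into $J$ (endpoints with $\ge\gamma p^2|V_i|$ common neighbours in each $V_i$, $i\in I_2$) and $L$ (the remaining edges), and it is why the paper needs the new Lemma~\ref{lem:mostgood}: with high probability, for every admissible $(U,W)$ only $O(|U|)$ pairs in $U$ have common $W$-neighbourhood below $\gamma p^2|W|$, so $L$ is $O(1)$-degenerate. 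Your proposal simply does not engage with this issue, and without it you cannot justify applying the $p=1$ embedding argument, which relies on edges having substantial common neighbourhoods inside the clusters you plan to embed $H$ into.

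Two further, smaller discrepancies. First, the ``\L uczak--Thomass\'e topological argument via Borsuk's theorem'' is not what the paper uses here: Borsuk's theorem shows up in~\cite{ChromThresh} only in the lower-bound \emph{constructions} (used in Lemma~\ref{lem:constconst} of Section~\ref{sec:constructconst}); the relevant upper-bound machinery for $r$-near-acyclic graphs is the paired VC-dimension argument together with the inductive double counting and Counting-Lemma-plus-pigeonhole steps, packaged here as Lemma~\ref{lem:magic} (with the hypothesis on edges' common neighbourhoods explicitly built in as item \ref{magic:d}). Second, the decomposition you describe — cover sets $X_i$ indexed by clusters and defined via a robust second-neighbourhood pattern, with a separate coverage step showing $X_0=\emptyset$ — is the scheme used in Section~\ref{sec:slightly} for cloud-forest graphs. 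The proof of Proposition~\ref{prop:pconstant:bottom} instead uses the finer \emph{partition} $\{X_{I_1,I_2}\}_{I_2\subseteq I_1\subseteq[k]}$ from~\eqref{def:XI1I2}, which makes the coverage step unnecessary and interfaces directly with the edge split into $J$ and $L$. Importing the Section~\ref{sec:slightly} scheme here is not obviously wrong, but you give no argument that it produces the configurations required by Lemma~\ref{lem:magic}, and in any case it does not address the common-neighbourhood problem above.
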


We emphasise that, in both propositions, the bound on the chromatic number is allowed to depend on $p$. Indeed, the construction used to prove~\cite[Theorem~1.3]{sparse} shows that, for every $\gamma > 0$ and $C$, if $\chi(H) = r \ge 4$ and $p = p(H,\gamma) > 0$ is sufficiently small, then with high probability $G(n,p)$ contains spanning $H$-free subgraphs with minimum degree at least $\big( \frac{r-2}{r - 1} - \gamma \big) pn$ and chromatic number at least $C$.

The proofs of both propositions are, with the exception of one key step in both proofs, relatively straightforward modifications of the proofs in~\cite{ChromThresh} of the corresponding bounds in the case $p = 1$. For this reason we will emphasise the new ideas required in the random setting, and refer the reader to~\cite{ChromThresh} for motivation of the lemmas we quote from~\cite{ChromThresh}. We will first (in Section~\ref{sec:newlemma}) state and prove a lemma which is the main new tool we will require; then (in Sections~\ref{sec:middle} and~\ref{sec:bottom}) we will describe how we combine this lemma with the method of~\cite{ChromThresh} in order to prove the propositions.

\subsection{A bound on the number of pairs with few common neighbours}\label{sec:newlemma}

Let $G$ be a subgraph of $G(n,p)$, and suppose that $U$ and $W$ are sets of vertices with the property that every $u \in U$ has more than $\big( \tfrac12 + \gamma \big) p|W|$ neighbours in $W$. If $p=1$, then this implies that every pair of vertices has many common neighbours in $W$, but for $p < 1$ there may exist some exceptional pairs with small common neighbourhood, even if $W$ is quite large. 

The following lemma, which is the key new step in the proof of both of the propositions above, says that (with high probability) few pairs in $U$ have small common neighbourhood in $W$ for every such pair $(U,W)$ with $|W| = \Omega(n)$. As in the previous section, we will write $N_\Gamma(u)$ for the neighbourhood of $u$ in $\Gamma = G(n,p)$ and $N(u)$ for the neighbourhood in $G$. 

\begin{lemma}\label{lem:mostgood} 
Given $p,\gamma,\alpha \in (0,1]$ there exists $C > 0$ such that, with high probability, the following holds. For every subgraph $G \subset G(n,p)$, and every pair of vertex sets $U$ and $W$ satisfying $|W| \ge \alpha n$ and 
\begin{equation}\label{eq:mostgood:condition}
|N(u)\cap W| \ge \bigg( \frac{1}{2} + \gamma \bigg) p |W|
\end{equation}
for every $u \in U$, we have
$$|N(u) \cap N (v) \cap W| \le \gamma p^2 |W|$$ 
for at most $C|U|$ pairs $u,v \in U$.
\end{lemma}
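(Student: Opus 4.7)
The plan is to combine a Cauchy--Schwarz lower bound on the total codegree with a random-graph upper bound, then bootstrap via a local analysis. Applying Cauchy--Schwarz to $\sum_{w \in W} d_G(w,U)$ together with the hypothesis $|N(u) \cap W| \ge (\tfrac{1}{2}+\gamma) p|W|$ for every $u \in U$, we obtain
\[
\sum_{(u,v) \in U^2} |N(u) \cap N(v) \cap W| \;=\; \sum_{w \in W} d_G(w,U)^2 \;\ge\; \Big(\tfrac{1}{2}+\gamma\Big)^{\!2} p^2 |U|^2 |W|,
\]
so the average codegree over ordered pairs in $U$ is at least $(\tfrac{1}{2}+\gamma)^2 p^2 |W|$, strictly above the threshold $\gamma p^2 |W|$. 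In the other direction I would establish, with high probability over $\Gamma = G(n,p)$, the uniform cap $|N_\Gamma(u) \cap N_\Gamma(v) \cap W| \le K p^2 |W|$ for every pair $u,v \in V(\Gamma)$ and every $W$ with $|W| \ge \alpha n$, where $K = K(p,\alpha)$ is a suitably large constant: this follows from Chernoff applied to the binomial $|N_\Gamma(u) \cap N_\Gamma(v) \cap W|$ of mean $\approx p^2 |W|$, with $K$ chosen so that the failure probability $\exp(-cKp^2\alpha n)$ absorbs the union-bound costs $2^n$ (over $W$) and $n^2$ (over pairs). Since $G \subseteq \Gamma$, the same bound caps every codegree in $G$.

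The main step is then a local argument for each $u \in U$. Setting $A := N(u) \cap W$ (so $|A| \ge (\tfrac{1}{2}+\gamma)p|W|$), a bad partner $v \in U$ must satisfy
\[
|N(v) \cap (W \setminus A)| \;\ge\; \Big(\tfrac{1}{2}+\gamma - \gamma p\Big)\, p|W|,
\]
which transfers to $|N_\Gamma(v) \cap (W \setminus A)|$. The expected value of the latter is $p|W \setminus A| \le p\bigl(1 - (\tfrac{1}{2}+\gamma)p\bigr)|W|$, so whenever $\gamma > (1-p)/2$ the required value strictly exceeds the mean by a fixed margin; a Chernoff tail bound with the exponent tuned to absorb the $2^n$ union bound over the choice of $W$ and $A$ then caps the number of bad partners of each $u$ by a constant $C'$, and summing over $u$ yields $|B| \le (C'/2)|U|$.

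The main obstacle is the regime $\gamma \le (1-p)/2$, where the local Chernoff step above fails because the required $v$-deviation is below the mean. To handle this, my plan is to complement the Cauchy--Schwarz lower bound with a matching upper bound
\[
\sum_{w \in W} d_\Gamma(w,U)^2 \;\le\; (1+o(1)) p^2 |U|^2 |W| + O(p|U||W|),
\]
proved whp uniformly over $U$ and $W$ by polynomial concentration, and then apply a higher-moment Markov inequality to the codegrees $k_\Gamma(u,v)$ around their mean $p^2 |W|$. Since $k_\Gamma(u,v)$ is a sum of independent Bernoullis whose centred fourth moment is $O(p^4|W|^2)$, this will bound the number of ``$\Gamma$-atypical'' pairs by $O(|U|^2/n^2) = O(|U|)$. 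The remaining $G$-bad but $\Gamma$-typical pairs I plan to control via an analysis of which $\Gamma$-edges are deleted in forming $G$: the total deletion budget from each $u$ is at most $(\tfrac{1}{2} - \gamma + o(1)) p|W|$, which limits how many pairs $(u,v)$ can lose enough of their common $\Gamma$-neighbourhood to become bad, and a double-counting argument then yields the required linear bound on $|B|$.
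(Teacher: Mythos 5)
Your argument for the regime $\gamma>(1-p)/2$ can be made to work (a single Chernoff event of probability $e^{-\Omega(n)}$ does not beat the $4^n$ union bound over $W$ and $A$, but the event that some fixed collection of $C'$ vertices $v$ all have $|N_\Gamma(v)\cap(W\setminus A)|$ exceeding its mean by a fixed fraction of $p|W|$ has probability $e^{-\Omega(nC')}$, which absorbs $4^n n^{C'}$ once $C'$ is large). The genuine gap is in the case $\gamma\le(1-p)/2$, which is exactly the case that matters in the applications, where $\gamma$ is small, and it occurs in two places. First, the fourth-moment/Markov step only bounds the \emph{expected} number of $\Gamma$-atypical pairs for a \emph{fixed} $W$; it gives no with-high-probability statement that is uniform over the $2^n$ choices of $W$ (and $U$ and $G$ are likewise chosen adversarially after $\Gamma$ is exposed), and a polynomial tail of order $n^{-2}$ per pair cannot survive such a union bound. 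This uniformity problem is precisely what the paper's Lemma~\ref{boundedweirdness} is built to solve: for every $W$ it produces a \emph{bounded} exceptional set $X_W$ outside of which all $\Gamma$-codegrees into $W$ are typical, and its proof makes the union bound over $W$ affordable by considering maximal families of \emph{disjoint} bad sets, whose failure events are independent and hence have probability $e^{-\Omega(tn)}$. Nothing in your plan plays this role.

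Second, and more fundamentally, the concluding ``deletion budget plus double counting'' step cannot work as an averaging argument. Deleting a single edge $uw$ simultaneously degrades the codegree of every pair $(u,v)$ with $w\in N_\Gamma(v)$, i.e.\ about $pn$ pairs at once, so a budget of $(\tfrac12-\gamma)p|W|$ deletions at $u$ can in principle ruin $\Theta(n)$ pairs through $u$; and your own Cauchy--Schwarz computation shows that the average $G$-codegree guaranteed by~\eqref{eq:mostgood:condition} is only about $(\tfrac14+\gamma)p^2|W|$, which is \emph{below} the typical $\Gamma$-codegree $p^2|W|$, so global counting is consistent with a constant fraction of all $|U|^2$ pairs being bad. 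What actually rules this out is a per-vertex pseudo-randomness statement about the adversarial surviving set: if $v$ is bad for $u$, then $N_\Gamma(v)$ must meet $N(u)\cap W$ (an arbitrary, $G$-dependent set of size at least $(\tfrac12+\gamma)p|W|=\Omega(n)$) in markedly fewer than $p\,|N(u)\cap W|$ vertices, and only boundedly many $v$ can do this. The paper proves exactly this (Claim~\ref{clm:mostgoodtwo}) by counting paths of length two with first edge in $G$ and second edge in $\Gamma$ against a bipartite discrepancy property of $\Gamma$ between linear-sized sets and bounded sets, and then finishes by inclusion--exclusion (Claim~\ref{clm:mostgoodone}), with no case split on $\gamma$. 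Your proposal contains no substitute for this mechanism, so the main case remains unproved.
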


The first step to proving this lemma is to find a bounded-size subset $X \subset U$ such that the $\Gamma$-neighbourhoods in $W$ of subsets of $U \setminus X$ are all well-behaved. The following lemma is quite a bit more general (though no harder to prove) than the result we need. Note that for this lemma we require $p<1$.

\begin{lemma} \label{boundedweirdness} 
Given $p\in(0,1)$ and $\delta \in (0,1]$, there exists $C > 0$ such that, with high probability, the following holds. For every set of vertices $W \subseteq V = V\big(G(n,p)\big)$, there exists a set of vertices $X_W \subseteq V$ with $|X_W| \le C$ such that 
\begin{equation}\label{eq:weirdness}
\Big| \bigcap_{v \in S} N_\Gamma(v)\, \cap\, W \Big| \, \in \, p^{|S|} |W| \, \pm\, \delta n
\end{equation}
for all $S \subseteq V \setminus X_W$. 
\end{lemma}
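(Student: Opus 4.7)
The plan is to choose a cutoff $k_0 = k_0(p, \delta)$ with $p^{k_0} \le \delta/3$ and design $X_W$ so that the concentration~\eqref{eq:weirdness} holds automatically for $|S| \le k_0$; the tail case $|S| > k_0$ is then handled by monotonicity. Indeed, for $|S| > k_0$ and any $S' \subseteq S$ of size $k_0$ (which still lies in $V \setminus X_W$ when $S$ does),
\[\Big| \bigcap_{v \in S} N_\Gamma(v) \cap W \Big| \le \Big| \bigcap_{v \in S'} N_\Gamma(v) \cap W \Big| \le p^{k_0}|W| + \delta n/2 \le \delta n,\]
and since $0 \le p^{|S|}|W| \le \delta n/3$, this forces $|\bigcap_{v \in S} N_\Gamma(v) \cap W| \in p^{|S|}|W| \pm \delta n$. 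So the task reduces to finding a bounded set $X_W$ such that no \emph{$(W,k)$-bad} $k$-set is contained in $V \setminus X_W$, where $S$ is $(W,k)$-bad if $|\bigcap_{v \in S} N_\Gamma(v) \cap W| \notin p^{|S|}|W| \pm \delta n/2$ and $|S| = k \le k_0$.

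For any fixed $(W,S)$ the intersection $\bigcap_{v \in S} N_\Gamma(v) \cap W$ is distributed as $\Bin(|W \setminus S|, p^{|S|})$, so Chernoff yields $\Pr(S \text{ is } (W,k)\text{-bad}) \le e^{-c'n}$ for some constant $c' = c'(p, \delta) > 0$. The key observation is that the bad events for pairwise disjoint $S_1, \ldots, S_C$ of size $k$ are \emph{almost} independent: decompose $|\bigcap_{v \in S_i} N_\Gamma(v) \cap W| = X_i^{\mathrm{free}} + X_i^{\mathrm{shared}}$, where $X_i^{\mathrm{free}}$ counts the $u \in W \setminus \bigcup_j S_j$ adjacent in $\Gamma$ to every vertex of $S_i$, and $X_i^{\mathrm{shared}} \le Ck_0$ counts the corresponding $u \in W \cap \bigcup_{j \ne i} S_j$. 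The $X_i^{\mathrm{free}}$'s depend on pairwise disjoint edge sets and are therefore mutually independent, so after absorbing the bounded shift $X_i^{\mathrm{shared}}$ into the Chernoff window the probability that \emph{all} $C$ sets $S_i$ are $(W,k)$-bad is at most $e^{-Cc'n}$.

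Union-bounding over the $2^n$ choices of $W$, the $k_0$ choices of $k$, and the at most $n^{k_0 C}$ pairwise disjoint $C$-tuples $(S_1, \ldots, S_C)$ gives
\[ \Pr\big(\exists\, W,\ k \le k_0,\ \text{pairwise disjoint } (W,k)\text{-bad } S_1, \ldots, S_C \big) \, \le \, k_0 \cdot 2^n \cdot n^{k_0 C} \cdot e^{-Cc'n}, \]
which is $o(1)$ once $C = C(p, \delta)$ is chosen with $Cc' > 2 \ln 2$. On this high-probability event, for each $W$ and $k$ let $X_{W,k}$ be the vertex union of a \emph{maximal} pairwise disjoint family of $(W,k)$-bad $k$-sets; this family has fewer than $C$ members, so $|X_{W,k}| < Ck_0$, and by maximality every $(W,k)$-bad set meets $X_{W,k}$. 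Then $X_W := \bigcup_{k \le k_0} X_{W,k}$ has $|X_W| \le Ck_0^2$, and every $S \subseteq V \setminus X_W$ with $|S| \le k_0$ fails to be $(W,k)$-bad and so satisfies~\eqref{eq:weirdness} with room to spare.

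The principal obstacle is that the number of candidate sets $W$ is $2^n$, which overwhelms any single Chernoff estimate of the form $e^{-c'n}$ once $\delta$ is small. The resolution is to demand the bad event on $C$ pairwise disjoint $k$-sets simultaneously: independence (after stripping off the bounded shared part) multiplies the exponent by $C$, and taking $C$ large in terms of $(p, \delta)$ buys enough slack to beat $2^n$. The maximal-matching step then produces a bounded vertex cover of the bad $k$-sets, which is the final ingredient making $|X_W|$ depend only on $p$ and $\delta$.
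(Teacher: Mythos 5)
Your proof is correct and follows essentially the same strategy as the paper's: pick a cutoff exponent beyond which monotonicity handles large $S$, take a maximal pairwise-disjoint family of `bad' small sets to form $X_W$, and use the (near-)independence of bad events on disjoint sets to get a failure probability of $e^{-\Omega(Cn)}$ that beats the $2^n$ union bound over $W$. The only real difference is cosmetic: you carefully decompose each count into free and shared parts to justify independence and split the bad families by exact set size $k$, whereas the paper works with a single maximal family of sets of size at most $\ell$ and simply asserts independence for disjoint sets.
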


\begin{proof}
Define $\ell = \ell(p,\delta)$ to be the least positive integer such that $p^{\ell} < \delta/2$. Given $W$, let $\mathcal{X}$ be a maximal family of disjoint vertex sets satisfying $|S| \le \ell$ and
\begin{equation}\label{eq:weird:bad}
\Big| \bigcap_{v\in S} N_\Gamma(v)\, \cap \, W \Big| \not\in p^{|S|}|W| \pm \frac{\delta n}{2}
\end{equation}
for each $S \in \X$. We claim that $X_W=\bigcup_{S\in \mathcal{X}}S$ satisfies the required conditions. Indeed, for any subset $S\subseteq V \setminus X_W$ with $|S|\le \ell$ it is immediate from the maximality of $\mathcal{X}$ that~\eqref{eq:weirdness} holds. On the other hand, if $|S|>\ell$ then let $S_0$ be an arbitrary subset of $S$ of cardinality $\ell$, and observe that
\[
0\, \le \, \Big|\bigcap_{v\in S}N_\Gamma(v)\, \cap \, W \Big| \, \le\, \Big|\bigcap_{v\in S_0}N_\Gamma(v)\, \cap \, W \Big|\, \le \, p^{\ell}|W| +\frac{\delta n}{2}\, \le \, \delta n,
\]
as required.

To show that, with high probability, $|X_W| \le C$ for every set $W$, note first that, by the Chernoff bound, 
\[
\Prob\bigg( \Big| \bigcap_{v\in S} N_\Gamma(v)\, \cap \, W \Big| \not\in p^{|S|}|W| \pm \frac{\delta n}{2} \bigg)\, \le\, e^{-\Omega(\delta n)}.
\]
These events are moreover independent for disjoint sets $S$, so if $\X$ contains $t$ sets for some sufficiently large constant $t$, the probability that all of them satisfy~\eqref{eq:weird:bad} is at most $e^{-n}$. Any $\X$ with $t$ sets gives $X_W$ with at most $C=t\ell$ vertices. The number of ways to choose such an $\X$ is at most $n^{O(1)}$, and the number of ways to choose $W$ is at most $2^n$, so by the union bound we conclude that with high probability, for all $W$ we have $|X_W|\le C$ as desired.
\end{proof}

We can now prove Lemma~\ref{lem:mostgood}.

\begin{proof}[Proof of Lemma~\ref{lem:mostgood}]
For $p=1$, if $U$ and $W$ satisfy the conditions of the lemma, then for any $u,v\in U$ we have $|N(u)\cap N(v)\cap W|\ge 2\gamma|W|-2>\gamma p^2|W|$, so the lemma statement is true. We thus assume from now on that $p<1$.

Fix $\delta = \delta(\alpha,\gamma,p) > 0$ sufficiently small, choose $C$ sufficiently large for Lemma~\ref{boundedweirdness} to hold, and suppose that $\Gamma=G(n,p)$ has the property described in that lemma. Let us assume also that there do not exists sets $S,T\subset V(\Gamma)$ such that 
$$|S| \ge \frac{\alpha\gamma pn}{5}, \qquad |T|\ge C  \qquad \text{and} \qquad  e\big( \Gamma[S,T] \big) \le \left( 1 - \frac{\gamma}{10} \right) p|S||T|,$$
and note that this also holds with high probability, by Chernoff's inequality. 

Now let $G\subset\Gamma$ and let $U$ and $W$ satisfy the conditions of the lemma. Let $X_W$ be the set given by Lemma~\ref{boundedweirdness}. Recall that for each $u\in U$ we have $\big|N(u)\cap W\big|\ge\big(\tfrac12+\gamma\big)p|W|$. For each $u\in U$ we define a set $X_u$ of all vertices~$v \in U \setminus X_W$ (so $N_\Gamma(u)\cap N_\Gamma(v)\cap W$ has about the expected size) such that $N(u)\cap N_\Gamma(v)\cap W$ is a bit smaller than expected:
\begin{equation}\label{def:Xu}
X_u \, := \, \left\{ v \in U \setminus X_W : | N(u) \cap N_\Gamma(v) \cap W|\le \bigg( \frac{1}{2} + \frac{3\gamma}{5} \bigg) p^2 |W| \right\}\,.
\end{equation}

\setcounter{claim}{0}

\begin{claim}\label{clm:mostgoodone} If $u \in U \setminus (X_W \cup X_v)$ and $v \in U \setminus (X_W \cup X_u)$, then
$$|N(u)\cap N(v) \cap W| \, \ge \, \gamma p^2 |W|.$$
\end{claim}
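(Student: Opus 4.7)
The claim is a direct inclusion--exclusion argument using Lemma~\ref{boundedweirdness} together with the definition~\eqref{def:Xu} of the sets $X_u$, $X_v$.

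First I would set up the inclusion--exclusion. Define
$A := N(u) \cap N_\Gamma(v) \cap W$ and $B := N_\Gamma(u) \cap N(v) \cap W$. Because $G \subset \Gamma$, both $A$ and $B$ are contained in $N_\Gamma(u) \cap N_\Gamma(v) \cap W$, and clearly $A \cap B = N(u) \cap N(v) \cap W$. Hence
$$|N(u) \cap N(v) \cap W| \,\ge\, |A| + |B| - |N_\Gamma(u) \cap N_\Gamma(v) \cap W|.$$

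Next I would bound each term on the right. Unpacking~\eqref{def:Xu}, the hypothesis $v \in U \setminus (X_W \cup X_u)$ means $v \notin X_W$ and $v \notin X_u$, which together force
$|A| > (\tfrac12 + \tfrac{3\gamma}{5}) p^2 |W|$; symmetrically $u \in U \setminus (X_W \cup X_v)$ gives $|B| > (\tfrac12 + \tfrac{3\gamma}{5}) p^2 |W|$. Since $u, v \notin X_W$, Lemma~\ref{boundedweirdness} applied with $S = \{u, v\}$ yields the matching upper bound $|N_\Gamma(u) \cap N_\Gamma(v) \cap W| \le p^2 |W| + \delta n$. Substituting into the displayed inequality gives
$$|N(u) \cap N(v) \cap W| \,\ge\, \frac{6\gamma}{5}\, p^2 |W| - \delta n.$$

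Finally, recall that $|W| \ge \alpha n$ and that $\delta = \delta(\alpha,\gamma,p)$ was chosen "sufficiently small" at the start of the proof of Lemma~\ref{lem:mostgood}; I would simply insist on $\delta \le \alpha \gamma p^2 / 5$, so that $\delta n \le (\gamma/5) p^2 |W|$ and the right-hand side above is at least $\gamma p^2 |W|$, as required. There is no serious obstacle in this particular claim --- it is a two-term inclusion--exclusion in which the Chernoff-style error from Lemma~\ref{boundedweirdness} is absorbed by the slack $3\gamma/5$ built into the definition of $X_u$. The substantive work of Lemma~\ref{lem:mostgood} will lie in the next step, where one must show that the sets $X_u$ are small on average over $u \in U$, using the minimum-degree assumption~\eqref{eq:mostgood:condition} together with the edge-distribution property of $\Gamma$ fixed at the outset.
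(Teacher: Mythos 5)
Your proposal is correct and is essentially identical to the paper's proof: the same two-term inclusion--exclusion with $A=N(u)\cap N_\Gamma(v)\cap W$ and $B=N_\Gamma(u)\cap N(v)\cap W$, the lower bounds coming directly from $u\notin X_v$, $v\notin X_u$ via~\eqref{def:Xu}, and the upper bound from Lemma~\ref{boundedweirdness} with the error $\delta n$ absorbed by choosing $\delta\le\alpha\gamma p^2/5$ and using $|W|\ge\alpha n$. No issues.
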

\begin{claimproof}[Proof of Claim~\ref{clm:mostgoodone}]
Since $u,v \not\in X_W$, and assuming we chose $\delta < \alpha \gamma p^2 / 5$, we have
\[
|N_\Gamma(u)\cap N_\Gamma(v)\cap W| 
\, \le \, p^2 |W|+\delta n
\, \le \, \bigg( 1 + \frac{\gamma}{5} \bigg) p^2 |W|.
\]
Also, since $u\not\in X_v$ and $v\not\in X_u$, it follows that 
$$\min\Big\{ | N(u)\cap N_\Gamma(v)\cap W |, \, | N(v)\cap N_\Gamma(u)\cap W| \Big\} \, \ge \, \bigg( \frac{1}{2} + \frac{3\gamma}{5} \bigg) p^2 |W|.$$ 
Moreover, both are subsets of $N_\Gamma(u)\cap N_\Gamma(v)\cap W$, and so, by inclusion-exclusion, 
\[
|N(u)\cap N(v) \cap W| \,\ge \, 2\bigg( \frac{1}{2} + \frac{3\gamma}{5} \bigg) p^2 |W| \, - \, \bigg( 1 + \frac{\gamma}{5} \bigg) p^2 |W| \, = \, \gamma p^2 |W|,
\]
as claimed.
\end{claimproof}

\begin{claim}\label{clm:mostgoodtwo}  $|X_u| \le C$ for every $u \in U \setminus X_W$. 
\end{claim}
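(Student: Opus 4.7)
The plan is to argue by contradiction using the discrepancy property of $\Gamma$ recorded at the start of the proof of Lemma~\ref{lem:mostgood}: namely, that there is no pair of sets with $|S|\ge\tfrac{\alpha\gamma pn}{5}$, $|T|\ge C$, and $e(\Gamma[S,T])\le(1-\tfrac{\gamma}{10})p|S||T|$. Suppose for contradiction that $u\in U\setminus X_W$ satisfies $|X_u|>C$, and set $S:=N(u)\cap W$. The hypothesis~\eqref{eq:mostgood:condition} of the lemma, together with $|W|\ge\alpha n$ and $\gamma\le 1$, gives $|S|\ge\bigl(\tfrac12+\gamma\bigr)p|W|\ge\tfrac{\alpha\gamma pn}{5}$, so applying the discrepancy property to the pair $(S,X_u)$ yields
\[
e\bigl(\Gamma[S,X_u]\bigr) \, > \, \Bigl(1-\tfrac{\gamma}{10}\Bigr)\,p\,|S|\,|X_u|\,.
\]

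On the other hand, the second bound will come straight from the definition~\eqref{def:Xu} of $X_u$: every $v\in X_u$ satisfies $|N_\Gamma(v)\cap S| = |N(u)\cap N_\Gamma(v)\cap W| \le \bigl(\tfrac12+\tfrac{3\gamma}{5}\bigr)p^2|W|$, and summing over $v\in X_u$ gives
\[
e\bigl(\Gamma[S,X_u]\bigr) \, \le \, |X_u|\cdot\Bigl(\tfrac12+\tfrac{3\gamma}{5}\Bigr)p^2|W|\,.
\]
Combining the two displays and cancelling $p|X_u|$ leaves $\bigl(\tfrac12+\tfrac{3\gamma}{5}\bigr)p|W| > \bigl(1-\tfrac{\gamma}{10}\bigr)|S|$, which together with $|S|\ge\bigl(\tfrac12+\gamma\bigr)p|W|$ will be the desired contradiction.

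The only real obstacle is arithmetic book-keeping on the constants. The buffer $\tfrac{3\gamma}{5}$ appearing in~\eqref{def:Xu} has been chosen precisely so that $\bigl(1-\tfrac{\gamma}{10}\bigr)\bigl(\tfrac12+\gamma\bigr) > \tfrac12+\tfrac{3\gamma}{5}$ for every $\gamma\in(0,1]$, which is an immediate one-line expansion; once this inequality is in hand, the contradiction is automatic. No new probabilistic input is needed beyond the property of $\Gamma$ already conditioned on, since the choice of $X_W$ from Lemma~\ref{boundedweirdness} plays no role in the argument for the size of $X_u$ itself.
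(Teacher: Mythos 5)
Your proof is correct, but it takes a more direct route than the paper's. The paper establishes Claim~\ref{clm:mostgoodtwo} by double counting paths of length two from $u$ to $X_u$ whose first edge lies in $G$, second edge in $\Gamma$, and middle vertex in $W$: the definition~\eqref{def:Xu} gives the upper bound $(\tfrac12+\tfrac{3\gamma}{5})p^2|W|\,|X_u|$, the lower bound is routed through the auxiliary set $\hat W_u$ of vertices of $W$ with at most $(1-\tfrac{\gamma}{10})p|X_u|$ $\Gamma$-neighbours in $X_u$, the comparison of the two bounds forces $|\hat W_u|>\gamma p|W|/5$, and only then is the discrepancy property invoked, with $S=\hat W_u$ and $T=X_u$. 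You instead apply the discrepancy property once, directly to $S=N(u)\cap W$ and $T=X_u$ (legitimate, since $|S|\ge(\tfrac12+\gamma)p|W|\ge\tfrac12\alpha pn\ge\tfrac{\alpha\gamma pn}{5}$ and, under the contradiction hypothesis, $|X_u|>C$), and play the resulting lower bound $e(\Gamma[S,X_u])>(1-\tfrac{\gamma}{10})p|S||X_u|$ off against the degree-sum upper bound $\sum_{v\in X_u}|N_\Gamma(v)\cap S|\le(\tfrac12+\tfrac{3\gamma}{5})p^2|W|\,|X_u|$ coming straight from~\eqref{def:Xu}; the numerical point $(1-\tfrac{\gamma}{10})(\tfrac12+\gamma)=\tfrac12+\tfrac{19\gamma}{20}-\tfrac{\gamma^2}{10}>\tfrac12+\tfrac{3\gamma}{5}$ for $\gamma\in(0,1]$ checks out, so the contradiction is genuine. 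You use only ingredients already conditioned on at that point (the hypothesis~\eqref{eq:mostgood:condition} and the Chernoff-type property of $\Gamma$), and you are right that $X_W$ and Lemma~\ref{boundedweirdness} are irrelevant to this claim. Your version saves the path count and the set $\hat W_u$, and the paper's detour does not appear to buy anything extra here, since both arguments rest on exactly the same whp event and the same slack in the constant $\tfrac{3\gamma}{5}$. One small caveat, shared with the paper's own application: $U$ and $W$ are not assumed disjoint in the lemma, so $e(\Gamma[S,X_u])$ should be interpreted as bounded above by the degree sum $\sum_{v\in X_u}|N_\Gamma(v)\cap S|$, which is all your estimate needs.
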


\begin{claimproof}[Proof of Claim~\ref{clm:mostgoodtwo}]
We will count the paths of length two from $u$ to $X_u$ with first edge in $G$, second edge in $G(n,p)$, and middle vertex in $W$. By~\eqref{def:Xu}, the number of such paths is at most
\begin{equation}\label{eq:paths:upper}
\bigg( \frac{1}{2} + \frac{3\gamma}{5} \bigg) p^2 |W| \cdot |X_u|.
\end{equation}
On the other hand, defining 
\[
\hat{W}_u := \left\{ w \in W \,:\, | N_\Gamma(w) \cap X_u | \le \bigg( 1 - \frac{\gamma}{10} \bigg) p |X_u|  \right\},
\]
we obtain a lower bound on the number of such paths of
\begin{equation}\label{eq:paths:lower}
\big| N(u) \cap \big( W \setminus \hat{W}_u \big) \big| \cdot \bigg( 1 - \frac{\gamma}{10} \bigg) p |X_u| \, 
\geByRef{eq:mostgood:condition} \, \bigg( \bigg( \frac{1}{2} + \gamma \bigg) p |W| - |\hat{W}_u| \bigg) \bigg( 1 - \frac{\gamma}{10} \bigg) p |X_u|.
\end{equation}
Now if $|\hat{W}_u| \le \gamma p |W| / 5$, then using~\eqref{eq:paths:upper} and~\eqref{eq:paths:lower}, we obtain 
\[\bigg( \frac{1}{2} + \frac{4\gamma}{5} \bigg) p |W| \cdot \bigg( 1 - \frac{\gamma}{10} \bigg) p |X_u| \le \bigg( \frac{1}{2} + \frac{3\gamma}{5} \bigg) p^2 |W| \cdot |X_u|\,,\] 
which is a contradiction. We conclude that $|\hat{W}_u|>\gamma p|W|/5$, so by definition of $\hat{W}_u$ and by our assumption on $\Gamma$ (with $S=\hat{W}_u$ and $T=X_u$), we have $|X_u|\le C$, as claimed.
\end{claimproof}

The lemma follows easily from the claims. Indeed, by Claim~\ref{clm:mostgoodone}  the only pairs $u,v \in U$ with $|N(u)\cap N (v) \cap W| \le \gamma p^2 |W|$ are those with $u \in X_W \cup X_v$ or $v \in X_W \cup X_u$. Since $|X_W| + |X_u| = O(1)$ for every $u \in U$, by Lemma~\ref{boundedweirdness} and Claim~\ref{clm:mostgoodtwo} , it follows that the number of such pairs is at most $O(|U|)$, as required.
\end{proof}

\subsection{The proof of Proposition~\ref{prop:pconstant:middle}}\label{sec:middle}

We need the following two lemmas from~\cite{ChromThresh}. Let $K_\ell(t)$
be the $t$-blow-up of $K_\ell$, that is, the graph obtained from $K_\ell$
by replacing each vertex with an independent set of size~$t$, and each edge
with a complete bipartite graph $K_{t,t}$. We write $F+H$ for the join of $F$ and $H$, that is, the graph obtained from a disjoint union of $F$ and $H$ by adding all edges between $F$ and $H$.

\begin{lemma}[Lemma~9 of~\cite{ChromThresh}]\label{pigeon}
Let $\alpha > 0$ and $3 \le r,t \in \N$, let~$F$ be a forest, and let~$H \subset F + K_{r-2}(t)$. Let $G$ be a graph on~$n$ vertices, and $X \subset V(G)$. If $G[N(u)]$ contains at least $\alpha n^{(r-1)v(H)}$ copies of $K_{r-1}(v(H))$ for every $u \in X$, then either $H \subset G$ or $|X| \le v(H)/\alpha$.
\end{lemma}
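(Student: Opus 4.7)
The plan is a straightforward pigeonhole-and-embedding argument. First, I would fix an interpretation of ``copy of $K_{r-1}(v(H))$'' so that the total number of such copies in an $n$-vertex graph is at most $n^{(r-1)v(H)}$ (for instance, as an ordered $(r-1)$-tuple of ordered $v(H)$-tuples of vertices inducing the required complete multipartite structure). By hypothesis, for each $u \in X$ there are at least $\alpha n^{(r-1)v(H)}$ such copies in $G[N(u)]$, so by averaging over all copies in $G$ there exists a single copy $K$ of $K_{r-1}(v(H))$ that is contained in $G[N(u)]$ for at least $\alpha|X|$ vertices $u\in X$. Supposing $|X|>v(H)/\alpha$, the set $S$ of such vertices satisfies $|S|\ge v(H)+1$; and since $u\notin N(u)$, we have $S\cap V(K)=\emptyset$ with every vertex of $S$ adjacent in $G$ to every vertex of $K$.

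Writing $V_1,\dots,V_{r-1}$ for the parts of $K$ (each of size $v(H)$), it then remains to show $H\subset G[S\cup V_1\cup\dots\cup V_{r-1}]$, using the assumption $H\subset F+K_{r-2}(t)$. I would use $V_1,\dots,V_{r-2}$ to host the $r-2$ independent classes of the $K_{r-2}(t)$ part (the image of $H$ uses at most $v(H)=|V_i|$ vertices from each such class, so this is possible regardless of how $t$ compares to $v(H)$), and embed the forest $F$ into the complete bipartite graph between $S$ and $V_{r-1}$: since forests are bipartite, write $F=F_1\cup F_2$, and place $F_1$ in $S$ and $F_2$ in $V_{r-1}$. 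Both $|F_1|,|F_2|\le v(F)\le v(H)\le\min\{|S|,|V_{r-1}|\}$, so there is room. All edges within $F$ are realised because the bipartite graph between $S$ and $V_{r-1}$ is complete, and all join edges from $F$ to the $K_{r-2}(t)$ part are realised because both $S$ and $V_{r-1}$ are completely joined in $G$ to $V_1\cup\dots\cup V_{r-2}$. This yields $H\subset G$, contradicting the alternative.

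The whole argument is essentially one-line pigeonhole plus a clean embedding, and I do not anticipate a serious obstacle. The one point worth highlighting is that the set $S$ of vertices of $X$ ``voting'' for the same fixed copy $K$ is automatically disjoint from $V(K)$ (as $u\notin N(u)$), which is precisely what allows us to use $S$ as one bipartition class when embedding the forest $F$ without vertex conflicts with $V(K)$.
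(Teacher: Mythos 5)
Your proposal is correct and follows essentially the same argument as the quoted lemma's original proof (Lemma~9 of the chromatic thresholds paper, which this paper only cites): pigeonhole over the at most $n^{(r-1)v(H)}$ possible copies of $K_{r-1}(v(H))$ to find one copy $K$ lying in $N(u)$ for more than $v(H)$ vertices $u\in X$, then embed the $K_{r-2}(t)$-part of $H$ into $r-2$ classes of $K$ and the forest part across the complete bipartite graph between the remaining class and those vertices of $X$. One tiny imprecision: $v(F)\le v(H)$ need not hold, since $F$ is an arbitrary forest with $H\subset F+K_{r-2}(t)$; but you only ever need to embed the at most $v(H)$ vertices of $H$ that lie in $V(F)$, so restricting to that subforest leaves the argument unchanged.
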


\begin{lemma}[Lemma~10 of~\cite{ChromThresh}]\label{lem:extend}
For every $\alpha > 0$ and $3 \le r,t \in \N$, there exists $C = C(\alpha,r,t)$ such that for every graph $H \subset F + K_{r-2}(t)$, the following holds. Let $G$ be an $H$-free graph on~$n$ vertices, and let $X \subset V(G)$ be such that every edge of $G[X]$ is contained in at least $\alpha n^{r-2}$ copies of $K_r$ in~$G$. Then~$G[X]$ is $Cv(F)$-degenerate.
\end{lemma}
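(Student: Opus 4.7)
I would argue by contrapositive. Fix a large constant $C = C(\alpha, r, t)$ to be chosen later, set $D := Cv(F)$, assume for contradiction that $G[X]$ contains a (necessarily non-empty) $Y \subseteq X$ with $\delta(G[Y]) > D$, and set out to find a copy of $F + K_{r-2}(t)$ in $G$. Since $H \subseteq F + K_{r-2}(t)$, any such copy supplies the embedding of $H$ that contradicts $H$-freeness.

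The first step is to show that for every edge $uv \in E(G[Y])$, the common neighbourhood $N(u) \cap N(v)$ carries many copies of $K_{r-2}(t)$. The hypothesis on $X$ tells us that $G[N(u)\cap N(v)]$ contains at least $\alpha n^{r-2}$ copies of $K_{r-2}$; since a set of $s$ vertices carries at most $\binom{s}{r-2}$ such copies, this already forces $|N(u)\cap N(v)| \ge c_0 n$ and hence a positive constant $K_{r-2}$-density inside $N(u) \cap N(v)$. A standard supersaturation argument for complete multipartite graphs (for $r=4$ this is exactly Theorem~\ref{lem:densbip}, and in general it follows by iterated K\H{o}v\'ari-S\'os-Tur\'an) then upgrades this to at least $c_1 n^{(r-2)t}$ copies of $K_{r-2}(t)$ in $G[N(u) \cap N(v)]$, for some $c_1 = c_1(\alpha, r, t) > 0$.

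The second step is a pigeon-hole over copies of $K_{r-2}(t)$ in $G$, of which there are at most $n^{(r-2)t}$. Summing the count above over the $e(G[Y]) \ge D|Y|/2$ edges of $G[Y]$ yields at least $c_1 e(G[Y]) n^{(r-2)t}$ pairs $(uv, B)$, so there is a single copy $B$ of $K_{r-2}(t)$ with $V(B) \subseteq N(u) \cap N(v)$ for at least $c_1 e(G[Y])$ edges $uv \in E(G[Y])$. Writing $A_B := \bigcap_{b\in V(B)} N(b)$, this reads $e\big(G[A_B \cap Y]\big) \ge c_1 D |Y|/2$. Greedily deleting from $G[A_B \cap Y]$ any vertex of degree below $2v(F)$ removes fewer than $2v(F)|Y|$ edges in total, so choosing $C$ with $c_1 C > 4$ ensures that the deletion halts on a non-empty induced subgraph $G[A']$ satisfying $\delta(G[A']) \ge 2v(F)$.

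To finish, the minimum-degree bound gives $e(G[A']) \ge v(F)|A'|$, so Fact~\ref{prop:forest} produces a copy of $F$ inside $G[A']$. The set $V(B)$ is disjoint from $A' \subseteq A_B$ (because $b \notin N(b)$ for each $b \in V(B)$) and is completely joined to $A'$ by the definition of $A_B$, so combining this copy of $F$ with $K_{r-2}(t)$ on $V(B)$ produces a copy of $F + K_{r-2}(t) \supseteq H$ in $G$, the desired contradiction. The main non-routine step is the first one: the whole argument hinges on observing that the hypothesis of $\alpha n^{r-2}$ copies of $K_{r-2}$ in $N(u) \cap N(v)$ automatically forces $|N(u) \cap N(v)| = \Omega(n)$, so one has a constant $K_{r-2}$-density there and supersaturation passes to a constant density of $K_{r-2}(t)$ copies with a multiplicative constant depending only on $\alpha$, $r$, $t$; after this, the pigeon-hole, the iterative cleaning, and the greedy forest embedding are all routine.
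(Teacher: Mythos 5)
Your argument is correct, and it is essentially the same proof as the one this lemma is quoted from (the paper itself only cites Lemma~10 of~\cite{ChromThresh} without reproving it): pass to a subgraph of large minimum degree, use the $K_r$-count to get a positive density of $K_{r-2}$'s in each edge's common neighbourhood, upgrade by supersaturation to many copies of $K_{r-2}(t)$, pigeonhole to fix one copy $B$ lying in the common neighbourhood of many edges, and embed the forest $F$ in the resulting dense graph inside $\bigcap_{b\in V(B)}N(b)$ via Fact~\ref{prop:forest}. The only cosmetic remark is that the greedy degree-cleanup is unnecessary, since $e\big(G[A_B\cap Y]\big)\ge c_1D|Y|/2\ge v(F)\,|A_B\cap Y|$ already suffices for Fact~\ref{prop:forest} once $c_1C\ge 2$.
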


We can now give the proof.

\begin{proof}[Proof of Proposition~\ref{prop:pconstant:middle}]
Let $F$ be a forest in the decomposition family of $H$, so in particular $H \subset F + K_{r-2}\big(v(H)\big)$. Let $\gamma > 0$, and let $G$ be an $H$-free spanning subgraph of $G(n,p)$ with
$$\delta(G) \ge \bigg( \frac{2r - 5}{2r-3} + 3\gamma \bigg) pn.$$
Applying the sparse minimum degree form of Szemer\'edi's Regularity Lemma, with $k_0 = r / \gamma$, $d = \gamma$ and $\eps$ sufficiently small, to $G$, we obtain a partition $V(G) = V_0 \cup V_1 \cup \cdots \cup V_k$ with $k_0\le k\le k_1=O(1)$ such that the $(\eps,d,p)$-reduced graph $R$ satisfies
$$\delta(R) \ge \bigg( \frac{2r - 5}{2r-3} + \gamma \bigg) k\,.$$
We now partition the vertices of $V(G)$ according to the sets $V_i$ to which they send `many' edges. More precisely, define for each $I_2\subseteq I_1\subseteq [k]$, 
\begin{equation}\label{def:XI1I2}\begin{split}
X_{I_1,I_2} \, := \, \Big\{ v \in V(G) \,:\, I_1 & = \big\{ i \in [k] : |N(v) \cap V_i| \ge \gamma p |V_i| \big\}\,,  \\ 
I_2 & = \big\{ i \in [k] : |N(v)\cap V_i| \ge \big( \tfrac12 + \gamma \big)p |V_i| \big\} \Big\}\,.
\end{split}\end{equation}
We remark that this is a refinement of the partition used in the proof of~\cite[Theorem~7]{ChromThresh}.
Since the number of parts in this partition is at most $3^{k_1}$, the following claim completes the proof of Proposition~\ref{prop:pconstant:middle}.

\begin{claim*} $\chi\big(G[X_{I_1,I_2}]\big) =O(1)$ for every $I_2 \subseteq I_1 \subset [k]$.
\end{claim*}

\begin{claimproof}[Proof of Claim]
We partition the edges of $G[X_{I_1,I_2}]$ into graphs $J = J_{I_1,I_2}$ and $L = L_{I_1,I_2}$, where $J$ consists of those edges whose endpoints have at least $\gamma p^2 |V_i|$ common neighbours in each of the clusters $\{ V_i : i \in I_2 \}$ and the remaining edges are in $L$. We will show that $J$ and $L$ are both $O(1)$-degenerate, which implies both are $O(1)$-colourable and hence that $G[X_{I_1,I_2}]$ is $O(1)$-colourable, as desired.

We start with $L$. For each $U \subset X_{I_1,I_2}$ and $i \in I_2$,
consider the graph $L_i[U] \subset L[U]$ on vertex set $U$ formed by the
edges of $G$ that have fewer than $\gamma p^2 |V_i|$ common neighbours in
$V_i$. Setting $W = V_i$, note that the pair $(U,W)$ satisfies~\eqref{eq:mostgood:condition}, by the definition of $X_{I_1,I_2}$ and since $i \in I_2$. By Lemma~\ref{lem:mostgood}, it follows that $L_i[U]$ has at most $C|U|$ edges (for some constant $C = C(H,p,\gamma)$) and hence has bounded average degree. Since $L[U] \subset \bigcup_{i \in I_2} L_i[U]$, and $|I_2| \le k$, it follows that the graph $L[U]$ has bounded average degree. Since this holds for every $U \subset X_{I_1,I_2}$, it follows that $L$ is $O(1)$-degenerate, as claimed.

The proof that $J$ is $O(1)$-degenerate is almost the same as
in~\cite[Theorem~7]{ChromThresh}, and so we shall only sketch the proof, skipping most of the details. Suppose first that 
$$|I_1| \, \ge \, \left( \ds\frac{2r-4}{2r-3} \right) k\,.$$
In this case we shall show that $|X_{I_1,I_2}| = O(1)$, and thus $J$ is trivially $O(1)$-degenerate. We first claim that $R[I_1]$ contains a copy of $K_{r-1}$. Indeed, by our minimum degree condition on $R$, we have $$\delta\big( R[I_1] \big) \, \ge \, \delta(R) - \big( k - |I_1| \big) \, \ge \, |I_1| - \left( \frac{2}{2r-3} - \gamma \right) k \, \ge \, \left( \frac{r-3}{r-2} + \gamma \right) |I_1|\,,$$
so $R[I_1]$ contains a copy of~$K_{r-1}$, as claimed. Let $\{W_1,\dots,W_{r-1}\} \subset \{V_i : i \in I_1\}$ be the set of parts corresponding to this copy of~$K_{r-1}$.

Now let~$u \in X_{I_1,I_2}$, and recall that $|N(u) \cap W_i|\ge \gamma p
|V_i|$ for each $i \in [r-1]$, by the definition of~$X_{I_1,I_2}$. By the
(dense) Slicing and Counting Lemmas, it follows that $G[N(u)]$ contains at least $\Omega\big( n^{(r-1)v(H)} \big)$ copies of $K_{r-1}(v(H))$. Since $H\not\subset G$, by Lemma~\ref{pigeon} we have $|X_{I_1,I_2}| = O(1)$, as claimed.
  
So let us assume from now on that
\[|I_1| \, < \, \left( \ds\frac{2r-4}{2r-3} \right) k\,,\]
and that $G(n,p)$ has the following property: for each vertex set $S$ of
size at least $n/(2k_1)$, the number of vertices that have more than
$(1+\gamma)p|S|$ neighbours in $S$ is at most a constant depending on $p$,
$\gamma$ and $k_1$ (for constant~$p$ this holds with high probability by the
Chernoff bound). In particular this applies for $S=V_i$ for each $1\le i\le k$. Now if there is no vertex of $X_{I_1,I_2}$ which has at most $(1+\gamma)p|V_i|$ neighbours in each $V_i$, then this implies $|X_{I_1,I_2}|=O(1)$, in which case $J$ is trivially $O(1)$-degenerate. So suppose $u \in X_{I_1,I_2}$ is a vertex with at most $(1+\gamma)p|V_i|$ neighbours in $V_i$ for every $0 \le i \le k$. 

We claim that $R[I_2]$ contains a copy of~$K_{r-2}$. Indeed, since $\delta(G) \ge \big(\tfrac{2r-5}{2r-3}+3\gamma\big)pn$, and since $|V_i| \le n/k$ for each $i \in [k]$ and $|V_0| \le \eps n \le \gamma n / 2$, it follows that
\[\bigg( \frac{2r-5}{2r-3} + 3\gamma \bigg)pn \, \le \, d(u) \, \le \, \bigg( (1+\gamma) |I_2| + \left( \frac12+\gamma \right)\big( |I_1| - |I_2| \big) + 2\gamma k \bigg) \frac{pn}{k}.\]
Hence, using our bound on $|I_1|$, it follows that $|I_2| \ge \big( \frac{2r-6}{2r-3} \big) k$. Thus
\[\delta\big( R[I_2] \big) \, \ge \, \delta(R) - \big( k - |I_2| \big) \, \ge \, |I_2| - \left( \frac{2}{2r-3} - \gamma \right) k \, \ge \, \left( \frac{r-4}{r-3} + \gamma \right) |I_2|\,,\]
so $R[I_2]$ contains a copy of~$K_{r-2}$, as claimed. 

Finally, recall that the endpoints of each edge of $J$ have at least $\gamma p^2 |V_i|$
common neighbours in $V_i$ for every $i \in I_2$. By the (dense) Slicing
and Counting Lemmas, it follows that each edge of $J$ is contained in
$\Omega(n^{r-2})$ copies of $K_r$. Since $H\not\subset G$,
Lemma~\ref{lem:extend} tells us that $J$ is $O(1)$-degenerate. 
We have thus proved that both $J$ and $L$ are $O(1)$-degenerate, which implies that $\chi\big(G[X_{I_1,I_2}]\big) = O(1)$, as claimed.
\end{claimproof}

As noted earlier, the sets $X_{I_1,I_2}$ partition the vertex set of $G$ into at most $3^{k_1}$ parts, and hence the claim implies that $\chi(G) = O(1)$, as required. 
\end{proof}

\subsection{The proof of Proposition~\ref{prop:pconstant:bottom}}\label{sec:bottom}

We need the following lemma, which was essentially proved
in~\cite[Section~7]{ChromThresh}. The difference here is that we have to
explicitly assume~\ref{magic:d} rather than deducing it from a degree condition.

\begin{lemma}\label{lem:magic}
For each $r \ge 3$ and $\gamma > 0$, and each $r$-near-acyclic graph $H$ with $\chi(H) = r$, there exist $\eps > 0$, $C > 0$ and $m_0 \in \N$ such that the following holds for every $m \ge m_0$. If $G$ is a graph containing pairwise disjoint vertex sets $X, Y, Z_1,\dots,Z_{r-3}$ with the following properties:
\begin{enumerate}[label=\abc]
\item $\chi\big( G[X] \big) > C$,
\item $|Y| = |Z_1| = \dots = |Z_{r-3}| = m$,
\item $|N(v) \cap Y| \ge \gamma m$ for every $v \in X$, 
\item\label{magic:d} $|N(u) \cap N(v) \cap Z_i| \ge \gamma m$ for every $uv \in E(G[X])$ and every $i \in [r-3]$, 
\item each pair from $Y,Z_1,\ldots,Z_{r-3}$ forms an $\eps$-regular pair in $G$ of density at least $\gamma$,
\end{enumerate} 
then $H\subseteq G$.
\end{lemma}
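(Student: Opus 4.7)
The proof is by induction on $r$, using the partition $V(H) = V(F_0) \sqcup I_0 \sqcup I_1 \sqcup \cdots \sqcup I_{r-3}$ witnessing that $H$ is $r$-near-acyclic: $F_0$ is a forest, each $I_j$ is independent, and $H_0 := H[V(F_0) \cup I_0]$ is near-acyclic (with every odd cycle meeting $I_0$ in at least two vertices). We aim to build an embedding $\phi\colon V(H) \hookrightarrow V(G)$ with $\phi(V(F_0)) \subseteq X$, $\phi(I_0) \subseteq Y$, and $\phi(I_j) \subseteq Z_j$ for each $j \in [r-3]$; set $s := v(H)$. The base case $r = 3$ has no $Z_j$'s, and the task reduces to embedding the near-acyclic $H_0$ into $G[X \cup Y]$ with $V(F_0) \to X$ and $I_0 \to Y$. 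This is essentially the main content of the proof of~\cite{LT} (see also~\cite{ChromThresh}) that $\delta_\chi(H_0) = 0$ for near-acyclic $H_0$: the large $\chi(G[X])$ from condition (a) yields a Borsuk-like substructure in $G[X]$ accommodating the forest $F_0$, and condition (c) then extends the embedding to $I_0 \to Y$ using the many $X$-to-$Y$ edges.

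For the inductive step $r > 3$, set $H' := H - I_{r-3}$, which is $(r-1)$-near-acyclic. The tuple $(X, Y, Z_1, \ldots, Z_{r-4})$ inherits hypotheses (a)--(e) for $H'$ with appropriately updated constants, so by induction we obtain an embedding $\phi$ of $H'$ into $G$ with the correct structure. We then extend $\phi$ to $I_{r-3} \to Z_{r-3}$ greedily: for each $v \in I_{r-3}$ we seek an unused vertex of $Z_{r-3}$ adjacent to every vertex of $\phi(N_H(v))$, a set of at most $s$ targets. Using condition (e), a standard iterated regularity argument (repeatedly applying the defining property of $(\eps,d,p)$-regular pairs) shows that the common neighborhood in $Z_{r-3}$ of the targets lying in $Y$ and in $Z_1, \ldots, Z_{r-4}$ has size at least $(\gamma - \eps)^{O(s)} m \gg s$, provided $\eps$ is small enough in terms of $\gamma$ and $s$. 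Condition (d) is then used to supply the analogous bound for the targets lying in $\phi(V(F_0)) \subseteq X$.

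The main obstacle is that condition (d) gives common neighborhoods in $Z_{r-3}$ only for pairs $uv$ that form edges of $G[X]$, whereas $\phi(N_H(v) \cap V(F_0))$ need not be pairwise adjacent. We overcome this by strengthening the inductive invariant, so that $\phi(V(F_0))$ is confined to a subset $X^\star \subseteq X$ on which every pair relevant to some $I_j$-vertex's forest-neighbors is already an edge of $G[X]$; the existence of such an $X^\star$ is extracted from the \L uczak--Thomass\'e structure in $G[X]$, whose large chromatic number produces a sufficiently ``rich'' induced subgraph in which the desired pairs can be arranged to be edges. Once this is arranged, condition (d) applies to each of the at most $\binom{s}{2}$ pertinent pairs, and the required $s$-wise common-neighborhood bound in $Z_{r-3}$ follows from these pairwise bounds via a careful pigeonhole argument. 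Propagating this strengthened invariant through the induction, and performing the analogous strengthening of the base-case embedding in~\cite{LT}, is the main technical burden of the proof.
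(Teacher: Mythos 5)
The paper's proof does not proceed by induction on $r$, and the inductive strategy you propose has a genuine gap precisely at the point you flag as the main obstacle. Condition~\ref{magic:d} supplies only a \emph{pairwise} lower bound: for each edge $uv$ of $G[X]$, the sets $N(u)\cap N(v)\cap Z_i$ have size $\ge\gamma m$. This does not propagate to a lower bound on the \emph{joint} common neighbourhood of $s$ vertices, even if those $s$ vertices form a clique in $G[X]$: one can easily have $s$ vertices all of whose pairs have $\gamma m$ common neighbours in $Z_{r-3}$ while the triple-wise intersection is already empty. So the phrase ``the required $s$-wise common-neighborhood bound in $Z_{r-3}$ follows from these pairwise bounds via a careful pigeonhole argument'' is not something that can be made to work; there is no such pigeonhole. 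Your proposed fix of confining $\phi(V(F_0))$ to an $X^\star$ on which relevant pairs are edges therefore does not resolve the difficulty even if such an $X^\star$ existed, and in fact the \L uczak--Thomass\'e / paired-VC-dimension machinery does not produce a vertex set with extra edges; it produces a nested \emph{edge} hierarchy $D \supset D(e_1) \supset D(e_1,e_2) \supset \cdots$ with large minimum degree at each level, which is a quite different kind of structure.

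The paper avoids this trap by reversing the order of quantification. It does not first embed $H' = H - I_{r-3}$ and then search for common neighbours. Instead, it works with the modified Zykov graphs $Z_\ell^{r,t}(T_1,\ldots,T_\ell)$ which contain $H$: the paired-VC-dimension proposition gives $(C^*,\alpha)$-richness of $(X,Y)$ in copies of $Z_{\ell^*}^{3,t^*}$ (the $r=3$ version, an edge hierarchy plus many ``good'' families $S$ in $Y$), and then Proposition~\ref{3-to-r-rich} uses the Counting Lemma inside $Z_1,\ldots,Z_{r-3}$ together with two pigeonhole steps to select, \emph{before} any tree is embedded, a single copy $T$ of $K_{r-3}(t)$ lying in the common neighbourhood of a positive fraction of edges $E_j$ at each level of the hierarchy (and with all the blown-up cloud sets $S_I$ having $\ge t$ common neighbours in $N(T)\cap Y$). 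Only afterwards are the trees $T_j$ embedded into the edge sets $E_j$; since every tree vertex is an endpoint of some tree edge and that edge's common neighbourhood contains $T$, every tree vertex is automatically adjacent to $T$, sidestepping the joint-common-neighbourhood question entirely. This ``fix $T$ first, then embed the forest into edges covered by $T$'' step is exactly the ingredient your greedy, post-hoc extension is missing and cannot recover from condition~\ref{magic:d} alone.
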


The proof of Lemma~\ref{lem:magic} is roughly as follows. We proceed as in
the proof of~\cite[Theorem~34]{ChromThresh}: We
apply~\cite[Proposition~26]{ChromThresh} (the so-called `paired
VC-dimension' argument),~\cite[Lemma~24]{ChromThresh} (an inductive double
counting argument) and~\cite[Proposition~36]{ChromThresh} (which uses the
Counting Lemma and the pigeonhole principle), followed
by~\cite[Lemma~25]{ChromThresh} (which uses the fact that high degree
graphs contain all trees). The only point where some extra care is needed is in the application of~\cite[Proposition~36]{ChromThresh}, since this
proposition requires that every vertex of the set $X$ has at least $\big(\tfrac12+\gamma\big)m$
neighbours in each set $Z_i$. However, the only use made of this condition is to
deduce that each edge of $G[X]$ has a common neighbourhood of size at least $\gamma m$ in each $Z_i$, which is assumption~\ref{magic:d} above, so the conclusion we need follows from exactly the same proof. For a complete proof of Lemma~\ref{lem:magic}, see {\ifarxiv~Appendix~\ref{app:magic}\else~\cite[Appendix~A]{dense:arXiv}\fi}.

The deduction of Proposition~\ref{prop:pconstant:bottom} from Lemma~\ref{lem:magic} follows the same outline as the proof of Proposition~\ref{prop:pconstant:middle} above.

\begin{proof}[Proof of Proposition~\ref{prop:pconstant:bottom}]
Let $H$ be $r$-near-acyclic, let $\gamma > 0$, and let $G$ be an $H$-free spanning subgraph of $G(n,p)$ with
\begin{equation}\label{eq:mindegreeG}
\delta(G) \ge \bigg( \frac{r - 3}{r - 2} + 2\gamma \bigg) pn\,.
\end{equation}
Applying the sparse minimum degree form of Szemer\'edi's Regularity Lemma to~$G$, with $k_0 = r / \gamma$, $d = \gamma$ and $\eps$ sufficiently small, we obtain a partition $V(G) = V_0 \cup V_1 \cup \cdots \cup V_k$ with $k_0\le k\le k_1=O(1)$ such that the reduced graph $R$ satisfies
\[\delta(R) \ge \bigg( \frac{r - 3}{r - 2} + \gamma \bigg) k\,.\]
We define sets $X_{I_1,I_2}$ for each $I_2 \subseteq I_1\subseteq [k]$
exactly as in~\eqref{def:XI1I2}. 
Since, again, the number of parts $X_{I_1,I_2}$ is at most $3^{k_1}$, the
following claim completes the proof of the proposition. 

\begin{claim*} $\chi\big(G[X_{I_1,I_2}]\big) =O(1)$ for every $I_2 \subseteq I_1 \subset [k]$.
\end{claim*}
\begin{claimproof}[Proof of Claim]
We partition the edges of $G[X_{I_1,I_2}]$ into graphs $J = J_{I_1,I_2}$
and $L = L_{I_1,I_2}$, exactly as in the proof of
Proposition~\ref{prop:pconstant:middle}. That is, we let $J$ consist of
those edges whose endpoints have at least $\gamma p^2 |V_i|$ common
neighbours in each of the clusters $\{ V_i : i \in I_2 \}$. The proof that
$L$ is $O(1)$-degenerate (using Lemma~\ref{lem:mostgood}) is exactly the
same as before, since it does not use the minimum degree condition on~$G$
(and thus~$R$), and we omit it. 

The proof that $J$ is $O(1)$-chromatic is somewhat different to that in Section~\ref{sec:middle}, so let us give the details. As before, with high probability for each $S$ of size at least $n/(2k_1)$, the number of vertices in $G(n,p)$ with more than $(1+\gamma)p|S|$ neighbours in $S$ is $O(1)$ and in particular the number of vertices with more than $(1+\gamma)p|V_i|$ neighbours in any $V_i$ with $0\le i\le k$ is $O(1)$. Again, this implies that either $|X_{I_1,I_2}|=O(1)$ and thus $J$ is trivially $O(1)$-degenerate, or there exists $u\in X_{I_1,I_2}$ with at most $(1+\gamma)p|V_i|$ neighbours in $V_i$ for every $0 \le i \le k$. In this latter case we claim that $R[I_1]$ contains a pair of disjoint copies of $K_{r-2}$, each with at least $r-3$ vertices in $I_2$. 

Indeed, counting neighbours of $u$, we obtain
\[\bigg( (1+\gamma) |I_1| + \gamma \cdot \big( k - |I_1| \big) + 2\eps k \bigg) \frac{pn}{k} \, \ge \, d(u) \, \ge \, \bigg( \frac{r-3}{r-2} + 2\gamma \bigg)pn\,,\]
and hence $|I_1| \ge \big( \frac{r-3}{r-2} \big) k$. Similarly, 
\[\bigg( (1+\gamma) |I_2| + \left(\frac{1}{2} + \gamma \right) \big( k - |I_2| \big) + 2\eps k \bigg) \frac{pn}{k} \, \ge \, \bigg( \frac{r-3}{r-2} + 2\gamma \bigg)pn\,,\]
which implies that $|I_2| \ge \big( \frac{r-4}{r-2} \big) k$. By our minimum degree condition on $R$, it follows that we can choose (greedily) two disjoint cliques in $R[I_1]$ as claimed. 

Let the clusters corresponding to the vertices of our two cliques be
respectively $Y \in I_1$ and $Z_1,\dots,Z_{r-3} \in I_2$ (for one), and $Y' \in I_1$ and $Z'_1,\ldots,Z'_{r-3} \in I_2$ (for the other). Set 
$$X_1 = X_{I_1,I_2} \setminus \big( Y \cup Z_1 \cup \dots \cup Z_{r-3} \big),$$ 
and $X_2 = X_{I_1,I_2} \setminus X_1$, and suppose first that $\chi\big( J[X_1] \big) > C$, where $C = C(r,\gamma p^2)$ is the constant in Lemma~\ref{lem:magic}. Note that $X_1$ is disjoint from $Y, Z_1, \ldots, Z_{r-3}$, by definition, and moreover we have 
\begin{enumerate}[label=\abc]
\item $\chi\big( J[X_1] \big) > C$, by assumption,
\item $|Y| = |Z_1| = \dots = |Z_{r-3}| \ge n/2k$, since each is a part of the Szemer\'edi partition, 
\item $|N(v) \cap Y| \ge \gamma p |V_i|$ for every $v \in X_1$, by the definition of $X_{I_1,I_2}$,
\item $|N(u) \cap N(v) \cap Z_i| \ge \gamma p^2 |V_i|$ for every $uv \in E(J[X])$, by the definition of $J$, and
\item each pair from $Y,Z_1,\ldots,Z_{r-3}$ forms an $\eps$-regular pair in $G$ of density at least $\gamma p$, since $Z_1,\dots,Z_{r-3}$ and $Y$ form a clique in $R$, and $d = \gamma$.  
\end{enumerate} 
Therefore, by Lemma~\ref{lem:magic}, it follows that $H \subset G$, which is a contradiction. On the other hand, if $\chi\big( J[X_2] \big) > C$, then the same argument (with $Y, Z_1, \ldots, Z_{r-3}$ replaced by $Y', Z'_1, \ldots, Z'_{r-3}$) gives the same contradiction. Hence $\chi(J) \le 2C$, as required.
\end{claimproof}

As noted earlier, this completes the proof of Proposition~\ref{prop:pconstant:bottom}, and hence of Theorem~\ref{thm:pconst}. 
\end{proof}

\section{Determining \texorpdfstring{$\delta_\chi^*(H)$}{delta*(H)}}\label{sec:deltachistar}

In this section we sketch the proof of Theorem~\ref{thm:deltachistar}, which states that 
\begin{equation}\label{eq:deltachistar}
 \delta_\chi^*(H) = \min\Big\{\delta_\chi(H')\,:\, \text{there exists a homomorphism from }H\text{ to }H' \Big\}\end{equation}
for every graph $H$. We begin with the upper bound.

\begin{prop}\label{prop:deltachistar:upper}
If $H$ is any graph homomorphic to $H'$, then $\delta_\chi^*(H) \le \delta_\chi(H')$.
\end{prop}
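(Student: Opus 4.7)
The plan is to show $\delta_\chi^*(H) \le d$ for every $d > \delta_\chi(H')$, from which the proposition follows. Fix $\eta > 0$ small enough that $d - 2\eta > \delta_\chi(H')$, and let $C = C(H', d - 2\eta)$ be the constant from the definition of $\delta_\chi(H')$: every $H'$-free graph on $N$ vertices with minimum degree at least $(d - 2\eta)N$ has chromatic number at most $C$. Given an arbitrary $\eps_0 > 0$, I need to show that for all sufficiently large $n$, every $H$-free graph $G$ on $n$ vertices with $\delta(G) \ge dn$ can be made $C$-colourable by removing at most $\eps_0 n^2$ edges.

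First, I would apply the minimum degree form of Szemer\'edi's Regularity Lemma (the $p = 1$ case of the version stated in Section~\ref{prelim:sec}) to $G$, with regularity parameter $\eps \ll \eta, \eps_0$ and density threshold $\eta$. This yields a partition $V_0 \cup V_1 \cup \cdots \cup V_k$ whose $(\eps,\eta,1)$-reduced graph $R$ satisfies $\delta(R) \ge (d - 2\eta) k$, with $k$ bounded by a function of the parameters and independent of $n$.

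The crux of the argument is to show that $R$ is $H'$-free. Suppose for contradiction that $R$ contains a copy of $H'$ spanning clusters $V_{i_1}, \ldots, V_{i_h}$, where $h = v(H')$. Set $t = v(H)$; since there is a homomorphism from $H$ to $H'$, the graph $H$ embeds into the $t$-blow-up $H'(t)$, so it suffices to find a copy of $H'(t)$ in $G$. Each pair $(V_{i_a}, V_{i_b})$ with $i_a i_b \in E(H')$ is $(\eps, \eta, 1)$-regular, and so, after refining each $V_{i_j}$ into $t$ equal subclusters and inheriting regularity via the Slicing Lemma (Lemma~\ref{lem:slicing}), the Counting Lemma produces $\Omega((n/k)^{th})$ copies of $H'(t)$ in $G$. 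For $n$ large this quantity is positive, contradicting the assumption that $G$ is $H$-free. This is precisely the step where the homomorphism hypothesis enters.

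Since $R$ is an $H'$-free graph on $k$ vertices with $\delta(R) \ge (d - 2\eta) k$, our choice of $C$ gives $\chi(R) \le C$. To finish, I would colour each vertex of $V_i$ (for $i \ge 1$) with the colour of $i$ in a proper $C$-colouring of $R$, and colour $V_0$ arbitrarily using these same colours. The only monochromatic edges then lie within some cluster, within an irregular pair, within a pair of density below $\eta$, or are incident to $V_0$; routine counting bounds the total number of such edges by $\tfrac{n^2}{k} + \eps n^2 + \tfrac{\eta}{2} n^2 + \eps n^2$, which is at most $\eps_0 n^2$ once the parameters are chosen sufficiently small and $k_0$ sufficiently large. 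The main obstacle in this proof is the $H'$-freeness of the reduced graph, which is where the hypothesis $H \to H'$ is essential; everything else is standard regularity machinery.
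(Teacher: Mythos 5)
Your argument is essentially correct, but it takes a genuinely different route from the paper. The paper never runs the regularity lemma on $G$ directly: it first observes (as you do) that $H$ sits inside the $v(H)$-blow-up of $H'$, then uses the Graph Removal Lemma together with Lemma~\ref{lem:tblowup} (Erd\H{o}s' hypergraph Tur\'an result) to conclude that an $H$-free $G$ has $o(n^{v(H')})$ copies of $H'$ and hence can be made $H'$-free by deleting $\mu n^2$ edges; it then deletes the resulting low-degree vertices and applies the chromatic-threshold constant for $H'$ to the cleaned-up subgraph $G'$ of $G$ itself. You instead apply the minimum-degree form of the regularity lemma, show the reduced graph $R$ is $H'$-free via the Slicing and Counting Lemmas (finding $H'(v(H))\supset H$ in $G$ otherwise), apply the chromatic-threshold constant to $R$, and pull the colouring back to $G$. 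Both are standard; the paper's version is shorter given the removal lemma as a black box and applies the extremal hypothesis to a genuinely large graph, while yours uses only the tools already set up in Section~\ref{prelim:sec} and applies it to a bounded-size reduced graph. Either way the homomorphism enters exactly where you say it does.

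One parameter needs untangling in your write-up. You fix the density threshold of the regularity lemma equal to $\eta$, which is chosen \emph{before} $\eps_0$ (it must be, since $C$ depends on $\eta$ and the definition of $\delta_\chi^*$ requires $C$ to be independent of $\eps_0$). But then the monochromatic edges lying in regular pairs of density below the threshold contribute up to $\tfrac{\eta}{2}n^2$, a term that does not shrink when you take $\eps$ small and $k_0$ large, so your final bound fails once $\eps_0<\eta/2$. The fix is immediate: decouple the two roles of $\eta$ and run the regularity lemma with density threshold $d_0:=\min\{\eta,\eps_0/4\}$, chosen after $\eps_0$. This only improves the minimum degree of $R$ (still at least $(d-2\eta)k$ provided $\eps\le\eta$), the Counting Lemma step goes through with $d_0$ in place of $\eta$, and crucially $C=C(H',d-2\eta)$ is unaffected, so the quantifier order in Definition~\ref{def:deltastar} is respected. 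With that one-line adjustment your proof is complete.
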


We will use the following lemma, which follows from the result of Erd\H{o}s~\cite{Erdos64} that the Tur\'an density of any $k$-partite $k$-uniform hypergraph is zero. 

\begin{lemma}\label{lem:tblowup}
For any graph $H'$ and any $t \in \N$ and $c > 0$, if $n = v(G)$ is large enough and $G$ contains $c n^{v(H')}$ copies of $H'$, then $G$ contains the $t$-blow-up of $H'$.
\end{lemma}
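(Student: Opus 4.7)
The plan is to encode the copies of $H'$ in $G$ as hyperedges of an auxiliary $k$-uniform $k$-partite hypergraph (where $k := v(H')$), and then to invoke the result of Erd\H{o}s~\cite{Erdos64} cited just before the lemma, which asserts that any $k$-uniform hypergraph with a positive fraction of all possible edges contains $K^{(k)}_{t,\ldots,t}$, the complete $k$-partite $k$-uniform hypergraph with all parts of size $t$.

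I would begin by labelling the vertices of $H'$ as $1,\ldots,k$, and noting that each (unlabelled) copy of $H'$ in $G$ corresponds to exactly $|\Aut(H')|$ injective homomorphisms $\phi\colon V(H')\to V(G)$; so there are at least $c\,|\Aut(H')|\,n^k$ such injective homomorphisms. Next, I would choose a uniformly random partition $V(G) = U_1 \cup \cdots \cup U_k$. Any fixed injective homomorphism $\phi$ satisfies $\phi(i)\in U_i$ for all $i$ with probability $k^{-k}$, so by averaging there is a partition for which at least $c_1 n^k$ injective homomorphisms are ``properly placed'' in the sense that $\phi(i)\in U_i$ for every $i$, where $c_1 := c\,|\Aut(H')|\,k^{-k} > 0$. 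Fixing such a partition, let $\mathcal{H}$ be the $k$-partite $k$-uniform hypergraph with vertex classes $U_1,\ldots,U_k$ whose edges are the $k$-sets $\{\phi(1),\ldots,\phi(k)\}$ arising from these properly placed homomorphisms; then $e(\mathcal{H}) \ge c_1 n^k$.

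Applying Erd\H{o}s' theorem to~$\mathcal{H}$, for $n$ sufficiently large there exist pairwise disjoint sets $T_1\subset U_1,\ldots,T_k\subset U_k$, each of size~$t$, such that every transversal $(v_1,\ldots,v_k)\in T_1\times\cdots\times T_k$ is an injective homomorphism from $H'$ to $G$. Equivalently, for every edge $ij\in E(H')$ and every pair $v\in T_i$ and $w\in T_j$, we have $vw\in E(G)$, which is precisely the statement that $G$ contains the $t$-blow-up of $H'$, with vertex class $i$ embedded onto $T_i$. I do not anticipate any real obstacle: disjointness of the $T_i$ is handled ``for free'' by the random partition step (the sets $T_i$ lie in distinct parts $U_i$), and the existence of the complete $k$-partite sub-hypergraph is a black-box application of the cited theorem of Erd\H{o}s.
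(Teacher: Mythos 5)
Your proposal is correct and follows essentially the same route as the paper: a uniformly random partition of $V(G)$ into $v(H')$ parts, an averaging argument to get a dense $v(H')$-partite $v(H')$-uniform auxiliary hypergraph of properly placed copies, and then a black-box application of Erd\H{o}s' theorem to extract the complete $v(H')$-partite sub-hypergraph, i.e.\ the $t$-blow-up. The extra bookkeeping with $|\Aut(H')|$ and labelled homomorphisms is harmless but not needed; the paper's proof is the same argument stated more tersely.
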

\begin{proof}
 Take a uniform random partition of $V(G)$ into $v(H')$ parts, and let $F$ be a $v(H')$-uniform hypergraph on $V(G)$ whose edges correspond to copies of $H'$ in $G$ with the $i$th vertex of $H'$ in the $i$th part of the partition for each $i$. In expectation, $F$ contains at least $v(H')^{-v(H')}cn^{v(H')}$ edges, and by the result of Erd\H{o}s~\cite{Erdos64} any $F$ with so many edges contains a copy of the complete $v(H')$-partite hypergraph with parts of size $t$, giving the desired $t$-blow-up of $H'$ in $G$.
\end{proof}

\begin{proof}[Proof of Proposition~\ref{prop:deltachistar:upper}]
Note that $H$ is contained in the $v(H)$-blow-up of $H'$. Recall that, by the definition of $\delta_\chi(H')$, for each $\gamma > 0$ there exists $C = C(\gamma)$ such that any $H'$-free graph $G'$ with minimum degree at least $\big(\delta_\chi(H') + \gamma \big)v(G')$ has chromatic number at most $C$. We claim that, for any $\eps > 0$, every sufficiently large $H$-free graph $G$ with minimum degree at least $\big(\delta_\chi(H')+2\gamma\big)v(G)$ can be made $C$-partite by deleting at most $\eps v(G)^2$ edges. 

To see this, fix $\eps > 0$ and choose $\mu > 0$ sufficiently small. By the Graph Removal Lemma (see, e.g.,~\cite{KS93}), there exists $c > 0$ such that any $n$-vertex graph $G$ either contains at least $c n^{v(H')}$ copies of $H'$, or can be made $H'$-free by deleting at most $\mu n^2$ edges. By Lemma~\ref{lem:tblowup}, we conclude that for all sufficiently large $n$, the graph $G$ can be made $H'$-free by deleting at most $\mu n^2$ edges.

Let $G'$ be obtained from $G$ by deleting $\mu n^2$ edges in order to destroy all copies of $H'$, and then sequentially vertices of degree less than $\big(\delta_\chi(H')+\gamma\big)n$ until no more remain. Since $\mu$ was chosen sufficiently small, this process terminates having deleted fewer than $\eps n / 2$ vertices. Thus $G'$ is an $H'$-free graph with minimum degree at least $\big(\delta_\chi(H')+\gamma\big)v(G')$, and hence it has chromatic number at most $C$. Moreover, the total number of edges deleted from $G$ to obtain $G'$ is at most $\mu n^2 + \eps n^2 / 2 < \eps n^2$, as required.
\end{proof}

To complete the proof of~\eqref{eq:deltachistar}, we will show that a simple modification of the  constructions from~\cite{ChromThresh} suffices to prove the claimed lower bound on $\delta_\chi^*(H)$. We will use the following variant of Lemma~\ref{lem:constconst}, which also follows from Propositions~5 and~35 and Theorem~16 of~\cite{ChromThresh}.

\begin{lemma}\label{lem:construction:allsametime}
For every graph $H'$, integer $s \in \N$ and constants $C,\gamma > 0$, there exists $K = K(H',s,\gamma,C) > 0$ such that the following holds. For all sufficiently large $n$, there exists a graph $G$ on $n$ vertices with the following properties:
 \begin{enumerate}[label=\abc]
\item $\delta(G) \ge \big(\delta_\chi(H')-\gamma\big)n$.
\item There exists a set $X \subset V(G)$, with $|X| = K$, such that $\chi\big(G[X]\big) > C$.
\item $\delta_\chi(H^*) < \delta_\chi(H')$ for every subgraph of $H^* \subset G$ with at most $s$ vertices.
\end{enumerate}
\end{lemma}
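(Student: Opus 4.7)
The plan is to adapt the three constructions from~\cite{ChromThresh} (Propositions~5 and~35 and Theorem~16) that prove Lemma~\ref{lem:constconst}, corresponding to the three possible nonzero values of $\delta_\chi(H')$ when $\chi(H') = r \ge 3$. Each such construction combines some high-chromatic-number auxiliary graph (an Erd\H{o}s graph of large girth, or a Borsuk/Kneser-type graph) with a blown-up base structure that enforces the desired minimum degree and ensures $H'$-freeness. The additional ingredient needed here is to choose the auxiliary graph's girth to be strictly greater than $s$, which is possible by Erd\H{o}s's classical result~\cite{Erd59} that there exist graphs of arbitrarily large chromatic number and girth.

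With this modification, properties (a) and (b) follow from the original arguments of~\cite{ChromThresh}: the minimum-degree and chromatic-number calculations are unchanged, and the set $X$ is taken to be the vertex set of the auxiliary graph itself. The new content is property (c). Given any subgraph $H^* \subset G$ with $v(H^*) \le s$, the high-girth condition forces the intersection of $H^*$ with the auxiliary graph to be a forest, so $H^*$ inherits a nice structure from the base construction together with this forest. A case analysis by the value of $\delta_\chi(H')$, using the classification in the introduction, then shows that $H^*$ has strictly better structure in each case: either $\chi(H^*) < \chi(H')$, or $H^*$ acquires a forest in its decomposition family that $H'$ lacked, or $H^*$ becomes $r$-near-acyclic when $H'$ was not. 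In all cases the classification yields $\delta_\chi(H^*) < \delta_\chi(H')$.

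The main obstacle lies in the most delicate case, $\delta_\chi(H') = (r-3)/(r-2)$ (which requires $r \ge 4$). Here one must be careful because merely reducing the chromatic number of $H^*$ by one leaves room for the threshold to remain at $(r-3)/(r-2)$, since an $(r-1)$-chromatic graph without a forest in its decomposition family also has $\delta_\chi = (r-3)/(r-2)$. One must therefore verify that the construction underlying Proposition~35 of~\cite{ChromThresh}, combined with the high-girth auxiliary graph, automatically introduces a forest into the decomposition family of every $s$-vertex subgraph. This should follow from the $r$-near-acyclic structure of the base graph used there, but the details will require a careful inspection of that construction.
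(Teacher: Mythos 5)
Your overall strategy---adapt the three lower-bound constructions from~\cite{ChromThresh} by raising the girth of the auxiliary high-chromatic graph above $s$---matches the paper's (the paper's ``proof'' of this lemma is a one-sentence citation to Propositions~5 and~35 and Theorem~16 of~\cite{ChromThresh}). But there are two genuine gaps. First, ``choose the auxiliary graph's girth $>s$'' only works for the Erd\H{o}s-graph constructions (Propositions~5 and~35). The Theorem~16 construction uses a Borsuk/Kneser-type graph whose girth is fixed at $4$ (it contains many $4$-cycles for any choice of parameters); what \emph{can} be made large is its odd girth. With high odd girth, a small subgraph meets the Borsuk part in a bipartite graph but not necessarily a forest. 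Your case analysis, which hinges on ``the intersection of $H^*$ with the auxiliary graph [is] a forest,'' therefore does not apply to this case without modification, and one would need to check that the bipartite intersection together with the base structure still yields the required conclusion (e.g.\ that every $r$-chromatic $s$-vertex subgraph is $r$-near-acyclic).

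Second, you explicitly leave the case $\delta_\chi(H')=\tfrac{r-3}{r-2}$ unverified. Your diagnosis of the difficulty is correct---an $(r-1)$-chromatic subgraph $H^*$ with no forest in its decomposition family still has $\delta_\chi(H^*)=\tfrac{r-3}{r-2}$, so merely reducing the chromatic number by one does not suffice---but you do not supply the argument. This is exactly the content that must be proved, not a detail to be deferred: until one inspects the Proposition~35 construction and shows that every $(r-1)$-chromatic $s$-vertex subgraph acquires a forest in its decomposition family, the proof of part~(c) in this case is incomplete. (A smaller inaccuracy: in the Erd\H{o}s-graph constructions the paper takes $X$ to be not the whole vertex set of the auxiliary graph, but the $K$ vertices that lie outside the large independent set.)
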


Given $H$, let $H'$ be a graph which minimises $\delta_\chi(H')$ such that there exists a homomorphism from $H$ to $H'$. We claim that $\delta_\chi^*(H) \ge \delta_\chi(H')$, i.e., that for every $\gamma > 0$ and $C > 0$, there exists $\eps > 0$ and infinitely many $H$-free graphs $G$ with $\delta(G) \ge \big(\delta_\chi(H') - 2\gamma\big)v(G)$ which cannot be made $C$-colourable by removing at most $\eps n^2$ edges.  

To prove this, first let $G'$ be the graph (on $n$ vertices) given by Lemma~\ref{lem:construction:allsametime} with inputs $H'$, $s = v(H)$, $C$ and $\gamma$. We construct a graph $G$ by blowing up each vertex of $X$ to size $\mu n$ for some constant $\mu > 0$. Now, if $\mu < \gamma / K$, then 
$$\delta(G) \, \ge \, \delta(G') \, \ge \, \big(\delta_\chi(H) - \gamma\big)n \, \ge \, \big(\delta_\chi(H) - 2\gamma\big) v(G),$$
since $v(G) \le \big( 1 + \mu K \big) n$, and if $\mu \ge \sqrt{\eps}$ then the chromatic number of $G$ cannot be decreased by removing fewer than $\mu^2 n^2 \ge \eps n^2$ edges. Thus it only remains to show that $G$ is $H$-free. 

Suppose that $G$ is not $H$-free, and fix a copy of $H$ in $G$. We can construct a subgraph $H^* \subset G'$ by taking all vertices of this copy of $H$ lying outside the blow-up of $X$ in $G$, and each vertex of $X$ in $G'$ whose blow-up in $G$ contains one or more vertices of $H$. Note that $H$ is homomorphic to $H^*$, by construction. But $H^*$ has at most $v(H)$ vertices, and therefore $\delta_\chi(H^*) < \delta_\chi(H')$, by Lemma~\ref{lem:construction:allsametime}$(c)$, which contradicts our choice of $H'$. Hence $G$ is $H$-free, and this completes the proof of Theorem~\ref{thm:deltachistar}.

\ifarxiv{\appendix
\section{The proof of Lemma~\ref{lem:magic}}
\label{app:magic}

Before beginning, we should stress that the proof consists only of small and obvious modifications to the argument in~\cite{ChromThresh}: in fact, the only changes are a modification to the statement and proof of Proposition~\ref{3-to-r-rich} below, and the proof of Lemma~\ref{lem:magic} itself which is essentially contained in the proof of `Theorem 34' there. Most of this appendix is copied unchanged from~\cite{ChromThresh}, and it exists only to facilitate the sceptical reader's verification of Lemma~\ref{lem:magic}.

\begin{definition}[Definition~19 of~\cite{ChromThresh}, Modified Zykov graphs]\label{def:Zykov}
Let $T_1,\ldots,T_\ell$ be (disjoint) trees, and let $T_j$ have bipartition $A_j
\dcup B_j$. We define~$Z_\ell(T_1,\ldots,T_\ell)$ to be the graph on
vertex set $$V\big( Z_\ell(T_1,\ldots,T_\ell) \big) := \bigg( \bigcup_{j \in
[\ell]} A_j \cup B_j \bigg) \cup \big\{ u_I \colon I \subset [\ell] \big\}$$ and
with edge set
\begin{equation*}
E\big( Z_\ell(T_1,\ldots,T_\ell) \big) \, := \, \bigcup_{j = 1}^\ell
\Bigg(E(T_j) \cup \bigcup_{j \in I \subset [\ell]} K\big( u_I,A_j \big) \cup
\bigcup_{j \not\in I \subset [\ell]} K\big( u_I,B_j \big) \Bigg).
\end{equation*}
For each $r \ge 3$ and $t \in \NATS$, the \emph{modified Zykov graph}
$Z_\ell^{r,t}(T_1,\ldots,T_\ell)$ is the graph obtained from
$Z_\ell(T_1,\ldots,T_\ell)$ by performing the following two operations:
\begin{enumerate}[label=\abc]
  \item Add vertices $W = \{ w_1, \dots, w_{r-3} \}$, and all edges with
    an endpoint in $W$.
  \item Blow up each vertex $u_I$ with $I \subset [\ell]$ and each
    vertex $w_j$ in $W$ to a set $S_I$ or $S'_j$, respectively, of size $t$.
\end{enumerate}
Finally, we shall write $Z_\ell^{r,t}$ for the modified Zykov graph obtained
when each tree $T_i$, $i\in[\ell]$, is a single edge; that is,
$Z_\ell^{r,t} = Z_\ell^{r,t}(e_1,\ldots,e_\ell)$.
\end{definition}

\begin{obs}[Observation~20 of~\cite{ChromThresh}]\label{acy=Zyk}
Let $\chi(H) = r$. Then $H$ is $r$-near-acyclic if and only if there exist trees
$T_1,\ldots,T_\ell$ and $t \in \NATS$  such that $H$ is a subgraph of
$Z_\ell^{r,t}(T_1,\ldots,T_\ell)$.
\end{obs}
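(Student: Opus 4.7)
The plan is to establish each direction by an explicit combinatorial verification.

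For the forward direction, suppose $H$ is $r$-near-acyclic with $\chi(H) = r$. Unpacking the definitions, there exist $r-3$ independent sets $J_1,\dots,J_{r-3}$ in $H$ whose removal yields a graph $H'$ that partitions as $V(H') = V(F) \dcup I$ into a forest $F$ and an independent set $I$ meeting every odd cycle of $H'$ in at least two vertices. Let $T_1,\dots,T_\ell$ be the components of $F$, with bipartitions $A_j \dcup B_j$; these will serve as the input trees to the Zykov construction. The key claim is that for each $v \in I$ and each $j \in [\ell]$, the neighbours of $v$ in $T_j$ lie entirely in $A_j$ or entirely in $B_j$: otherwise, neighbours $a \in A_j$ and $b \in B_j$ of $v$ would be joined by an odd-length path in $T_j$ (opposite bipartition classes in a tree), and together with $va$, $vb$ this produces an odd cycle of $H'$ meeting $I$ only in $v$. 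Setting $I_v := \{j : N(v) \cap V(T_j) \subset A_j\}$ and mapping $v \mapsto u_{I_v}$ then preserves every edge incident to $v$, since each neighbour in $T_j$ lies in $A_j$ when $j \in I_v$ and in $B_j$ otherwise. Mapping each $J_k$ into the blow-up $S'_k$ and taking $t$ larger than both $\max_I |\{v \in I : I_v = I\}|$ and $\max_k |J_k|$ then gives $H \subset Z_\ell^{r,t}(T_1,\dots,T_\ell)$.

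For the backward direction, I would verify that $Z_\ell^{r,t}(T_1,\dots,T_\ell)$ is itself $r$-near-acyclic, and then observe that $r$-near-acyclicity is inherited by any subgraph $H$ with $\chi(H) = r$. First, $\chi(Z_\ell^{r,t}) = r$: the universal vertices $w_k$ form a clique joined to all other vertices, contributing $r-3$ to the chromatic number, while the underlying Zykov graph $Z_\ell(T_1,\dots,T_\ell)$ is $3$-chromatic (colour each tree by its bipartition and the independent set $\{u_I\}$ with a third colour). Second, removing the $r-3$ independent sets $S'_1,\dots,S'_{r-3}$ leaves a blow-up of $Z_\ell(T_1,\dots,T_\ell)$ which admits the forest-plus-independent-set partition $V(F) \dcup \bigcup_I S_I$: any odd cycle using only a single vertex of $\bigcup_I S_I$ would, after deleting that vertex, reduce to a path in a single $T_j$ between two endpoints in the same bipartition class, forcing even cycle length. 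Hence $Z_\ell^{r,t}$ is $r$-near-acyclic. Finally, for any $H \subset Z_\ell^{r,t}$ with $\chi(H) = r$, the sets $H \cap S'_k$ are $r-3$ independent sets whose removal drops $\chi$ by at most $r-3$ and leaves a subgraph of a $3$-chromatic graph, so $\chi$ is exactly $3$; both the partition and the odd-cycle-meets-$I$-twice condition transfer under restriction to $V(H)$, so $H$ is $r$-near-acyclic.

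The only substantive step is the odd-cycle-forcing argument in the forward direction, which pins down the bipartition class of each $v$'s neighbours in each $T_j$ and legitimises the assignment $v \mapsto u_{I_v}$. The rest is routine: checking that the constructed map is edge-preserving after blow-up, and that the chromatic number arithmetic survives removal of the specified independent sets. I do not anticipate any real obstacle beyond this.
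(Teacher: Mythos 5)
Your proof is correct, and it is the natural unpacking of the definitions; the paper itself states this as Observation~20 of~\cite{ChromThresh} without giving a proof, and your argument (assigning each cloud vertex $v$ the index set $I_v$ via the odd-cycle/bipartition-class argument in one direction, and deleting the sets $S'_k$ and using the $3$-colouring plus the even-length-path argument in the other) is essentially the intended one. One minor remark: your side claim that $\chi\big(Z_\ell^{r,t}(T_1,\dots,T_\ell)\big)=r$ (equivalently that $Z_\ell(T_1,\dots,T_\ell)$ is exactly $3$-chromatic) can fail in degenerate cases, e.g.\ when every $T_j$ is edgeless the graph is $(r-1)$-colourable; this is harmless, since your final paragraph only uses $3$-colourability of the blow-up of $Z_\ell$ together with the hypothesis $\chi(H)=r$, which is all that is needed.
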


It will be convenient for us to provide a compact piece of notation for the
adjacencies in $Z_\ell^{r,t}$. For this purpose, given a graph $G$ and a set $Y\subset
V(G)$, and integers $\ell,t \in \NATS$ and $r \ge 3$, define $\cG^{r,t}_\ell
(Y)$ to be the collection of functions
\[S \,:\, 2^{[\ell]} \cup [r-3] \,\to\, \binom{Y}{t}\,.\]
It is natural to think of $S$ as a family $\{S_I : I \subset [\ell]\} \cup \{
S'_j : j \in [r-3]\}$ of subsets of $Y$ of size $t$. We say that $S \in
\cG_\ell^{r,t}(Y)$ is \emph{proper} if these sets are pairwise disjoint and
$E(G)$ contains all edges $xy$ with $x\in S_I\cup S'_j$ and $y\in S'_{j'}$, whenever $j\neq j'$. We
shall write $\cF_\ell^{r,t}(Y)$ for the collection of proper functions
in $\cG_\ell^{r,t}(Y)$. The idea behind this definition is that we will later
want to consider a vertex set $Y\subset V(G)$ and a family of disjoint subsets
$\{S_I\colon I\subset[\ell]\}\cup\{S'_j\colon j\in[r-3]\}$ of size $t$ in $Y$
that we want to extend to a copy of $Z_\ell^{r,t}$.

For an ordered pair $(x,y)$ of vertices of $G$, a function
$S\in\cF_\ell^{r,t}(Y)$, and $i \in [\ell]$, we write $(x,y) \to_i S$, if
$S'_j \subset N(x,y)$ for every $j \in [r-3]$ and
\[\bigcup_{I \,:\, i \in I} S_I \subset N(x)  \qquad \textup{and} \qquad
\bigcup_{I \,:\, i \not\in I} S_I \subset N(y)\,.\] For an edge $e=xy\in
E(G)$, we write $e\to_i S$ if either $(x,y)\to_i S$ or $(y,x)\to_i S$.
Recall that $\tpl{e}{\ell}$ denotes the $\ell$-tuple $(e_1,\ldots,e_\ell)$, with
$\tpl{e}{0}$ the empty tuple. Define
\[\tpl{e}{\ell} \to S \qquad \Leftrightarrow \qquad e_i
\to_i S \quad \textup{for each $i \in [\ell]$}\,.\] 
Observe that the graph
$Z_\ell^{r,t}$ consists of a set of pairwise disjoint edges $e_1,\ldots,e_\ell$
and an $S \in \cF_\ell^{r,t}(Y)$ such that $\tpl{e}{\ell} \to S$. An advantage
of this notation is that we can write $\tpl{e}{\ell}\to S$ even if the edges in
$\tpl{e}{\ell}$ are not pairwise disjoint.

We shall show how to find a well-structured set of many
copies of $Z_\ell^{r,t}$ inside a graph with high minimum degree and high chromatic
number. The following definition (in which we shall make use of the compact
notation just defined) makes the concept of `well-structured' precise. Given $X \subset V(G)$, we write $E(X)$ for the edge set of $G[X]$, and if $D \subset E(G)$, then $\delta(D)$ denotes the minimum degree of the
graph $G[D]$.

In the following definition, the reader should think of the sets $D(\tpl{e}{j})$ as a `hierarchy' of graphs: we have a different graph $D(\tpl{e}{j+1})$ associated to each edge of $D(\tpl{e}{j})$. Note that if the vector $\tpl{e}{j}$ and the edge $e_{j+1}$ are contained in the same statement, then $\tpl{e}{j+1}$ is assumed to be their concatenation.

\begin{definition}[Definition~21 of~\cite{ChromThresh}, $(C,\alpha)$-rich in copies of $Z_\ell^{r,t}$]\label{def:rich}
Let~$X$ and~$Y$ be disjoint vertex sets in a graph~$G$, let~$C \in \NATS$ and
$\alpha > 0$, and let $s := (2^\ell+ r - 3) t$. We say that $(X,Y)$ is
\emph{$(C,\alpha)$-rich} in copies of $Z_\ell^{r,t}$ if
\begin{align*}
& \exists \, D=D(\tpl{e}{0})\subset E(X) \; \forall  \, e_1\in D \; \exists \,
D(\tpl{e}{1})\subset E(X) \; \forall \, e_2 \in D(\tpl{e}{1}) \quad \dots \\ &
\hspace{2cm} \dots \quad \forall \, e_{\ell-1} \in D(\tpl{e}{\ell-2}) \;
\exists \, D(\tpl{e}{\ell-1}) \subset E(X) \; \forall \, e_\ell \in
D(\tpl{e}{\ell-1})
\end{align*}
the following properties hold:
\begin{enumerate}[label=\abc]
  \item\label{def:rich:a} $\delta(D), \delta\big(D(\tpl{e}{1})\big), \dots,
  \delta\big(D(\tpl{e}{\ell-1})\big) > C$, and 
  \item\label{def:rich:b} $\left|
  \big\{ S \in \cF_\ell^{r,t}(Y) \,:\, \tpl{e}{\ell} \to S \big\} \right|  \ge
  \alpha |Y|^s$.
\end{enumerate}
\end{definition}

The point of this definition is that since $Z_\ell(3,t)$ is quite a simple graph, we require only quite weak properties of $(X,Y)$ in order to show that $(X,Y)$ is $(C,\alpha)$-rich in copies of $Z_\ell^{3,t}$. This is the content of the following proposition, whose proof is a `paired VC-dimension' argument.

\begin{prop}[Proposition~26 of~\cite{ChromThresh}]\label{lem:VC}
  For every $\ell,t \in \NATS$ and $d > 0$, there exists $\alpha > 0$ such
  that, for every $C \in \NATS$, there exists $C' \in \NATS$ such that the following holds.
  Let $G$ be a graph and let $X$ and $Y$ be disjoint subsets of $V(G)$, such that
  $|N(x) \cap Y| \ge d|Y|$ for every $x \in X$.
  
  Then either $\chi\big( G[X] \big) \le C'$, or $(X,Y)$ is $(C,\alpha)$-rich in
  copies of $Z_\ell^{3,t}$.
\end{prop}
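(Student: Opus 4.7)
The plan is to prove this by induction on $\ell$, after strengthening the statement to permit a bounded family $\{Y_1, \ldots, Y_m\}$ of target sets in place of a single $Y$ (each satisfying the density hypothesis $|N(x) \cap Y_j| \ge d|Y_j|$), and concluding the existence of a single common nested hierarchy $D, D(\tpl{e}{1}), \ldots$ witnessing that $(X, Y_j)$ is $(C,\alpha)$-rich in copies of $Z_\ell^{3,t}$ for every $j \in [m]$ simultaneously. The original proposition is the case $m=1$. The base case $\ell = 0$ is immediate, since there are no edge constraints and $\binom{|Y_j|}{t} \ge \alpha|Y_j|^t$ for $|Y_j|$ large (if $|Y_j|$ is small, $\chi(G[X]) \le |X|$ is trivially bounded).

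For the inductive step, let $(\alpha', C'')$ be the IH constants at level $\ell - 1$ with $2m$ target sets and reduced density $d' := d^2/4$, and assume $\chi(G[X]) > C'$ for $C'$ chosen sufficiently large in terms of $C''$, $m$, $d$. Since high chromatic number forces high degeneracy, pass to a subgraph $G[X^*] \subset G[X]$ of minimum degree at least $C'$, and set $D := E(G[X^*])$; this gives the outer min-degree requirement. For each fixed edge $e_1 = x_1 y_1 \in D$, let $Y_j^+ := N(x_1) \cap Y_j$ and $Y_j^- := N(y_1) \cap Y_j$, each of density at least $d$ in $Y_j$. The core subclaim, proved by a paired Sauer--Shelah-type argument, is that there exists a subset $X^{(1)} \subset X^*$ with $\chi(G[X^{(1)}]) > C''$ on which every vertex $v$ satisfies $|N(v) \cap Y_j^\sigma| \ge d'|Y_j^\sigma|$ for every $j \in [m]$ and $\sigma \in \{+, -\}$.

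Applying the strengthened IH at level $\ell - 1$ to $(X^{(1)}, \{Y_j^+, Y_j^-\}_{j \in [m]})$ with density $d'$ yields the remaining nested hierarchy $D(e_1), D(\tpl{e}{2}), \ldots$ with min degrees exceeding $C$ at each level. For each $\tpl{e}{\ell}$ and each $j$, combine a proper $S^+ \in \cF_{\ell - 1}^{3,t}(Y_j^+)$ with a proper $S^- \in \cF_{\ell - 1}^{3,t}(Y_j^-)$ by setting $S_I := S^+_{I \setminus \{1\}}$ when $1 \in I$ and $S_I := S^-_I$ otherwise; the result is a proper $S \in \cF_\ell^{3,t}(Y_j)$ with $\tpl{e}{\ell} \to S$, since $(x_1, y_1) \to_1 S$ is built in and the conditions $(x_i, y_i) \to_i S$ for $i \ge 2$ are inherited from the IH. Properness across the two halves (pairwise disjointness of the $S_I$) holds for a positive fraction of pairs because $|Y_j^\sigma| \ge d|Y_j| \gg 2^\ell t$, giving at least $\alpha |Y_j|^s$ valid $S$ for a suitable $\alpha = (\alpha')^2 \cdot \mathrm{const}(d, \ell, t)$.

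The main obstacle is the core subclaim used in the previous paragraph. A naive double-counting shows that $\mathbb{E}_{x_1 \in X}[|N(v) \cap N(x_1) \cap Y_j|] \ge d^2 |Y_j|$, so by Markov and union bound $|X^* \setminus X^{(1)}|$ is at most a small fraction of $|X^*|$; but a small fraction of $|X^*|$ may be unbounded in absolute terms, and the direct inequality $\chi(G[X^{(1)}]) \ge \chi(G[X^*]) - |X^* \setminus X^{(1)}|$ does not suffice when $|X^*|$ is unbounded. The resolution is a paired Sauer--Shelah bootstrap: either one can find a specific $e_1$ for which the bad set is absolutely bounded, or the obstruction to doing so produces a proper colouring of $G[X]$ with fewer than $C'$ colours classes, each class consisting of vertices with approximately the same neighbourhood type in $Y$, contradicting $\chi(G[X]) > C'$. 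This bootstrap, together with the doubling of the number of target sets at each inductive step and the corresponding drop in density from $d$ to $d^2/4$, forces $C'$ to grow tower-type in $\ell$.
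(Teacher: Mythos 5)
The proposition is not proved in this paper at all -- it is quoted from~\cite{ChromThresh} (Proposition~26 there), whose proof is the `paired VC-dimension' argument -- so your proposal must stand on its own, and it has a genuine gap at exactly the point where that argument is needed. Your inductive skeleton (strengthening to a bounded family of target sets with a common hierarchy, splitting $Y_j$ into $N(x_1)\cap Y_j$ and $N(y_1)\cap Y_j$, and gluing a proper $S^+$ with a proper $S^-$ into a proper $S\in\cF_\ell^{3,t}(Y_j)$ with $\tpl{e}{\ell}\to S$) is sensible and in the spirit of the cited argument. But the ``core subclaim'' is false for the choice $D:=E(G[X^*])$ with $X^*$ an arbitrary minimum-degree subgraph. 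Take $Y=Y_1\cup Y_2$ split into halves, $d=\tfrac12$, one vertex $x_1$ with $N(x_1)\cap Y=Y_1$, all other vertices of $X$ with trace exactly $Y_2$, the graph on $X\setminus\{x_1\}$ of enormous chromatic number, and $x_1$ joined to many of these vertices. Then $\chi(G[X])$ is huge, the hypotheses hold, and $(X,Y)$ \emph{is} rich (choose $D$ inside $X\setminus\{x_1\}$), but for any edge $e_1$ at $x_1$ no vertex of $X$ other than $x_1$ has a single neighbour in $Y^+=N(x_1)\cap Y$, so no set $X^{(1)}$ with the required density and chromatic number exists. Since the definition of richness quantifies over \emph{every} $e_1\in D$, one cannot take $D$ to be all edges of a minimum-degree subgraph; the entire difficulty is to show that either the ``good'' edges (those admitting a continuation) themselves contain an edge set of minimum degree greater than $C$, or else $\chi(G[X])$ is bounded. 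That dichotomy is precisely the paired VC-dimension argument, and your proposal replaces it by an asserted ``paired Sauer--Shelah bootstrap'' rather than a proof.

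Moreover, the resolution as described would not work even in outline. Finding ``a specific $e_1$ for which the bad set is absolutely bounded'' is useless, because a single edge does not yield a set $D$ with $\delta(D)>C$ all of whose edges admit continuations; and the alternative branch, a colouring whose classes consist of ``vertices with approximately the same neighbourhood type in $Y$'', is not justified and is not even obviously a proper colouring -- adjacent vertices of $X$ can have identical traces on $Y$, so such classes need not be independent, and the mechanism linking adjacency in $X$ to trace patterns on $Y$ (the actual content of the shattering argument) is absent. The heuristic you offer in support is also incorrect: $\Exp_{x_1\in X}\big[|N(v)\cap N(x_1)\cap Y_j|\big]\ge d^2|Y_j|$ does not follow from the hypotheses, since there is no degree condition from $Y$ into $X$; averaging does give many \emph{pairs} of vertices of $X$ with large common trace, but these pairs need not be edges, which is again the crux. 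Two smaller points: passing to a subgraph of minimum degree at least $C'$ does not preserve large chromatic number (take a $\chi$-critical subgraph instead, since you later use $\chi(G[X^{(1)}])\ge\chi(G[X^*])-|X^*\setminus X^{(1)}|$), and the base-case remark that small $|Y_j|$ forces $\chi(G[X])\le|X|$ to be bounded is unjustified (nothing bounds $|X|$ in terms of $|Y|$). In short: the reduction and gluing steps are fine, but the heart of the proof is missing.
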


Unfortunately, it is not easy to work with the concept of $(C,\alpha)$-richness. The reason for this is the quantifier alternation in the definition: we would like to construct a tree $T_1$ in $D$, but different edges $e_1$ of $D$ may give us entirely different sets $D(\tpl{e}{1})$, and we need to construct a tree $T_2$ which is in $D(\tpl{e}{1})$ for each $e_1\in T_1$. The next definition eliminates this quantifier alternation. We write $\ol{d}(E)$ for the average degree of $E$, i.e.\ $2|E|$ divided by the number of vertices contained in some edge of $E$.

\begin{definition}[Definition~23 of~\cite{ChromThresh}, Good function, $(C,\alpha)$-dense]\label{def:good}
  A function $S \in \cF_\ell^{r,t}(Y)$ is \emph{$(r,\ell,t,C,\alpha)$-good}
  for  $(X,Y)$ if there exist sets 
  \[E_{1},\ldots,E_\ell \subset E(X), \quad \textup{with} \quad
  \ol{d}(E_j) \ge 2^{-\ell} \alpha C \quad \textup{ for each $1 \le j
    \le \ell$\,,}\]
    such that for every $e_{1} \in E_{1}, \ldots, e_\ell
  \in E_\ell$, we have $\tpl{e}{\ell} \to S$.

  The pair $(X,Y)$ is \emph{$(C,\alpha)$-dense} in copies of $Z_\ell^{r,t}$
  if there exist at least $2^{-\ell} \alpha |Y|^s$ families $S \in \cF(Y)$
  which are $(r,\ell,t,C,\alpha)$-good for $(X,Y)$. 
\end{definition}

To go with this definition we have the following lemma, whose proof is an inductive double counting argument, which converts richness into the more useable denseness.

\begin{lemma}[Lemma~24 of~\cite{ChromThresh}]\label{q-induc}
  If $(X,Y)$ is $(C,\alpha)$-rich in copies of
  $Z_\ell^{r,t}$, then $(X,Y)$ is $(C,\alpha)$-dense in copies of
  $Z_\ell^{r,t}$.
\end{lemma}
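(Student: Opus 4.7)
The key observation is that the condition $\tpl{e}{\ell} \to S$ is a conjunction of the single-edge conditions $e_j \to_j S$, one per coordinate. Writing $E_j(S) := \{e \in E(X) : e \to_j S\}$, the condition becomes $e_j \in E_j(S)$ for every $j$; in particular any choice of subsets $E_j^* \subseteq E_j(S)$ satisfies $(e_1,\ldots,e_\ell) \to S$ for every $\tpl{e}{\ell} \in E_1^*\times\cdots\times E_\ell^*$, so the product structure in Definition~\ref{def:good} is automatic and denseness reduces to producing, for many $S$, such subsets of sufficiently large average degree.

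The plan is to prove the lemma by induction on $\ell$ via a slightly strengthened statement, parameterised by a prefix $\tpl{e}{k}\in E(X)^k$ (with $0 \le k \le \ell-1$) and a set $D(\tpl{e}{k})$ of minimum degree greater than $C$ witnessing $(C,\alpha)$-richness of the remaining coordinates. The claim is: there exist at least $2^{-(\ell-k)}\alpha|Y|^s$ functions $S$ with $e_j \to_j S$ for every $j \le k$, equipped with subsets $E_{k+1}^*(S),\ldots,E_\ell^*(S) \subseteq E(X)$ of average degree at least $2^{-(\ell-k)}\alpha C$ such that $(e_1,\ldots,e_\ell) \to S$ whenever $e_j \in E_j^*(S)$ for each $j > k$. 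The lemma itself is the case $k=0$; the base of the induction is $k=\ell-1$, where a single double count of $(e_\ell,S)$ pairs using the terminal richness bound, followed by a standard pigeonhole, produces at least $\tfrac12\alpha|Y|^s$ functions $S$ with $e_j \to_j S$ for every $j \le \ell-1$ and with $|E_\ell(S) \cap D(\tpl{e}{\ell-1})| \ge \tfrac12\alpha|D(\tpl{e}{\ell-1})|$; since $\delta(D(\tpl{e}{\ell-1})) > C$, the edge-count bound forces $\ol{d}(E_\ell(S) \cap D(\tpl{e}{\ell-1})) \ge \tfrac12\alpha C$, which is exactly what we need.

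For the inductive step, apply the inductive hypothesis at each $e_{k+1} \in D(\tpl{e}{k})$ with the extended prefix $\tpl{e}{k+1}$ and its witness $D(\tpl{e}{k+1})$, to obtain for each such $e_{k+1}$ a collection $\cS(e_{k+1})$ of at least $2^{-(\ell-k-1)}\alpha|Y|^s$ functions $S$ equipped with local edge sets $E_{k+2}^{e_{k+1}}(S),\ldots,E_\ell^{e_{k+1}}(S)$. A second double count over $(e_{k+1},S)$ pairs, followed by one more pigeonhole, yields at least $2^{-(\ell-k)}\alpha|Y|^s$ functions $S$ for which the set $E_{k+1}^*(S) := \{e_{k+1} \in D(\tpl{e}{k}) : S \in \cS(e_{k+1})\}$ covers a $2^{-(\ell-k)}\alpha$-fraction of $D(\tpl{e}{k})$, and hence, via $\delta(D(\tpl{e}{k})) > C$, has $\ol{d}(E_{k+1}^*(S)) \ge 2^{-(\ell-k)}\alpha C$.

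The main obstacle is combining the parameter-dependent local sets $E_j^{e_{k+1}}(S)$ (for $j \ge k+2$) into parameter-independent $E_j^*(S)$; at first sight one might fear having to intersect across many $e_{k+1}$ and lose control of the average degrees. The key observation resolves this cleanly: because $e_j \to_j S$ depends only on the edge $e_j$ and the function $S$ (not on any $e_i$ for $i\neq j$), every edge of $E_j^{e_{k+1}}(S)$ automatically lies in the natural $E_j(S)$ regardless of which $e_{k+1}$ was used. I would therefore fix once and for all a single representative $e_{k+1}^* \in E_{k+1}^*(S)$, set $E_j^*(S) := E_j^{e_{k+1}^*}(S)$ for $j \ge k+2$, and verify the product structure coordinate by coordinate: $e_j \to_j S$ for $j \le k$ holds by the prefix condition, for $j=k+1$ because $E_{k+1}^*(S) \subseteq E_{k+1}(S)$, and for $j \ge k+2$ by the product-structure guarantee for $e_{k+1}^*$ coming from the inductive hypothesis. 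This closes the induction and proves the lemma.
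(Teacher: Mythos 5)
Your plan is correct and matches the approach the paper attributes to Lemma~24 of \cite{ChromThresh} (``an inductive double counting argument''), so I will just confirm the details rather than compare two different routes.

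The crucial observation --- that $\tpl{e}{\ell} \to S$ is precisely the conjunction of the single-coordinate conditions $e_j \to_j S$, so that the product structure demanded in Definition~\ref{def:good} is automatic once you choose $E_j^*(S)\subseteq E_j(S):=\{e\colon e\to_j S\}$ --- is exactly what makes the induction close. Without it, as you note, the $e_{k+1}$-dependence of the local sets $E_j^{e_{k+1}}(S)$ for $j>k+1$ would seem to force an intersection over all $e_{k+1}\in E^*_{k+1}(S)$ and destroy the degree bound; with it, fixing one representative $e_{k+1}^*$ suffices, because membership in $E_j^{e_{k+1}^*}(S)$ already certifies $e_j\to_j S$, which is the only thing the $j$-th coordinate of the final product is asked to deliver.

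Two small checks worth making explicit when you flesh this out. First, the degree transfer: from $E^*\subseteq D$ with $|E^*|\ge\theta|D|$ and $\delta(D)>C$ you get $\ol d(E^*)=2|E^*|/|V(E^*)|\ge 2\theta|D|/|V(D)|=\theta\,\ol d(D)\ge\theta\,\delta(D)>\theta C$, since $E^*\subseteq D$ implies $V(E^*)\subseteq V(D)$; this is the step that turns the pigeonhole fraction $\theta=2^{-(\ell-k)}\alpha$ into the required average-degree bound. Second, the pigeonhole itself uses $|\cF_\ell^{r,t}(Y)|\le|Y|^s$ with $s=(2^\ell+r-3)t$, which holds since each proper $S$ is determined by $2^\ell+r-3$ ordered $t$-subsets of $Y$; with this, $\sum_S N(S)\ge\beta|Y|^s|D|$ and $N(S)\le|D|$ yield at least $\tfrac\beta2|Y|^s$ functions with $N(S)\ge\tfrac\beta2|D|$, where $\beta=2^{-(\ell-k-1)}\alpha$, giving exactly the halving per level that produces $2^{-\ell}\alpha$ at $k=0$. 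Also make sure the statement of your strengthened claim records that the surviving $S$ additionally satisfy the prefix condition $e_j\to_j S$ for $j\le k$ (they do, since $N(S)>0$ forces $S\in\cS(e_{k+1})$ for some $e_{k+1}$, and membership in $\cS(e_{k+1})$ already enforces the prefix); this is what lets you use the prefix condition in the final coordinate-by-coordinate verification. With those points spelled out the induction is complete.
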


Since we only obtain richness in copies of $Z_\ell^{3,t}$ from Proposition~\ref{lem:VC}, but the conclusion of Lemma~\ref{lem:magic} which we want to prove speaks of $Z_\ell^{r,t}$ for all $r\ge 3$, if $r\ge 4$ we need at some point to `upgrade' the structure we have. The lemma which permits us to do this is the following. It is a small modification of Proposition 36 in~\cite{ChromThresh}.

\begin{prop}\label{3-to-r-rich}
  For every $r>3$, $\ell,t \in \NATS$ and $d,\gamma > 0$ there exist
  $\ell^*,t^*\in\NATS$ such that for every $\alpha>0$ and $C\in\NATS$, there
  exist $\eps_1>0$ and $C^*\in\NATS$, such that for every $0<\eps<\eps_1$ the
  following holds.
  
  Let $G$ be a graph, and let $X$, $Y$ and $Z_1,\ldots,Z_{r-3}$ be disjoint
  subsets of $V(G)$, with $|Y| = |Z_j|$ for each $j \in [r-3]$. Let
  $Z:=Z_1\cup\cdots\cup Z_{r-3}$. Suppose that
  $(Y,Z_j)$ and $(Z_i,Z_j)$ are $(\eps,d)$-regular for each $i \ne j$, and that
  for each $e\in G[X]$ and $j \in [r-3]$, the edge $e$ has at least $\gamma |Z_j|$ common neighbours in $Z_j$.
  
  If $(X,Y)$ is $(C^*,\alpha)$-dense in copies of $Z_{\ell^*}^{3,t^*}$, then
  there is some $S\in\cF_\ell^{r,t}(Y\cup Z)$ such that
  $S$ is $(r,\ell,t,C,\alpha)$-good for $(X,Y\cup Z)$.
\end{prop}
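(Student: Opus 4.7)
The proof is essentially identical to that of \cite[Proposition~36]{ChromThresh}, and my plan is simply to point out how our hypothesis plugs into that argument. The proposition in \cite{ChromThresh} assumes instead a minimum-degree condition -- every vertex of~$X$ has at least $(\tfrac12+\gamma)|Z_j|$ neighbours in each~$Z_j$ -- and the only use made of this condition in the proof is, by inclusion--exclusion, to deduce that every edge of $G[X]$ has at least $\gamma|Z_j|$ common neighbours in each $Z_j$. Since we now take this edge-wise common-neighbourhood bound as a direct hypothesis, the remainder of the argument goes through verbatim.

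Starting from the $(C^*,\alpha)$-denseness in copies of $Z_{\ell^*}^{3,t^*}$, we obtain at least $2^{-\ell^*}\alpha|Y|^{s^*}$ families $S^*\in\cF_{\ell^*}^{3,t^*}(Y)$, each equipped with edge sets $E_1^*,\ldots,E_{\ell^*}^*\subset E(X)$ of large average degree, such that every choice $e_i\in E_i^*$ satisfies $\tpl{e}{\ell^*}\to S^*$. The goal is to ``promote'' a good fraction of these $S^*$ to good functions $S\in\cF_\ell^{r,t}(Y\cup Z)$. This is done in two stages: first one selects $2^\ell$ of the $2^{\ell^*}$ subsets $I$ and thins the associated $t^*$-sets $S_I^*\subset Y$ to $t$-subsets $S_I$; second, one attaches $t$-sets $S'_1\subset Z_1,\ldots,S'_{r-3}\subset Z_{r-3}$ so that the resulting family is a proper member of $\cF_\ell^{r,t}(Y\cup Z)$ and each $S'_j$ lies in the common neighbourhood of both endpoints of every edge in a retained subcollection of the $E_i^*$.

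The attachment step combines two inputs. First, the Counting Lemma applied to the $(\eps,d)$-regular multipartite structure on the chosen subsets of~$Y$ and on $Z_1,\ldots,Z_{r-3}$ shows that a positive fraction of tuples $(S'_1,\ldots,S'_{r-3})\in\binom{Z_1}{t}\times\cdots\times\binom{Z_{r-3}}{t}$ are complete to the chosen $S_I$'s and pairwise complete among themselves, thereby yielding the ``proper'' condition. Second, our hypothesis guarantees that for each edge $e$ in the retained $E_i^*$ and each $j$, a positive fraction of $t$-subsets of $Z_j$ lie entirely in the common $Z_j$-neighbourhood of $e$ (via regularity and straightforward counting). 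A pigeonhole over the $S^*$'s, edges and $(r-3)$-tuples then produces a single tuple $(S'_1,\ldots,S'_{r-3})$ and thinned edge sets $E_i\subset E_i^*$ whose average degrees are still at least $2^{-\ell}\alpha C$, giving a good $S$ as required.

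The only quantitative work is to choose $\ell^*,t^*,C^*,\eps_1$ large enough relative to $\ell,t,C,\eps,d,\gamma$ so that the successive pigeonhole losses leave enough room to satisfy the average-degree threshold in Definition~\ref{def:good}. All of this bookkeeping is already carried out in \cite[Proposition~36]{ChromThresh}, so the main obstacle -- and the only thing one really needs to verify -- is that no step of that proof uses the minimum-degree hypothesis beyond the deduction of the edge-common-neighbourhood bound. This is immediate from inspection of the original argument.
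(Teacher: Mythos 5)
Your proposal is correct and is essentially the paper's own argument: the paper likewise observes that the only use of the minimum-degree hypothesis in \cite[Proposition~36]{ChromThresh} is to deduce that every edge of $G[X]$ has at least $\gamma|Z_j|$ common neighbours in each $Z_j$, and then reruns that proof with this edge condition assumed directly (reproducing the details only for the sceptical reader). One small inaccuracy in your sketch of the quoted argument: completeness of the attached sets to the thinned $S_I$'s is not obtained by applying the Counting Lemma to the constant-size $S_I$'s (regularity says nothing about such small sets), but rather by first finding the copy $T$ of $K_{r-3}(t)$ in the common neighbourhoods of the retained edges and then thinning each $S^*_I$ inside $N(T)$, using \cite[Lemma~37]{ChromThresh} to ensure most $t^*$-sets of $Y$ contain at least $t$ common neighbours of almost every such $T$ --- but this is internal to the proof you defer to and does not affect the modification you are verifying.
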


The change here, as compared to~\cite{ChromThresh}, is that we insist that each edge $e\in G[X]$ has common neighbourhood $\gamma |Z_j|$ in each $Z_j$, as opposed to that each vertex of $X$ has neighbourhood at least $\big(\tfrac12+\gamma\big)|Z_j|$ in $Z_j$, which latter obviously implies the former. The change in the proof is similarly trivial: the only use made of the stronger condition in~\cite{ChromThresh} is to imply the weaker condition. Nevertheless, we give the details.

For the proof, we combine an application of the
Counting Lemma and two uses of the pigeonhole principle. As a preparation
for these steps we need to show that there exists a family
$S^*\in\cF_{\ell^*}^{3,t^*}$ which is $(3,\ell^*,t^*,C^*,\alpha)$-good for
$(X,Y)$ and `well-behaved' in the following sense. For each of the sets
$S^*_I\subset Y$ given by~$S^*_I$ only a small positive fraction of the
$(r-3)t$-element sets in~$Z$ has a common neighbourhood in $S^*_I$ of less
than~$t$ vertices. To this end we shall use the following lemma.

Recall that for a set
$T$ of vertices in a graph $G$, we write
\[N(T)\colon = \bigcap_{x\in T} N(x)\,.\]

\begin{lemma}[Lemma~37 of~\cite{ChromThresh}]\label{countST}
  For all $r,t \in \NATS$ and $\mu,d > 0$, there exist $t^* =
  t^*(r,t,\mu,d) \in \NATS$ and $\eps_0 = \eps_0(r,t,\mu,d) > 0$ such that for
  all $0<\eps<\eps_0$ the following holds.
  
  Let $G$ be a graph, and suppose that $Y$ and $Z_1,\ldots,Z_{r-3}$ are disjoint
  subsets of $V(G)$ such that $(Y,Z_j)$ is $(\eps,d)$-regular for each $j \in
  [r-3]$. Let $Z := Z_1 \cup \ldots \cup Z_{r-3}$, and define
  \[\cB(S) := \Big\{ T \in \binom{Z}{(r-3)t} \,:\, |N(T) \cap S | < t \Big\}\]
  for each $S \subset Y$. Then we have
  \[\cS:=\Big\{ S \in \binom{Y}{t^*} \,:\, |\cB(S)| \ge \mu
  |Z|^{(r-3)t} \Big\}  \; \le \; \sqrt{\eps} |Y|^{t^*}\,.\]
\end{lemma}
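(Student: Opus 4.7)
The plan is to combine a Counting-Lemma-style bound that controls the $T \in \binom{Z}{(r-3)t}$ with small common neighbourhood in $Y$, with a hypergeometric concentration argument for random $t^*$-subsets $S \subset Y$, via a double-counting identity.

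First, I would set $\delta := (d/2)^{(r-3)t}$ and show, using the $(\eps,d)$-regularity of each pair $(Y,Z_j)$, that the number of $T \in \binom{Z}{(r-3)t}$ with $|N(T) \cap Y| < \delta|Y|$ is at most $\eta(\eps)|Z|^{(r-3)t}$, where $\eta(\eps) \to 0$ as $\eps \to 0$, and in fact $\eta(\eps) \le \sqrt{\eps}/2$ for $\eps$ small. The argument is iterative: by regularity, for each $j \in [r-3]$, all but at most $\eps|Y|$ vertices $y \in Y$ satisfy $|N(y) \cap Z_j| \ge (d-\eps)|Z_j|$; then, applying regularity again to the appropriate subsets $Y' \subset Y$ carved out by the previous $T_i$'s, most $t$-subsets $T_j \in \binom{Z_j}{t}$ satisfy $|N(T_j) \cap Y'| \ge (d-\eps)^t|Y'|$. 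Iterating over $j = 1, \ldots, r-3$ gives the desired bound on the number of ``light'' $T$.

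Next, I would choose $t^* = t^*(r,t,\mu,d)$ sufficiently large so that, for any ``heavy'' $T$ (i.e., $|N(T) \cap Y| \ge \delta|Y|$), a uniformly random $S \in \binom{Y}{t^*}$ satisfies $|N(T) \cap S| \ge t$ except with probability $p_1$, where $p_1$ can be made as small as needed. This follows from Hoeffding's inequality for the hypergeometric distribution $\Hyp(|Y|, |N(T)\cap Y|, t^*)$, which has mean at least $\delta t^*$, so that the probability of obtaining fewer than $t$ ``hits'' is at most $\exp(-c \delta t^*)$ for an absolute constant $c > 0$.

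Finally, a double count gives
\[
\mu|Z|^{(r-3)t}|\cS| \, \le \, \sum_{S \in \binom{Y}{t^*}}|\cB(S)| \, = \, \sum_T \#\{S \,:\, |N(T) \cap S| < t\} \, \le \, \big(\eta(\eps) + p_1\big)|Z|^{(r-3)t}\binom{|Y|}{t^*},
\]
where light $T$'s contribute the $\eta(\eps)$ term (via the trivial bound $\#\{S\} \le \binom{|Y|}{t^*}$) and heavy $T$'s contribute the $p_1$ term. Rearranging, and calibrating $t^*$ and $\eps_0$ appropriately, yields $|\cS| \le \sqrt{\eps}|Y|^{t^*}$, as required. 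The hard part will be the Counting-Lemma-style estimate in the first step: the regularity argument must be iterated through the $(r-3)$ clusters $Z_j$ carefully, so that $\eta(\eps)$ decays polynomially in $\eps$ rather than losing a factor at each step; with this in hand, calibrating $t^*$ so that $p_1$ is of order $\sqrt{\eps}$ for $\eps$ in the allowed range completes the argument.
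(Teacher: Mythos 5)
Your first two steps are fine and standard: iterating regularity through the clusters shows that all but at most $(r-3)t\,\eps\,|Z|^{(r-3)t}\le\tfrac12\sqrt{\eps}\,|Z|^{(r-3)t}$ sets $T\in\binom{Z}{(r-3)t}$ satisfy $|N(T)\cap Y|\ge\delta|Y|$ with $\delta=(d/2)^{(r-3)t}$, and for such a heavy $T$ the hypergeometric tail gives $\#\{S\in\binom{Y}{t^*}:|N(T)\cap S|<t\}\le p_1\binom{|Y|}{t^*}$ with $p_1\le e^{-c\delta t^*}$. (Note that this paper does not prove Lemma~\ref{countST} at all: it is quoted from~\cite{ChromThresh} and used as a black box, so I can only judge your argument on its own terms.)

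The gap is in the final calibration, and it is fatal rather than cosmetic. The quantifier order forces $t^*=t^*(r,t,\mu,d)$ to be fixed \emph{before} $\eps$, and the conclusion must hold for every $\eps<\eps_0$, in particular for arbitrarily small $\eps$. Once $t^*$ is fixed, $p_1$ is a positive constant independent of $\eps$: indeed $S$ may simply avoid $N(T)\cap Y$, and in a quasirandom bipartite graph of density exactly $d$ the heavy-$T$ contribution to $\sum_S|\cB(S)|$ genuinely is of order $|Z|^{(r-3)t}\binom{|Y|}{t^*}$, so no sharpening of the tail estimate helps. Your double count therefore yields only $|\cS|\le\mu^{-1}\big(\eta(\eps)+p_1\big)\binom{|Y|}{t^*}$, a \emph{constant} (in $\eps$) fraction of all $t^*$-sets, whereas the lemma demands a fraction of order $\sqrt{\eps}$, tending to $0$ as $\eps\to0$; ``calibrating $t^*$ so that $p_1$ is of order $\sqrt{\eps}$'' is not available because $t^*$ cannot depend on $\eps$. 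This $\eps$-dependence is exactly what the application needs: in the proof of Proposition~\ref{3-to-r-rich}, $t^*$ is chosen before $\alpha$, and one requires $2^{\ell^*}\sqrt{\eps}<2^{-\ell^*}\alpha$, which a bound of the form ``constant depending on $t^*,\mu,d$'' cannot supply. The missing idea is a concentration statement: $\Exp_S|\cB(S)|$ is indeed far below $\mu|Z|^{(r-3)t}$ once $t^*$ is large, but Markov applied to $\sum_S|\cB(S)|$ cannot rule out that a $\Theta(1)$-fraction of sets $S$ each miss slightly more than a $\mu$-fraction of the $T$'s, while the truth is that many $S$ each miss only a few heavy $T$'s. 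One must tie every bad $S$ to a witness of $\eps$-irregularity — for instance extract from each bad $S$ a bounded-size subset with atypically small common neighbourhood, or run a second-moment/codegree argument exploiting the two-sided regularity of the pairs $(Y,Z_j)$ — so that the number of bad $S$ is bounded by a function of $\eps$ that vanishes as $\eps\to0$. No ingredient of this kind appears in your proposal.
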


We shall now prove Proposition~\ref{3-to-r-rich}.

\begin{proof}[Proof of Proposition~\ref{3-to-r-rich}]
  We start by defining the constants.
  Given $r>3$, $\ell,t \in \NATS$ and $\gamma,d  > 0$, we set
  \begin{equation}\label{eq:3tor:setalphaell}
    \mu:=\frac{\gamma^{(r-3)t}}{8\big((r-3)t\big)!(r-3)^{(r-3)t}}
    \Big(\frac{d}{2}\Big)^{\binom{r-3}{2}t^2} \quad\text{and}\quad
    \ell^*:=\frac{\ell}{2\mu}\,.
  \end{equation}
  Let $t^*$ and $\eps_0$ be given by Lemma~\ref{countST} with input
  $r, t, \mu':=2^{-\ell^*}\mu, d$. Given $\alpha>0$ and $C$, we choose
  \begin{equation}\label{eq:3tor:setepsCstar}
    \eps_1:=\min\Big(\frac{\alpha^2}{2^{4\ell^*+1}},
    \frac{d\gamma}{4(\gamma+1)(r-3)t},\eps_0\Big)\quad\text{and}\quad
    C^*:=\frac{2^{\ell^*}C}{\alpha\mu}\,.
  \end{equation}
  
  Now let $0<\eps<\eps_1$, let $G$ be a graph, and let $X$, $Y$ and
  $Z_1,\ldots,Z_{r-3}$ be disjoint subsets of $V(G)$ as described in the
  statement, so in particular, $(X,Y)$ is $(C^*,\alpha)$-dense in copies of
  $Z_{\ell^*}^{3,t^*}$. The goal is to
  show that there exists $S\in\cF_\ell^{r,t}(Y\cup Z)$ such that $S$ is
  $(r,\ell,t,C,\alpha)$-good for $(X,Y\cup Z)$.
  
  Our first step is to show that there is a `well-behaved' function
  $S^*\in\cF_{\ell^*}^{3,t^*}(Y)$.
  
  \begin{claim}\label{clm:wellbvd} There is a function
  $S^*\in\cF_{\ell^*}^{3,t^*}(Y)$ which is $(3,\ell^*,t^*,C^*,\alpha)$-good for
  $(X,Y)$ and has the property that for every $I\subset[\ell^*]$, the set
  \[\cB(S^*_I)=\Big\{T\in\binom{Z}{(r-3)t}\colon \big|N(T)\cap S^*_I\big|\le
  t\Big\}\]
  in $\binom{Z}{(r-3)t}$ has size at most $2^{-\ell^*}\mu |Z|^{(r-3)t}$.
  \end{claim}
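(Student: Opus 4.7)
The plan is a counting argument combining the density hypothesis on $(X,Y)$ with Lemma~\ref{countST}. Recall that for $r=3$ the parameter $s$ in Definition~\ref{def:good} equals $2^{\ell^*}t^*$, so a function $S\in\cF_{\ell^*}^{3,t^*}(Y)$ is just a choice of $2^{\ell^*}$ pairwise disjoint $t^*$-subsets of $Y$, one for each $I\subset[\ell^*]$. The hypothesis that $(X,Y)$ is $(C^*,\alpha)$-dense in copies of $Z_{\ell^*}^{3,t^*}$ gives at least $2^{-\ell^*}\alpha|Y|^{2^{\ell^*}t^*}$ such functions that are $(3,\ell^*,t^*,C^*,\alpha)$-good. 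The job is to produce one such good function all of whose component sets $S^*_I$ are also ``well-behaved'' in the sense stated.

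First I would apply Lemma~\ref{countST} to our $Y$ and $Z_1,\dots,Z_{r-3}$ with parameters $r$, $t$, $\mu':=2^{-\ell^*}\mu$, $d$; this is legitimate because the pairs $(Y,Z_j)$ are $(\eps,d)$-regular with $\eps<\eps_0$. The lemma outputs $t^*$ (which matches the $t^*$ we fixed in~\eqref{eq:3tor:setepsCstar}) and tells us that the collection
\[
\cS\,:=\,\Big\{S\in\tbinom{Y}{t^*}\,:\,|\cB(S)|\ge 2^{-\ell^*}\mu|Z|^{(r-3)t}\Big\}
\]
has size at most $\sqrt{\eps}|Y|^{t^*}$. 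Call the members of $\cS$ \emph{bad} $t^*$-sets. A function $S\in\cF_{\ell^*}^{3,t^*}(Y)$ will be called \emph{well-behaved} if none of its $2^{\ell^*}$ constituent sets $S_I$ is bad.

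Next I would bound the number of functions in $\cF_{\ell^*}^{3,t^*}(Y)$ that fail to be well-behaved. There are at most $|Y|^{2^{\ell^*}t^*}$ functions altogether, so a union bound over the $2^{\ell^*}$ possible indices $I$ at which a bad $t^*$-set could appear gives at most
\[
2^{\ell^*}\cdot\sqrt{\eps}\,|Y|^{t^*}\cdot|Y|^{(2^{\ell^*}-1)t^*}\;=\;2^{\ell^*}\sqrt{\eps}\,|Y|^{2^{\ell^*}t^*}
\]
non-well-behaved functions. Using the choice $\eps<\eps_1\le\alpha^2/2^{4\ell^*+1}$ from~\eqref{eq:3tor:setepsCstar}, we have $\sqrt{\eps}<\alpha/2^{2\ell^*+1/2}$, so this count is strictly less than $2^{-\ell^*}\alpha|Y|^{2^{\ell^*}t^*}$.

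Finally, comparing with the lower bound $2^{-\ell^*}\alpha|Y|^{2^{\ell^*}t^*}$ on the number of $(3,\ell^*,t^*,C^*,\alpha)$-good functions provided by the density hypothesis, we conclude that at least one $(3,\ell^*,t^*,C^*,\alpha)$-good function must also be well-behaved; take this to be $S^*$. The only real bookkeeping issue is making sure $\ell^*$, $t^*$, $\eps_1$ were chosen so that $\sqrt{\eps_1}\cdot 2^{\ell^*}<2^{-\ell^*}\alpha$, but this is built into~\eqref{eq:3tor:setepsCstar}, so no genuine obstacle arises.
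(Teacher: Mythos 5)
Your argument is correct and mirrors the paper's own proof essentially step for step: apply Lemma~\ref{countST} with $\mu' = 2^{-\ell^*}\mu$ to bound the number of bad $t^*$-sets, union-bound over the $2^{\ell^*}$ coordinates to bound the number of functions with some bad $S_I$, and compare against the lower bound on good functions coming from $(C^*,\alpha)$-denseness, using the choice $\eps < \alpha^2/2^{4\ell^*+1}$ to close the gap. No substantive differences.
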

  
  \begin{claimproof}[Proof of Claim~\ref{clm:wellbvd}]
    By Lemma~\ref{countST} (with input $r,t,\mu'=2^{-\ell^*}\mu,d$), the
    total number of `bad' $t^*$-subsets $S'$ of $Y$, i.e., those for which
    $\cB(S')\ge 2^{-\ell^*}\mu|Z|^{(r-3)t}$, is at most
    $\sqrt{\eps}|Y|^{t^*}$. Let $\cS$ be the set of functions $S^*$ in
    $\cF_{\ell^*}^{3,t^*}(Y)$ which do \emph{not} have the property that
    for every $I\subset[\ell^*]$ we have
    $\cB(S^*_I)<2^{-\ell^*}\mu|Z|^{(r-3)t}$. We can obtain any function
    $S^*$ in $\cS$ by taking a set $I\subset [\ell^*]$ and one of the at
    most $\sqrt{\eps}|Y|^{t^*}$ `bad' $t^*$-sets to be $S^*_I$, and
    choosing the $2^{\ell^*}-1$ remaining sets of $S^*$ in any way from
    $\binom{Y}{t^*}$. It follows that
    \[|\cS|\le
    2^{\ell^*}\sqrt{\eps}|Y|^{t^*}|Y|^{(2^{\ell^*}-1)t^*}=2^{\ell^*}\sqrt{\eps}|Y|^{2^{\ell^*}t^*}\,.\]
    
    Since $(X,Y)$ is $(C^*,\alpha)$-dense in copies of $Z_{\ell^*}^{3,t^*}$,
    there are at least $2^{-\ell^*}\alpha|Y|^{2^{\ell^*}t^*}$ functions in
    $\cF_{\ell^*}^{3,t^*}(Y)$ which are $(3,\ell^*,t^*,C^*,\alpha)$-good for
    $(X,Y)$. Since by~\eqref{eq:3tor:setepsCstar} we have
    $2^{-\ell^*}\alpha> 2^{\ell^*}\sqrt{\eps}$, at
    least one of these functions is not in $\cS$, as required.
  \end{claimproof}
  
  For the remainder of the proof, $S^*$ will be a fixed function satisfying the
  conclusion of Claim~\ref{clm:wellbvd}. Since $S^*$ is
  $(3,\ell^*,t^*,C^*,\alpha)$-good for $(X,Y)$, there exist sets
  \[E^*_1,\ldots,E^*_{\ell^*} \subset E(X), \quad \text{with}  \quad
  \ol{d}(E^*_j) \ge 2^{-\ell^*} \alpha C^* \quad \text{ for each } \quad 1 \le j
  \le \ell^*\,,\] such that for every $e_{1} \in E^*_{1}, \ldots, e_{\ell^*} \in
  E^*_{\ell^*}$, we have $\tpl{e}{\ell^*} \to S^*$.
  
  Our next claim comprises two applications of the pigeonhole
  principle to find a copy of $K_{r-3}(t)$ in $Z$.
  
  \begin{claim}\label{3-to-r:claim2}
    There exists a copy $T$ of $K_{r-3}(t)$ with $t$ vertices in $Z_j$ for each
    $j \in [r-3]$, and a set $L \subset [\ell^*]$ of size $|L| = \ell$ such that:
    \begin{enumerate}[label=\rom]
      \item\label{3-to-r:a} $|N(T) \cap S^*_I | \ge t$  for every $I \subset
      [\ell^*]$,
      \item\label{3-to-r:b} $N(T)$ contains at least $\mu
      |E^*_j|$ edges of $E^*_j$, for each $j \in L$.
    \end{enumerate}
  \end{claim}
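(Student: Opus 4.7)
The plan is a double-counting argument combining the Counting Lemma, Claim~\ref{clm:wellbvd}, and an averaging/pigeonhole step. First, for each $j \in [\ell^*]$ and each edge $e \in E^*_j$, I count copies of $K_{r-3}(t)$ with $t$ vertices in each $N(e)\cap Z_k$. The hypothesis that every edge of $G[X]$ has at least $\gamma|Z_k|$ common neighbours in $Z_k$ allows us to apply the Slicing Lemma with $\alpha=\gamma$ to each $(\eps,d)$-regular pair $(Z_i,Z_k)$, yielding an $(\eps/\gamma,d)$-regular pair; the bound $\eps_1\le d\gamma/\big(4(\gamma+1)(r-3)t\big)$ from~\eqref{eq:3tor:setepsCstar} ensures this is small enough for the dense Counting Lemma. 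Hence for each pair $(j,e)$ the number of such copies is at least
\[
\tau \, := \, \frac{d^{\binom{r-3}{2}t^2}}{2\big((r-3)t\big)!}\bigg(\frac{\gamma|Z|}{r-3}\bigg)^{(r-3)t}.
\]

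Next I discard the ``bad'' copies, those failing condition~(i). By Claim~\ref{clm:wellbvd} applied to each $I\subset[\ell^*]$ and a union bound over the $2^{\ell^*}$ choices of~$I$, the number of $(r-3)t$-subsets $T$ of $Z$ with $|N(T)\cap S^*_I|<t$ for some $I$ is at most $\mu|Z|^{(r-3)t}$. A short calculation using~\eqref{eq:3tor:setalphaell} shows $\mu|Z|^{(r-3)t}\le\tau/4$, so for each $(j,e)$ at least $3\tau/4$ of the copies of $K_{r-3}(t)$ in $N(e)$ are ``good'' (satisfy~(i)). Writing $\cT^{\mathrm{good}}$ for the set of good copies and $f_j(T):=|E^*_j\cap E(G[N(T)])|$, double-counting the pairs $(T,e)$ with $T\in\cT^{\mathrm{good}}$ and $e\in E^*_j\cap E(G[N(T)])$ then gives $\sum_{T\in\cT^{\mathrm{good}}} f_j(T)\ge(3\tau/4)|E^*_j|$ for every $j\in[\ell^*]$.

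Finally I apply averaging and the pigeonhole principle. Since $|\cT^{\mathrm{good}}|\le|Z|^{(r-3)t}/\big((r-3)t\big)!$, dividing the previous inequality by $|E^*_j|$, summing over $j\in[\ell^*]$, and averaging over $T\in\cT^{\mathrm{good}}$ shows that the expected value of $\sum_{j\in[\ell^*]}f_j(T)/|E^*_j|$ for uniform $T\in\cT^{\mathrm{good}}$ is at least $(3/4)\ell^*\tau\big((r-3)t\big)!/|Z|^{(r-3)t}$; plugging in the definition of $\tau$ and using $\ell^*=\ell/(2\mu)$ together with~\eqref{eq:3tor:setalphaell}, this simplifies to at least $3\ell/2$. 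Fix a $T\in\cT^{\mathrm{good}}$ achieving at least this average. Since $f_j(T)/|E^*_j|\le 1$ and $\ell^*\mu=\ell/2$, if fewer than $\ell$ indices $j$ satisfied $f_j(T)\ge\mu|E^*_j|$ we would have $\sum_j f_j(T)/|E^*_j|<\ell+\ell^*\mu=3\ell/2$, contradicting the previous sentence. Hence at least $\ell$ indices $j$ satisfy~(ii), and taking $L$ to be any $\ell$-subset of these completes the proof. The only real obstacle is the bookkeeping between $\tau$, $\mu$, and $\ell^*$, but this has already been streamlined by the explicit choice of constants in~\eqref{eq:3tor:setalphaell}--\eqref{eq:3tor:setepsCstar}.
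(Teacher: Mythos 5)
Your proof is correct and follows essentially the same route as the paper: Slicing plus the Counting Lemma to get many partite copies of $K_{r-3}(t)$ in each $N(e)$, Claim~\ref{clm:wellbvd} to discard the at most $\mu|Z|^{(r-3)t}$ vertex sets violating~(i), and a double-counting/pigeonhole step to find a single surviving copy $T$ that is rich in $E^*_j$-edges for at least $\ell$ indices $j$. The only difference is organizational: the paper performs two successive pigeonhole extractions (first per index $j$ to get the families $\cT_j$, then over indices to get copies with $|L(T)|\ge\ell$) and only intersects with the well-behaved vertex sets at the end, whereas you remove the bad copies first and merge the two pigeonholes into one averaging of $\sum_{j}f_j(T)/|E^*_j|$; your constant $\tau$ is stated slightly more generously than the Counting Lemma strictly gives, but the slack in the choice of $\mu$ and $\ell^*$ absorbs this comfortably.
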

  
  \begin{claimproof}[Proof of Claim~\ref{3-to-r:claim2}]
    By assumption, each edge $e \in E^*_1 \cup
    \ldots \cup E^*_{\ell^*}$ has at least $\gamma |Z_j|$ common neighbours in
    $Z_j$. By the Slicing Lemma, the common neighbours of $e$ in $Z_i$ and
    $Z_j$ form an $(\eps/\gamma,d)$-regular pair for each $1\le i<j\le
    r-3$. By~\eqref{eq:3tor:setepsCstar} we have
    $d-\eps-(r-3)t\eps/\gamma>d/2$.
    Hence, applying the Counting Lemma with~$\eps$ replaced by $\eps/\gamma$ to the graph $H=K_{r-3}(t)$, it follows
    that there are at least 
    \begin{multline*}
        \frac1{\Aut(H)}\Big(d-\frac\eps\gamma v(H)\Big)^{e(H)}\Big(\frac{\gamma|Z|}{r-3}\Big)^{v(H)}
        \\ \ge\frac1{\big((r-3)t\big)!}\Big(\frac
        d2\Big)^{\binom{r-3}{2}t^2}\Big(\frac{\gamma|Z|}{r-3}\Big)^{(r-3)t}
        \geByRef{eq:3tor:setalphaell} 8\mu |Z|^{(r-3)t}
    \end{multline*}
    copies of $K_{r-3}(t)$ in
    $N(e) \cap Z$, each with $t$ vertices in each $Z_j$.
   
    There are therefore, for each $j \in [\ell^*]$, at least $8\mu
    |Z|^{(r-3)t} |E^*_j|$ pairs $(e,T)$, where $e \in E^*_j$ and $T$ is a copy
    of $K_{r-3}(t)$ as described, such that $T \subset N(e)$, or equivalently $e
    \subset N(T)$. Since we have
    \[8\mu
    |Z|^{(r-3)t} |E^*_j|=4 \mu |Z|^{(r-3)t}|E^*_j|+4 \mu
    |E^*_j||Z|^{(r-3)t}\,,\]
    by the pigeonhole principle, it follows that
    there are at least $4 \mu |Z|^{(r-3)t}$ copies of $K_{r-3}(t)$ in~$Z$ each of which
    has at least $4 \mu |E^*_j|$ edges of $E^*_j$ in its common
    neighbourhood. Let us denote by $\cT_j$ the collection of such copies of
    $K_{r-3}(t)$. For a copy $T$ of $K_{r-3}(t)$, let $L(T) = \big\{ j : T
    \in \cT_j \big\}$.
    
    We claim that there is a set $\cT$ containing at least $2\mu
    |Z|^{(r-3)t}$ copies $T$ of $K_{r-3}(t)$ in $Z$, each with $|L(T)|\ge\ell$.
    Indeed, this follows once again by the pigeonhole principle, since there are at least
    \[\ell^*\cdot 4 \mu |Z|^{(r-3)t} \,\eqByRef{eq:3tor:setalphaell}\, \ell
    |Z|^{(r-3)t}+\ell^*\cdot 2\mu|Z|^{(r-3)t}\]
    pairs $(T,j)$ with $T \in \cT_j$.
    
    Now, recall that $S^*$ satisfies the conclusion of Claim~\ref{clm:wellbvd},
    i.e., for each $I\subset[\ell^*]$, there are at most
    $2^{-\ell^*}\mu|Z|^{(r-3)t}$ sets $T \in \binom{Z}{(r-3)t}$ such
    that $|N(T) \cap S^*_I | \le t$. Since $|\cT|\ge 2\mu|Z|^{(r-3)t}$, there
    is a copy $T$ of $K_{r-3}(t)\in\cT$ such that for each $I\subset[\ell^*]$,
    we have $|N(T)\cap S^*_I|\ge t$. If we let~$L$ be any subset of $L(T)$ of size
    $\ell$, then $T$ and $L$ satisfy the conclusions of the claim.
  \end{claimproof}
  
  Let $T$ and $L$ be as given by Claim~\ref{3-to-r:claim2} and for each $j \in L$ let
  $E_j\subset X$ be a set of $\mu|E^*_j|$ edges of $E^*_j$ contained in $N(T)$ as
  promised by Claim~\ref{3-to-r:claim2}\ref{3-to-r:b}.
  We construct a
  function $S \in \cF_\ell^{r,t}(Y)$ by choosing, for each $I \subset L$, a
  subset $S_I \subset S^*_I$ of size~$t$ in $N(T)\cap Y$ (which is possible
  by Claim~\ref{3-to-r:claim2}\ref{3-to-r:a}), and letting the sets $S_i$,
  $i\in[r-3]$, be the parts of $T$.
  
  \begin{claim}\label{3-to-r:claim3}
    $S$ is $(r,\ell,t,C,\alpha)$-good for $(X,Y \cup Z)$.
  \end{claim}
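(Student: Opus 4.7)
The plan is to verify directly the two conditions in Definition~\ref{def:good} for the function $S$ constructed just before the claim. Fix an arbitrary bijection $\sigma\colon [\ell]\to L$, which allows us to identify the indexing of the $S_I$ (built with $I\subset L$) with subsets of $[\ell]$, and similarly to relabel the edge sets so that $E_i := E_{\sigma(i)}$ for each $i\in[\ell]$. With this identification in place, the verification splits into a short degree calculation and a combinatorial check of the neighbourhood conditions.

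For the average-degree condition, note that $E_i$ consists of $\mu|E^*_{\sigma(i)}|$ edges chosen from $E^*_{\sigma(i)}$, and so $V(E_i)\subset V(E^*_{\sigma(i)})$. Consequently
$$\bar d(E_i) \,\ge\, \mu\,\bar d(E^*_{\sigma(i)}) \,\ge\, \mu\cdot 2^{-\ell^*}\alpha C^* \,=\, C \,\ge\, 2^{-\ell}\alpha C\,,$$
using $C^*=2^{\ell^*}C/(\alpha\mu)$ from~\eqref{eq:3tor:setepsCstar} and the harmless assumption $\alpha\le 1$.

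For the mapping condition, I need to show that $\tpl{e}{\ell}\to S$ for every choice $e_i\in E_i$. Writing $e_i=x_iy_i$, the requirement that $S'_j\subset N(x_i)\cap N(y_i)$ for each $j\in[r-3]$ is immediate, because $E_i\subset N(T)$ by Claim~\ref{3-to-r:claim2}\ref{3-to-r:b}, so both endpoints of $e_i$ are adjacent to every vertex of $T$, in particular to the part $S'_j$ of $T$. For the containments involving the sets $S_I$, I would extend $(e_1,\dots,e_\ell)$ to a full sequence $(e^*_1,\dots,e^*_{\ell^*})$ by setting $e^*_{\sigma(i)}=e_i$ and choosing $e^*_j\in E^*_j$ arbitrarily for $j\in[\ell^*]\setminus L$. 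Since $S^*$ is $(3,\ell^*,t^*,C^*,\alpha)$-good for $(X,Y)$, we have $\tpl{e^*}{\ell^*}\to S^*$, and hence for each $i\in[\ell]$ there is an orientation of $e_i$ with
$$\bigcup_{\substack{J\subset[\ell^*]\\ \sigma(i)\in J}} S^*_J \,\subset\, N(x_i)\,,\qquad \bigcup_{\substack{J\subset[\ell^*]\\ \sigma(i)\notin J}} S^*_J \,\subset\, N(y_i)\,.$$
Now $S_I\subset S^*_{\sigma(I)}$ by construction, and for $I\subset[\ell]$ we have $i\in I$ if and only if $\sigma(i)\in\sigma(I)$, so the desired inclusions $\bigcup_{I\ni i}S_I\subset N(x_i)$ and $\bigcup_{I\not\ni i}S_I\subset N(y_i)$ follow by taking the corresponding sub-unions.

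The only real obstacle is notational, namely carefully keeping track of the relabelling between subsets of $L\subset[\ell^*]$ and subsets of $[\ell]$ under $\sigma$, and being consistent about edge orientations; beyond this bookkeeping, the claim is a direct consequence of the construction of $S$ out of $S^*$ and $T$, together with the choice of $C^*$ made in~\eqref{eq:3tor:setepsCstar}.
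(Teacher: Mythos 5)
Your argument is correct and follows the paper's proof essentially verbatim: identify $L$ with $[\ell]$, obtain $\ol{d}(E_j)\ge\mu\,\ol{d}(E^*_j)\ge\mu 2^{-\ell^*}\alpha C^*=C\ge 2^{-\ell}\alpha C$ from the choice of $C^*$ in~\eqref{eq:3tor:setepsCstar}, and deduce $\tpl{e}{\ell}\to S$ from $\tpl{e}{\ell^*}\to S^*$ together with $E_j\subset N(T)$ and $S_I\subset S^*_I$. The only point the paper records that you leave implicit is that $S$ is a \emph{proper} function in $\cF_\ell^{r,t}(Y\cup Z)$ (the sets are pairwise disjoint, and each $S_I$ and each part of $T$ is completely adjacent to the other parts of $T$), which is immediate since $S_I\subset N(T)\cap Y$ and $T$ is a copy of $K_{r-3}(t)$ with one part in each $Z_j$.
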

  \begin{claimproof}[Proof of Claim~\ref{3-to-r:claim3}]
    Recall that $|L| = \ell$, and assume without loss of generality that $L
    = \{1,\ldots,\ell\}$. 
    By the choice of~$T$ and the definition of the sets $S_I$ with $I
    \subset L$ and the sets~$S_i$ with
    $i\in[r-3]$, we have that $S_i$ is completely adjacent to each $S_{i'}$
    with $i\neq i'$, to each~$S_I$, and to each edge $e\in \bigcup_{j\in L} E_j$.
    Since $\tpl{e}{\ell^*} \to S^*$ for each
    $\tpl{e}{\ell^*} \in E^*_1 \times \ldots \times E^*_{\ell^*}$, it
    follows that $\tpl{e}{\ell} \to S$ for each $\tpl{e}{\ell} \in E_1
    \times \ldots \times E_{\ell}$. Finally, for each $j\in L$, since
    $|E_j|\ge\mu|E^*_j|$, we have
    \[\ol{d}(E_j)\ge \mu\ol{d}(E^*_j)\ge \mu2^{-\ell^*}\alpha
    C^*\eqByRef{eq:3tor:setepsCstar} C\,,\]
    as required.
  \end{claimproof}
  
  Thus there exists a function $S \in \cF_\ell^{r,t}(Y)$ which is
  $(r,\ell,t,C,\alpha)$-good for $(X,Y \cup Z)$, as required.
\end{proof}

Our final lemma states that the existence of a good function as provided by the previous proposition indeed implies the existence of the desired $r$-near-acyclic graph.

\begin{lemma}[Lemma~25 of~\cite{ChromThresh}]\label{lem:denseembed} 
Let $X$ and $Y$ be disjoint vertex sets
  in $G$. Given $r,\ell,t\in\NATS$, $\alpha>0$, and trees $T_1,\ldots,T_\ell$,
  if $C\ge 2^{\ell+3}\alpha^{-1}\sum_{i=1}^\ell |T_i|$ and
  $S\in\cF_\ell^{r,t}(Y)$ is $(r,\ell,t,C,\alpha)$-good for $(X,Y)$, then
  $Z_\ell^{r,t}(T_1,\ldots,T_\ell)\subset G$.
\end{lemma}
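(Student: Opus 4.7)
The plan is to embed the trees $T_1,\ldots,T_\ell$ into $X$ one at a time using the edge sets $E_1,\ldots,E_\ell\subset E(X)$ supplied by goodness, and then combine with the sets $S_I$ and $S'_{j'}$ living in $Y$ to recover a copy of $Z_\ell^{r,t}(T_1,\ldots,T_\ell)$ in $G$. By the definition of $(r,\ell,t,C,\alpha)$-goodness, every $E_j$ satisfies $\ol{d}(E_j)\ge 2^{-\ell}\alpha C\ge 8\sum_{i=1}^\ell |T_i|$, and every edge $e\in E_j$ admits at least one orientation $(x,y)$ with $(x,y)\to_j S$: I would fix such a \emph{designated orientation} once and for all for every edge of every $E_j$.

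The key difficulty is that the designated orientation sorts each vertex's incident $E_j$-edges into `$A$-side' and `$B$-side' incidences, and the bipartition $A_j\cup B_j$ of the embedded copy of $T_j$ must match this sorting. I would overcome this by replacing each $E_j$ with a bipartite auxiliary graph. First pass to a subgraph $F_j\subset E_j$ with $\delta(F_j)\ge 4\sum_i|T_i|$, which exists by the standard remove-low-degree-vertices argument since $\ol{d}(E_j)\ge 8\sum_i|T_i|$. Next form $H_j$ with two parts $V^A_j,V^B_j$, each a copy of $V(F_j)$, joining $x^A\in V^A_j$ to $y^B\in V^B_j$ whenever $xy\in F_j$ has designated orientation $(x,y)$. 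Since $e(H_j)=e(F_j)$, the bipartite graph $H_j$ has average degree $\ol{d}(F_j)/2\ge 2\sum_i|T_i|$, hence contains a subgraph $H'_j$ of minimum degree at least $\sum_i|T_i|$; moreover every vertex of $H'_j$ has a neighbour in the opposite part, so $|V^A_j\cap V(H'_j)|$ and $|V^B_j\cap V(H'_j)|$ are both at least $\sum_i|T_i|$.

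Then I would embed $T_1,\ldots,T_\ell$ sequentially, mapping $A_j$ into $V^A_j$ and $B_j$ into $V^B_j$ via a BFS from a root $a_0\in A_j$ and keeping track of $U_j$, the set of physical vertices of $V(F_j)\subset X$ already occupied by $T_1,\ldots,T_{j-1}$ (so $|U_j|\le\sum_{i<j}|T_i|$). At each extension step, say embedding $v\in A_j$ whose parent $p\in B_j$ has already been mapped to $\phi(p)\in V^B_j$, I would choose $\phi(v)$ among the $H'_j$-neighbours of $\phi(p)$ in $V^A_j$ whose physical copy avoids $U_j$ and is distinct from the physical copies of previously embedded vertices of $T_j$. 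The number of forbidden candidates in $V^A_j$ is at most
\[
(|A_j|-1)+|B_j|+|U_j|\,\le\,\sum_{i=1}^{\ell}|T_i|-1\,<\,\delta(H'_j),
\]
so a valid $\phi(v)$ always exists; the case $v\in B_j$ is symmetric, and the root $a_0$ can be placed in $V^A_j\cap V(H'_j)$ with physical copy outside $U_j$ since that set has size $\ge\sum_i|T_i|>|U_j|$. This bookkeeping guarantees that the projection of the embedded $T_j$ to $V(F_j)\subset X$ is injective and that distinct trees occupy disjoint vertex sets in $X$.

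The verification that the resulting configuration is a copy of $Z_\ell^{r,t}(T_1,\ldots,T_\ell)$ is then routine. Every edge of the embedded $T_j$ corresponds to an edge $e_j\in E_j$ in its designated orientation, with the image of the $A_j$-endpoint at the `$x$'-position and the $B_j$-endpoint at the `$y$'-position. Picking, for each $j$, any such $e_j$ incident to a given tree-vertex and completing to a tuple $\tpl{e}{\ell}\in E_1\times\cdots\times E_\ell$, the hypothesis $\tpl{e}{\ell}\to S$ immediately gives, for every non-isolated tree vertex $\phi(v)$, the adjacencies $\bigcup_{I\ni j}S_I\subset N(\phi(v))$ if $v\in A_j$, $\bigcup_{I\not\ni j}S_I\subset N(\phi(v))$ if $v\in B_j$, and $S'_{j'}\subset N(\phi(v))$ for every $j'\in[r-3]$; an isolated $T_j$ (single vertex placed in $V^A_j\cap V(H'_j)$) receives the same adjacencies from any designated edge of $F_j$ incident to its physical image. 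Combined with the `proper' condition on $S$, which already supplies the edges from each $S'_{j'}$ to $S_I\cup S'_j$ for $j\ne j'$, this yields the required copy of $Z_\ell^{r,t}(T_1,\ldots,T_\ell)$ in $G$. The main obstacle throughout is the orientation matching, handled by the bipartite doubling trick in $H_j$ at the cost of a factor of two in the degree bound, which is absorbed by the factor $8$ coming from the hypothesis on $C$.
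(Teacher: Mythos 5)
Your argument is correct and complete: from goodness you extract, for each $j$, that every edge of $E_j$ has an orientation witnessing $e\to_j S$, and the hypothesis $C\ge 2^{\ell+3}\alpha^{-1}\sum_i|T_i|$ gives $\ol{d}(E_j)\ge 8\sum_i|T_i|$, which after passing to a minimum-degree subgraph and your bipartite doubling trick (to force each tree edge to respect its designated orientation, with $A_j$ on the head side) leaves enough room for the greedy BFS embedding of $T_1,\ldots,T_\ell$ with the stated forbidden-vertex bookkeeping; the adjacencies to the $S_I$ and $S'_{j'}$ then follow from the definitions of $\to_j$ and properness. The present paper only quotes this lemma from~\cite{ChromThresh} without reproducing its proof, but your proof is exactly in the spirit of the intended argument (greedy tree embedding inside high-average-degree subgraphs of the $E_j$), with the orientation-matching issue handled cleanly by the double cover at the cost of the factor of two that the constant $2^{\ell+3}$ absorbs; the only point worth stating explicitly is that the pointwise statement ``every $e\in E_j$ satisfies $e\to_j S$'' follows from the tuple condition because each $E_i$ is nonempty.
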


We can now complete the proof of Lemma~\ref{lem:magic}.

\begin{proof}[Proof of Lemma~\ref{lem:magic}]
  Let $H$ be an $r$-near-acyclic graph,
  with $r \ge 3$, and let $\gamma > 0$. We set $d=\gamma=\gamma$. Because $H$ is $r$-near-acyclic, by
  Observation~\ref{acy=Zyk} there exist trees $T_1,\ldots,T_\ell$ and a
  number $t\in\NATS$ such that $H\subset
  Z_\ell^{r,t}(T_1,\ldots,T_\ell)$. We now set constants as follows. First,
  we choose $d=\gamma$. Given $r$, $\ell$, $t$, $d$ and $\gamma$,
  Proposition~\ref{3-to-r-rich} returns integers $\ell^*$ and $t^*$.  Now
  Proposition~\ref{lem:VC}, with input $\ell^*,t^*$ and $d$, returns
  $\alpha>0$. Next, consistent with Lemma~\ref{lem:denseembed} we set
  $C:=2^{\ell+3}\alpha^{-1}\sum_{i=1}^\ell |T_i|$. Feeding $\alpha$ and $C$
  into Proposition~\ref{3-to-r-rich} yields $\eps>0$ and $C^*$. Putting
  $C^*$ into Proposition~\ref{lem:VC} yields a constant $D=C'$.
  
  Now suppose we are given a graph $G$ containing pairwise disjoint vertex sets $X$, $Y$, $Z_1,\dots,Z_{r-2}$ such that $G[X]$ has chromatic number at least $D+1$, and such that $|Y|=|Z_1|=\dots=|Z_{r-2}|=m$, such that each vertex of $X$ has at least $\gamma m$ neighbours in $Y$ and each edge of $G[X]$ has at least $\gamma m$ common neighbour in each $Z_1,\ldots,Z_{r-2}$, and such that each pair from $Y,Z_1,\dots,Z_{r-2}$ forms an $\eps$-regular pair in $G$ of density at least $\gamma$. Our aim is to prove $H\subseteq G$.

  We apply Proposition~\ref{lem:VC}, with input $\ell^*$, $t^*$, $d$ and
  $C^*$, to $(X,Y)$. By assumption, we have $|N(x)\cap
  Y|\ge d|Y|$ for each $x\in X$. Since $\chi\big(G[X]\big)\ge D+1$, so $(X,Y)$ is $(C^*,\alpha)$-rich in copies of
  $Z_{\ell^*}^{3,t^*}$.
  
  By Lemma~\ref{q-induc} the pair $(X,Y)$ is
  $(C^*,\alpha)$-dense in copies of $Z_{\ell^*}^{3,t^*}$. We now apply
  Proposition~\ref{3-to-r-rich}, with input $r$, $\ell$, $t$, $d$,
  $\gamma$, $\alpha$, $C$, and~$\eps$
  to $X,Y,Z_1,\ldots,Z_{r-3}$. By assumption, each edge of $G[X]$ has at least $\gamma |Z_i|$ common neighbours in each $Z_i$, and any pair of $Y,Z_1,\ldots,Z_{r-3}$ is
  $(\eps,d)$-regular. By Proposition~\ref{3-to-r-rich}, there exists
  a function $S\in\cF_\ell^{r,t}(Y\cup Z_1\cup\cdots\cup Z_{r-3})$ which is
  $(r,\ell,t,C,\alpha)$-good for $(X,Y\cup Z_1\cup\cdots\cup Z_{r-3})$.
  Finally, we apply Lemma~\ref{lem:denseembed}, with input
  $r,\ell,t,\alpha$ and $T_1,\ldots,T_\ell$, to $X$ and $Y\cup Z_1\cup\cdots\cup
  Z_{r-3}$ with the function $S$. By the definition of~$C$, this lemma gives that
  $Z_\ell^{r,t}(T_1,\ldots,T_\ell)$ is contained in $G$, and so $H\subseteq G$ as desired.
\end{proof}
}\fi

%%%% BIBLIOGRAPHY %%%%%%%%%%%%%%%%%%%%%%%%%%%%%%%%%%%%%%%%%%%%%%%%%%%%%

\bibliographystyle{amsplain_yk} 
\bibliography{ChromaticThreshold}

\providecommand{\bysame}{\leavevmode\hbox to3em{\hrulefill}\thinspace}
\providecommand{\MR}{\relax\ifhmode\unskip\space\fi MR }
% \MRhref is called by the amsart/book/proc definition of \MR.
\providecommand{\MRhref}[2]{%
  \href{http://www.ams.org/mathscinet-getitem?mr=#1}{#2}
}
\providecommand{\href}[2]{#2}
\def\MR#1{\relax}
\begin{thebibliography}{10}

\bibitem{dense:arXiv}
P.~Allen, J.~B\"ottcher, S.~Griffiths, Y.~Kohayakawa, and R.~Morris,
  \emph{Chromatic thresholds in dense random graphs}, Submitted
  (arXiv:1508.03870).

\bibitem{sparse}
\bysame, \emph{Chromatic thresholds in sparse random graphs}, Submitted
  (arXiv:1508.03875).

\bibitem{ChromThresh}
\bysame, \emph{The chromatic thresholds of graphs}, Adv. Math. \textbf{235}
  (2013), 261--295.

\bibitem{SparseBU}
P.~Allen, J.~B\"ottcher, H.~H\`an, Y.~Kohayakawa, and Y.~Person, \emph{Blow-up
  lemmas for sparse graphs}, in preparation.

\bibitem{ABKR}
P.~Allen, J.~B{\"o}ttcher, Y.~Kohayakawa, and B.~Roberts, \emph{Triangle-free
  subgraphs of random graphs}, Submitted (arXiv:1507.05226).

\bibitem{BMS}
J.~Balogh, R.~Morris, and W.~Samotij, \emph{Independent sets in hypergraphs},
  J. Amer. Math. Soc. \textbf{28} (2015), no.~3, 669--709.

\bibitem{BipartBU}
J.~B{\"o}ttcher, Y.~Kohayakawa, and A.~Taraz, \emph{Almost spanning subgraphs
  of random graphs after adversarial edge removal}, Combin. Probab. Comput.
  \textbf{22} (2013), no.~5, 639--683.

\bibitem{ConGow}
D.~Conlon and W.~T. Gowers, \emph{Combinatorial theorems in sparse random
  sets}, Submitted (arXiv:1011.4310).

\bibitem{CGSS}
D.~Conlon, W.~T. Gowers, W.~Samotij, and M.~Schacht, \emph{On the {K{\L}R}
  conjecture in random graphs}, Israel J. Math. \textbf{203} (2014), no.~1,
  535--580.

\bibitem{DeMKah2}
B.~DeMarco and J.~Kahn, \emph{Tur{\'a}n's {T}heorem for random graphs},
  Submitted (arXiv:1501.01340).

\bibitem{DeMKah1}
\bysame, \emph{Mantel's {T}heorem for random graphs}, Random Structures
  Algorithms \textbf{47} (2015), no.~1, 59--72.

\bibitem{ErdSimSup}
P.~Erd\H{o}s and M.~Simonovits, \emph{Supersaturated graphs and hypergraphs},
  Combinatorica \textbf{3} (1983), no.~2, 181--192.

\bibitem{Erd59}
P.~Erd{\H{o}}s, \emph{Graph theory and probability}, Canad. J. Math.
  \textbf{11} (1959), 34--38.

\bibitem{Erdos64}
\bysame, \emph{On extremal problems of graphs and generalized graphs}, Israel
  J. Math. \textbf{2} (1964), 183--190.

\bibitem{ES73}
P.~Erd{\H{o}}s and M.~Simonovits, \emph{On a valence problem in extremal graph
  theory}, Discrete Math. \textbf{5} (1973), 323--334.

\bibitem{FR}
P.~Frankl and V.~R{\"o}dl, \emph{Large triangle-free subgraphs in graphs
  without {$K\sb 4$}}, Graphs Combin. \textbf{2} (1986), no.~2, 135--144.

\bibitem{GKRS}
S.~Gerke, Y.~Kohayakawa, V.~R\"{o}dl, and A.~Steger, \emph{Small subsets
  inherit sparse $\eps$-regularity}, J. Comb. Theory Ser. B \textbf{97} (2007),
  no.~1, 34--56.

\bibitem{JLRbook}
S.~Janson, T.~{\L}uczak, and A.~Ruci{\'n}ski, \emph{Random graphs},
  Wiley-Interscience, New York, 2000.

\bibitem{KLR}
Y.~Kohayakawa, T.~{\L}uczak, and V.~R{\"o}dl, \emph{On {$K\sp 4$}-free
  subgraphs of random graphs}, Combinatorica \textbf{17} (1997), no.~2,
  173--213.

\bibitem{KS93}
J.~Koml{\'o}s and M.~Simonovits, \emph{Szemer\'edi's regularity lemma and its
  applications in graph theory}, Combinatorics, Paul Erd{\H{o}}s is eighty,
  Vol.\ 2 (Keszthely, 1993), Bolyai Soc. Math. Stud., vol.~2, J\'anos Bolyai
  Math. Soc., Budapest, 1996, pp.~295--352.

\bibitem{LT}
T.~{\L}uczak and S.~Thomass{\'e}, \emph{Coloring dense graphs via
  {VC}-dimension}, Submitted ({arXiv:1007.1670v1}).

\bibitem{Lyle10}
J.~Lyle, \emph{On the chromatic number of {$H$-free} graphs of large minimum
  degree}, Graphs and Combinatorics (2010), 1--14.

\bibitem{RS}
V.~R{\"o}dl and M.~Schacht, \emph{Extremal results in random graphs}, Erd\"os
  centennial, Bolyai Soc. Math. Stud., vol.~25, J\'anos Bolyai Math. Soc.,
  Budapest, 2013, pp.~535--583.

\bibitem{Sam}
W.~Samotij, \emph{Stability results for random discrete structures}, Random
  Structures Algorithms \textbf{44} (2014), no.~3, 269--289.

\bibitem{ST}
D.~Saxton and A.~Thomason, \emph{Hypergraph containers}, Invent. math. (2015),
  1--68.

\bibitem{SchTuran}
M.~Schacht, \emph{Extremal results for random discrete structures}, Submitted.

\bibitem{Thomassen02}
C.~Thomassen, \emph{On the chromatic number of triangle-free graphs of large
  minimum degree}, Combinatorica \textbf{22} (2002), no.~4, 591--596.

\bibitem{Thomassen07}
\bysame, \emph{On the chromatic number of pentagon-free graphs of large minimum
  degree}, Combinatorica \textbf{27} (2007), no.~2, 241--243.

\end{thebibliography}

\end{document}